\documentclass[12pt]{amsart}

\usepackage{amsthm}
\newtheorem{theorem}{Theorem}[section]
\newtheorem*{theorem*}{Theorem}
\newtheorem{lemma}[theorem]{Lemma}

\theoremstyle{definition}
\newtheorem{definition}[theorem]{Definition}
\newtheorem{question}[theorem]{Question}

\theoremstyle{remark}
\newtheorem{remark}[theorem]{Remark}
\newtheorem{example}[theorem]{Example}
\newtheorem{corollary}[theorem]{Corollary}

\counterwithin*{section}{part}

\usepackage{amssymb}
\usepackage{empheq}
\usepackage{stmaryrd}
\usepackage{enumerate}
\usepackage[shortlabels]{enumitem}
\usepackage{calc}
\usepackage{url}
\usepackage{comment}

\usepackage{xcolor}

\newcommand{\norm}[1]{\left\lVert#1\right\rVert}
\usepackage{mathtools}

\newcommand{\CAT}{{\rm{CAT}(0)}}

\DeclareMathOperator{\dsum}{\overrightarrow{\sum}}

\def\EE{\mathbb{E}}
\def\PP{\mathbb{P}}

\usepackage[left=2.0cm,%
                right=2.0cm,%
                top=2.5cm,%
                bottom=3.5cm,%
                headheight=12pt,%
                a4paper]{geometry}%

\begin{document}

\title[Strong laws, random monotone vector fields and gradient flows]{Strong laws, random monotone vector fields and gradient flows on metric spaces of nonpositive curvature}

\author[N. Pischke]{Nicholas Pischke}
\date{\today}
\maketitle
\vspace*{-5mm}
\begin{center}
{\scriptsize 
Department of Computer Science, University of Bath,\\
Claverton Down, Bath, BA2 7AY, United Kingdom.\\
E-mail: nnp39@bath.ac.uk}
\end{center}

\maketitle
\begin{abstract}
Using a novel effective non-asymptotic concentration inequality, we establish a distribution-uniform generalization of Sturm's strong law of large numbers for inductive means of $L^1$-sequences of i.i.d.\ random variables on (separable) Hadamard spaces. Building on that, we establish a strong law of large numbers for integrable random monotone vector fields on (separable) Hilbert-Hadamard spaces, that is Hadamard spaces where all tangent cones isometrically embed into Hilbert spaces, extending a previous result of Salim set in Hilbert spaces. We use this latter strong law to establish a probabilistic Lie-Trotter-Kato formula for the gradient flow generated by an integral function over (separable) Hilbert-Hadamard spaces where all tangent cones are actually full Hilbert spaces. This application leverages the previous Lie-Trotter-Kato formula established for gradient flows of sums of convex functions by Stojkovi\'c together with a result relating resolvent and gradient flow convergence established by Ba\v{c}\'ak, as well as a new result on the interchangeability of the subdifferential and the integral, which we establish for $L^2$-Lipschitz integrands. The last ingredient provides, to our knowledge, the first nonlinear version of (a particular case of) a previous result due variously to Ioffe and Tikhomirov, Levin, Hiriart-Urruty, Thibault as well as Rockafellar and Wets. Throughout the paper, we highlight various remaining open problems.
\end{abstract}

\noindent
{\bf Keywords:} Strong law of large numbers; non-positive curvature; monotone vector fields; Aumann integral; subdifferential; gradient flow; Lie–Trotter–Kato formula\\ 
{\bf MSC2020 Classification:} 60F15, 60B12, 47H05, 47N10, 62L20, 47H20 

\section{Introduction}

The strong law of large numbers, originally established by Kolmogorov \cite{Kolmogorov1930}, states that for independent and identically distributed (i.i.d.)\ integrable random variables $(X_i)$ on a probability space $(\Omega,\mathsf{F},\PP)$, their sample averages converge to their common mean with probability one, i.e. 
\[
\frac{1}{n}\sum_{i=1}^n X_i\to \EE[X_1]\quad \PP\text{-a.s.\ as }n\to\infty. 
\]
Due to its ubiquity both in theoretical probability and statistics as well as their applications, this result belongs arguably to the most important results in all of probability theory, and many extensions and variants of the strong law of large numbers have been studied over the years (see e.g.\ the surveys \cite{Revesz1968,RosalskyThanh2025}, as well as \cite{Neri2025a,Neri2025b,Neri2026} for recent quantitative studies of various related results).

In the present paper, we focus on strong laws of large numbers that take place over complete geodesic metric spaces of nonpositive curvature, so-called Hadamard spaces. Introduced by Alexandrov  \cite{Aleksandrov1951,Alexandrov1957} and often called $\CAT$ spaces since the influential work of Gromov \cite{Gromov1987}, this class unifies spaces with smooth structure such as (infinite-dimensional) Hilbert spaces and Hadamard manifolds (i.e.\ complete simply connected Riemannian manifolds of nonpositive sectional curvature) with (potentially discrete) structures such as $\mathbb{R}$-trees or the Billera-Holmes-Vogtmann tree space prominently used in phylogenetics \cite{BilleraHolmesVogtmann2001}, as well as many more. We generally refer to \cite{AlexanderKapovitchPetrunin2019,AlexanderKapovitchPetrunin2023,Bacak2014a,BridsonHaefliger1999} for various treatments of these spaces, which are a key common ground for many analytic, geometric and probabilistic results, with techniques from either area being used to study the others. Probability theory in particular provides various fundamental tools by which analytical or geometrical aspects of these spaces can be studied, going back to the work of Jost \cite{Jost1994,Jost1997} and in particular Sturm \cite{Sturm2001,Sturm2002b,Sturm2003}. 

The present paper now aims to illustrate how such strong laws over nonpositively curved spaces can be used in combination with other results from stochastic convex analysis to derive new stochastic approximation principles for deterministic gradient flows generated by integral functions. Concretely, as is well-known, if $\mathcal{H}$ is a Hadamard space and $f:\mathcal{H}\to (-\infty,+\infty]$ is a convex and lower-semicontinuous function such that $f=\sum_{k=1}^Nf_k$, then the gradient flow semigroup $S_t(x)$ associated with $f$ (see \cite{AmbrosioGigliSavare2008}) can be characterized by a limit of products of the individual proximal mappings $\mathrm{prox}_{\lambda f_1}$, \dots, $\mathrm{prox}_{\lambda f_N}$ of $f_1$, \dots, $f_N$, as opposed to a limit of the proximal mapping $\mathrm{prox}_{\lambda f}$ of $f$ as in the usual Crandall-Liggett-type formula (see \cite{Mayer1998}). More precisely, one has the following limit for all $t\geq 0$ and all $x\in \mathcal{H}$:
\[
S_t(x)=\lim_{n\to\infty} \left(\mathrm{prox}_{\frac{t}{n} f_N} \circ\dots\circ \mathrm{prox}_{\frac{t}{n} f_1}\right)^{(n)}(x).
\]
This product formula provides a nonlinear analogue of the well-known Lie-Trotter-Kato product formula for such gradient flow semigroups in linear spaces (see \cite{KatoMasuda1978}, among many other related works) and was first established by Stojkovi\'c \cite{Stojkovic2012} (and later simplified by Ba\v{c}\'ak \cite{Bacak2014c}).\footnote{We also mention the work of Ohta and P\'alfia \cite{OhtaPalfia2017} which extended this result to $\mathrm{CAT}(1)$-spaces. Generally, while not addressed explicitly throughout, a natural question for future work is whether the results of the present paper extend to more general upper curvature bounds.}

The ultimate goal of the present paper is to establish the following probabilistic variant of the Lie-Trotter-Kato formula (somewhat similar in spirit to \cite{Kurtz1972}) for the gradient flow generated by an integral function $\int f(e,\cdot)\, d\mu(e)$ for a proper normal convex integrand $f:E\times \mathcal{H}\to (-\infty,+\infty]$, relating the flow of the integral to powers of products of random samples:

\begin{theorem*}[informal, Theorem \ref{probLTK} in the paper]
Let $(\Omega,\mathsf{F},\PP)$ and $(E,\mathsf{E},\mu)$ be suitable probability spaces and let $(\mathcal{H},d)$ be a suitable separable Hadamard space. Let $f:E\times \mathcal{H}\to (-\infty,+\infty]$ be a proper normal convex integrand which is $L^2$-Lipschitz and suitably integrable. Write $S_t(x)$ for the gradient flow associated with $\int f(e,\cdot)\, d\mu(e)$, and fix an i.i.d.\ sequence $(\xi_i)$ of random variables $\xi_i:\Omega\to E$ with distribution $\mu$.

Then, there exists a set $\Omega'\subseteq\Omega$ with $\PP(\Omega')=1$ such that for any $\omega\in\Omega'$ as well as for any $t\geq 0$ and $x\in \mathcal{H}$:
\[
S_t(x)=\lim_{n\to\infty} \left(\mathrm{prox}_{\frac{t}{n^2} f(\xi_n(\omega),\cdot)} \circ\dots\circ \mathrm{prox}_{\frac{t}{n^2} f(\xi_1(\omega),\cdot)}\right)^{(n)}(x).
\]
In particular, the above limit holds almost surely.
\end{theorem*}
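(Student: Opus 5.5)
The plan is to read the random product formula as an instance of the deterministic Lie--Trotter--Kato formula of Stojkovi\'c, in the form simplified by Ba\v{c}\'ak, applied at level $n$ to the finite sum
\[
F_n:=\frac1n\sum_{i=1}^n f(\xi_i(\omega),\cdot).
\]
Indeed, with $\lambda=t/n$ one has $\mathrm{prox}_{\frac{t}{n^2}f(\xi_i,\cdot)}=\mathrm{prox}_{\lambda\cdot\frac1n f(\xi_i,\cdot)}$. So the $n$-th iterate is exactly the $n$-fold power of a single sweep through the proximal maps of the $n$ summands of $F_n$, with step $t/n$. Since both the number of summands and the function change with $n$, I will not invoke the Lie--Trotter--Kato formula as a black box. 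Instead I will use Ba\v{c}\'ak's abstract scheme: a Chernoff/Crandall--Liggett-type principle which says that if a family of nonexpansive maps $J_\lambda$ satisfies a consistency estimate $d(J_\lambda x,\mathrm{prox}_{\lambda F}x)=o(\lambda)$, locally uniformly, together with resolvent convergence, then $J_{t/n}^{(n)}(x)\to S_t(x)$. Here the role of $J_\lambda$ is played by the composed sweep.

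The key steps, in order, are as follows.

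\textbf{(i) Identify the limit.} Show that $F=\int f(e,\cdot)\,d\mu(e)$ is convex and lsc. Using the interchange result for $L^2$-Lipschitz integrands, show that $\partial F(x)$ equals the Aumann integral of $\partial f(e,x)$.

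\textbf{(ii) Strong law for subdifferentials.} Apply the strong law for integrable random monotone vector fields on Hilbert--Hadamard spaces to the random fields $A_i=\partial f(\xi_i,\cdot)$. This yields a full-measure set $\Omega'$ on which $\frac1n\sum_{i\le n}A_i\to\partial F$ in the graphical sense used by that law. Via Ba\v{c}\'ak's result relating resolvent convergence to gradient flow convergence, this gives $\mathrm{prox}_{\lambda F_n}(x)\to\mathrm{prox}_{\lambda F}(x)$ for all $\lambda,x$. Consequently the flows $S^{F_n}_t(x)\to S_t(x)$, locally uniformly in $t$. Separability lets me take $\Omega'$ independent of $x$ and $t$: I restrict to a countable dense set and use nonexpansiveness.

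\textbf{(iii) Compare the random sweep with the flow of $F_n$.} I will follow Stojkovi\'c's proof, whose error bound for a sweep over $N$ functions with step $\lambda$ is controlled by $\lambda^2\sum_k \mathrm{Lip}(f_k)^2$ times a factor depending on $N$. For our functions $\frac1n f(\xi_i,\cdot)$ with step $t/n$, each prox moves a point by at most $\frac{t}{n^2}L(\xi_i)$, where $L(\cdot)$ is the $L^2$ Lipschitz modulus. Over one sweep the accumulated drift is therefore $\frac{t}{n}\cdot\frac1n\sum_i L(\xi_i)$. Over $n$ sweeps the total deviation from the $F_n$-flow should be of order
\[
t^2\cdot\frac1n\sum_i L(\xi_i)^2\cdot\frac1n\to0,
\]
and the classical SLLN gives $\frac1n\sum_i L(\xi_i)^2\to\EE L^2<\infty$ almost surely.

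The main obstacle is step (iii). Stojkovi\'c's estimate is not stated uniformly in the number $N$ of summands. I would rework his argument, where the nonpositive-curvature comparison between a product of proxes and $\mathrm{prox}_{\lambda\sum f_k}$ gives a variational inequality with error $\lambda^2(\sum_k \mathrm{Lip} f_k)^2$. With the $1/n$ weights this error becomes $\lambda^2(\frac1n\sum_i L(\xi_i))^2$, which is $N$-independent. If the Lipschitz bound is only local, or if $f$ takes the value $+\infty$, I would fall back on a localization argument, exploiting that the gradient flow trajectories stay in bounded sets.
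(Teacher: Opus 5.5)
Your proposal is correct and follows essentially the same route as the paper. The paper combines the a.s.\ convergence $\partial F_n\to^R\partial F$ from Theorem \ref{HadCorr} (the strong law for monotone vector fields together with the interchange theorem) with an $N$-uniform quantitative rework of Stojkovi\'c's argument, which it carries out as Lemma \ref{quantLTK} with bound $8tn^{-1/4}\cdot\frac1n\sum_{i\le n}L(\xi_i)$. Two small corrections: the $R$-convergence itself already gives $\mathrm{prox}_{\lambda F_n}\to\mathrm{prox}_{\lambda F}$, and Ba\v{c}\'ak's result (Lemma \ref{resToSemi}) is only what upgrades this to $S^{F_n}_t\to S_t$; moreover, only $\sup_n\frac1n\sum_{i\le n}L(\xi_i)<\infty$ a.s.\ is needed, so neither your heuristic $1/n$ rate nor $\frac1n\sum_i L(\xi_i)^2$ enters, the $L^2$-Lipschitz assumption being used instead for the interchange result.
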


The probability space $(E,\mathsf{E},\mu)$ later has to satisfy completeness and separability restrictions, and the class of permissible Hadamard spaces is characterized by the condition that their tangent cones are isomorphic to Hilbert spaces (so-called full Hilbert-Hadamard spaces, which encompass Hilbert spaces as well as manifolds with nonpositive sectional curvature but also go beyond these classes, as discussed in more detail later). The integrability condition on $f$ is mild, but a bit technical and so postponed to later.

The proof of the above result makes crucial use of the usual Lie-Trotter-Kato formula for sums of convex functions already mentioned previously and in particular a new uniform quantitative variant thereof tailored to Lipschitz functions (see Lemma \ref{quantLTK} later), as well as of a result relating resolvent and gradient flow convergence established by Ba\v{c}\'ak \cite{Bacak2015}. 

However, as mentioned before, the main novel ingredients in the respective argument are strong laws of large numbers together with further results from stochastic convex analysis over these spaces. Concretely, there is the following sequence of intermediate results of independent interest leading up to the above theorem:

\begin{enumerate}[(A)]
\item We derive a (distribution-uniform) strong law of large numbers for inductive means of i.i.d.\ sequences of integrable random variables over general Hadamard spaces.
\item We utilize the strong law for integrable random variables to derive a strong law of large numbers for i.i.d.\ selections of integrable random monotone vector fields on so-called Hilbert-Hadamard spaces (characterized by a flatness condition on the tangent cones).
\item To apply the strong law for integrable random monotone vector fields in the context of convex functions, we establish a result on the interchange between the subdifferential and the Aumann integral on so-called full Hilbert-Hadamard spaces (characterized by a linearity condition on the tangent cones, as mentioned before).
\end{enumerate}

The use of this sequence of results can be avoided in certain cases. In the case of Hilbert spaces (see Theorem \ref{probLTKHilbert} later on), the results can be replaced by an application of a strong law for convex functions due to King and Wets \cite{KingWets1991}, while in the case of a locally compact Hadamard space (see Theorem \ref{probLTKProper} later on, which in particular does not require any further assumptions on the tangent cones), one can utilize a related strong law due to Artstein and Wets \cite{ArtsteinWets1995}. In both cases, the $L^2$-Lipschitz assumption can actually be relaxed to $L^1$-Lipschitz continuity. Hence, the succession of results (A) -- (B) -- (C) is only required to establish the above theorem for Hadamard spaces (satisfying the respective tangent cone restrictions) which are neither linear nor locally compact, such as e.g.\ the infinite dimensional hyperbolic space $\mathbb{H}^\infty$ (see \cite{Gromov1993}), where it however remains the only route known to us.

Both of the strong laws in (A) and (B) are stronger than actually required in the later proof of the probabilistic Lie-Trotter-Kato formula, and both strong laws are actually established by means of a quantitative result which entails the various uniformities. On the way to each of these results and the theorem above, we prove various other related results of independent interest, and develop a considerable apparatus of stochastic nonlinear analysis for monotone vector fields over Hadamard spaces. We use this coherent and expansive picture not aimed exclusively at a single result to formulate a range of remaining open problems in the area.

The following sections of the introduction now describe each of the above ingredients and related results in more detail.

\subsection*{A. Strong laws for inductive means of random variables on Hadamard spaces}

The strong law of large numbers for bounded random variables on Hadamard spaces goes back to the work of Sturm \cite{Sturm2003} (see Theorem 4.7 therein, and also Theorem 7.2.1 in \cite{Bacak2014a}): Over a probability space $(\Omega,\mathsf{F},\PP)$ and a separable Hadamard space $(\mathcal{H},d)$, if $(X_i)$ is an i.i.d.\ sequence of random variables $X_i\in L^\infty(\Omega,\mathcal{H},\PP)$, then 
\[
\frac{1}{n}\underset{i=1,\dots,n}{\dsum} X_i\to \EE[X_1]\quad \PP\text{-a.s.\ as }n\to\infty. 
\]
Here, $\frac{1}{n}{\dsum}_{i=1,\dots,n} X_i$ refers to an iteratively defined notion of sample average, the so-called inductive mean introduced by Sturm (see Definition \ref{IndMean} later). These inductive means, which coincide with the usual Cesaro means over the reals, are a crucial feature of this strong law since they allow one to actually compute the sample average without prior knowledge or ability to compute the mean of the sequence. This is in particular opposed to other notions of approximate means, such as e.g.\ those induced by barycenters (see e.g.\ \cite{EsSahibHeinich1999}), in which case the strong law would be much easier to prove but also of much less interest as the iteration itself becomes already hard to compute (see Remark 4.8 in \cite{Sturm2003} for further discussions). This strong law has various applications, and can be used to develop further probability theory on these spaces (such as e.g.\ for deriving Jensen's inequality \cite{Sturm2003}) or for developing algorithms to approximate Fr\'echet means in practice (see e.g.\ \cite{Bacak2014b}).

Many variants of this result have been studied since then (see \cite{ChoiHeoJi2017,Funano2010,KostenbergerStark2025,LekaPalfia2024,LimPalfia2020,OhtaPalfia2015,Pischke2025,PischkePowell2026,TeranMolchanov2006,Yokota2018}, among others), but to our knowledge only the work of Yokota \cite{Yokota2018} explicitly provides an extension of Sturm's strong law of large numbers using inductive means to $L^1$-sequences of i.i.d.\ random variables over general Hadamard spaces, mirroring Kolmogorov's original strong law. Indeed, all the other works above (while extending the strong law in various interesting ways in their own right) deviate from this general formulation, either regarding the notion of average by avoiding inductive means (as in \cite{TeranMolchanov2006}), still only entailing a corresponding strong law for bounded or square-integrable variables (as in \cite{ChoiHeoJi2017,Funano2010,KostenbergerStark2025,OhtaPalfia2015,Pischke2025,PischkePowell2026}), or focusing on different (perhaps not even Hadamard) spaces and operator-theoretic formulations (as in \cite{Funano2010,LekaPalfia2024,LimPalfia2020}).

Our first main result is set in the context of such a general strong law of large numbers on Hadamard spaces using inductive means of $L^1$-sequences of i.i.d.\ random variables, where we establish an even stronger quantitative result by giving the following non-asymptotic concentration inequality:

\begin{theorem*}[Theorem \ref{L1LawHadQuant} in the paper]
Let $(\Omega,\mathsf{F},\PP)$ be a probability space and let $(\mathcal{H},d)$ be a separable Hadamard space. If $(X_i)$ is an i.i.d.\ sequence of random variables $X_i\in L^1(\Omega,\mathcal{H},\PP)$, then for any $\varepsilon>0$ and any $m\in\mathbb{N}$:
\[
\PP\left(\sup_{n\geq m} d\left(\frac{1}{n}\underset{i=1,\dots,n}{\dsum} X_i,\EE[X_1]\right)>\varepsilon\right)\leq \frac{6}{\varepsilon}\left(\frac{\sqrt{\EE[d(o,X_1)]}}{m^{1/2}}+U_\PP(m^{1/8})\right)+\frac{72}{\min\{\varepsilon^2,1\}m^{1/4}},
\]
where $U_\PP(x):=\EE[d(o,X_1)\mathbf{1}_{d(o,X_1)\geq x}]$ for a fixed $o\in \mathcal{H}$.
\end{theorem*}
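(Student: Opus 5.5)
The plan is a truncation argument that reduces the $L^1$ case to a quantitative $L^2$ concentration bound for inductive means, in the spirit of Kolmogorov's proof, with the truncation level tied to $m$. Fix $o\in\mathcal{H}$ and a level $R>0$, to be chosen as $R=m^{1/8}$. Define truncated variables $Y_i$ by projecting $X_i$ onto the closed ball $\overline{B}(o,R)$. This projection is $1$-Lipschitz and measurable, so the $Y_i$ are again i.i.d. They are bounded, and they agree with $X_i$ whenever $d(o,X_i)\leq R$.

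The basic tool is Sturm's variance-type inequality for inductive means of bounded (or $L^2$) i.i.d.\ variables:
\[
\EE\, d^2\Bigl(\tfrac{1}{n}\dsum_{i\le n} Y_i,\EE[Y_1]\Bigr)\le \frac{1}{n}\mathrm{Var}(Y_1)\le \frac{4R^2}{n}.
\]
To make this uniform in $n\ge m$, I would not use a maximal inequality directly, because inductive means are not martingales. Instead I would use blocks. On the dyadic-type subsequence $n_k=k^2$ (or $n_k=\lceil m(1+\delta)^k\rceil$), apply Chebyshev together with a union bound. Between consecutive indices, use the deterministic increment bound
\[
d(S_{n+1},S_n)=\tfrac{1}{n+1}d(S_n,Y_{n+1})\le \tfrac{2R}{n+1}.
\]
This controls $\sup_{n\ge m}$ by the sum of the Chebyshev bounds over the subsequence plus a deterministic drift term. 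Choosing the spacing relative to $R$ and $\varepsilon$ should yield a term of the form $C R^2/(\min\{\varepsilon^2,1\}\,m^{1/2})$. With $R=m^{1/8}$ this becomes $C/(\min\{\varepsilon^2,1\}m^{1/4})$, which matches the final term in the statement.

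It remains to compare the original inductive means with the truncated ones. By the Lipschitz property of geodesics in $\CAT$ spaces, $t\mapsto d(\gamma(t),\eta(t))$ is convex. Inductively this gives
\[
d\Bigl(\tfrac{1}{n}\dsum X_i,\tfrac{1}{n}\dsum Y_i\Bigr)\le \frac{1}{n}\sum_{i\le n} d(X_i,Y_i)\le \frac{1}{n}\sum_{i\le n} d(o,X_i)\mathbf 1_{d(o,X_i)>R}.
\]
For the supremum over $n\ge m$ I would not use Markov alone, since that gives no uniformity. Instead I would split the indices. For $i\le m$, the contribution is at most $\frac1m\sum_{i\le m}(\cdot)$, whose expectation is $U_\PP(R)$. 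For $i>m$, I would bound the tail averages by the real-valued maximal ergodic (Hopf/Doob) inequality for the nonnegative i.i.d.\ averages, which again gives probability at most $(\mathrm{const}/\varepsilon)\,U_\PP(R)$. Finally, $d(\EE[X_1],\EE[Y_1])\le \EE d(X_1,Y_1)\le U_\PP(R)$, so the means are close as well.

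The term $\sqrt{\EE d(o,X_1)}/m^{1/2}$ presumably comes from handling the early-index part and the mean shift more carefully. One route is to bound $\EE d(o,Y_1)^2\le R\,\EE d(o,X_1)$ instead of $R^2$. Collecting the $\varepsilon/3$ splits then produces the constants $6/\varepsilon$ and $72$.

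The main obstacle is the uniform-in-$n$ control of the bounded part. Inductive means lack a martingale structure, so I expect the block/Chebyshev argument with deterministic interpolation to be the step that needs the most care with constants. If that fails, the fallback is to reverse the order of truncation and apply the already-known square-integrable strong law's quantitative form, as in the works cited, to the $Y_i$.
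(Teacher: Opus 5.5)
Your proposal is correct in substance, and its skeleton matches the paper's proof. Your projection onto $\overline{B}(o,R)$ is exactly the paper's geodesic truncation $X_i^b$ with $b=m^{1/8}$. Your $k^2$-subsequence/Chebyshev/interpolation step is Lemma~\ref{LinftyLawQuant}, which is Sturm's argument made quantitative and gives $8b^2/(\min\{\varepsilon^2,1\}m^{1/2})$. The Busemann comparison $d(S_n,S_n^b)\le\frac1n\sum_{i\le n}W_i$, with $W_i:=(d(o,X_i)-R)^+$, and the mean shift via Lemma~\ref{ExpNonexp} are also the same.

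The one genuinely different step is how you control $\sup_{n\ge m}\frac1n\sum_{i\le n}W_i$. The paper centers, writing $\frac1n\sum W_i\le|\frac1n\sum W_i-\EE W_1|+\EE W_1$, and applies Kachurovskii's inequality (Theorem~\ref{L1LawRealsQuant}) to the centered averages. That is where the term $\frac{6}{\varepsilon}\sqrt{\EE[d(o,X_1)]}/m^{1/2}$ comes from, not from a refined second-moment bound as you guess. Because $\EE W_1$ is then paid twice, the paper's case split is at $U_\PP(b)\le\varepsilon/6$.

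You instead use the maximal inequality $\PP(\sup_{n\ge 1}\frac1n\sum_{i\le n}W_i>\lambda)\le \EE W_1/\lambda$ for nonnegative i.i.d.\ summands. This follows from the maximal ergodic theorem for the shift, or from Doob's inequality for the reversed martingale $\frac1n\sum_{i\le n}W_i$. Your route is more elementary and actually sharper. If $U_\PP(R)\le\varepsilon/3$ (otherwise the claim is trivial), you get
\[
\PP\Bigl(\sup_{n\ge m} d(S_n,\EE[X_1])>\varepsilon\Bigr)\le \frac{3}{\varepsilon}U_\PP(m^{1/8})+\frac{72}{\min\{\varepsilon^2,1\}m^{1/4}},
\]
which implies the stated inequality with the square-root term to spare. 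So you need not reconstruct that term at all.

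What the paper's route buys is modularity. The truncation error is handled by a real-valued quantitative strong law used as a black box, so the same scheme can be fed other real-valued rates, such as the pairwise-i.i.d.\ inequality cited after Question~\ref{pairwiseQuestion}. Your maximal inequality, by contrast, uses the full i.i.d.\ (stationary, exchangeable) structure.

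Two bookkeeping points matter if you want exactly the stated constants.

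First, do not split the indices at $m$. Markov on the first $m$ terms plus the maximal inequality on the rest, each at level $\varepsilon/6$, costs $\frac{12}{\varepsilon}U_\PP(R)$. That is not dominated by the stated right-hand side in general (take a heavy tail with $U_\PP(m^{1/8})$ of order $\varepsilon/20$ and $m$ large). Apply the maximal inequality directly to $\sup_{n\ge 1}$, which already dominates $\sup_{n\ge m}$.

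Second, in the bounded part use $\mathbb{V}[Y_1]\le\EE[d^2(o,Y_1)]\le R^2$ rather than $4R^2$. Combine it with the interpolation $d(S_{k^2},S_j)\le 4R/k$ for $k^2\le j<(k+1)^2$, which follows from your increment bound. With $4R^2$ the constant $72$ becomes $288$.
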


While quantitative estimates for inductive means were previously obtained by Funano \cite{Funano2010} under substantially stronger boundedness assumptions (and under restrictions on the space), the above result gives an explicit non-asymptotic estimate (on an arbitrary separable Hadamard space) under the minimal first-moment assumption.
 
This inequality is derived by applying a type of geodesic truncation to the random variables which allows us to leverage the Busemann convexity of the space as well as the nonexpansiveness of barycenters w.r.t.\ the Wasserstein metric to reduce the result to a previous non-asymptotic concentration inequality for the usual strong law of large numbers on the reals due to Kachurovskii \cite{Kachurovskii1996} (see Theorem \ref{L1LawRealsQuant} later on, and also the recent \cite{RufWaudbySmith2025} for related results) together with a respective quantitative variant of Sturm's strong law of large numbers for bounded i.i.d.\ sequences (see Lemma \ref{LinftyLawQuant} later on).\footnote{These quantitative results, as well as the remaining quantitative results in this paper, have been informed and obtained by the logic-based methodology of proof mining \cite{Kohlenbach2008,Kohlenbach2019} and as such are part of a recent advance of applying these methods in the context of probability theory \cite{NeriOlivaPischke2026,NeriPischke2024}. This paper however avoids any further reference to mathematical logic, as is common in proof mining.} In particular, this provides a new proof of the previous $L^1$ strong law of large numbers established by Yokota \cite{Yokota2018} (see Corollary \ref{L1LawHad} later on). Moreover, this concentration inequality is strong enough to also entail a distribution-uniform generalization of the strong law on Hadamard spaces (see Corollary \ref{chungHad} later on), extending a corresponding result of Chung \cite{Chung1951}. 

While not explored in this paper in detail, the present approach to the (quantitative) strong law of large numbers seems moreover rather general and could perhaps be used to establish other related results in the literature, such as an extension of the $L^1$-version of Birkhoff's pointwise ergodic theorem to Hadamard spaces using inductive means (see Question \ref{ergodicQuestion} later on) or establishing a strong law of large numbers for pairwise independent and identically distributed random variables over Hadamard spaces (see Question \ref{pairwiseQuestion} later on), among others, as discussed in more detail later.

\subsection*{B. Strong laws for inductive means of random monotone vector fields on Hilbert-Hadamard spaces}

Our second main result is an extension of this strong law of large numbers to random monotone vector fields. On Hilbert spaces, so-called maximal monotone operators are key objects in the modern theory underlying convex analysis as well as that of differential equations and their semigroups, among others (see e.g.\ \cite{BauschkeCombettes2017}), and so-called monotone vector fields extend this central notion and its theory to Riemannian manifolds using their tangential structure \cite{Nemeth1999,DaCruzNetoFerreiraLucambioPerez2000,LiLopezMartinMarquez2009,LiLopezMartinMarquezWang2011,WangLopezMartinMarquezLi2010}. Recently, these mappings have in particular also been studied over Hadamard spaces (see e.g.\ \cite{ChaipunyaKohsakaKumam2021}), where they make use of the tangent cones of these spaces in the sense of Alexandrov, Berestovskii and Nikolaev \cite{AleksandrovBerestovskiiNikolaev1986}. Random monotone vector fields are, similarly to random monotone operators on Hilbert spaces, stochastically perturbed versions of these mappings which feature in stochastic convex analysis (see e.g.\ \cite{Bianchi2016,BianchiHachem2016,BianchiHachemSalim2019} as well as the recent \cite{Pischke2025} for extensions to the setting of Hadamard spaces).

A strong law of large numbers for an integrable random monotone operator on Hilbert spaces was recently established by Salim \cite{Salim2023}. Concretely, if $\mathcal{M}(\mathcal{H})$ is the collection of all maximally monotone operators on a separable Hilbert space $\mathcal{H}$, it was shown in \cite{Salim2023} that the averages $\frac{1}{n}\sum_{i=1}^n A(\xi_i)$ of a suitably measurable and integrable mapping $A:E\to \mathcal{M}(\mathcal{H})$ generated via an i.i.d.\ sequence of random variables $\xi_i:\Omega\to E$ converge a.s.\ to the integral operator $\int A(e)\, d\mu(e)$ (defined using the Aumann integral \cite{Aumann1965}), that is 
\[
\frac{1}{n}\sum_{i=1}^n A(\xi_i)\to^R \int A(e)\, d\mu(e)\quad\PP\text{-a.s.\ as }n\to\infty,
\]
where convergence here is in the sense of $R$-convergence (see e.g.\ \cite{Attouch1979}), that is in terms of the resolvents of the operators. This strong law of Salim is quite different to previous strong laws of large numbers for random sets \cite{CastaingRaynauddeFitte2013,ShapiroXu2007,TaylorInoue1997,Teran2008,ArtsteinHart1981,ArtsteinVitale1975} and more closely related to strong laws for convex functions \cite{ArtsteinWets1995,KingWets1991}, and was illustrated to have applications to empirical risk minimization (see the discussions in \cite{Salim2023}).
 
Using an appropriate notion of inductive mean for sets in a Hadamard space and a corresponding variant of $R$-convergence, we extend this result of Salim (using a somewhat different approach) to integrable random monotone vector fields (generalizing the respective notion from \cite{Salim2023}):

\begin{theorem*}[informal, Theorem \ref{L1LawVector} in the paper]
Let $(\Omega,\mathsf{F},\PP)$ and $(E,\mathsf{E},\mu)$ be probability spaces and let $(\mathcal{H},d)$ be a suitable separable Hadamard space. Further, fix an i.i.d.\ sequence $(\xi_i)$ of random variables $\xi_i:\Omega\to E$ with distribution $\mu$. Let $A:E\to \mathcal{M}(\mathcal{H})$ be a suitably measurable and integrable mapping, where $\mathcal{M}(\mathcal{H})$ refers to the collection of all monotone vector fields on $\mathcal{H}$ with total resolvents. Then
\[
\frac{1}{n}\underset{i=1,\dots,n}{\dsum} A(\xi_i)\to^R \int A(e)\, d\mu(e)\quad\PP\text{-a.s.}
\]
as $n\to\infty$.
\end{theorem*}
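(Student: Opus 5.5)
We aim to reduce $R$-convergence to pointwise convergence of resolvents and then to the $L^1$ strong law for points (Corollary~\ref{L1LawHad}). The resolvent of the inductive mean of vector fields should be expressible through inductive means of the individual resolvents. Concretely, fix $\lambda>0$ and $x\in\mathcal{H}$ and set $Y_i:=J^{A(\xi_i)}_\lambda(x)$. We take the Hilbert-Hadamard condition to mean that the tangent cone at $x$ embeds isometrically into a Hilbert space. Write $\exp_x^{-1}(Y_i)$ for the tangent vector at $x$ pointing to $Y_i$, and $\overrightarrow{xy}$ for the tangent vector at $x$ pointing to $y$. Then the inclusion $y\in J_\lambda^A(x)$ is the relation $-\tfrac1\lambda\overrightarrow{xy}$ lying in a suitable transported image of $A$ at $y$. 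This relation should be compatible with taking geodesic (inductive) averages of the $A(\xi_i)$. We would prove a lemma showing that the resolvent of $\frac1n\dsum_{i\le n}A(\xi_i)$ at $x$ is controlled, up to a Busemann-convexity error, by the inductive mean $\frac1n\dsum_{i\le n}Y_i$. For the limit object, the resolvent of $\int A\,d\mu$ should equal the barycenter $\EE[Y_1]$; this identification uses the definition of the Aumann integral and measurability of $e\mapsto J^{A(e)}_\lambda(x)$, which follows from the measurability hypothesis.

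The probabilistic steps are then as follows. Integrability of $A$ should give $Y_1\in L^1(\Omega,\mathcal{H},\PP)$, since $d(x,Y_1)\le\lambda\|A(\xi_1)(x)\|$ for a minimal-norm selection. The $Y_i$ are i.i.d.\ because the $\xi_i$ are. Corollary~\ref{L1LawHad} therefore gives $\frac1n\dsum Y_i\to\EE[Y_1]$ almost surely for each fixed $x$ and $\lambda$. To obtain a single null set, we apply this on a countable dense set $D\subseteq\mathcal{H}$ (separability) and for rational $\lambda$. We then extend to all $x$ using that all resolvents are nonexpansive, so the inductive means of the $Y_i$ are $1$-Lipschitz in $x$ by Busemann convexity. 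Finally we extend to all $\lambda$ using the resolvent identity, which bounds $d(J_\lambda x,J_\mu x)$ by $|1-\mu/\lambda|\,d(x,J_\lambda x)$.

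The main obstacle is the first step: making precise that resolvents commute with inductive means. In a Hilbert space this is only approximate even for two operators, since $J_{(A+B)/2}$ is not the average of $J_A$ and $J_B$. So the real argument must follow Salim's approach through Yosida approximations or graph convergence rather than an exact identity. We would first try the following. Show that $\frac1n\dsum Y_i$ (or the iterate built from it) yields approximate zeros of $\lambda\cdot\frac1n\dsum A(\xi_i)+\overrightarrow{x\,\cdot}$ with a defect tending to zero. Then use strong monotonicity of the shifted field to conclude convergence of the true resolvents, comparing tangent vectors at different base points via the Hilbert embedding of the cones. This comparison across base points is exactly where the Hilbert-Hadamard flatness assumption is needed.
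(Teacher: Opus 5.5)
\textbf{There is a genuine gap.} Your central reduction fails. The resolvent of $\int A\,d\mu$ is not the barycenter of the individual resolvents $Y_i=J^{A(\xi_i)}_\lambda(x)$, and the discrepancy does not vanish as $n\to\infty$. Take $\mathcal{H}=\mathbb{R}$ and $A(e)(x)=a(e)x$ with $a\geq 0$. Then $J^{A(e)}_\lambda(x)=x/(1+\lambda a(e))$, but $J^{\int A\,d\mu}_\lambda(x)=x/(1+\lambda\EE[a])$. With $a\in\{0,2\}$ equally likely and $\lambda=x=1$, the strong law for your $Y_i$ gives the limit $\EE[Y_1]=\frac{2}{3}$, whereas the correct limit is $\frac{1}{2}$.

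So the whole probabilistic part of your plan targets the wrong quantity. This includes the strong law for the $Y_i$, the Lipschitz dependence of their inductive means on $x$, and the $L^1$ bound on $Y_1$. That bound is also unsupported: your minimal-norm argument presupposes $A(e,x)\neq\emptyset$ with integrable minimal norm, and the paper's notion of integrability (only that $\int A\,d\mu$ has total resolvents) does not provide this. Your fallback names the right tool, strong monotonicity of the shifted field. However, it still proposes $\frac{1}{n}\underset{i=1,\dots,n}{\dsum}Y_i$ as the approximate zero, which fails for the same reason. No comparison of tangent vectors across base points is needed either, and embedding each cone separately into a Hilbert space would not supply one. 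Flatness is used through Lemma \ref{affine}, to make $\underline{A}_n:=\frac{1}{n}\underset{i=1,\dots,n}{\dsum}A(\xi_i)$ and $\underline{A}:=\int A\,d\mu$ monotone.

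\textbf{The missing idea} is to run the strong law in the tangent cone at the \emph{limit} point $y:=J^{\underline{A}}_\lambda(x)$.
\begin{itemize}
\item Since $z:=\frac{1}{\lambda}\log_yx\in\underline{A}(y)$, the definition of the Aumann--Sturm integral gives $\alpha\in\mathcal{S}^1(A(\cdot,y))$ with $z=\int\alpha\,d\mu$.
\item Let $z_n$ be the inductive means of the $\alpha(\xi_i)$, taken in the separable Hadamard space $T_y\mathcal{H}$. They satisfy $z_n\in\underline{A}_n(y)$, and $z_n\to z$ almost surely by Corollary \ref{L1LawHad}. Integrability comes for free from $\alpha\in\mathcal{S}^1$.
\item Thus $y$ itself is the approximate zero, with defect $d_y(z,z_n)$.
\item Set $y_n:=J^{\underline{A}_n}_\lambda(x)$ and $u_n:=\frac{1}{\lambda}\log_{y_n}x\in\underline{A}_n(y_n)$. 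Monotonicity of $\underline{A}_n$ gives $g_{y_n}(u_n,\log_{y_n}y)\leq-g_y(z_n,\log_yy_n)$.
\item Lemma \ref{negLogMon} gives $g_{y_n}(u_n,\log_{y_n}y)\geq -g_y(z,\log_yy_n)+\frac{1}{\lambda}d^2(y_n,y)$.
\item Lemma \ref{nonexpCat0} then yields $d(y_n,y)\leq\lambda\, d_y(z,z_n)\to 0$.
\end{itemize}

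Your last step (a countable dense set of points, rational $\lambda$, nonexpansiveness and Lemma \ref{contRes}) is essentially the paper's. It must, however, be applied to $J^{\underline{A}_n}_\lambda$ and $J^{\underline{A}}_\lambda$ rather than to inductive means of the $Y_i$. The bound on $d(J^{\underline{A}_n}_{\Lambda}(w),w)$ needed there is made uniform in $n$ via the convergence already established at a fixed rational $\Lambda$ above $\lambda$.
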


The class of Hadamard spaces that this result is phrased over is that of Hilbert-Hadamard spaces as introduced by Gong, Wu and Yu \cite{GongWuYu2021}, that is Hadamard spaces where every tangent cone is flat (which are discussed in detail later, and also featured in recent investigations into stochastic optimization \cite{Pischke2025}). In particular, the resulting strong law recovers the result from \cite{Salim2023} and additionally holds on Hadamard manifolds and their infinite-dimensional generalizations (as well as other spaces, see Example \ref{HilHadEx} later on).

This result and the surrounding discussion relies on various properties of (random) monotone vector fields on (Hilbert-)Hadamard spaces, established here for the first time, which have independent use in (stochastic) convex analysis where these objects are often employed. Also this strong law of large numbers for vector fields can be made quantitative using a respective concentration inequality phrased using the resolvents (see Remark \ref{vectorLawQuant} later on).

\subsection*{C. An interchange result between the subdifferential and the Aumann integral on full Hilbert-Hadamard spaces}

A canonical example of a (random) monotone vector field is the (random) subdifferential $\partial f(t,x)$ of a proper normal convex integrand $f:T\times \mathcal{H}\to (-\infty,+\infty]$ on a complete probability space $(T,\mathsf{T},\tau)$ and a suitable space $\mathcal{H}$ (see \cite{RockafellarWets1998}). Crucial for both the applicability of the associated strong law of large numbers for random monotone vector fields in the context of gradient flows as illustrated in this paper, but also for other results in (stochastic) convex analysis (see in particular \cite{Pischke2025}, where the question for such a result over Hilbert-Hadamard spaces $\mathcal{H}$ is raised) are interchange results between the random subdifferential and the integral of the form
\[
\int \partial f(t,x)\,d\tau(t)=\partial\left(\int f(t,\cdot)\,d\tau(t)\right)(x)\text{ for all }x\in \mathcal{H}
\]
where the integral on the left-hand side is again the Aumann integral.

Results of this type were famously considered by Ioffe and Tikhomirov \cite{IoffeTikhomirov1969}, Levin \cite{Levin1970}, Hiriart-Urruty \cite{HiriartUrruty1976}, Thibault \cite{Thibault1981} as well as Rockafellar and Wets \cite{RockafellarWets1982}, among others, and have various uses in stochastic optimization and analysis next to the one mentioned above (see in particular the discussions in the above references).

As a third main result, we establish such an interchange when $f$ is $L^2$-Lipschitz and suitably integrable:

\begin{theorem*}[informal, Theorem \ref{secondInterchange} in the paper]
Let $(T,\mathsf{T},\tau)$ be a suitable probability space and let $(\mathcal{H},d)$ be a suitable separable Hadamard space. Let $f:T\times \mathcal{H}\to (-\infty,+\infty]$ be a proper normal convex integrand and suppose that $f$ is $L^2$-Lipschitz and suitably integrable. Then, for any $x\in \mathcal{H}$:
\[
\int \partial f(t,x)\,d\tau(t)=\partial\left(\int f(t,\cdot)\,d\tau(t)\right)(x).
\]
\end{theorem*}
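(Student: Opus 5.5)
The plan is to prove the two inclusions separately. Write $F:=\int f(t,\cdot)\,d\tau(t)$. The subdifferential at $x$ lives in the tangent cone $T_x\mathcal{H}$, which on a full Hilbert-Hadamard space is a Hilbert space; I identify it with $\overline{\{\lambda\,\overrightarrow{xy}\}}$ and write $\langle\cdot,\cdot\rangle_x$ for its inner product. A vector $v\in T_x\mathcal{H}$ lies in $\partial g(x)$ if and only if
\[
g(y)\geq g(x)+\langle v,\overrightarrow{xy}\rangle_x\quad\text{for all }y\in\mathcal{H},
\]
or equivalently if the directional derivative satisfies $g'(x;w)\geq\langle v,w\rangle_x$ for all $w\in T_x\mathcal{H}$. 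The Lipschitz hypothesis extends $g'(x;\cdot)$ continuously from geodesic directions to the whole cone.

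The inclusion ``$\subseteq$'' is the easy one. Take an integrable measurable selection $v(t)\in\partial f(t,x)$. Integrate the subgradient inequality in $t$. The integral $\int\langle v(t),\overrightarrow{xy}\rangle_x\,d\tau$ equals $\langle\int v\,d\tau,\overrightarrow{xy}\rangle_x$, since the Bochner integral in the Hilbert space $T_x\mathcal{H}$ commutes with continuous linear functionals. Hence $\int v\,d\tau\in\partial F(x)$. Integrability of $f(\cdot,y)$ and of $v$ makes every term finite.

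For ``$\supseteq$'' I would follow the Hilbert-space strategy of Ioffe--Tikhomirov and Rockafellar--Wets, working entirely in the Hilbert space $T_x\mathcal{H}$.

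\textbf{Step 1.} Show that the directional derivatives interchange with the integral: $F'(x;w)=\int f'(t,x;w)\,d\tau(t)$. Write $w=\overrightarrow{xy}$ and differentiate along the geodesic $\gamma$ from $x$ to $y$. The difference quotients $(f(t,\gamma_s)-f(t,x))/s$ are monotone in $s$ by convexity. They are also bounded in absolute value by $L(t)\,d(x,y)$, where $L\in L^2\subseteq L^1$ is the Lipschitz modulus. Dominated (or monotone) convergence therefore gives the interchange on geodesic directions. By density of such directions and Lipschitz continuity in $w$, it extends to all of $T_x\mathcal{H}$.

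\textbf{Step 2.} Each $f'(t,x;\cdot)$ is then a sublinear, $L(t)$-Lipschitz function on the Hilbert space $T_x\mathcal{H}$, and $\partial f(t,x)$ is precisely its subdifferential at $0$. This is where fullness of the tangent cone is used: without linearity the cone is not a vector space and the following duality fails. By Hahn--Banach/support-function duality, $\partial f(t,x)$ is a nonempty closed convex set, bounded by $L(t)$. Its support function is $\sigma_{\partial f(t,x)}(w)=f'(t,x;w)$.

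\textbf{Step 3.} The Aumann integral of an integrably bounded, measurable, closed-convex-valued multifunction in a separable Hilbert space has closed convex values. Its support function is the integral of the support functions; this is the standard Hiai--Umegaki/Castaing fact, and $L\in L^2$ gives integrable boundedness. Combining with Step 1,
\[
\sigma_{\int\partial f(t,x)d\tau}(w)=\int f'(t,x;w)\,d\tau=F'(x;w)=\sigma_{\partial F(x)}(w).
\]
Two closed convex sets with the same support function coincide, which gives ``$\supseteq$''.

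The main obstacle is measurability in Step 3. The tangent cone depends on $x$, but since $x$ is fixed this is harmless. Still, $t\mapsto\partial f(t,x)\subseteq T_x\mathcal{H}$ must be shown to be a measurable multifunction into a separable Hilbert space. First, $T_x\mathcal{H}$ is separable because $\mathcal{H}$ is. For measurability I would use that $f'(t,x;w)$ is a pointwise limit of measurable functions of $t$ (by normality of $f$). Then $\partial f(t,x)$ is the intersection over a countable dense set of $w$'s of the measurable half-spaces $\{v:\langle v,w\rangle_x\leq f'(t,x;w)\}$. Completeness of $(T,\mathsf{T},\tau)$ then gives a Castaing representation and measurable selections.
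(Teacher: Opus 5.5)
Your proof is correct, but it takes a genuinely different route from the paper's.

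\textbf{The paper's route.} The paper does not compare support functions in $T_x\mathcal{H}$. It lifts the problem to the Hadamard space $L^2(T,\mathcal{H},\tau)$ and the integral functional $F(q)=\int f(t,q(t))\,d\tau(t)$. It characterizes $\partial F(x)$ as the square-integrable selections of $\partial f(\cdot,x)$, using conjugates and a measurable-selection interchange of supremum and integral (Lemmas~\ref{conjugateCharac} and~\ref{partialFcharac}). Then, given $u\in\partial\underline{f}(x)$, it extends $g_x(u,\cdot)$ by Hahn--Banach from $T_x\mathcal{H}$ to $T_xL^2(T,\mathcal{H},\tau)\cong L^2(T,T_x\mathcal{H},\tau)$, dominated by $d_xF$. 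Riesz representation then hands over the required selection $v$ with $\int v\,d\tau=u$ directly (Lemma~\ref{chainRule}).

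\textbf{Your route.} You stay inside the single Hilbert space $T_x\mathcal{H}$ and run the classical Ioffe--Tikhomirov/Levin argument: the directional derivative commutes with the integral, Hahn--Banach is applied pointwise in $t$, and you use the Hiai--Umegaki support-function formula.

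\textbf{What it costs.}
\begin{enumerate}
\item You need the closedness of $\int\partial f(t,x)\,d\tau(t)$ in a possibly infinite-dimensional $T_x\mathcal{H}$. This is a real weak-compactness argument, which the paper's Riesz step avoids. Use Diestel's criterion in $L^1(T,T_x\mathcal{H},\tau)$, or simply weak compactness of the selections in $L^2(T,T_x\mathcal{H},\tau)$ when $L\in L^2$. You should cite this precisely.
\item Sublinearity of $d_xf(t,\cdot)$ is not the elementary linear-space argument in a curved space. It rests on the convexity of the directional derivative on the tangent cone due to Di Marino--Gigli--Pasqualetto--Soultanis (Lemma~\ref{dirDerProp}), and you should make this explicit.
\item Under the paper's actual hypothesis, finiteness of $\underline{f}$ needs a short argument. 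It follows from $\vert f(t,x)-f(t,u_0(t))\vert\leq L(t)\,d(x,u_0(t))$ and Cauchy--Schwarz.
\end{enumerate}

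\textbf{What it buys.} Your route is markedly lighter: no $L^2(T,\mathcal{H},\tau)$, no Gigli--Nobili description of its tangent cones, and no conjugates. As far as I can check, it only uses $L\in L^1$ together with $f(\cdot,x_0)\in L^1$ for one $x_0\in\mathcal{H}$. It also never uses the separability of $T$. It therefore appears to settle, at least partly, the paper's question of whether those hypotheses are needed. The paper's route, in turn, yields the description of $\partial F(x)$ on $L^2(T,\mathcal{H},\tau)$ as a by-product of independent interest.
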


The integrability condition on $f$ is quite mild, and just requires that there exists a $u_0\in L^2(T,\mathcal{H},\tau)$ such that $f(\cdot,u_0(\cdot))\in L^1(T,\mathbb{R},\tau)$. Next to being complete, the probability space $(T,\mathsf{T},\tau)$ moreover has to be separable for this result. The class of Hadamard spaces that this result is phrased over is characterized by the assumption that all tangent cones of the Hadamard space are not only flat but are actually isometric to Hilbert spaces. This class of spaces, introduced as full Hilbert-Hadamard spaces in the present paper, is a further restriction of the Hilbert-Hadamard spaces discussed previously, also naturally covering Hilbert spaces and Hadamard manifolds and their infinite-dimensional variants, and which is still not limited to those (see again Example \ref{HilHadEx} later on). In particular, the present result seems to be the first such interchange result even in the manifold case. Also, this assumption of a Hilbert space structure on the tangent cones is in a way sharp since it does not extend to plain Hilbert-Hadamard spaces, as illustrated by an example (see Example \ref{ExNoHH} later on). 

As even full Hilbert-Hadamard spaces do not have a single global dual like linear spaces, establishing this result is rather tricky, and relies on utilizing various tools, including conjugates of convex functions on Hadamard spaces motivated by \cite{BergmannHerzogSilvaLouzeiroTenbrinckVidalNunez2021,SilvaLouzeiroBergmannHerzog2022}, measurable selection theory (see \cite{AubinFrankowska2009,CastaingValadier1977}), directional derivatives of convex functions as studied in \cite{DiMarinoGigliPasqualettoSoultanis2021,MovahediBehmardiHosseini2015} (see also \cite{Lytchak2004,Petrunin2007,Plaut2002}), in particular also making use of recent results of Gigli and Nobili \cite{GigliNobili2021} on the study of the tangent cones of the Hadamard-space of square integrable functions over a Hadamard space, and classical results over Hilbert spaces like Hahn-Banach and the Riesz representation theorem.

\subsection*{Outline of the paper}

The paper is organized as follows: Section \ref{sec:prelim} provides the various preliminaries used throughout, such as background on geodesic metric spaces, Hadamard spaces and their tangent cones as well as the special class of Hilbert-Hadamard spaces and basic probability theory over these spaces. In Section \ref{sec:SLLN}, we derive the (quantitative and distribution-uniform) strong law of large numbers for inductive means of integrable random variables on Hadamard spaces and discuss various related problems and questions. Section \ref{sec:randOp} introduces monotone vector fields on Hadamard spaces as well as their random variants, and derives various initial properties required later. In Section \ref{sec:SLLNVec}, we then derive the strong law of large numbers for integrable random monotone vector fields. Section \ref{interchange} is concerned with the interchange result for the integral and the subdifferential and the final Section \ref{sec:LTK} combines the previous work into the probabilistic Lie-Trotter-Kato formula outlined before, and discusses special and alternative cases.

\section{Preliminaries}\label{sec:prelim}

\subsection{Geodesics, Hadamard spaces and tangent cones}\label{sec:tangCAT}

Let $(\mathcal{H},d)$ be a metric space. A geodesic is an isometry $\gamma:[0,l]\to \mathcal{H}$. We say that $\gamma$ joins $\gamma(0)$ and $\gamma(l)$, as well as that $\gamma$ issues from $\gamma(0)$. Note that necessarily $l=d(\gamma(0),\gamma(l))$. $\mathcal{H}$ is called (uniquely) geodesic if every two points $x,y\in \mathcal{H}$ are joined by a (unique) geodesic and if such a geodesic is unique, we denote it by $\gamma_{x,y}$. A $\CAT$ space (also called a space of nonpositive curvature in the sense of Alexandrov) is a geodesic metric space $(\mathcal{H},d)$ such that the (extended) Bruhat-Tits $\mathrm{CN}$-inequality \cite{BruhatTits1972} holds (see also Proposition 2.3 in \cite{Sturm2003}):
\[
d^2(\gamma(tl),x)\leq (1-t)d^2(\gamma(0),x)+td^2(\gamma(l),x)-t(1-t)d^2(\gamma(0),\gamma(l))
\]
for all $x\in \mathcal{H}$, $t\in [0,1]$ and all geodesics $\gamma:[0,l]\to \mathcal{H}$. Any $\CAT$ space is uniquely geodesic and a complete $\CAT$ space is called a Hadamard space. In that context, we also sometimes write $(1-t)x\oplus t y$ given $x,y\in \mathcal{H}$ and $t\in [0,1]$ for the point $\gamma(tl)$ on the unique geodesic $\gamma:[0,l]\to \mathcal{H}$ with $\gamma(0)=x$ and $\gamma(l)=y$. It should further be noted that every $\CAT$ space is a Busemann space (see e.g.\ Theorem 1.3.3 in \cite{Bacak2014a}), that is it holds that 
\[
d(\gamma(tl),\eta(tk))\leq (1-t)d(\gamma(0),\eta(0))+td(\gamma(l),\eta(k))
\]
for any two geodesics $\gamma:[0,l]\to \mathcal{H}$ and $\eta:[0,k]\to \mathcal{H}$ and any $t\in [0,1]$. As mentioned before, we refer to \cite{AlexanderKapovitchPetrunin2023,Bacak2014a,BridsonHaefliger1999} for comprehensive overviews of geodesic metric spaces, $\CAT$ spaces and Hadamard spaces, including alternative definitions.

For nonconstant geodesics $\gamma$ and $\eta$ issuing from a point $x\in \mathcal{H}$ in a Hadamard space $(\mathcal{H},d)$, we define their Aleksandrov angle $\angle_x(\gamma,\eta)$ by
\[
\angle_x(\gamma,\eta):=\lim_{s,t\to 0^+}\bar{\angle}_x(\gamma(s),\eta(t)),
\]
with $\bar{\angle}_x(y,z)$ referring to the comparison angle, defined via the comparison triangle $\bar{\Delta}(\bar{x},\bar{y},\bar{z})$ of the geodesic triangle $\Delta(x,y,z)\subseteq \mathcal{H}$, as usual (see e.g.\ \cite{BridsonHaefliger1999}). Using the Alexandrov angle, we can define the tangent cone $T_x\mathcal{H}$ of $\mathcal{H}$ at a point $x\in \mathcal{H}$ in the sense of Alexandrov, Berestovskii and Nikolaev \cite{AleksandrovBerestovskiiNikolaev1986}. For that, first note that the Aleksandrov angle $\angle_x$ is a pseudometric on the set of all nonconstant geodesics issuing from $x$. Writing $\Sigma'_x\mathcal{H}$ for the space of all equivalence classes of such geodesics under the equivalence relation defined by $\angle_x(\gamma,\eta)= 0$, the completion $(\Sigma_x\mathcal{H},\angle_x)$ of the space $(\Sigma'_x\mathcal{H},\angle_x)$ is called the space of directions from $x$. The tangent cone $T_x\mathcal{H}$ of $\mathcal{H}$ at $x$ is then the Euclidean cone over $\Sigma_x\mathcal{H}$, that is $T_x\mathcal{H}:=(\Sigma_x\mathcal{H}\times [0,\infty) )/\sim$ where $(\gamma,t)\sim(\eta,s)$ if, and only if, $t=s=0$ or $t=s>0$ and $\gamma=\eta$. For brevity, we write $t\gamma$ for the equivalence class $[(\gamma,t)]_\sim$ and given $u=t\gamma$ and $\lambda\geq 0$, we write $\lambda u:=(\lambda t)\gamma$. On $T_x\mathcal{H}$, we consider the metric
\[
d_x(t\gamma,s\eta):=\sqrt{t^2+s^2-2ts\cos\angle_x(\gamma,\eta)}.
\]
By a result due to Nikolaev \cite{Nikolaev1995}, $T_x\mathcal{H}$ is a Hadamard space itself. We call the set $T\mathcal{H}=\bigcup_{x\in \mathcal{H}}T_x\mathcal{H}$ the tangent bundle of $\mathcal{H}$. On $T_x\mathcal{H}$, we now write $0_x:=0\gamma$ as well as $\norm{t\gamma}_x:=d_x(0_x,t\gamma)=t$ and
\[
g_x(t\gamma,s\eta):=\frac{1}{2}\left(\norm{t\gamma}_x^2+\norm{s\eta}_x^2-d_x^2(t\gamma,s\eta)\right)=ts\cos\angle_x(\gamma,\eta).
\]
Crucially, we have that $g_x(t\gamma,s\eta)\leq \norm{t\gamma}_x\norm{s\eta}_x$, as well as $g_x(t\gamma,t\gamma)=\norm{t\gamma}_x^2$, $g_x(t\gamma,s\eta)=g_x(s\eta,t\gamma)$ and $g_x(t\gamma,s\eta)=tg_x(\gamma,s\eta)$. We refer to  \cite{LewisLopezAcedoNicolae2024} as well as  \cite{AlexanderKapovitchPetrunin2023,BridsonHaefliger1999} for further exposition and proofs of the above results.

\subsection{Flatness and Hilbert-Hadamard spaces}

We call a Hadamard space $\mathcal{H}$ flat (see e.g.\ \cite{KhatibzadehRanjbar2017}) if the $\mathrm{CN}$-inequality is actually an equality, i.e.\ if
\[
d^2(\gamma(tl),x)= (1-t)d^2(\gamma(0),x)+td^2(\gamma(l),x)-t(1-t)d^2(\gamma(0),\gamma(l))
\]
for all $x\in \mathcal{H}$, $t\in [0,1]$ and all geodesics $\gamma:[0,l]\to \mathcal{H}$. Following Gong, Wu and Yu \cite{GongWuYu2021}, we call a Hadamard space $\mathcal{H}$ a Hilbert-Hadamard space if every tangent cone $T_x\mathcal{H}$ is flat.

\begin{remark}\label{flatRem}
This formulation diverges slightly from the definition used in \cite{GongWuYu2021}, where instead of flatness, it is required that $T_x\mathcal{H}$ isometrically embeds into a Hilbert space (which in turn is equivalent to being isometric to a closed and convex subset of a Hilbert space). However, this condition is equivalent to flatness (see in particular Remark 2.4 in \cite{Pischke2025}).
\end{remark}

As discussed in Example 3.2 of \cite{GongWuYu2021}, Hilbert spaces and Hadamard manifolds are Hilbert-Hadamard spaces, as are complete simply connected Hilbert manifolds of nonpositive sectional curvature, that is generalizations of Hadamard manifolds modelled on (potentially infinite-dimensional) Hilbert spaces (such as e.g.\ the infinite dimensional hyperbolic space $\mathbb{H}^\infty$, see \cite{Gromov1993}). Further, Hilbert-Hadamard spaces are closed under taking closed and convex subsets. As a flat space is isometric to a closed and convex subset of a Hilbert space (recall Remark \ref{flatRem}), every flat space is itself a Hilbert-Hadamard space. Therefore, tangent cones of Hilbert-Hadamard spaces are themselves Hilbert-Hadamard spaces. Further, whenever $\mathcal{H}$ is a Hilbert-Hadamard space, so is the space of all square-integrable functions $L^2(T,\mathcal{H},\tau)$ for a finite measure space $(T,\mathsf{T},\tau)$, discussed in detail later. This latter construction in particular allows Hilbert-Hadamard spaces to reach beyond the setting of manifolds (see the discussion in \cite{GongWuYu2021}). We here provide another example of a Hilbert-Hadamard space which is not a manifold:

\begin{example}\label{HilHadEx}
Let $\mathcal{H}_1=\{(x,y)\in\mathbb{H}\mid x\geq 0\}$ be the  right half of the usual Poincar\'e upper half plane $\mathbb{H}$ with constant curvature $-1$ (see \cite{BridsonHaefliger1999}), which is a closed and convex subset of $\mathbb{H}$ and hence a Hadamard space. Let $\mathcal{H}_2$ be a scaled version of $\mathcal{H}_1$ with constant curvature $-2$. As their left boundaries $\{(0,y)\mid y>0\}$ are isometric to $\mathbb{R}$, we can glue $\mathcal{H}_1$ and $\mathcal{H}_2$ along these boundaries to produce $\mathcal{H}:=\mathcal{H}_1\sqcup_\mathbb{R}\mathcal{H}_2$. By Reshetnyak's gluing theorem (see e.g.\ Theorem 11.1 in \cite{BridsonHaefliger1999}), as $\mathbb{R}$ is convex and $\mathcal{H}_1$, $\mathcal{H}_2$ are Hadamard spaces, we get that $\mathcal{H}$ is also a Hadamard space. If $x\in \mathcal{H}$ is away from the seam, it is easy to see that we have $T_x \mathcal{H}\cong\mathbb{R}^2$. If $x\in \mathcal{H}$ is on the seam, then $\Sigma_x\mathcal{H}$ is the gluing of two semi-circles, that is $\Sigma_x\mathcal{H}\cong S^1$. Therefore $T_x\mathcal{H}\cong\mathbb{R}^2$ as well. In particular, $\mathcal{H}$ is a Hilbert-Hadamard space. However, $\mathcal{H}$ cannot be isometric to a smooth Riemannian manifold as the curvature is discontinuous across the seam.
\end{example}

Note that the tangent cones in the above Example \ref{HilHadEx} are not only all flat but actually all isometric to a Hilbert space, which as highlighted in the introduction is a class of Hilbert-Hadamard spaces that will be important for our interchange result between the subdifferential and the integral as well as our probabilistic Lie-Trotter-Kato formula later on.

Whenever $T_x\mathcal{H}$ is flat, we in particular have that $g_x(u,v)$ is affine, that is it is both convex and concave, in both arguments. This follows from a similar argument as in the proof of Theorem 2.4, (4) in \cite{LeGouicParisRigolletStromme2022} and was already explicitly noted in \cite{Pischke2025}.

\begin{lemma}[Lemma 2.6 in \cite{Pischke2025}]\label{affine}
Let $x\in \mathcal{H}$. If $T_x\mathcal{H}$ is flat, then $g_x(u,v)$ is affine in both arguments.
\end{lemma}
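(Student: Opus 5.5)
Since $T_x\mathcal{H}$ is flat, Remark \ref{flatRem} gives an isometric embedding $\iota:T_x\mathcal{H}\to V$ into a Hilbert space $V$, with image a closed convex set. The plan is to transport $g_x$ to the inner product of $V$ and read off affinity from bilinearity. Since $g_x$ is symmetric, it suffices to prove affinity in the first argument. Affinity is taken along geodesics of $T_x\mathcal{H}$, and the images of these under $\iota$ are line segments in $V$.

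\textbf{Step 1: normalize the embedding.} Translating $V$, we may assume $\iota(0_x)=0$. Then $\|\iota(u)\|=d_x(0_x,u)=\norm{u}_x$ for every $u$. By the definition of $g_x$ through the polarization formula,
\[
g_x(u,v)=\tfrac12\left(\|\iota u\|^2+\|\iota v\|^2-\|\iota u-\iota v\|^2\right)=\langle \iota u,\iota v\rangle_V.
\]

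\textbf{Step 2: conclude affinity.} Let $\gamma$ be a geodesic in $T_x\mathcal{H}$ from $u_0$ to $u_1$, and let $u_t$ denote the point at fraction $t$ along it. Since $\iota$ is an isometry and geodesics in $V$ are segments, $\iota(u_t)=(1-t)\iota u_0+t\,\iota u_1$. Hence
\[
g_x(u_t,v)=(1-t)g_x(u_0,v)+t\,g_x(u_1,v),
\]
which is exactly convexity and concavity along geodesics.

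\textbf{Alternative without the embedding.} One can argue directly from the flat CN-equality in $T_x\mathcal{H}$. Apply it with base point $0_x$ and with base point $v$ to obtain expressions for $\norm{u_t}_x^2$ and $d_x^2(u_t,v)$. Subtracting these, the quadratic terms $t(1-t)d_x^2(u_0,u_1)$ cancel, and what remains is
\[
g_x(u_t,v)=(1-t)g_x(u_0,v)+t\,g_x(u_1,v).
\]
This is the argument in the style of \cite{LeGouicParisRigolletStromme2022} and needs no embedding at all.

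\textbf{Main obstacle.} The only delicate point is the well-definedness of the geodesic structure. Flatness is stated for the Hadamard space $T_x\mathcal{H}$ (a Hadamard space by Nikolaev's result), so geodesics exist and are unique. The cancellation in the alternative argument is then routine, and I would present that version as the primary proof.
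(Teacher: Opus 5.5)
Your argument is correct and matches what the paper relies on: the paper gives no proof of its own and only points to the argument of Le Gouic, Paris, Rigollet and Stromme, which is your subtraction of the flat CN-equalities based at $0_x$ and at $v$, where the $t(1-t)d_x^2(u_0,u_1)$ terms cancel (your Hilbert-embedding version is an equivalent repackaging of the same fact). Symmetry of $g_x$ then gives affinity in the second argument, as you note.
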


Following \cite{KleinerLeeb1997} (see also already \cite{AleksandrovBerestovskiiNikolaev1986,Berestovskii1975}), we define a metric analog $\log_x:\mathcal{H}\to T_x\mathcal{H}$ of the inverse exponential map from Riemannian geometry by $\log_x a:=d(x,a)\gamma_{x,a}$ for $a\neq x$ as well as $\log_x x:=0_x$. Crucially, note that $\log_x$ is nonexpansive:

\begin{lemma}[see e.g.\ p.\ 224 in \cite{GongWuYu2021}]
For any $x,a,b\in \mathcal{H}$:
\[
d_x(\log_xa,\log_xb)\leq d(a,b).
\]
\end{lemma}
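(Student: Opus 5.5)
We must show $d_x(\log_x a,\log_x b)\le d(a,b)$, and the plan is to reduce it to a comparison of Alexandrov angles with comparison angles. Handle the degenerate cases first. If $a=x$, then $\log_x a=0_x$, so $d_x(0_x,\log_x b)=\norm{\log_x b}_x=d(x,b)=d(a,b)$, with equality. The case $b=x$ is symmetric. So assume $a,b\neq x$ and set $s:=d(x,a)$, $t:=d(x,b)$. By the definition of the cone metric,
\[
d_x^2(\log_x a,\log_x b)=s^2+t^2-2st\cos\angle_x(\gamma_{x,a},\gamma_{x,b}).
\]
By the Euclidean law of cosines applied to the comparison triangle $\bar\Delta(\bar x,\bar a,\bar b)$,
\[
d^2(a,b)=s^2+t^2-2st\cos\bar\angle_x(a,b).
\]
Since $\cos$ is decreasing on $[0,\pi]$, it suffices to show $\angle_x(\gamma_{x,a},\gamma_{x,b})\le\bar\angle_x(a,b)$.

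This angle inequality is the standard $\CAT$ angle comparison, and deriving it from the $\mathrm{CN}$-inequality is the only real step. First, for points $\gamma_{x,a}(\sigma),\gamma_{x,b}(\tau)$ on the two geodesics, the comparison angle $\bar\angle_x(\gamma_{x,a}(\sigma),\gamma_{x,b}(\tau))$ is nondecreasing in $\sigma$ and in $\tau$. To see this for $\sigma$, fix $\tau$ and the point $y=\gamma_{x,b}(\tau)$. Apply the $\mathrm{CN}$-inequality to the geodesic from $x$ to $a$ with base point $y$ to get
\[
d^2(\gamma_{x,a}(\lambda s),y)\le(1-\lambda)d^2(x,y)+\lambda d^2(a,y)-\lambda(1-\lambda)s^2 .
\]
The right-hand side is exactly the squared distance from $\bar y$ to the corresponding point on the side $[\bar x,\bar a]$ of the Euclidean comparison triangle $\bar\Delta(\bar x,\bar a,\bar y)$. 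So the distance in $\mathcal H$ is at most the comparison distance. By the law of cosines, this means the comparison angle at $x$ for the shorter segment is at most that for the full segment. Applying the same argument to the other side gives monotonicity in $\tau$.

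Consequently, the limit defining $\angle_x$ as $\sigma,\tau\to0^+$ is an infimum, hence at most the value at $\sigma=s$, $\tau=t$, which is $\bar\angle_x(a,b)$. Plugging this into the two formulas above yields $d_x(\log_x a,\log_x b)\le d(a,b)$. The main care needed is the monotonicity step: one has to verify that the $\mathrm{CN}$-inequality's right-hand side matches the Euclidean comparison distance exactly. That holds because the $\mathrm{CN}$ expression is an equality in $\mathbb{R}^2$.
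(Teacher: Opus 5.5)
Your proof is correct: reducing to $\angle_x(\gamma_{x,a},\gamma_{x,b})\le\bar\angle_x(a,b)$ via the two law-of-cosines formulas, and getting that from the monotonicity of comparison angles derived from the $\mathrm{CN}$-inequality, is the standard argument. The paper gives no proof of its own and only cites Gong, Wu and Yu, where nonexpansiveness of $\log_x$ likewise follows from the $\CAT$ fact that Alexandrov angles are bounded by comparison angles; your write-up simply makes that cited fact self-contained.
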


The most important property of the pseudo-Riemannian metric $g_x$ on $T_x\mathcal{H}$ is the following estimate relative to the inverse exponential map:

\begin{lemma}[essentially Proposition 2.16 in \cite{ChaipunyaKohsakaKumam2021}]\label{tangentCat0}
For any $x,a,b\in \mathcal{H}$:
\[
g_x(t\log_x a,s\log_x b)\geq \frac{ts}{2}(d^2(x,a)+d^2(x,b)-d^2(a,b)).
\]
\end{lemma}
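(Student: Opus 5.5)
The plan is to reduce to $t=s=1$ and then read off the inequality from comparison angles. Since $g_x(t\log_x a,s\log_x b)=ts\,g_x(\log_x a,\log_x b)$ for $t,s\geq 0$, using homogeneity in each argument together with symmetry, it suffices to prove
\[
g_x(\log_x a,\log_x b)\geq \frac{1}{2}\left(d^2(x,a)+d^2(x,b)-d^2(a,b)\right).
\]
If $a=x$ or $b=x$, one side of the pairing is $0_x$. Then $g_x=0$, and the right-hand side is also $0$: for instance, with $a=x$ it equals $\tfrac12(d^2(x,b)-d^2(x,b))$. So we may assume $a,b\neq x$.

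In that case, by definition,
\[
g_x(\log_x a,\log_x b)=d(x,a)\,d(x,b)\cos\angle_x(\gamma_{x,a},\gamma_{x,b}).
\]
By the Euclidean law of cosines in the comparison triangle $\bar\Delta(\bar x,\bar a,\bar b)$, the right-hand side equals $d(x,a)\,d(x,b)\cos\bar\angle_x(a,b)$. Hence it suffices to show
\[
\angle_x(\gamma_{x,a},\gamma_{x,b})\leq \bar\angle_x(a,b),
\]
since $\cos$ is decreasing on $[0,\pi]$.

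This angle comparison is the main (though standard) step, and I would derive it from the $\mathrm{CN}$-inequality. The first claim is that $(s,r)\mapsto\bar\angle_x(\gamma_{x,a}(s),\gamma_{x,b}(r))$ is nondecreasing in each variable. Fix $r$ and write $p=\gamma_{x,b}(r)$. Apply the $\mathrm{CN}$-inequality along the geodesic from $x$ to $a$, with base point $p$. This bounds $d^2(\gamma_{x,a}(s),p)$ above by the corresponding squared distance in the Euclidean comparison triangle of $\Delta(x,a,p)$. Since the side lengths from $\bar x$ agree, the law of cosines then gives
\[
\bar\angle_x(\gamma_{x,a}(s),p)\leq\bar\angle_x(a,p).
\]
Repeating the argument in the other variable yields monotonicity there as well.

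With monotonicity in hand, the limit as $s,r\to0^+$ defining $\angle_x$ exists as an infimum. Taking $s=d(x,a)$ and $r=d(x,b)$ then gives $\angle_x(\gamma_{x,a},\gamma_{x,b})\leq\bar\angle_x(a,b)$. Alternatively, one can cite the standard fact (Bridson–Haefliger II.1.7) that in $\CAT$ spaces Alexandrov angles are bounded by comparison angles. Combining this bound with the earlier reduction yields the lemma.
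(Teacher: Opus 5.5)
Your proof is correct. After the homogeneity reduction and the degenerate cases, the inequality is the Euclidean law of cosines combined with the $\CAT$ angle comparison $\angle_x(\gamma_{x,a},\gamma_{x,b})\leq\bar\angle_x(a,b)$, which you derive correctly from the $\mathrm{CN}$-inequality (or can cite as Bridson--Haefliger II.1.7(4)). The paper gives no proof of its own and defers to Proposition 2.16 of Chaipunya--Kohsaka--Kumam; your argument is the standard angle-comparison proof of that fact.
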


Further, we will need that $g_x$ is Lipschitz continuous in both arguments.

\begin{lemma}[Lemma 2.3 in \cite{Pischke2025}]\label{nonexpCat0}
For $x\in \mathcal{H}$ and $u,v,w\in T_x\mathcal{H}$:
\[
\vert g_x(u,v)-g_x(u,w)\vert\leq \norm{u}_xd_x(v,w).
\]
\end{lemma}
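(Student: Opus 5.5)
We first reduce to the case $\norm{u}_x=1$ and then write $v\mapsto g_x(u,v)$ as a pointwise limit of $1$-Lipschitz functions on $T_x\mathcal{H}$. This uses only the explicit cone metric $d_x$ and the homogeneity of $g_x$, not any curvature property of the cone.

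\emph{Reduction.} If $u=0_x$, both sides vanish, since $g_x(0_x,\cdot)\equiv 0$ by the formula $g_x(t\gamma,s\eta)=ts\cos\angle_x(\gamma,\eta)$. Otherwise write $u=t\gamma$ with $t=\norm{u}_x>0$ and $\gamma\in\Sigma_x\mathcal{H}$. By the homogeneity $g_x(t\gamma,\cdot)=t\,g_x(\gamma,\cdot)$ recalled above, it suffices to show
\[
\vert g_x(\gamma,v)-g_x(\gamma,w)\vert\leq d_x(v,w)\quad\text{for all }v,w\in T_x\mathcal{H},
\]
that is, that $g_x(\gamma,\cdot)$ is $1$-Lipschitz on $T_x\mathcal{H}$.

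\emph{Busemann-type representation.} For $r>0$, the point $r\gamma$ lies in $T_x\mathcal{H}$. For $v=s\eta$, the definition of $d_x$ gives $d_x(r\gamma,v)=\sqrt{r^2+s^2-2rs\cos\angle_x(\gamma,\eta)}$. Expanding the square root as $r\to\infty$ yields
\[
r-d_x(r\gamma,v)=\frac{2rs\cos\angle_x(\gamma,\eta)-s^2}{r+d_x(r\gamma,v)}\longrightarrow s\cos\angle_x(\gamma,\eta)=g_x(\gamma,v).
\]
The case $s=0$ is immediate, since then $r-d_x(r\gamma,0_x)=0$. Thus $g_x(\gamma,\cdot)$ is the pointwise limit of the functions $h_r:=r-d_x(r\gamma,\cdot)$; up to sign, this is the Busemann function of the ray $r\mapsto r\gamma$ in the cone.

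\emph{Conclusion.} By the triangle inequality in the metric space $(T_x\mathcal{H},d_x)$, each $h_r$ is $1$-Lipschitz: $\vert h_r(v)-h_r(w)\vert=\vert d_x(r\gamma,w)-d_x(r\gamma,v)\vert\leq d_x(v,w)$. A pointwise limit of $1$-Lipschitz functions is $1$-Lipschitz, so letting $r\to\infty$ gives the reduced inequality, and multiplying by $t$ gives the claim.

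The only step needing care is the limit computation. It requires $r\gamma$ to be a legitimate point of $T_x\mathcal{H}$ for every $r>0$, which holds because $\gamma$ lies in the completed space of directions $\Sigma_x\mathcal{H}$. It also requires $g_x$ and $d_x$ to be given by the same $\cos\angle_x$, which holds by definition. No appeal to flatness or to Lemma \ref{tangentCat0} is needed.
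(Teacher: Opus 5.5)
Your proof is correct. The paper itself does not prove this lemma; it only imports it as Lemma 2.3 of \cite{Pischke2025}, so there is no in-paper argument to follow.

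A standard alternative route starts from the identity
\[
g_x(u,v)-g_x(u,w)=\tfrac{1}{2}\bigl(d_x^2(0_x,v)+d_x^2(u,w)-d_x^2(0_x,w)-d_x^2(u,v)\bigr).
\]
The right-hand side is the quasi-linearization $\langle\overrightarrow{0_xu},\overrightarrow{wv}\rangle$ in $(T_x\mathcal{H},d_x)$. One then applies the Berg--Nikolaev Cauchy--Schwarz inequality for quasi-linearizations, which holds because $T_x\mathcal{H}$ is itself a Hadamard space by \cite{Nikolaev1995}. Once those two results are granted this is a one-liner, but it genuinely depends on the CAT(0) property of the tangent cone.

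Your Busemann-function argument uses only three ingredients: the explicit cone formula for $d_x$, the homogeneity $g_x(t\gamma,\cdot)=t\,g_x(\gamma,\cdot)$, and the triangle inequality for $d_x$. It is therefore fully elementary. It in fact proves the estimate on the Euclidean cone over an arbitrary metric space (with angles in $[0,\pi]$), with no curvature input at all. The limit computation $r-d_x(r\gamma,v)\to s\cos\angle_x(\gamma,\eta)$ and the passage of the $1$-Lipschitz bound to the pointwise limit are both handled correctly.
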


\subsection{Measurability and integration on $\CAT$ spaces}\label{sec:intCAT}

Let $(T,\mathsf{T},\tau)$ be a probability space and $(\mathcal{H},d)$ be a separable Hadamard space. An ($\mathcal{H}$-valued) random variable is a map $x:T\to \mathcal{H}$ which is $\mathsf{T}$/$\mathsf{B}(\mathcal{H})$-measurable, where $\mathsf{B}(\mathcal{H})$ is the Borel $\sigma$-algebra of $\mathcal{H}$. Write $\mathcal{P}(\mathcal{H})$ for the set of all probability measures on $\mathcal{H}$ and for $p\in [1,\infty)$, write $\mathcal{P}^p(\mathcal{H})$ for the set of all measures $P\in\mathcal{P}(\mathcal{H})$ such that $\int d^p(w,z)\,dP(w)<\infty$ for some/any $z\in \mathcal{H}$. The push-forward measure $\tau_{x}(A):= \tau(x^{-1}(A))$ for $A\in \mathsf{B}(\mathcal{H})$ is called the distribution of $x$. Naturally, we have $\tau_x\in\mathcal{P}(\mathcal{H})$.

Fundamentally (compare e.g.\ Theorem 2.3.1 in \cite{Bacak2014a}), we have the following result in Hadamard spaces: For any $P\in\mathcal{P}^1(\mathcal{H})$ and some/any $y\in \mathcal{H}$, there is a unique minimizer 
\[
b(P):=\mathrm{argmin}_{z\in \mathcal{H}}\int\left(d^2(z,w)-d^2(w,y)\right)\,dP(w),
\]
which is independent of $y$, called the barycenter of $P$. Given $p\in [1,\infty)$, the space $\mathcal{L}^p(T,\mathcal{H},\tau)$ is the space of all $\mathsf{T}$/$\mathsf{B}(\mathcal{H})$-measurable maps $x:T\to \mathcal{H}$ with $d_p(x,z)<\infty$ for some/any $z\in \mathcal{H}$, where $d^p_p(x,y):= \int d^p(x,y)\,d\tau$ for $\mathsf{T}$/$\mathsf{B}(\mathcal{H})$-measurable maps $x,y:T\to \mathcal{H}$. The space $L^p(T,\mathcal{H},\tau)$ arises from this space by considering equivalence classes under the equivalence relation defined by $d_p(x,y)=0$. Note that $x\in L^p(T,\mathcal{H},\tau)$ if, and only if, $\tau_x\in\mathcal{P}^p(\mathcal{H})$. For $p=\infty$, the space $\mathsf{L}^\infty(T,\mathcal{H},\tau)$ is the space of all almost sure equivalence classes of $\mathsf{T}$/$\mathsf{B}(\mathcal{H})$-measurable maps $x:T\to \mathcal{H}$ where $d(x,z)\in L^\infty(T,\mathbb{R},\tau)$ for some/any $z\in \mathcal{H}$. The expectation of a random variable $x\in L^1(T,\mathcal{H},\tau)$ is now defined via 
\[
\int x\,d\tau:=b(\tau_x)
\]
where $b(\tau_x)$ is the barycenter of $\tau_x$ as before. Later, we will sometimes work with multiple probability spaces simultaneously. In that context, we generally use $(\Omega,\mathsf{F},\PP)$ as notation for a ground space over which results take place, where we use $\EE[x]$ as an alternative notation for the integral $\int x\,d\PP$, and $(E,\mathsf{E},\mu)$ as notation for a secondary space. We use the notation $(T,\mathsf{T},\tau)$ in general sections like the present to make it easier to transfer results to either case.

Before we move on, we briefly consider the above notion in the special case of a discrete measure space: If $T=\{t_1,\dots,t_n\}$ is a finite set with full $\sigma$-algebra $\mathsf{T}=2^T$, a measurable random variable $x:T\to \mathcal{H}$ can simply be identified with a tuple of points $(x_1,\dots,x_n)$ from $\mathcal{H}$. If $\tau$ is a measure on $(T,\mathsf{T})$ with $\tau(\{t_i\})=w_i$ such that $\sum_{i=1}^n w_i=1$, then the integral $\int x\,d\tau$ of $x=(x_1,\dots,x_n)$ reduces to
\[
\mathrm{argmin}_{z\in \mathcal{H}}\int d^2(z,x(t))\,d\tau(t)=\mathrm{argmin}_{z\in \mathcal{H}} \sum_{i=1}^n w_i d^2(z,x_i),
\]
also called the Fr\'echet mean of the points $(x_1,\dots,x_n)$ (see e.g.\ \cite{Bacak2014a}), a generalization of the arithmetic mean in Hadamard spaces which is of central importance for various applications. For this, we further refer to the discussions in \cite{Bacak2014a} as well as \cite{GoodwinLewisLopezAcedoNicolae2025}, the latter of which gives various computational approaches to recognizing whether a point arises as a mean itself. 

The first key theoretical result on the expectation that we rely on here is as follows: 

\begin{lemma}[Corollary 2.4 in \cite{Sturm2002}]\label{ExpNonexp}
For any $x,y\in L^1(T,\mathcal{H},\tau)$:
\[
d\left(\int x\,d\tau,\int y\,d\tau\right)\leq \int d(x,y)\,d\tau.
\]
\end{lemma}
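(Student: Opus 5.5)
The plan is to prove the inequality directly from two standard $\CAT$ facts, without any approximation argument. Write $b:=\int x\,d\tau=b(\tau_x)$ and $c:=\int y\,d\tau=b(\tau_y)$. We may assume $b\neq c$, since otherwise there is nothing to prove. The argument combines a variance inequality at each barycenter with the quadruple (Reshetnyak) inequality for $\CAT$ spaces, and then integrates.

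\emph{Step 1 (variance inequality).} For $P\in\mathcal{P}^1(\mathcal{H})$ with barycenter $b(P)$ and any $z\in\mathcal{H}$, I will show
\[
\int\left(d^2(z,w)-d^2(b(P),w)\right)dP(w)\geq d^2(z,b(P)).
\]
The integrand is integrable because $\vert d^2(z,w)-d^2(b(P),w)\vert\leq d(z,b(P))\,(d(z,w)+d(b(P),w))$ and $P$ has a first moment.

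To prove it, set $z_t:=(1-t)b(P)\oplus tz$ and apply the $\mathrm{CN}$-inequality pointwise in $w$, after subtracting $d^2(w,y)$ so that everything is integrable:
\[
d^2(z_t,w)-d^2(w,y)\leq (1-t)\big(d^2(b(P),w)-d^2(w,y)\big)+t\big(d^2(z,w)-d^2(w,y)\big)-t(1-t)d^2(b(P),z).
\]
Integrating and using minimality of $b(P)$ gives
\[
0\leq t\int\left(d^2(z,w)-d^2(b(P),w)\right)dP-t(1-t)d^2(b(P),z).
\]
Dividing by $t$ and letting $t\to 0^+$ yields the claim.

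\emph{Step 2 (quadruple inequality).} In a $\CAT$ space, for all points $p,q,r,s$,
\[
d^2(p,s)+d^2(q,r)\leq d^2(p,r)+d^2(q,s)+2d(p,q)\,d(r,s).
\]
This is Reshetnyak's quadruple comparison (Berg--Nikolaev); it may be cited or derived from the $\mathrm{CN}$-inequality. Apply it pointwise with $p=x(t)$, $q=y(t)$, $r=b$, $s=c$ and rearrange:
\[
\big(d^2(x,c)-d^2(x,b)\big)+\big(d^2(y,b)-d^2(y,c)\big)\leq 2\,d(x,y)\,d(b,c).
\]
Both brackets are integrable by the bound in Step 1.

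\emph{Step 3 (combine).} Integrate the last inequality over $T$. By Step 1 applied to $\tau_x$ with $z=c$, and to $\tau_y$ with $z=b$ (via the change of variables for push-forwards), the left-hand side integrates to at least $2d^2(b,c)$. Hence
\[
2d^2(b,c)\leq 2d(b,c)\int d(x,y)\,d\tau .
\]
Dividing by $2d(b,c)>0$ gives the lemma.

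The main obstacle is Step 2, if it is not simply cited. Its derivation from the $\mathrm{CN}$-inequality goes through a midpoint and subembedding argument, namely that any four points of a $\CAT$ space admit a Euclidean quadrilateral with dominating diagonals. Step 1 under only first-moment assumptions also needs the integrability care indicated above, which is why the centered functional with $d^2(w,y)$ subtracted is used throughout.
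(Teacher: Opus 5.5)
Your proof is correct. It is the standard argument going back to Sturm: the variance inequality at each barycenter combined with Reshetnyak's quadruple inequality, integrated against the joint law of $(x,y)$.

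The paper gives no proof of its own. It cites Sturm and remarks that the lemma follows from the nonexpansivity $d(b(P),b(P'))\leq W_1(P,P')$ of the barycenter, applied to the law of $t\mapsto(x(t),y(t))$, which lies in $\Gamma(\tau_x,\tau_y)$. Your argument is exactly the proof of that contraction property, specialized to this one coupling. It therefore makes the lemma self-contained (modulo the quadruple inequality) and avoids taking the infimum over couplings.
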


The above lemma essentially states (and can be derived from) the nonexpansivity of the barycenter relative to the Wasserstein (or Kantorovich-Rubinstein) metric
\[
W_1(P,P'):=\inf_{\gamma\in \Gamma(P,P')}\int d(x,y)\,d\gamma(x,y)
\]
on $\mathcal{P}^1(\mathcal{H})$, where $\Gamma(P,P')$ is the set of all couplings
\[
\Gamma(P,P'):=\{\gamma\in\mathcal{P}(\mathcal{H}^2)\mid \gamma(A\times \mathcal{H})=P(A)\text{ and }\gamma(\mathcal{H}\times A)=P'(A)\text{ for all }A\in\mathsf{B}(\mathcal{H})\}.
\]

The second key result on the expectation that we rely on here is the following analogous version of Jensen's inequality:

\begin{lemma}[Proposition 3.4 in \cite{Sturm2002}]\label{nonlinJensen}
Let $x\in L^1(T,\mathcal{H},\tau)$ and let $\varphi:\mathcal{H}\to\mathbb{R}$ be lower-semicontinuous and convex with $(\varphi\circ x)_+\in L^1(T,\mathbb{R},\tau)$. Then
\[
\int \varphi\circ x\,d\tau\geq \varphi\left(\int x\,d\tau\right).
\]
\end{lemma}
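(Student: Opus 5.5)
The statement is Sturm's nonlinear Jensen inequality (Proposition 3.4 in \cite{Sturm2002}). The plan is to reduce it to the barycenter's defining variational property by approximating $x$ with finitely supported random variables, and then pass to the limit using Lemma \ref{ExpNonexp} and lower semicontinuity of $\varphi$.

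\textbf{Step 1: finitely supported $x$.} Suppose $x$ takes finitely many values $x_1,\dots,x_n$ with weights $w_i$, so that $\int x\,d\tau$ is the Fréchet mean $\bar x$. I would realise $\bar x$ as a limit of iterated geodesic combinations. One route uses inductive means of an i.i.d.\ sequence sampled from $\tau_x$. Such inductive means are built from points $(1-t)a\oplus tb$. Convexity of $\varphi$ along geodesics gives, inductively,
\[
\varphi\Bigl(\tfrac{1}{k}\dsum_{i\le k} y_i\Bigr)\le \tfrac1k\sum_{i\le k}\varphi(y_i).
\]
By Sturm's bounded strong law the left side's argument converges a.s.\ to $\bar x$, and by the real strong law the right side converges a.s.\ to $\sum_i w_i\varphi(x_i)$. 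Lower semicontinuity of $\varphi$ then yields $\varphi(\bar x)\le \int\varphi\circ x\,d\tau$. I treat the bounded strong law as available from the literature (Sturm 2003), because the paper cites it as established. If one wants to avoid this, a deterministic alternative is Sturm's original argument via a sequence of iterated midpoint-type barycenter approximations, which is equivalent.

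\textbf{Step 2: general $x\in L^1$.} Since $\mathcal H$ is separable, I choose a countable dense set $\{z_j\}$. I then define simple measurable maps $x_m$ by nearest-point projection onto $\{z_1,\dots,z_m\}$, with ties broken by the least index, and cut off so that $x_m=z_1$ wherever $d(x,z_1)$ exceeds the distance to all of $z_1,\dots,z_m$ by too much. This gives $d(x_m,x)\to0$ pointwise, and $d(x_m,x)\le 2d(x,z_1)$ serves as an integrable majorant. By dominated convergence $\int d(x_m,x)\,d\tau\to0$, and Lemma \ref{ExpNonexp} gives $\int x_m\,d\tau\to\int x\,d\tau$. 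Lower semicontinuity gives $\varphi(\int x\,d\tau)\le\liminf_m\varphi(\int x_m\,d\tau)\le\liminf_m\int\varphi\circ x_m\,d\tau$.

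\textbf{Main obstacle.} The hard part is showing $\liminf_m\int\varphi\circ x_m\,d\tau\le\int\varphi\circ x\,d\tau$. Only lower semicontinuity and $(\varphi\circ x)_+\in L^1$ are assumed, so $\varphi\circ x_m$ may be much larger than $\varphi\circ x$. My first attempt would replace nearest-point projection by a cleverer discretization, namely conditional barycenters. I partition $T$ into the sets $A_j=x^{-1}(B_j)$ for a countable Borel partition $\{B_j\}$ of $\mathcal H$ of mesh $\le 1/m$, and set $x_m:=\int_{A_j}x\,d\tau/\tau(A_j)$ on $A_j$. On each piece, Step 1 (extended to countable atoms by a further truncation) would give $\varphi(x_m)\le \tau(A_j)^{-1}\int_{A_j}\varphi\circ x\,d\tau$. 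That yields $\int\varphi\circ x_m\le\int\varphi\circ x$ directly, with no semicontinuity issue on the right. The remaining points are these. First, verify $x_m\to x$ in $L^1$, using mesh $\to0$ and Lemma \ref{ExpNonexp} on each cell. Second, reduce the cell-wise inequality to the finite case, by first proving Jensen for $x$ with values in a bounded set via approximation within that cell. Third, handle the possibility $\int\varphi\circ x=-\infty$, which is trivial.
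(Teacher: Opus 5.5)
There is a genuine gap, and it sits at the step you list as ``remaining''. The cell-wise inequality $\varphi(x_m)\le\tau(A_j)^{-1}\int_{A_j}\varphi\circ x\,d\tau$ is Jensen's inequality for the conditional law of $x$ on $A_j$. That law is bounded but in general diffuse, so neither Step 1 nor its countable-atom extension covers it. Your plan to reduce it ``to the finite case via approximation within that cell'' runs back into the obstacle you identified yourself. If $x'$ is finitely valued with $d(x',x)$ small, lower semicontinuity gives no upper bound on $\varphi\circ x'$ in terms of $\varphi\circ x$. Iterating conditional barycenters on sub-cells only regresses, since every sub-cell again carries a diffuse law. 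This is not a technicality. On the separable $\mathbb{R}$-tree obtained by gluing unit segments $e_1,e_2,\dots$ at a common endpoint $o$, the function equal to $k\,d(o,\cdot)$ on $e_k$ is real-valued, convex and lsc, yet unbounded above on every ball around $o$.

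Separately, the case $\int\varphi\circ x\,d\tau=-\infty$ is not ``trivial'': for real-valued $\varphi$ the asserted inequality would be false there. It simply cannot occur. The function $\varphi+\frac12 d^2(o,\cdot)$ is $1$-strongly convex with minimiser $p:=\mathrm{prox}_{\varphi}(o)$, which yields $\varphi\ge\varphi(p)-d(o,p)\,d(p,\cdot)$. Hence $(\varphi\circ x)_-\in L^1$ whenever $x\in L^1$.

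The missing observation is that Step 1 never used finite support. Sturm's strong law (Lemma~\ref{LinftyLawQuant}) holds for arbitrary bounded i.i.d.\ sequences. For bounded $x$ the variable $\varphi\circ x$ is integrable by the bound just given. So your inductive-mean, convexity and lsc argument proves the inequality verbatim for every bounded $x$.

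The general case then needs no clever discretization, only truncation to a base point instead of a projection. Let $x^R:=x$ on $\{d(o,x)\le R\}$ and $x^R:=o$ elsewhere. Then $\int d(x^R,x)\,d\tau\le\int_{\{d(o,x)>R\}}d(o,x)\,d\tau\to0$, so $\int x^R\,d\tau\to\int x\,d\tau$ by Lemma~\ref{ExpNonexp}. At the same time $\int\varphi\circ x^R\,d\tau=\int_{\{d(o,x)\le R\}}\varphi\circ x\,d\tau+\tau(d(o,x)>R)\,\varphi(o)\to\int\varphi\circ x\,d\tau$, and lower semicontinuity finishes. This works because every value of $x^R$ is a value of $x$ or the fixed point $o$, so $\varphi$ is never evaluated at uncontrolled new points.

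Your conditional-barycenter scheme is correct once cell-wise Jensen is available, but at that point it is superfluous. Within this paper you can shortcut further: Corollary~\ref{L1LawHad} is proved without Jensen's inequality, so Step 1 applies directly to every $x\in L^1(T,\mathcal{H},\tau)$. Note that the paper itself gives no proof of this lemma and only cites Sturm.
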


We refer to the fundamental works of Sturm \cite{Sturm2002,Sturm2003} as well as the exposition of these matters given by Ba\v{c}\'ak \cite{Bacak2014a} for further information on the above.

We will later use the extension of the above integral to set-valued operators. Following \cite{Pischke2025}, we define this operation in an analogous way to the seminal work of Aumann \cite{Aumann1965} set in Hilbert spaces, replacing the use of the Bochner integral therein with the integral of Sturm. Concretely, given a set-valued operator $F:T\to 2^\mathcal{H}$, a function $\phi:T\to \mathcal{H}$ is a measurable selection of $F$ if it is $\mathsf{T}$/$\mathsf{B}(\mathcal{H})$-measurable and $\phi(t)\in F(t)$ for all $t\in T$. The set of all measurable selections of $F$ is denoted by $\mathcal{S}(F)$ and we write $\mathcal{S}^p(F):=\mathcal{S}(F)\cap L^p(T,\mathcal{H},\tau)$. The Aumann-Sturm integral of $F$ is then defined as
\[
\int F\,d\tau=\left\{\int \phi\,d\tau\mid \phi\in\mathcal{S}^1(F)\right\}.
\]

Again, we briefly consider the above notion over discrete measure spaces. If $T=\{t_1,\dots,t_n\}$ is a finite set with full $\sigma$-algebra $\mathsf{T}=2^T$, a set-valued map $F:T\to 2^\mathcal{H}$ can be identified with a tuple $(F_1,\dots,F_n)$ of sets $F_i\subseteq \mathcal{H}$. If now again $\tau$ is a measure on $(T,\mathsf{T})$ with $\tau(\{t_i\})=w_i$ such that $\sum_{i=1}^n w_i=1$, then the integral $\int F\,d\tau$ reduces to
\[
\left\{\mathrm{argmin}_{z\in \mathcal{H}} \sum_{i=1}^n w_i d^2(z,x_i)\mid x_i\in F_i \text{ for all }i=1,\dots,n\right\},
\]
that is the set of all Fr\'echet means which can be formed for tuples from $F_1\times\dots\times F_n$.

As we will later be concerned with probabilistic notions on $T_x\mathcal{H}$, we will need to ensure that the respective concepts are well-defined thereon. In particular, this entails a separability assumption on $T_x\mathcal{H}$ which we briefly discuss in the following remark (compare Remark 2.18 in \cite{Pischke2025}).

\begin{remark}\label{rem:tangentSepAndMeas}
We later crucially rely on the tangent cone $T_x\mathcal{H}$ of $\mathcal{H}$ being separable. This however directly follows if the underlying Hadamard space $\mathcal{H}$ separable as discussed in Remark 2.18 in \cite{Pischke2025}. This in particular ensures that $d_x$ is jointly measurable and in particular also guarantees that $g_x$ and $\norm{\cdot}_x$ are (jointly) measurable as well. Further, note that $\log_x$ is measurable as it is nonexpansive and hence uniformly continuous.
\end{remark}

It is in particular in the context of the interaction between the integral and the pseudo-Riemannian metric that we in this paper make essential use of the assumption that $T_x\mathcal{H}$ is flat later on, that is that $\mathcal{H}$ is a Hilbert-Hadamard space. Concretely, in a separable Hilbert space $(\mathcal{H},\langle\cdot,\cdot\rangle)$, it is well-known that independent ($\mathcal{H}$-valued) random variables $u,v:T\to \mathcal{H}$ satisfy 
\[
\int \langle u,v\rangle\,d\tau=\left\langle \int u\,d\tau,\int v\,d\tau\right\rangle,
\]
where the integral in that case reduces to the Bochner integral on $\mathcal{H}$. Flatness of $T_x\mathcal{H}$ allows us to extend this property to the pseudo-Riemannian metric $g_x$, where we crucially rely on Lemma \ref{affine} above. Like that lemma, also this observation was already made and crucially employed in \cite{Pischke2025} (in fact even an extension to conditional expectations, which we will not need here).

\begin{lemma}[Lemma 2.19 in \cite{Pischke2025}]\label{intEx}
Fix $x\in \mathcal{H}$ and let $T_x\mathcal{H}$ be flat. For independent integrable random variables $u,v:T\to T_x\mathcal{H}$, we have
\[
\int g_x(u,v)\,d\tau= g_x\left(\int u\,d\tau,\int v\,d\tau\right).
\]
\end{lemma}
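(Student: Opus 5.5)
The plan is to reduce the claim to the corresponding identity for independent real-valued random variables, using that $g_x$ is affine in each argument (Lemma \ref{affine}) and that the tangent cone is flat. By Remark \ref{flatRem}, $T_x\mathcal{H}$ is then isometric to a closed convex subset $C$ of a Hilbert space $\mathcal{K}$; fix such an isometry $\iota$. Since $T_x\mathcal{H}$ is separable (Remark \ref{rem:tangentSepAndMeas}), we may take $\mathcal{K}$ separable.

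\textbf{Step 1: the integral is the Bochner mean.} For $u\in L^1(T,T_x\mathcal{H},\tau)$ I first show that $\iota\left(\int u\,d\tau\right)$ equals the Bochner integral $\int \iota\circ u\,d\tau$. The Bochner mean $m$ lies in $C$, because $C$ is closed and convex. Moreover $m$ minimizes
\[
z\mapsto \int \left(\norm{z-\iota u}^2-\norm{\iota u-y}^2\right)d\tau
\]
over all of $\mathcal{K}$, hence also over $C$. By uniqueness of the barycenter, $\iota$ therefore carries $\int u\,d\tau$ to $m$.

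\textbf{Step 2: express $g_x$ through the inner product.} The vertex $0_x$ is mapped to some point $c=\iota(0_x)$, and $g_x$ is defined purely from distances to $0_x$. Hence
\[
g_x(u,v)=\tfrac12\left(\norm{\iota u-c}^2+\norm{\iota v-c}^2-\norm{\iota u-\iota v}^2\right)=\langle \iota u-c,\iota v-c\rangle.
\]

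\textbf{Step 3: factor the expectation.} Set $U=\iota u-c$ and $V=\iota v-c$. These are independent, Bochner integrable $\mathcal{K}$-valued random variables. I then need $\int\langle U,V\rangle\,d\tau=\langle \int U,\int V\rangle$, which is the standard Hilbert-space fact quoted before the lemma. To prove it, expand in an orthonormal basis $(e_k)$ of the separable space $\mathcal{K}$. For each $k$, the coordinates $\langle U,e_k\rangle$ and $\langle V,e_k\rangle$ are independent real integrable variables, so the expectation of their product factorizes. Summing over $k$ is justified by dominated convergence. The partial sums are bounded by $\norm{U}\norm{V}$ via Cauchy--Schwarz, and $\norm{U}\norm{V}$ is integrable because independence gives $\int\norm{U}\norm{V}=\int\norm{U}\int\norm{V}<\infty$. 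Combining this with Steps 1 and 2, and using linearity of the Bochner integral to pull $c$ out, gives the claim.

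\textbf{Main obstacle.} The hard part is Step 1 together with the integrability in Step 3. Step 1 requires that the barycenter commutes with an isometric embedding onto a convex subset. Step 3 requires that $g_x(u,v)$ is integrable at all, which follows from $|g_x(u,v)|\le\norm{u}_x\norm{v}_x$ and independence. If the embedding route fails, the fallback is to use Lemma \ref{affine} directly: approximate $v$ by simple random variables, and apply Jensen-type equalities for affine continuous functions (Lemma \ref{nonlinJensen} applied to both $\varphi$ and $-\varphi$) conditionally on the values of $v$. One then passes to the limit using the Lipschitz bound of Lemma \ref{nonexpCat0} and the nonexpansiveness of the integral from Lemma \ref{ExpNonexp}.
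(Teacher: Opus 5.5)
Your proof is correct, but it takes a different route from the one the paper indicates.

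\textbf{Your route.} You argue extrinsically:
\begin{enumerate}
\item embed the flat cone $T_x\mathcal{H}$ as a closed convex set $C$ in a separable Hilbert space;
\item identify Sturm's barycenter with the Bochner mean, since the Bochner mean lies in $C$ and minimizes the same functional, so uniqueness of the barycenter settles it;
\item write $g_x$ as the inner product recentered at $\iota(0_x)$;
\item prove the classical factorization for independent Hilbert-valued variables.
\end{enumerate}
Each step checks out, including the integrable majorant $\norm{U}\norm{V}$ obtained from independence.

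\textbf{The paper's route.} The paper takes the lemma from \cite{Pischke2025}; the proof is not reproduced here. The paper does say that the argument rests on Lemma \ref{affine}, so it stays inside $T_x\mathcal{H}$. For fixed $b\in T_x\mathcal{H}$, the map $a\mapsto g_x(a,b)$ is convex, concave and Lipschitz (Lemma \ref{nonexpCat0}). Sturm's Jensen inequality (Lemma \ref{nonlinJensen}) applied to $\pm g_x(\cdot,b)$ then gives $\int g_x(u,b)\,d\tau=g_x(\int u\,d\tau,b)$. Independence lets you write $\int g_x(u,v)\,d\tau$ as an iterated integral against the product of the laws of $u$ and $v$. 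This is Fubini, with majorant $\norm{u}_x\norm{v}_x$. The identity is then applied once in each argument.

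This is essentially your fallback. The approximation of $v$ by simple variables is unnecessary, because Fubini on the product law does the conditioning directly.

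\textbf{Comparison.} Your route makes the mechanism transparent: the barycenter becomes a linear mean and $g_x$ becomes an inner product. The cost is invoking the embedding characterization of flatness from Remark \ref{flatRem} and checking the barycenter/Bochner identification. The intrinsic route needs only affinity of $g_x$ and Sturm's Jensen inequality, and never leaves the cone. Both use flatness essentially: you through the embedding, the paper through Lemma \ref{affine}.
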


Lastly, we discuss the space $L^2(T,\mathcal{H},\tau)$ in more detail, which features crucially in a later section on the interchange of the subdifferential and the integral. Since at least the work of Jost (see Corollary 4.1.1 in  \cite{Jost1997} and see also e.g.\ Proposition 3.3 \cite{Sturm2001}), it is known that the space $L^2(T,\mathcal{H},\tau)$ is itself a Hadamard space whenever $\mathcal{H}$ is one. As mentioned before, it was shown by Gong, Wu and Yu (see Proposition 3.13 in \cite{GongWuYu2021}) that if $\mathcal{H}$ is a Hilbert-Hadamard space, then also $L^2(T,\mathcal{H},\tau)$ is a Hilbert-Hadamard space, and if $\mathcal{H}$ and $T$ are further separable,\footnote{A probability space $(T,\mathsf{T},\tau)$ is called separable if the metric space $(\mathsf{T},d_\tau)$ with $d_\tau(A,B)=\tau(A\triangle B)$ is separable, where $A\triangle B$ is the symmetric difference of $A,B\in\mathsf{T}$.} then so is $L^2(T,\mathcal{H},\tau)$. A study of the tangent cones of $L^2(T,\mathcal{H},\tau)$ can be found in Proposition 4.5 of \cite{GigliNobili2021}, already for much more general spaces $\mathcal{H}$, by which $T_xL^2(T,\mathcal{H},\tau)$ for $x\in L^2(T,\mathcal{H},\tau)$ is isometric to the space of $L^2$-sections of the so-called pullback $x^*T_G\mathcal{H}$ of the geometric tangent bundle $T_G\mathcal{H}$ (see \cite{GigliNobili2021} for precise definitions). In the case that $\mathcal{H}$ is a separable Hadamard space and that $(T,\mathsf{T},\tau)$ is separable (as later assumed), Proposition 4.8 of \cite{GigliNobili2021} shows that this characterization of $T_xL^2(T,\mathcal{H},\tau)$ reduces to the space of all equivalence classes of measurable $u$ such that $u(t)\in T_{x(t)}\mathcal{H}$ almost surely and $\int \norm{u(t)}^2_{x(t)}\,d\tau(t)<+\infty$. In particular, if $x\in \mathcal{H}$, then $T_xL^2(T,\mathcal{H},\tau)$ is isometric to $L^2(T,T_x\mathcal{H},\tau)$. In that case, one also in particular obtains that for $u,v\in T_xL^2(T,\mathcal{H},\tau)$ that
\[
d^2_x(u,v)=\int d^2_x(u(t),v(t))\,d\tau(t)
\]
and so $\norm{u}_x^2=\int \norm{u}_x^2\,d\tau$ as well as $g_x(u,v)=\int g_x(u(t),v(t))\,d\tau(t)$.

\section{An $L^1$ strong law of large numbers}\label{sec:SLLN}

The main goal of this section is to prove the quantitative generalization of the strong law of large numbers for $L^1$ i.i.d.\ processes where crucially, as already motivated in the introduction, the notion of sample average is that of the inductive mean, following Sturm \cite{Sturm2002,Sturm2003}:

\begin{definition}[Inductive mean]\label{IndMean}
Let $(x_i)\subseteq \mathcal{H}$ be given. The sequence of inductive means $(s_n)\subseteq \mathcal{H}$ of $(x_i)$ is defined as follows:
\[
s_1:=x_1\text{ and }s_{n}:=\left(1-\frac{1}{n}\right) s_{n-1}\oplus \frac{1}{n}x_n.
\]
We write $\frac{1}{n}\dsum_{i=1,\dots,n}x_i$ for $s_n$. In general, this point will depend on the permutation of the $(x_i)$.
\end{definition}

To derive the distribution-dependent non-asymptotic concentration inequality, we first require a quantitative variant of Sturm's $L^\infty$ strong law of large numbers:

\begin{lemma}\label{LinftyLawQuant}
Let $(\Omega,\mathsf{F},\PP)$ be a probability space and let $(\mathcal{H},d)$ be a separable Hadamard space. If $(X_i)$ is an i.i.d.\ sequence of random variables $X_i\in L^\infty(\Omega,\mathcal{H},\PP)$ with $d(o,X_1)\leq b$ $\PP$-a.s.\ for some $b\geq 1$ and $o\in \mathcal{H}$, then for any $\varepsilon>0$ and $m\in\mathbb{N}$:
\[
\PP\left(\sup_{n\geq m} d\left(\frac{1}{n}\underset{i=1,\dots,n}{\dsum} X_i,\EE[X_1]\right)>\varepsilon\right)\leq \frac{8b^2}{\min\{\varepsilon^2,1\}m^{1/2}}.
\]
\end{lemma}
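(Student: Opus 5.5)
Write $S_n:=\frac{1}{n}\dsum_{i=1,\dots,n}X_i$ and $\mu:=\EE[X_1]$. The plan is to follow Sturm's proof of the $L^\infty$ strong law and make it quantitative. The key input is Sturm's variance inequality for inductive means:
\[
\EE[d^2(S_n,\mu)]\leq \frac{1}{n}\EE[d^2(X_1,\mu)]=:\frac{\sigma^2}{n}.
\]
It follows by induction from the CN-inequality applied to the geodesic from $S_{n-1}$ to $X_n$ with $t=1/n$. One conditions on $X_1,\dots,X_{n-1}$ and uses independence of $X_n$. One also needs the variance inequality $\EE[d^2(z,X_n)]\geq d^2(z,\mu)+\sigma^2$, which comes from the barycenter property. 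Together these give the recursion
\[
\EE[d^2(S_n,\mu)]\leq (1-\tfrac1n)^2\EE[d^2(S_{n-1},\mu)]+\tfrac{1}{n^2}\sigma^2 .
\]
Since $d(o,X_1)\le b$ and $d(o,\mu)\le b$ (by Lemma \ref{ExpNonexp} or Jensen, Lemma \ref{nonlinJensen}), we get $\sigma^2\le 4b^2$.

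Next we pass from second moments to a maximal estimate. Chebyshev gives the bound only for individual $n$, so we use a blocking argument. Take a sparse subsequence $n_k$, for instance $n_k=k^2$, or geometric blocks adapted to $m$. On each block $[n_k,n_{k+1}]$ we control fluctuations deterministically. From the definition of inductive means,
\[
d(S_n,S_{n-1})=\tfrac1n d(S_{n-1},X_n)\le \tfrac{2b}{n},
\]
using $d(o,S_{n})\le b$, which holds by convexity of balls. Hence for $n_k\le n\le n_{k+1}$ we have $d(S_n,S_{n_k})\le 2b\,(n_{k+1}-n_k)/n_k$. With $n_k=k^2$ this is at most $2b\cdot 3/k$.

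Then we combine. For $\varepsilon\le 1$, the event $\sup_{n\ge m}d(S_n,\mu)>\varepsilon$ forces either $d(S_{n_k},\mu)>\varepsilon/2$ for some $k\geq k_0\approx\sqrt m$, or a block fluctuation exceeding $\varepsilon/2$. The block fluctuations are excluded deterministically once $6b/k\le\varepsilon/2$. A union bound with Chebyshev gives
\[
\sum_{k\geq k_0}\frac{4\sigma^2}{\varepsilon^2k^2}\lesssim \frac{b^2}{\varepsilon^2\sqrt m}.
\]
The main obstacle is the bookkeeping. The deterministic condition $k\ge 12b/\varepsilon$ may fail for small $k_0$, and then one must either absorb this case into the trivial bound (probability $\le1$, where the right-hand side already exceeds $1$ since $b\ge1$, $\varepsilon\le1$) or choose the blocks differently so that the constant comes out as $8$. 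For $\varepsilon>1$, the event is contained in the one for $\varepsilon=1$, which explains the $\min\{\varepsilon^2,1\}$. I would first try $n_k=k^2$ and check whether the constants fit $8b^2$. Failing that, I would use a Kolmogorov/Doob-type maximal inequality for the real submartingale-like quantity $d^2(S_n,\mu)$ to improve the constant.
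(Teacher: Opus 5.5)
Your main line is the paper's proof, which is Sturm's proof of his Theorem 4.7 made quantitative. It uses $\EE[d^2(S_n,\mu)]\le\sigma^2/n$, Chebyshev along $n=k^2$ with a union bound, deterministic interpolation between consecutive squares, and absorption of the regime $8b^2>\min\{\varepsilon^2,1\}m^{1/2}$ into the trivial bound.

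\textbf{The constants you chose do not give $8$.} With the two estimates you commit to, the constant does not come out, and you left this unresolved.
\begin{itemize}
\item With $\sigma^2\le 4b^2$, the union bound is about $16b^2/(\varepsilon^2\sqrt m)$, twice the target.
\item With interpolation bound $6b/k$, you need $k_0\ge 12b/\varepsilon$. The nontrivial regime $\sqrt m\ge 8b^2/\varepsilon^2$ does not guarantee this when $b/\varepsilon<3/2$.
\end{itemize}
For $b=\varepsilon=1$ and $m=100$ the right-hand side is $0.8$. Yet $k_0=10<12$, and your union bound is $16\sum_{k\ge 10}k^{-2}\approx 1.7$.

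\textbf{How the paper avoids both losses.}
\begin{itemize}
\item Since $\mu$ minimizes $z\mapsto\EE[d^2(z,X_1)]$, one has $\sigma^2\le\EE[d^2(o,X_1)]\le b^2$. Your triangle-inequality bound wastes a factor $4$.
\item For $k^2\le n<(k+1)^2$ there are at most $2k$ steps, each of size at most $2b/k^2$. Hence $d(S_n,S_{k^2})\le 4b/k$, and the deterministic condition is only $k_0\ge 8b/\varepsilon$.
\end{itemize}
This yields the paper's intermediate estimate $\PP(\sup_{n\ge n_0^2}d(S_n,\mu)>\varepsilon)\le\lambda$ for $n_0=\max\{4b^2/(\varepsilon^2\lambda),8b/\varepsilon\}$. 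The final form follows from $n_\lambda=8b^2/(\min\{\varepsilon^2,1\}\min\{\lambda,1\})$ with $\lambda=8b^2/(\min\{\varepsilon^2,1\}m^{1/2})$. The factor $2$ of slack in $n_\lambda$ absorbs two losses. One is rounding $\sqrt m$ down to an integer. The other is that $\sum_{k\ge k_0}k^{-2}$ is only bounded by $1/(k_0-1)$, not $1/k_0$. In edge cases, split $\varepsilon$ as $(\varepsilon-4b/k_0)+4b/k_0$ rather than into halves.

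\textbf{Your fallback is mis-stated but is actually the better proof.} The quantity $d^2(S_n,\mu)$ is not a submartingale. However, the conditional form of your recursion is
$\EE[d^2(S_n,\mu)\mid X_1,\dots,X_{n-1}]\le(1-\frac{1}{n})^2d^2(S_{n-1},\mu)+\sigma^2/n^2$.
This shows that $Y_n:=d^2(S_n,\mu)+\sigma^2\sum_{j>n}j^{-2}$ is a nonnegative supermartingale. Ville's maximal inequality then gives
$\PP(\sup_{n\ge m}d(S_n,\mu)\ge\varepsilon)\le\EE[Y_m]/\varepsilon^2\le 2\sigma^2/(\varepsilon^2 m)\le 2b^2/(\varepsilon^2 m)$.
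This implies the lemma; for $\varepsilon>1$, use $\varepsilon^2\ge 1$. It gives rate $m^{-1}$ instead of $m^{-1/2}$, needs no blocking or interpolation, and uses only $\sigma^2<\infty$ rather than boundedness. The paper's route has the merit only of quoting Sturm's published estimates directly.
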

\begin{proof}
Write $S_n$ for $\frac{1}{n}\underset{i=1,\dots,n}{\dsum} X_i$. We first show that for any $\varepsilon,\lambda>0$:
\[
\PP\left(\sup_{n\geq (\max\{4b^2/\varepsilon^2\lambda,8b/\varepsilon\})^2} d\left(S_n,\EE[X_1]\right)>\varepsilon\right)\leq \lambda.\tag{$*$}\label{sturmFirstQuant}
\]
As shown in the proof of Theorem 4.7 in \cite{Sturm2003}, it holds that 
\[
\sum_{n=1}^\infty\PP(d(S_{n^2},\EE[X_1])>\varepsilon)\leq \sum_{n=1}^\infty \frac{\mathbb{V}[X_1]}{\varepsilon^2n^2},
\]
where $\mathbb{V}[X_1]:=\inf_{z\in \mathcal{H}}\EE[d^2(z,X_1)]$ is the variance of $X_1$. Note that $\mathbb{V}[X_1]\leq \EE[d^2(o,X_1)]\leq b^2$. As the measure is sub-$\sigma$-additive, we get
\begin{align*}
\PP(\sup_{n\geq n_0}d(S_{n^2},\EE[X_1])>\varepsilon)&\leq \sum_{n=n_0}^\infty\PP(d(S_{n^2},\EE[X_1])>\varepsilon)\\
&\leq \frac{b^2}{\varepsilon^2}\sum_{n=n_0}^\infty\frac{1}{n^2}\leq \frac{b^2}{\varepsilon^2n_0}.
\end{align*}
As further shown in the proof of Theorem 4.7 in \cite{Sturm2003}, one has $d(S_{n^2},S_k)\leq 4b/n$ for all $n,k\geq 1$ with $n^2\leq k <(n+1)^2$. For $\varepsilon,\lambda>0$ now fixed, write
\[
n_0:=\max\left\{\frac{4b^2}{\varepsilon^2\lambda},\frac{8b}{\varepsilon}\right\}.
\]
For $\omega\in\Omega$ such that $d(S_{n^2}(\omega),\EE[X_1])\leq\varepsilon/2$ for all $n\geq n_0$ and $d(S_{n^2}(\omega),S_k(\omega))\leq 4b/n$ for all $n,k\geq 1$ as above, if $k\geq n_0^2$, then $k\in [n^2,(n+1)^2)$ for some $n\geq n_0$. Hence, for such a $k$, we have
\[
d(S_{k}(\omega),\EE[X_1])\leq d(S_k(\omega),S_{n^2}(\omega))+d(S_{n^2}(\omega),\EE[X_1])\leq \frac{4b}{n}+\frac{\varepsilon}{2}\leq \frac{4b}{n_0}+\frac{\varepsilon}{2}\leq \varepsilon.
\]
Therefore, we get
\[
\PP\left(\sup_{k\geq n_0^2} d\left(S_k,\EE[X_1]\right)>\varepsilon\right)\leq \PP\left(\sup_{n\geq n_0} d\left(S_{n^2},\EE[X_1]\right)>\varepsilon/2\right)\leq \frac{4b^2}{\varepsilon^2n_0}\leq \lambda.
\]
This shows \eqref{sturmFirstQuant}. To now see the inequality claimed in the theorem, note first that \eqref{sturmFirstQuant} implies
\[
\PP\left(\sup_{n\geq n_\lambda^2} d\left(S_n,\EE[X_1]\right)>\varepsilon\right)\leq \lambda
\]
for $n_\lambda:=8b^2/(\min\{\varepsilon^2,1\}\min\{\lambda,1\})$. Now, if $m^{1/2}\geq 8b^2/\min\{\varepsilon^2,1\}$, we obtain
\[
\PP\left(\sup_{n\geq m} d\left(S_n,\EE[X_1]\right)>\varepsilon\right)\leq \frac{8b^2}{\min\{\varepsilon^2,1\}m^{1/2}}
\] 
by setting $\lambda:=8b^2/(\min\{\varepsilon^2,1\}m^{1/2})$ in the above. However, as $8b^2/(\min\{\varepsilon^2,1\}m^{1/2})> 1$ if $m^{1/2}< 8b^2/\min\{\varepsilon^2,1\}$, the above inequality holds unconditionally.
\end{proof}

We similarly require a quantitative estimate for the usual $L^1$ strong law of large numbers on $\mathbb{R}$, which takes the form of the following distribution-dependent non-asymptotic concentration inequality established by Kachurovskii \cite{Kachurovskii1996}:

\begin{theorem}[essentially Theorem 26 in \cite{Kachurovskii1996}]\label{L1LawRealsQuant}
Let $(\Omega,\mathsf{F},\PP)$ be a probability space and let $(X_i)$ be an i.i.d.\ sequence of random variables $X_i\in L^1(\Omega,\mathbb{R},\PP)$. Then, for any $\varepsilon>0$ and $m\in\mathbb{N}$:
\[
\PP\left(\sup_{n\geq m}\left\vert\frac{1}{n}\sum_{i=1}^n X_i- \EE[X_1]\right\vert>\varepsilon\right)\leq \frac{2}{\varepsilon}\left(\frac{\sqrt{\EE[\vert X_1\vert]}}{m^{1/2}}+U_\PP(m^{1/2})\right),
\]
where $U_\PP(x):=\EE[\vert X_1\vert\mathbf{1}_{\vert X_1\vert> x}]$ is the truncated first absolute moment of $X_1$.
\end{theorem}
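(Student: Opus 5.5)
Since the statement is labelled as essentially Theorem 26 of Kachurovskii, the plan has two parts. First, reduce the i.i.d.\ setting to his ergodic-theoretic framework. Second, verify the two terms of the bound by a truncation argument whose only nontrivial input is his maximal rate estimate for the bounded part. We may assume $m^{1/2}\ge 2\sqrt{\EE[\vert X_1\vert]}/\varepsilon$, since otherwise the right-hand side is at least $1$ and there is nothing to prove.

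\emph{Reduction.} Pass to the canonical path space $(\mathbb{R}^{\mathbb{N}},\mathsf{B}(\mathbb{R})^{\otimes\mathbb{N}},\PP_{X_1}^{\otimes\mathbb{N}})$ with the left shift $\theta$. By Kolmogorov's $0$--$1$ law $\theta$ is measure-preserving and ergodic. The event in question depends only on the law of $(X_i)$, so its probability is unchanged under this transfer. With $f(\omega):=\omega_1$, the averages become Birkhoff averages: $\frac1n\sum_{i=1}^nX_i=\frac1n\sum_{k=0}^{n-1}f\circ\theta^k$, with limit $\int f=\EE[X_1]$.

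\emph{Truncation and splitting.} Put $c:=m^{1/2}$, $Y_i:=X_i\mathbf{1}_{\vert X_i\vert\le c}$ and $Z_i:=X_i-Y_i$. Then
\[
\left\vert\tfrac1n\textstyle\sum_{i\le n}X_i-\EE[X_1]\right\vert\le\left\vert\tfrac1n\textstyle\sum_{i\le n}(Y_i-\EE[Y_1])\right\vert+\tfrac1n\textstyle\sum_{i\le n}\vert Z_i\vert+\EE\vert Z_1\vert .
\]
Split $\varepsilon$ into $\varepsilon/2+\varepsilon/2$, and put the deterministic term $\EE\vert Z_1\vert=U_\PP(m^{1/2})$ together with the $Z$-averages.

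For the $Z$-part, the maximal ergodic inequality (equivalently, Doob's inequality for the reverse martingale $\frac1n\sum_{i\le n}\vert Z_i\vert$ with respect to the exchangeable filtration) gives
\[
\PP\left(\sup_{n\ge m}\tfrac1n\textstyle\sum_{i\le n}\vert Z_i\vert>\delta\right)\le \frac{\EE\vert Z_1\vert}{\delta}.
\]
Taking $\delta=\varepsilon/2$ produces the summand $\frac{2}{\varepsilon}U_\PP(m^{1/2})$.

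For the centred bounded part $Y_i-\EE[Y_1]$, apply Kachurovskii's maximal rate estimate for Birkhoff averages of the truncated observable $f\mathbf{1}_{\vert f\vert\le c}$; this is the content of his Theorem 26. The aim is to bound this probability by $\frac{2}{\varepsilon}\sqrt{\EE\vert X_1\vert}\,m^{-1/2}$. The only moment inputs are $\EE\vert Y_1\vert\le \EE\vert X_1\vert$ and $\vert Y_1\vert\le c=m^{1/2}$.

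\emph{Main obstacle.} The hardest step is exactly this last bound, because a naive argument loses a square root. Doob's $L^2$ maximal inequality combined with $\mathrm{Var}(Y_1)\le c\,\EE\vert X_1\vert$ only yields a rate $\sqrt{\EE\vert X_1\vert}\,m^{-1/4}/\varepsilon$. This is not good enough for the stated estimate. Two routes are available.

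\begin{enumerate}[(i)]
\item I would first try a Hájek--Rényi-type maximal inequality with level-dependent truncation $Y_i=X_i\mathbf{1}_{\vert X_i\vert\le\sqrt i}$. The key computation is
\[
\sum_{i>m}\frac{\EE[X_1^2\mathbf{1}_{\vert X_1\vert\le\sqrt i}]}{i^2}\lesssim \EE\left[\min\{X_1^2/m,1\}\right]\le \frac{\EE\vert X_1\vert}{m^{1/2}}.
\]
This has to be paired with the $L^1$ (linear in $1/\varepsilon$) form of the maximal inequality that Kachurovskii uses.
\item If (i) does not close, I will follow his proof directly. That proof proceeds via the spectral/maximal-ergodic bound on $\sup_{n\ge m}\vert A_n g\vert$ for bounded $g$, and one then tracks the constants to obtain the factor $2/\varepsilon$.
\end{enumerate}

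In either route, the remaining work is bookkeeping. Add the two probability bounds, and use the reduction step to return to $(\Omega,\mathsf{F},\PP)$.
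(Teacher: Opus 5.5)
\textbf{The bounded-part estimate is false, and so is the displayed statement.} The step you single out as the main obstacle cannot be done. Take $X_i=\pm\frac12 m^{1/2}$ with probability $\frac12$ each. Then:
\begin{itemize}
\item $\vert X_i\vert<m^{1/2}$, so $Y_i=X_i$, $\EE[Y_1]=0$ and $U_\PP(m^{1/2})=0$;
\item $\EE\vert X_1\vert=\frac12 m^{1/2}$;
\item with $R_m$ a sum of $m$ independent signs, $\frac1m\sum_{i\le m}X_i=R_m/(2m^{1/2})$.
\end{itemize}
At $\varepsilon=\frac12$ the left-hand side of the statement is therefore at least $\PP(\vert R_m\vert>m^{1/2})$. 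By the central limit theorem this tends to $2(1-\Phi(1))\approx 0.32$, with $\Phi$ the standard normal distribution function. The right-hand side equals $2\sqrt2\,m^{-1/4}\to 0$. Likewise, your target $\frac{2}{\varepsilon}\sqrt{\EE\vert X_1\vert}\,m^{-1/2}$ for the bounded part at level $\varepsilon/2=\frac12$ equals $\sqrt2\,m^{-1/4}$, while the probability it should bound is again at least $\PP(\vert R_m\vert>m^{1/2})$.

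So neither route (i) nor route (ii) can close, and your diagnosis is the wrong way round. The ``naive'' rate $\sqrt{\EE\vert X_1\vert}\,m^{-1/4}/\varepsilon$ is the correct one. The same example shows that no exponent larger than $\frac14$ is possible in a bound of this shape with truncation level $m^{1/2}$, whatever constant replaces $2$. The $m^{1/2}$ in the statement is presumably a transcription slip for $\sqrt{\EE\vert X_1\vert/m^{1/2}}=\sqrt{\EE\vert X_1\vert}/m^{1/4}$.

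\textbf{How the corrected inequality follows.} The paper gives no argument beyond citing Kachurovskii. The corrected inequality follows from ingredients you already list, with no ergodic-theoretic reduction and no splitting of $\varepsilon$. Splitting would in any case spoil the constant in your $Z$-part: the deterministic shift $\EE\vert Z_1\vert$ turns $\EE\vert Z_1\vert/(\varepsilon/2)$ into $\EE\vert Z_1\vert/(\varepsilon/2-\EE\vert Z_1\vert)$.

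Set $\bar X_n:=\frac1n\sum_{i\le n}X_i$. This is a reverse martingale with respect to $\mathcal{G}_n:=\sigma(\sum_{i\le n}X_i,X_{n+1},X_{n+2},\dots)$. Doob's maximal inequality for the reverse submartingale $\vert\bar X_n-\EE[X_1]\vert$ then gives
\[
\PP\Bigl(\sup_{n\ge m}\vert\bar X_n-\EE[X_1]\vert>\varepsilon\Bigr)\le\frac1\varepsilon\,\EE\vert\bar X_m-\EE[X_1]\vert\le\frac1\varepsilon\Bigl(\sqrt{c\,\EE\vert X_1\vert/m}+2U_\PP(c)\Bigr).
\]
The last step truncates at level $c$ exactly as you do and uses $\mathrm{Var}(Y_1)\le c\,\EE\vert X_1\vert$. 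Choosing $c=m^{1/2}$ yields $\frac2\varepsilon\bigl(\sqrt{\EE\vert X_1\vert}\,m^{-1/4}+U_\PP(m^{1/2})\bigr)$.

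Accordingly, the first term in Theorem \ref{L1LawHadQuant} and in Remark \ref{vectorLawQuant} should carry $m^{1/4}$ rather than $m^{1/2}$. Since this still tends to $0$, the qualitative consequences are unaffected.
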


Note that the right-hand side of the inequality from Theorem \ref{L1LawRealsQuant} vanishes as $m\to\infty$ since $U_\PP(x)\to 0$ as $x\to\infty$ by the dominated convergence theorem.

Combined, we can now derive the following quantitative estimate on the asymptotic behavior of $L^1$ i.i.d.\ processes on $\mathcal{H}$:

\begin{theorem}\label{L1LawHadQuant}
Let $(\Omega,\mathsf{F},\PP)$ be a probability space and let $(\mathcal{H},d)$ be a separable Hadamard space. If $(X_i)$ is an i.i.d.\ sequence of random variables $X_i\in L^1(\Omega,\mathcal{H},\PP)$, then for any $\varepsilon>0$ and any $m\in\mathbb{N}$:
\[
\PP\left(\sup_{n\geq m} d\left(\frac{1}{n}\underset{i=1,\dots,n}{\dsum} X_i,\EE[X_1]\right)>\varepsilon\right)\leq \frac{6}{\varepsilon}\left(\frac{\sqrt{\EE[d(o,X_1)]}}{m^{1/2}}+U_\PP(m^{1/8})\right)+\frac{72}{\min\{\varepsilon^2,1\}m^{1/4}},
\]
where $U_\PP(x):=\EE[d(o,X_1)\mathbf{1}_{d(o,X_1)\geq x}]$ for a fixed $o\in \mathcal{H}$.
\end{theorem}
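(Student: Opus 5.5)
The plan is a geodesic truncation at level $K:=m^{1/8}$. This splits the deviation into three pieces: a bounded part handled by Lemma~\ref{LinftyLawQuant}, a real-valued tail part handled by Theorem~\ref{L1LawRealsQuant}, and a deterministic bias handled by Lemma~\ref{ExpNonexp}. Fix $o\in\mathcal{H}$ and define the truncation map $\pi_K:\mathcal{H}\to\mathcal{H}$ by $\pi_K(x)=x$ if $d(o,x)\leq K$ and otherwise $\pi_K(x)$ is the point on $\gamma_{o,x}$ at distance $K$ from $o$. This is the metric projection onto the closed ball $\overline{B}(o,K)$, so it is nonexpansive and in particular measurable. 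Set $Y_i:=\pi_K(X_i)$. Then $(Y_i)$ is i.i.d., $d(o,Y_1)\leq K$, and
\[
d(X_i,Y_i)=(d(o,X_i)-K)_+\leq d(o,X_i)\mathbf{1}_{d(o,X_i)\geq K}=:Z_i ,
\]
where $(Z_i)$ is a real i.i.d.\ $L^1$ sequence.

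The first key step is to compare the inductive means. Write $S_n$ and $S_n^Y$ for the inductive means of $(X_i)$ and $(Y_i)$. By Busemann convexity,
\[
d(S_n,S^Y_n)\leq (1-\tfrac1n)\,d(S_{n-1},S^Y_{n-1})+\tfrac1n\, d(X_n,Y_n),
\]
and induction gives $d(S_n,S_n^Y)\leq \frac1n\sum_{i=1}^n d(X_i,Y_i)\leq \frac1n\sum_{i=1}^n Z_i$. By Lemma~\ref{ExpNonexp}, $d(\EE[X_1],\EE[Y_1])\leq \EE[Z_1]=U_\PP(K)$. The triangle inequality then gives
\[
d(S_n,\EE X_1)\leq d(S_n^Y,\EE Y_1)+\Big|\tfrac1n\textstyle\sum Z_i-\EE Z_1\Big|+2U_\PP(K).
\]

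Next I split $\varepsilon$ into three thirds. If $2U_\PP(K)>\varepsilon/3$, the claimed bound already exceeds $1$, since $\frac6\varepsilon U_\PP(K)>1$, and there is nothing to prove. Otherwise the event in question is contained in the union of $\{\sup_{n\geq m} d(S^Y_n,\EE Y_1)>\varepsilon/3\}$ and $\{\sup_{n\geq m}|\frac1n\sum Z_i-\EE Z_1|>\varepsilon/3\}$; a finer split of $\varepsilon$ may be needed to get exactly the stated constants.
\begin{itemize}
\item For the first event, apply Lemma~\ref{LinftyLawQuant} with $b=\max\{K,1\}$. This gives $8b^2/(\min\{\varepsilon^2/9,1\}m^{1/2})\leq 72\,m^{1/4}/(\min\{\varepsilon^2,1\}m^{1/2})$, which is the $72/(\min\{\varepsilon^2,1\}m^{1/4})$ term. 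The choice $K=m^{1/8}$ is made precisely so that $b^2=m^{1/4}$ here, using $m\geq 1$.
\item For the second event, apply Theorem~\ref{L1LawRealsQuant} to $(Z_i)$. This gives $\frac{6}{\varepsilon}\big(\sqrt{\EE Z_1}/m^{1/2}+\EE[Z_1\mathbf 1_{Z_1>m^{1/2}}]\big)$. Since $\EE Z_1\leq \EE d(o,X_1)$, and since $Z_1\leq d(o,X_1)$ with $\{Z_1>m^{1/2}\}\subseteq\{d(o,X_1)\geq m^{1/8}\}$, the tail term is at most $U_\PP(m^{1/8})$.
\end{itemize}

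The main obstacle I expect is the bookkeeping of constants, namely arranging the $\varepsilon$-split and the disposal of the $2U_\PP(K)$ bias term so that the stated coefficients $6/\varepsilon$ and $72$ come out. The structural steps, the Busemann induction and the nonexpansive truncation, are routine. If the three-way split above leaves the bias term mishandled, I would absorb it by the trivial-case argument: whenever $U_\PP(m^{1/8})\geq\varepsilon/6$, the right-hand side is already at least $1$.
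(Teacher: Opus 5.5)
Your proposal is correct and takes essentially the same route as the paper. Both use geodesic truncation at level $m^{1/8}$, Busemann convexity to compare the inductive means, Lemma~\ref{ExpNonexp} for the bias term, and Lemma~\ref{LinftyLawQuant} together with Theorem~\ref{L1LawRealsQuant} under an $\varepsilon/3$ split; the paper applies Kachurovskii's bound to $(d(o,X_i)-b)^+$ directly rather than to your dominating $Z_i$, which is immaterial. Your bookkeeping already gives exactly the stated constants, so no finer split of $\varepsilon$ is needed.
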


Also here, the right-hand side of the inequality featuring in Theorem \ref{L1LawHadQuant} vanishes as $m\to\infty$.

\begin{proof}[Proof of Theorem \ref{L1LawHadQuant}]
Let $b\geq 1$ be given. We associate with $(X_i)$ a bounded (relative to $b$ and $o$) and i.i.d.\ process $(X_i^b)$ by truncating it along geodesics as follows:
\[
X_i^b:=\begin{cases}X_i&\text{if }d(o,X_i)\leq b,\\
\frac{b}{d(o,X_i)}X_i\oplus \frac{d(o,X_i)-b}{d(o,X_i)}o&\text{if }d(o,X_i)>b.\end{cases}
\]
For brevity, write 
\[
S_n:=\frac{1}{n}\underset{i=1,\dots,n}{\dsum} X_i\text{ and }S^b_n:=\frac{1}{n}\underset{i=1,\dots,n}{\dsum} X_i^b.
\]
Note that $d(o,X^b_i)=\min\{d(o,X_i),b\}$ so that $(X_i^b)$ is i.i.d.\ as well as $X_i^b\in L^\infty(\Omega,\mathcal{H},\PP)$. Our quantitative version of Sturm's $L^\infty$ strong law of large numbers, that is Lemma \ref{LinftyLawQuant}, then yields
\[
\PP\left(\sup_{n\geq m} d\left(S_n^b,\EE[X^b_1]\right)>\varepsilon\right)\leq \frac{8b^2}{\min\{\varepsilon^2,1\}m^{1/2}}
\]
for any $\varepsilon>0$ and $m\in\mathbb{N}$. Now note that since $\mathcal{H}$ is a Hadamard space, it is a Busemann space and so we have that $d$ is jointly convex. In particular, we inductively get
\[
d\left(\frac{1}{n}\underset{i=1,\dots,n}{\dsum} x_i,\frac{1}{n}\underset{i=1,\dots,n}{\dsum} y_i\right)\leq\frac{1}{n}\sum_{i=1}^n d(x_i,y_i)
\]
for any pair of sequences $(x_i),(y_i)\subseteq \mathcal{H}$. This yields
\[
d\left(S_n,S_n^b\right)\leq\frac{1}{n}\sum_{i=1}^n d(X_i,X_i^b).
\]
Since we have $d(X_i,X_i^b)=(d(o,X_i)-b)^+$, the real-valued process $(d(X_i,X_i^b))$ is i.i.d.\ and we have $d(X_i,X_i^b)\in L^1(\Omega,\mathbb{R},\PP)$ since $X_i\in L^1(\Omega,\mathcal{H},\PP)$. Appealing to the above quantitative variant of the real-valued $L^1$ strong law of large numbers, that is Theorem \ref{L1LawRealsQuant}, now yields
\[
\PP\left(\sup_{n\geq m}\left\vert\frac{1}{n}\sum_{i=1}^n d(X_i,X_i^b)- \EE[(d(o,X_1)-b)^+]\right\vert>\varepsilon\right)\leq \frac{2}{\varepsilon}\left(\frac{\sqrt{\EE[d(o,X_1)]}}{m^{1/2}}+V^b_\PP(m^{1/2})\right)
\]
for any $\varepsilon>0$ and $m\in\mathbb{N}$, where $V^b_\PP(x):=\EE[(d(o,X_1)-b)^+\mathbf{1}_{(d(o,X_1)-b)^+> x}]$. Using the above inequality, we hence get
\begin{align*}
d\left(S_n,S_n^b\right)&= d\left(S_n,S_n^b\right)-\EE[(d(o,X_1)-b)^+]+\EE[(d(o,X_1)-b)^+]\\
&\leq \left\vert \frac{1}{n}\sum_{i=1}^n d(X_i,X_i^b)-\EE[(d(o,X_1)-b)^+]\right\vert+\EE[(d(o,X_1)-b)^+].
\end{align*}
Lemma \ref{ExpNonexp} now yields that
\[
d(\EE[X_1],\EE[X_1^b])\leq \EE[d(X_1,X_1^b)]=\EE[(d(o,X_1)-b)^+].
\]
The triangle inequality yields
\begin{align*}
d\left(S_n,\EE[X_1]\right)&\leq d\left(S_n,S_n^b\right) + d\left(S_n^b,\EE[X_1^b]\right)+d(\EE[X_1^b],\EE[X_1])\\
&\leq \left\vert \frac{1}{n}\sum_{i=1}^n d(X_i,X_i^b)-\EE[(d(o,X_1)-b)^+]\right\vert+d\left(S_n^b,\EE[X_1^b]\right)+2\EE[(d(o,X_1)-b)^+].
\end{align*}
Now, note that $(d(o,X_1)-b)^+=(d(o,X_1)-b)\mathbf{1}_{d(o,X_1)\geq b}$ and so
\[
\EE[(d(o,X_1)-b)^+]=U_\PP(b)-b\PP(d(o,X_1)\geq b)\leq U_\PP(b).
\]
In particular, we have $V^b_\PP(x)\leq U_\PP(b)$. Hence $2\EE[(d(o,X_1)-b)^+]\leq 2U_\PP(b)$. If $U_\PP(b)\leq \varepsilon/6$ for $b=m^{1/8}$, then we have
\begin{align*}
\PP\left(\sup_{n\geq m} d\left(S_n,\EE[X_1]\right)>\varepsilon\right)&\leq \PP\left(\sup_{n\geq m}\left\vert\frac{1}{n}\sum_{i=1}^n d(X_i,X_i^b)- \EE[(d(o,X_1)-b)^+]\right\vert>\varepsilon/3\right)\\
&\hphantom{\leq\,}+\PP\left(\sup_{n\geq m} d\left(S_n^b,\EE[X^b_1]\right)>\varepsilon/3\right)\\
&\leq\frac{6}{\varepsilon}\left(\frac{\sqrt{\EE[d(o,X_1)]}}{m^{1/2}}+V^b_\PP(m^{1/2})\right)+\frac{72b^2}{\min\{\varepsilon^2,1\}m^{1/2}}\\
&\leq \frac{6}{\varepsilon}\left(\frac{\sqrt{\EE[d(o,X_1)]}}{m^{1/2}}+U_\PP(m^{1/8})\right)+\frac{72}{\min\{\varepsilon^2,1\}m^{1/4}}.
\end{align*}
If $\varepsilon/6< U_\PP(m^{1/8})$, then $\frac{6}{\varepsilon}U_\PP(m^{1/8})>1$ so that the result holds unconditionally.
\end{proof}

In particular, the above Theorem \ref{L1LawHadQuant} entails the purely qualitative strong law of large numbers for $L^1$ i.i.d.\ processes first established by Yokota \cite{Yokota2018}:

\begin{corollary}\label{L1LawHad}
Let $(\Omega,\mathsf{F},\PP)$ be a probability space and let $(\mathcal{H},d)$ be a separable Hadamard space. If $(X_i)$ is an i.i.d.\ sequence of random variables $X_i\in L^1(\Omega,\mathcal{H},\PP)$, then
\[
\frac{1}{n}\underset{i=1,\dots,n}{\dsum} X_i\to \EE[X_1]\quad \PP\text{-a.s.}
\]
as $n\to\infty$.
\end{corollary}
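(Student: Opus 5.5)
The plan is to deduce Corollary \ref{L1LawHad} directly from the concentration inequality of Theorem \ref{L1LawHadQuant}, together with continuity of measure. Write $S_n:=\frac{1}{n}\dsum_{i=1,\dots,n}X_i$. The event of non-convergence is
\[
\left\{S_n\not\to \EE[X_1]\right\}=\bigcup_{k\in\mathbb{N}}\bigcap_{m\in\mathbb{N}}\left\{\sup_{n\geq m} d\left(S_n,\EE[X_1]\right)>1/k\right\}.
\]
This set is measurable: $d(S_n,\EE[X_1])$ is a random variable because $S_n$ is obtained from $(X_1,\dots,X_n)$ by finitely many geodesic convex combinations. The map $(x,y,t)\mapsto (1-t)x\oplus ty$ is continuous, since Busemann convexity gives joint continuity.

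So it suffices to show that, for each fixed $\varepsilon=1/k$, the decreasing sequence of events $A_m:=\{\sup_{n\geq m}d(S_n,\EE[X_1])>\varepsilon\}$ has $\PP(\bigcap_m A_m)=0$. For this, use $\PP(\bigcap_m A_m)\leq \PP(A_m)$ for every $m$. Theorem \ref{L1LawHadQuant} bounds $\PP(A_m)$ by
\[
\frac{6}{\varepsilon}\left(\frac{\sqrt{\EE[d(o,X_1)]}}{m^{1/2}}+U_\PP(m^{1/8})\right)+\frac{72}{\min\{\varepsilon^2,1\}m^{1/4}}.
\]

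Next, check that this bound tends to $0$ as $m\to\infty$.
\begin{itemize}
\item The first and third terms vanish trivially, since $\EE[d(o,X_1)]<\infty$ by $X_1\in L^1(\Omega,\mathcal{H},\PP)$.
\item For the middle term, $U_\PP(m^{1/8})=\EE[d(o,X_1)\mathbf{1}_{d(o,X_1)\geq m^{1/8}}]\to 0$ by dominated convergence. The integrands are dominated by the integrable $d(o,X_1)$ and converge pointwise to $0$, because $d(o,X_1)<\infty$ everywhere.
\end{itemize}

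Hence $\PP(\bigcap_m A_m)=0$ for each $k$. A countable union over $k$ of null sets is null, which gives $S_n\to\EE[X_1]$ almost surely.

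I expect no real obstacle here. The only points needing care are the measurability remark and the observation that the truncated moment $U_\PP$ vanishes at infinity, which the paper already notes after Theorem \ref{L1LawHadQuant}.
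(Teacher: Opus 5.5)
Your proposal is correct and follows exactly the paper's route. The paper obtains Corollary \ref{L1LawHad} directly from the concentration inequality of Theorem \ref{L1LawHadQuant}, noting that its right-hand side vanishes as $m\to\infty$. Your continuity-of-measure argument over $\varepsilon=1/k$, the dominated-convergence step for $U_\PP$, and the measurability remark simply make explicit the details the paper leaves implicit.
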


However, akin to \cite{RufWaudbySmith2025}, also here the concentration inequality established in Theorem \ref{L1LawHadQuant} is strong enough to immediately entail a distribution-uniform generalization of the strong law of large numbers in the style of Chung \cite{Chung1951} on Hadamard spaces:

\begin{corollary}\label{chungHad}
Let $(\Omega,\mathsf{F})$ be a measurable space and let $(\mathcal{H},d)$ be a separable Hadamard space. Let $\mathcal{P}$ be a collection of probability measures on $(\Omega,\mathsf{F})$ and $(X_i)$ be a sequence of random variables which is i.i.d.\ for any $\PP\in\mathcal{P}$. If $X_1$ is $\mathcal{P}$-uniformly integrable, i.e.
\[
\lim_{x\to\infty}\sup_{\PP\in\mathcal{P}}\EE_{\PP}[d(o,X_1)\mathbf{1}_{d(o,X_1)\geq x}]=0
\]
for a fixed $o\in \mathcal{H}$, then 
\[
\lim_{m\to\infty}\sup_{\PP\in\mathcal{P}}\PP\left(\sup_{n\geq m} d\left(\frac{1}{n}\underset{i=1,\dots,n}{\dsum} X_i,\EE_\PP[X_1]\right)>\varepsilon\right)=0.
\]
\end{corollary}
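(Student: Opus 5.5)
The plan is to apply Theorem \ref{L1LawHadQuant} under each $\PP\in\mathcal{P}$ and then bound every term on its right-hand side by a quantity that no longer depends on $\PP$. For each $\PP\in\mathcal{P}$, $\varepsilon>0$ and $m\in\mathbb{N}$, the theorem gives
\[
\PP\left(\sup_{n\geq m} d\left(\frac{1}{n}\underset{i=1,\dots,n}{\dsum} X_i,\EE_\PP[X_1]\right)>\varepsilon\right)\leq \frac{6}{\varepsilon}\left(\frac{\sqrt{\EE_\PP[d(o,X_1)]}}{m^{1/2}}+U_\PP(m^{1/8})\right)+\frac{72}{\min\{\varepsilon^2,1\}m^{1/4}},
\]
where $U_\PP(x)=\EE_\PP[d(o,X_1)\mathbf{1}_{d(o,X_1)\geq x}]$. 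First we have to check that the theorem applies, i.e.\ that $X_1\in L^1(\Omega,\mathcal{H},\PP)$ for each $\PP\in\mathcal{P}$. This follows from the uniform integrability assumption: pick $x_0$ with $\sup_{\PP}U_\PP(x_0)\leq 1$, so that $\EE_\PP[d(o,X_1)]\leq x_0+1$ for every $\PP$.

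Next, take the supremum over $\PP\in\mathcal{P}$ on the right-hand side. The last term does not depend on $\PP$ at all. The first moment term is bounded by
\[
\frac{6}{\varepsilon}\cdot\frac{\sqrt{x_0+1}}{m^{1/2}},
\]
using the same uniform bound $C:=\sup_\PP\EE_\PP[d(o,X_1)]\leq x_0+1$. The middle term is bounded by $\frac{6}{\varepsilon}\sup_{\PP}U_\PP(m^{1/8})$. Since $m^{1/8}\to\infty$, this tends to $0$ by the $\mathcal{P}$-uniform integrability hypothesis. Altogether,
\[
\sup_{\PP\in\mathcal{P}}\PP\left(\sup_{n\geq m} d(\cdots)>\varepsilon\right)\leq \frac{6}{\varepsilon}\left(\frac{\sqrt{C}}{m^{1/2}}+\sup_\PP U_\PP(m^{1/8})\right)+\frac{72}{\min\{\varepsilon^2,1\}m^{1/4}}\to 0
\]
as $m\to\infty$, which is the claim.

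There is no real obstacle here. The only subtle point is making sure that uniform integrability supplies both a uniform first-moment bound and membership in $L^1$, so that Theorem \ref{L1LawHadQuant} is applicable for each $\PP$. Measurability of the event is inherited from the setting of that theorem, since $\sup_{n\geq m}$ of countably many measurable functions is measurable.
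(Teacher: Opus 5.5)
Your proposal is correct and follows essentially the same route as the paper: apply Theorem \ref{L1LawHadQuant} for each $\PP\in\mathcal{P}$, use $\mathcal{P}$-uniform integrability to get $\sup_{\PP\in\mathcal{P}}\EE_\PP[d(o,X_1)]\leq x_0+1$, and let $m\to\infty$. Your explicit check that $X_1\in L^1(\Omega,\mathcal{H},\PP)$ for every $\PP\in\mathcal{P}$, so that the theorem applies, is a small point the paper leaves implicit.
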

\begin{proof}
The corollary follows immediately from Theorem \ref{L1LawHadQuant} once one notes that the $\mathcal{P}$-uniform integrability implies that $\sup_{\PP\in\mathcal{P}}\EE_{\PP}[d(o,X_1)]<+\infty$. To see this, use the $\mathcal{P}$-uniform integrability to pick an $x$ such that $\sup_{\PP\in\mathcal{P}}\EE_{\PP}[d(o,X_1)\mathbf{1}_{d(o,X_1)\geq x}]\leq 1$. Then, for any $\PP\in\mathcal{P}$:
\[
\EE_{\PP}[d(o,X_1)]\leq \EE_{\PP}[d(o,X_1)\mathbf{1}_{d(o,X_1)\leq x}]+\EE_{\PP}[d(o,X_1)\mathbf{1}_{d(o,X_1)\geq x}]\leq x+1
\]
which yields $\sup_{\PP\in\mathcal{P}}\EE_{\PP}[d(o,X_1)]\leq x+1<+\infty$.
\end{proof}

The convergence result of Corollary \ref{L1LawHad} does not extend to the stochastic proximal point algorithm for strongly convex functions, one of the generalizations of the strong law of large numbers: While it is well-known that the stochastic proximal point algorithm for such functions on Hadamard spaces converges under $L^\infty$- or $L^2$-Lipschitz assumptions (see \cite{OhtaPalfia2015} or \cite{PischkePowell2026}, respectively), which in particular is used in \cite{OhtaPalfia2015} to re-derive Sturm's $L^\infty$ strong law of large numbers, the following example shows that this does not extend to an $L^1$-Lipschitz assumption on the function.

\begin{example}\label{exPPA}
We construct a probability space $(E,\mathsf{E},\mu)$ and a proper Hadamard space $(\mathcal{H},d)$ together with a function $f:E\times \mathcal{H}\to\mathbb{R}$ such that $f$ is $\mathsf{E}\otimes\mathsf{B}(\mathcal{H})$-measurable and $f(e,\cdot)$ is strongly convex and $L(e)$-Lipschitz with $L\in L^1(E,(0,\infty),\mu)$, but where the stochastic proximal point iteration
\[
x_{n+1}:=\mathrm{prox}_{\lambda_n f}(\xi_{n},x_n):=\mathrm{argmin}_{y\in \mathcal{H}}\left\{f(\xi_n,y)+\frac{1}{2\lambda_n}d^2(x_n,y)\right\}
\]
does not converge almost surely, for a given sequence of parameters $\lambda_n\in (0,\infty)$ such that $\sum_{n\in\mathbb{N}}\lambda_n=+\infty$ and $\sum_{n\in\mathbb{N}}\lambda_n^2<+\infty$ as well as an i.i.d.\ sequence $(\xi_n)$ of random variables $\xi_n:\Omega\to E$ with distribution $\mu$.

For that, consider the Borel space over $E:=\mathbb{R}$ with a measure $\mu$ induced by the push-forward of a symmetric and integrable random variable $\xi:\Omega\to E$ with symmetric Pareto tails (see e.g.\ \cite{Arnold2015}), that is $\EE[\xi]=0$, $\EE[\vert\xi\vert]<+\infty$ as well as
\[
\PP(\xi>t)=\frac{1}{2}t^{-r}\text{ and }\PP(\xi<-t)=\frac{1}{2}t^{-r}
\]
for all $t\geq 1$, where $r>1$. On that space and for $\mathcal{H}:=[-1,1]$, consider $f:E\times \mathcal{H}\to\mathbb{R}$ defined by $f(e,x):=\frac{1}{2}x^2-ex$. Then $f(e,\cdot)$ is strongly convex and
\[
\vert f(e,x)-f(e,y)\vert\leq (1+\vert e\vert)\vert x-y\vert
\] 
for all $x,y\in [-1,1]$ and $e\in\mathbb{R}$, so that $f(e,\cdot)$ is Lipschitz with constant $L(e):=1+\vert e\vert\in L^1(E,(0,\infty),\mu)$. Further, $\underline{f}(x):=\int f(e,x)\,d\mu(e)=\frac{1}{2}x^2$, which has the unique minimizer $0$ on $[-1,1]$. It is easy to see that
\[
\mathrm{prox}_{\lambda f(e,\cdot)}(x)=P_{[-1,1]}\left(\frac{x+\lambda e}{1+\lambda}\right),
\]
where $P_{[-1,1]}$ is the projection onto the interval $[-1,1]$. Now, for $\lambda_n=1/n^a$ with $a\in (\frac{1}{2},1)$ and where $r\in (1,\frac{1}{a}]$, we have $\sum_{n\in\mathbb{N}}\lambda_n=+\infty$ and $\sum_{n\in\mathbb{N}}\lambda_n^2<+\infty$. Let now $(\xi_n)$ be i.i.d.\ with distribution $\mu$. We then have
\[
\sum_{n\in\mathbb{N}}\PP\left(\xi_{n}>\frac{3}{\lambda_n}\right)=\sum_{n\in\mathbb{N}}\frac{1}{2\cdot 3^r}\lambda_n^r=+\infty,
\]
and so the Borel-Cantelli lemma yields that $\xi_{n}>\frac{3}{\lambda_n}$ holds infinitely often almost surely. Now if $\xi_{n}>\frac{3}{\lambda_n}$, we have $\frac{x+\lambda e}{1+\lambda}>1$ and so $x_{n+1}=1$, which hence holds infinitely often almost surely. Similarly, we get $\sum_{n\in\mathbb{N}}\PP\left(\xi_{n}<-\frac{3}{\lambda_n}\right)=+\infty$ and so $x_{n+1}=-1$ holds infinitely often almost surely. Hence $x_{n+1}$ cannot converge almost surely.
\end{example}

As already highlighted in the introduction, it still remains open whether an $L^1$ version of the full Birkhoff pointwise ergodic theorem holds for inductive means. Concretely, while various ergodic theorems are known on Hadamard spaces (see e.g.\ \cite{AntezanaGhiglioniStojanoff2023,Austin2011,ChoiKim2022,Navas2013}, as well as \cite{KostenbergerStark2025}), similar as with the strong law previously, all of these results deviate in some way from a full formulation of Birkhoff's pointwise ergodic theorem on Hadamard spaces using inductive means, with \cite{AntezanaGhiglioniStojanoff2023,ChoiKim2022}, while using inductive means, not being phrased for general dynamical systems but only Kronecker systems (which in particular exclude the Bernoulli shift otherwise used to derive the strong law of large numbers from the ergodic theorem) and with \cite{Austin2011,Navas2013} using other notions of averages based on (canonical) barycenters. We thus want to highlight the following question:

\begin{question}\label{ergodicQuestion}
Let $(\Omega,\mathsf{F},\PP)$ be a probability space and let $(\mathcal{H},d)$ be a separable Hadamard space. Suppose $T:\Omega\to\Omega$ is a measurable and measure-preserving function and that $f\in L^1(\Omega,\mathcal{H},\PP)$. Does it then hold that the inductive ergodic averages
\[
\frac{1}{n}\underset{i=1,\dots,n}{\dsum} f(T^{(i)}(\omega)),
\]
converge to $\EE[f\mid \mathsf{I}]$ almost surely? Here, $T^{(i)}(\omega)$ denotes $i$-fold application of $T$ to $\omega$ and $\mathsf{I}=\{A\in\mathsf{F}\mid T^{-1}(A)=A\}$ is the $\sigma$-algebra of invariant events.
\end{question}

The above is in particular known in Hilbert spaces (see e.g.\ \cite{Krengel2011}). A result such as in Question \ref{ergodicQuestion} would then in particular imply the strong law of large numbers from Corollary \ref{L1LawHad}. In particular, the strategy employed above to deduce the $L^1$ strong law of large numbers is also viable in this context, that is using a similar bounding technique along geodesics, one can derive an $L^1$ variant of the pointwise ergodic theorem already from the $L^\infty$ version of the pointwise ergodic theorem in Hadamard spaces together with the usual Birkhoff pointwise ergodic theorem on $\mathbb{R}$.

Another open problem already highlighted in the introduction is the generalization of Corollary \ref{L1LawHad} to weaker notions than i.i.d.\ random variables, such as e.g.\ pairwise i.i.d.\ random variables.

\begin{question}\label{pairwiseQuestion}
Let $(\Omega,\mathsf{F},\PP)$ be a probability space and let $(\mathcal{H},d)$ be a separable Hadamard space. If $(X_i)$ is a sequence of pairwise independent and identically distributed random variables $X_i\in L^1(\Omega,\mathcal{H},\PP)$, does it then hold that
\[
\frac{1}{n}\underset{i=1,\dots,n}{\dsum} X_i\to \EE[X_1]\quad \PP\text{-a.s.}
\]
as $n\to\infty$? 
\end{question}

This problem is in particular open for $X_i\in L^\infty(\Omega,\mathcal{H},\PP)$, which suffices to establish the claim for $X_i\in L^1(\Omega,\mathcal{H},\PP)$, as also here the strategy employed above to deduce the $L^1$ strong law of large numbers, that is using a similar bounding technique along geodesics, is viable to derive an $L^1$ variant from an $L^\infty$ version together with the $L^1$ strong law of large numbers for pairwise i.i.d.\ processes (as established by Etemadi \cite{Etemadi1981}, see also e.g.\ Theorem 5.17 in \cite{Klenke2020}). In particular, if one could establish a similar concentration inequality for an $L^\infty$ strong law for pairwise i.i.d.\ processes over Hadamard spaces like that obtained in Lemma \ref{LinftyLawQuant}, then this result can be combined in a similar way as in the proof of Theorem \ref{L1LawHadQuant} with the respective concentration inequality for the real-valued $L^1$ strong law of large numbers for pairwise i.i.d.\ processes obtained in \cite{NguyenPham2021} (see Corollary 3 therein).

Next to the previous two questions, various other surrounding questions remain interesting points for future study where the above approach could be of help, like e.g.\ whether the strong law of large numbers extends to different classes of spaces such as e.g.\ Gromov's hyperbolic spaces (see the recent \cite{Ohta2024}), or whether there is an analogous result on Hadamard spaces to the strong law of large numbers by Marcinkiewicz and Zygmund \cite{MarcinkiewiczZygmund1937}, or even whether the associated non-asymptotic concentration inequality recently established in \cite{RufWaudbySmith2025} (see Theorem 2.3 therein) can be (appropriately) extended to a more geometric case. Such a result could then perhaps be used to establish, similar to \cite{RufWaudbySmith2025}, related Baum-Katz results \cite{BaumKatz1965} (see also \cite{Neri2025a}) on Hadamard spaces.

\section{Random monotone vector fields on Hadamard spaces}\label{sec:randOp}

\subsection{Monotone vector fields on Hadamard spaces}

Following \cite{ChaipunyaKohsakaKumam2021}, a monotone vector field over a Hadamard space $(\mathcal{H},d)$ is a mapping $A:\mathcal{H}\to 2^{T\mathcal{H}}$ such that $A(x)\subseteq T_x\mathcal{H}$ and 
\[
g_x(u,\log_x y)\leq - g_y(v,\log_y x)
\] 
for all $(x,u),(y,v)\in A$. We denote the set of zeros of $A$ by $\mathrm{zer}A:=\{x\in \mathcal{H}\mid 0_x\in A(x)\}$. As already discussed in the introduction, this notion simultaneously generalizes monotone operators on Hilbert spaces and monotone vector fields on Hadamard manifolds as introduced in \cite{Nemeth1999,DaCruzNetoFerreiraLucambioPerez2000} (see also \cite{LiLopezMartinMarquez2009,LiLopezMartinMarquezWang2011,WangLopezMartinMarquezLi2010}).

The key derived object for a monotone vector field is its resolvent, which we defined (following \cite{ChaipunyaKohsakaKumam2021}, which in turn generalizes \cite{LiLopezMartinMarquezWang2011}) via
\[
J^A_\lambda x:=\{z\in \mathcal{H}\mid \tfrac{1}{\lambda}\log_zx\in A(z)\}.
\]
Lemma \ref{tangentCat0} immediately yields that a $z\in \mathcal{H}$ such that $\frac{1}{\lambda}\log_zx\in A(z)$ is unique whenever $A$ is monotone. In that case, we identify $J^A_\lambda$ with the corresponding (potentially partial) function from $\mathcal{H}$ to $\mathcal{H}$. Following \cite{ChaipunyaKohsakaKumam2021}, we say that $A$ satisfies the surjectivity condition if all resolvents are total functions. In the following, we write $\mathcal{M}(\mathcal{H})$ for the set of all monotone vector fields on $\mathcal{H}$ satisfying the surjectivity condition (adapting the notation used in \cite{Salim2023}).

\begin{remark}
A monotone vector field $A$ is called maximal if its graph $\mathrm{gra}A:=\{(x,u)\in \mathcal{H}\times T\mathcal{H}\mid u\in Ax\}$ cannot be extended properly while preserving monotonicity. By the well-known theorem of Minty \cite{Minty1962}, the maximality of a monotone operator over a Hilbert space is equivalent to the totality of the resolvent. One direction remains valid, as shown in Proposition 3.5 in \cite{ChaipunyaKohsakaKumam2021}: if $A$ satisfies the surjectivity condition, then it is maximal. The converse extends to the setting of Hadamard manifolds as shown in \cite{LiLopezMartinMarquez2009} under the condition that the domain $\mathrm{dom}A:=\{x\in \mathcal{H}\mid A(x)\neq\emptyset\}$ is the whole space (see Remark 4.4 in \cite{LiLopezMartinMarquez2009}). 
\end{remark}

\begin{question}
Let $A$ be a maximally monotone vector field on a Hadamard space such that $\mathrm{dom}A=\mathcal{H}$. Does $A$ then satisfy the surjectivity condition? Further, can the assumption $\mathrm{dom}A=\mathcal{H}$ be omitted? This latter problem in particular seems to be open already for Hadamard manifolds.
\end{question}

\begin{example}[cf.\ Examples 2.7 and 2.11 in \cite{Pischke2025}]\label{subDiffEx}
Let $f:\mathcal{H}\to (-\infty,+\infty]$ be a proper, convex and lower-semicontinuous (lsc) function. Then the subdifferential 
\[
\partial f(x):=\{u\in T_x\mathcal{H}\mid f(y)\geq f(x)+g_x(u,\log_x y)\text{ for all }y\in \mathcal{H}\},
\]
as defined in the general setting of Hadamard spaces in \cite{ChaipunyaKohsakaKumam2021}, is a monotone vector field with $\mathrm{argmin} f:=\{x^*\in \mathcal{H}\mid f(x^*)\leq f(x)\text{ for all }x\in \mathcal{H}\}=\mathrm{zer} \partial f$. As shown in \cite[Proposition 3.8]{ChaipunyaKohsakaKumam2021}, the resolvent of $\partial f$ is given by
\[
\mathrm{prox}_{\lambda f}(x):=\mathrm{argmin}_{y\in \mathcal{H}}\left\{ f(y)+\frac{1}{2\lambda}d^2(x,y)\right\},
\]
that is the proximal map of $f$ (also called the Moreau-Yosida resolvent). Each $\mathrm{prox}_{\lambda f}$ is total (see \cite[Lemma 2]{Jost1995}), so that $\partial f$ satisfies the surjectivity condition (and hence is also maximal).
\end{example}

In the rest of this paper, we actually require very little properties of the resolvent. The first is the following lemma:

\begin{lemma}[Proposition 4.3 in \cite{ChaipunyaKohsakaKumam2021}]\label{ResProp}
If $A$ is a monotone vector field, then $J^A_\lambda$ is nonexpansive, that is
\[
d(J^A_\lambda(x),J^A_\lambda(y))\leq d(x,y)
\]
for all $x,y\in\mathrm{dom}(J^A_\lambda)$. Further, it holds that $\mathrm{Fix}(J^A_\lambda)=\mathrm{zer}A$ and if $A$ also satisfies the surjectivity condition, then
\[
J^A_\lambda(x)=J^A_\gamma\left(\left(1-\frac{\gamma}{\lambda}\right)J^A_\lambda(x)\oplus \frac{\gamma}{\lambda}x\right)
\]
for any $x\in \mathcal{H}$ and $0<\gamma\leq\lambda$.
\end{lemma}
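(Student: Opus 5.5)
For nonexpansivity, fix $x,y\in\mathrm{dom}(J^A_\lambda)$ and write $z:=J^A_\lambda x$ and $w:=J^A_\lambda y$. By definition $\frac1\lambda\log_z x\in A(z)$ and $\frac1\lambda\log_w y\in A(w)$. Monotonicity of $A$ applied to these two pairs, together with the homogeneity $g_x(t\gamma,s\eta)=tg_x(\gamma,s\eta)$ (which lets us drop the factor $\frac1\lambda$), gives
\[
g_z(\log_z x,\log_z w)+g_w(\log_w y,\log_w z)\leq 0.
\]
Lemma \ref{tangentCat0} bounds each term from below:
\[
g_z(\log_z x,\log_z w)\geq \tfrac12\bigl(d^2(z,x)+d^2(z,w)-d^2(x,w)\bigr),\qquad
g_w(\log_w y,\log_w z)\geq \tfrac12\bigl(d^2(w,y)+d^2(w,z)-d^2(y,z)\bigr).
\]
Adding these and using the monotonicity inequality yields
\[
2d^2(z,w)\leq d^2(x,w)+d^2(y,z)-d^2(x,z)-d^2(y,w).
\]

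To close the argument I will invoke the quadrilateral (Reshetnyak / Berg--Nikolaev) inequality valid in every $\CAT$ space:
\[
d^2(x,w)+d^2(y,z)\leq d^2(x,z)+d^2(y,w)+2d(x,y)d(z,w).
\]
Substituting this into the previous bound gives $d^2(z,w)\leq d(x,y)d(z,w)$, hence $d(z,w)\leq d(x,y)$ (trivially so if $d(z,w)=0$). The only nonformal ingredient is this four-point inequality. It is standard, but if it is not acceptable as known I would derive it from the $\mathrm{CN}$-inequality via the usual midpoint argument; I regard this as the main technical point.

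The identity $\mathrm{Fix}(J^A_\lambda)=\mathrm{zer}A$ is immediate. We have $J^A_\lambda x=x$ if and only if $\frac1\lambda\log_x x=0_x\in A(x)$, using $\log_x x=0_x$ and uniqueness of the resolvent point. The direction "$0_x\in A(x)$ implies $x=J^A_\lambda x$" uses that uniqueness, which Lemma \ref{tangentCat0} provides as remarked above.

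For the resolvent identity, let $z:=J^A_\lambda x$ and $y:=(1-\frac\gamma\lambda)z\oplus\frac\gamma\lambda x$. This $y$ lies on the geodesic $\gamma_{z,x}$ at distance $\frac\gamma\lambda d(z,x)$ from $z$. If $y\neq z$, its direction at $z$ is therefore that of $\gamma_{z,x}$, so
\[
\log_z y=\tfrac\gamma\lambda\log_z x.
\]
If $y=z$, then $z=x$ and both sides equal $0_z$. Consequently $\frac1\gamma\log_z y=\frac1\lambda\log_z x\in A(z)$, which means $z=J^A_\gamma y$ by uniqueness. Totality of the resolvents (the surjectivity condition) is what guarantees that both sides are defined for every $x$.
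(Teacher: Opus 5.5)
Your proof is correct. The paper gives no argument of its own for this lemma; it imports it from Chaipunya--Kohsaka--Kumam. Your route is the standard one: monotonicity plus Lemma~\ref{tangentCat0} gives the firm-nonexpansivity-type bound $2d^2(z,w)\leq d^2(x,w)+d^2(y,z)-d^2(x,z)-d^2(y,w)$, and a four-point inequality closes it. The fixed-point and resolvent identities then follow directly from the definition via $\log_x x=0_x$, $\log_z y=\frac{\gamma}{\lambda}\log_z x$ and uniqueness of the resolvent point.

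One correction concerns your fallback remark. The midpoint argument from the $\mathrm{CN}$-inequality works as follows: take the midpoint $m$ of $x$ and $w$, apply $\mathrm{CN}$ with base points $y$ and $z$, and use $d^2(y,z)\leq 2d^2(y,m)+2d^2(m,z)$. This only yields the sum-of-squares quadrilateral inequality
\[
d^2(x,w)+d^2(y,z)\leq d^2(x,z)+d^2(y,w)+d^2(x,y)+d^2(z,w).
\]
The Berg--Nikolaev product form is usually obtained from Reshetnyak's quadruple comparison theorem instead.

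The weaker inequality is already enough here. Combined with your bound it gives $2d^2(z,w)\leq d^2(x,y)+d^2(z,w)$, i.e.\ $d(z,w)\leq d(x,y)$. So you can either cite Berg--Nikolaev as you do, or make the argument fully self-contained from the $\mathrm{CN}$-inequality alone.
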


The only other property required later is the continuity of the resolvent in its real parameter. This was established as Proposition 4.12 in \cite{ChaipunyaKohsakaKumam2021} under the condition that $\mathcal{H}$ satisfies the geodesic extension property, but we here give the following lemma which disposes of that assumption:

\begin{lemma}\label{contRes}
Let $A$ be a monotone vector field that satisfies the surjectivity condition. Then for any $x\in \mathcal{H}$ and any $0<\gamma\leq\lambda$:
\[
d(J^A_\lambda(x),J^A_\gamma(x))\leq \left( 1-\frac{\gamma}{\lambda}\right) d(J^A_\lambda(x),x).
\]
In particular, $J^A_\lambda(x)$ is continuous in $\lambda\in (0,\infty)$.
\end{lemma}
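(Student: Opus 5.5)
The idea is to combine the resolvent identity of Lemma \ref{ResProp} with nonexpansiveness of $J^A_\gamma$, and then compare along the geodesic from $x$ to $J^A_\lambda(x)$.

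Fix $x\in\mathcal{H}$ and $0<\gamma\leq\lambda$. Write $z:=J^A_\lambda(x)$ and
\[
y:=\left(1-\tfrac{\gamma}{\lambda}\right)z\oplus\tfrac{\gamma}{\lambda}x .
\]
Since $A$ satisfies the surjectivity condition, all resolvents are total. Lemma \ref{ResProp} then gives $z=J^A_\gamma(y)$.

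We also have $J^A_\gamma(x)=J^A_\gamma(x)$, so nonexpansiveness of $J^A_\gamma$ (again Lemma \ref{ResProp}) yields
\[
d(J^A_\lambda(x),J^A_\gamma(x))=d(J^A_\gamma(y),J^A_\gamma(x))\leq d(y,x).
\]
The point $y$ lies on the geodesic joining $z$ and $x$, at parameter $\gamma/\lambda$ measured from $z$. Its distance to $x$ is therefore exactly $(1-\gamma/\lambda)\,d(z,x)$, because geodesics are isometries. This gives the claimed inequality.

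For continuity in $\lambda$, the bound shows that $\gamma\mapsto J^A_\gamma(x)$ satisfies
\[
d(J^A_\lambda(x),J^A_\gamma(x))\leq \tfrac{\lambda-\gamma}{\lambda}\,d(J^A_\lambda(x),x)\qquad\text{for }\gamma\leq\lambda.
\]
This immediately gives left-continuity at each $\lambda$.

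For right-continuity at $\gamma$, apply the same inequality with the roles arranged so that $\gamma<\lambda$ and $\lambda\downarrow\gamma$. This gives
\[
d(J^A_\lambda(x),J^A_\gamma(x))\leq \tfrac{\lambda-\gamma}{\lambda}\,d(J^A_\lambda(x),x).
\]
It therefore suffices to bound $d(J^A_\lambda(x),x)$ locally uniformly in $\lambda$. By the triangle inequality,
\[
d(J^A_\lambda(x),x)\leq d(J^A_\lambda(x),J^A_\gamma(x))+d(J^A_\gamma(x),x)\leq \tfrac{\lambda-\gamma}{\lambda}\,d(J^A_\lambda(x),x)+d(J^A_\gamma(x),x).
\]
Rearranging gives $\tfrac{\gamma}{\lambda}\,d(J^A_\lambda(x),x)\leq d(J^A_\gamma(x),x)$, so $d(J^A_\lambda(x),x)\leq \tfrac{\lambda}{\gamma}\,d(J^A_\gamma(x),x)$. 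This bound is finite and stays bounded for $\lambda$ near $\gamma$. Hence the right-hand side tends to $0$ as $\lambda\downarrow\gamma$.

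The only real care needed is this uniform bound for the right limit. The main step is recognising that Lemma \ref{ResProp} reduces everything to one application of nonexpansiveness.
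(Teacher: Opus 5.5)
Your proof is correct and matches the paper's argument for the main inequality: you use the resolvent identity from Lemma \ref{ResProp}, then nonexpansiveness of $J^A_\gamma$, then the position of $y$ on the geodesic from $J^A_\lambda(x)$ to $x$. For right-continuity you get the local bound differently. The paper shows $d(J^A_\gamma(x),x)\le 2\,d(J^A_\lambda(x),x)$ for $\gamma\le\lambda$ and caps the parameters by an auxiliary $\Gamma$, whereas you absorb the term to get $d(J^A_\lambda(x),x)\le\frac{\lambda}{\gamma}\,d(J^A_\gamma(x),x)$, which is controlled by the limit point itself. Both are valid, and yours additionally gives the explicit estimate $d(J^A_\lambda(x),J^A_\gamma(x))\le\frac{\lambda-\gamma}{\gamma}\,d(J^A_\gamma(x),x)$.
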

\begin{proof}
Let $x\in \mathcal{H}$ and $0<\gamma\leq\lambda$. Using Lemma \ref{ResProp}, we get
\begin{align*}
d(J^A_\lambda(x),J^A_\gamma(x))&= d\left(J^A_\gamma\left(\left(1-\frac{\gamma}{\lambda}\right)J^A_\lambda(x)\oplus \frac{\gamma}{\lambda}x\right),J^A_\gamma(x)\right)\\
&\leq d\left(\left(1-\frac{\gamma}{\lambda}\right)J^A_\lambda(x)\oplus \frac{\gamma}{\lambda}x,x\right)\\
&\leq \left( 1-\frac{\gamma}{\lambda}\right) d(J^A_\lambda(x),x).
\end{align*}
To see that $J^A_\lambda(x)$ is continuous in $\lambda$, note now first that the above yields
\begin{align*}
d(J^A_\gamma(x),x)&\leq d(J^A_\gamma(x),J^A_\lambda(x))+d(J^A_\lambda(x),x)\\
&\leq \left( 1-\frac{\gamma}{\lambda}\right) d(J^A_\lambda(x),x)+d(J^A_\lambda(x),x)\\
&= \left(2-\frac{\gamma}{\lambda}\right) d(J^A_\lambda(x),x)\leq 2d(J^A_\lambda(x),x).
\end{align*}
If now $\gamma_n\to\lambda$ from below, then the first inequality yields
\[
d(J^A_\lambda(x),J^A_{\gamma_n}(x))\leq \left( 1-\frac{{\gamma_n}}{\lambda}\right) d(J^A_\lambda(x),x)\to 0
\]
directly. If $\gamma_n\to\lambda$ from above, then w.l.o.g.\ $\gamma_n\in[\lambda,\Gamma]$ for some $\Gamma>0$. Then, the first together with second inequality imply
\[
d(J^A_\lambda(x),J^A_{\gamma_n}(x))\leq \left( 1-\frac{\lambda}{{\gamma_n}}\right) d(J^A_{\gamma_n}(x),x)\leq 2\left( 1-\frac{\lambda}{{\gamma_n}}\right) d(J^A_\Gamma(x),x)\to 0.
\]
Combined, $J^A_\lambda(x)$ is continuous in $\lambda$.
\end{proof}

\subsection{Random monotone vector fields and their means}

Fix a probability space $(T,\mathsf{T},\tau)$ and let $(\mathcal{H},d)$ be a separable Hadamard space. Over these spaces, consider a map $A:T\to\mathcal{M}(\mathcal{H})$.\footnote{Equivalently, we can regard $A$ as a perturbed set-valued vector field $A:T\times \mathcal{H}\to 2^{T\mathcal{H}}$ with $A(t,x)\subseteq T_x\mathcal{H}$ for all $t\in T$ and $x\in \mathcal{H}$ such that $A(t,\cdot)$ is monotone and satisfies the surjectivity condition for any $t\in T$ (as studied in \cite{Pischke2025}).} We write $A(t,x)$ for $A(t)(x)$.

For such a perturbed vector field, we define its resolvent similar to before via 
\[
J^{A(t)}_\lambda(x):=\{z\in \mathcal{H}\mid \tfrac{1}{\lambda}\log_zx\in A(t,z)\}
\]
for $\lambda>0$, $t\in T$ and $x\in \mathcal{H}$. The following notion of a random monotone vector field, characterized by a measurability condition on these perturbed resolvents, was recently introduced in \cite{Pischke2025}, extending the notion used in Hilbert spaces (see \cite{Bianchi2016} as well as \cite{Salim2023}, among others):

\begin{definition}
An operator $A:T\to \mathcal{M}(\mathcal{H})$ is called a random monotone vector field if $J^{A(\cdot)}_\lambda(x)$ is $\mathsf{T}$/$\mathsf{B}(\mathcal{H})$-measurable for any $x\in \mathcal{H}$ and any $\lambda>0$.
\end{definition}

As $J^{A(t)}_\lambda(\cdot)$ is single-valued, total and uniformly continuous (being nonexpansive, as discussed before) for any $t\in T$, the map $J^{A(\cdot)}_\lambda(\cdot)$ is therefore a Carath\'eodory map and, in particular, is $\mathsf{T}\otimes\mathsf{B}(\mathcal{H})$/$\mathsf{B}(\mathcal{H})$-measurable (see e.g.\ Lemma 8.2.6 in \cite{AubinFrankowska2009}). 

\begin{example}\label{ex:RMVF}
Let $f:T\times \mathcal{H}\to (-\infty,+\infty]$ be a given function. Following \cite{RockafellarWets1998}, we call $f$ a normal integrand if $f(t,\cdot)$ is lsc for all $t\in T$ and its epigraph $\mathrm{epi}f(t,\cdot):=\{(x,\alpha)\in \mathcal{H}\times\mathbb{R}\mid f(t,x)\leq\alpha\}$ is measurable as a set-valued function $T\to 2^{\mathcal{H}\times\mathbb{R}}$. We call $f$ a normal convex integrand if $f(t,\cdot)$ is additionally convex for all $t\in T$ and we call $f$ proper if $f(t,\cdot)$ is proper for any $t\in T$. If $(T,\mathsf{T},\tau)$ is complete, the measurability of the epigraph condition can equivalently be replaced with the assumption that $f$ is $\mathsf{T}\otimes\mathsf{B}(\mathcal{H})$-measurable.

For such a proper normal convex integrand $f$, consider now the associated random subdifferential 
\[
\partial f(t,x):=\{u\in T_x\mathcal{H}\mid f(t,y)\geq f(t,x)+g_x(u,\log_x y)\text{ for all }y\in \mathcal{H}\}
\]
for $t\in T$ and $x\in\mathcal{H}$. As discussed in Example \ref{subDiffEx}, the resolvents of $\partial f(t,\cdot)$ are given by the proximal maps $\mathrm{prox}_{\lambda f(t,\cdot)}(x)$, which are all total. As discussed in Example 3.2 in \cite{Pischke2025}, by adapting arguments from \cite{RockafellarWets1998}, it is further easy to see that $\mathrm{prox}_{\lambda f}(t,\cdot)(x)$ is $\mathsf{T}$/$\mathsf{B}(\mathcal{H})$-measurable in $t$, so that $\partial f:T\to M(\mathcal{H})$ is a random monotone vector field. 
\end{example}

The notion of a random monotone vector field $A$ as defined above can be characterized by a genuine measurability condition on $A$ defined using the so-called $R$-topology, extending results of Attouch \cite{Attouch1979}. This topology later gives rise to the notion of $R$-convergence which is the key notion of convergence for our strong law of large numbers for random monotone vector fields.

\begin{definition}\label{RtopRem}
The $R$-topology $\tau_{R}$ on $\mathcal{M}(\mathcal{H})$ is the coarsest topology that makes all the functions $p_{x,\lambda}(A):=J^A_\lambda(x)$, given $x\in \mathcal{H}$ and $\lambda>0$, continuous.
\end{definition}

Analogously to Proposition 1.1 in \cite{Attouch1979}, we also get here that this topology is metrizable and that the resulting space is separable. Before we derive this result, we however first give the following lemma, illustrating that the resolvents completely determine a monotone vector field with the surjectivity condition.

\begin{lemma}\label{resUnique}
Let $A,B\in\mathcal{M}(\mathcal{H})$ such that $J^A_\lambda(x)=J^B_\lambda(x)$ for all $\lambda>0$ and $x\in \mathcal{H}$. Then $A=B$.
\end{lemma}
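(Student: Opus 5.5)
By symmetry it suffices to prove $\mathrm{gra}A\subseteq\mathrm{gra}B$. The plan is to use maximality: since $B$ satisfies the surjectivity condition, it is maximal by Proposition 3.5 in \cite{ChaipunyaKohsakaKumam2021}. So, fixing $(x,u)\in\mathrm{gra}A$, it is enough to show that $(x,u)$ is monotonically related to every $(y,v)\in\mathrm{gra}B$, i.e.
\[
g_x(u,\log_x y)+g_y(v,\log_y x)\leq 0,
\]
because maximality then forces $u\in B(x)$.

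The key observation is that every ``resolvent pair'' of $B$ is also a pair of $A$. For $z\in\mathcal{H}$ and $\mu>0$, put $q:=J^B_\mu(z)=J^A_\mu(z)$. By the definition of the resolvent, $(q,\tfrac1\mu\log_q z)$ lies in both $\mathrm{gra}B$ and $\mathrm{gra}A$. Given $(y,v)\in\mathrm{gra}B$, we approximate it by such pairs, choosing $z=y$, $q_\mu:=J^B_\mu(y)$ and $w_\mu:=\tfrac1\mu\log_{q_\mu}y$.
\begin{enumerate}[(i)]
\item Monotonicity of $B$ applied to $(y,v)$ and $(q_\mu,w_\mu)$ gives $d^2(q_\mu,y)/\mu\leq -g_y(v,\log_y q_\mu)\leq\norm{v}_y\,d(q_\mu,y)$. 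Hence $d(q_\mu,y)\leq\mu\norm{v}_y\to0$ as $\mu\to0$.
\item Monotonicity of $A$ applied to $(x,u)$ and $(q_\mu,w_\mu)\in\mathrm{gra}A$ gives $g_x(u,\log_x q_\mu)+g_{q_\mu}(w_\mu,\log_{q_\mu}x)\leq0$.
\end{enumerate}
The first term in (ii) converges to $g_x(u,\log_x y)$ by Lemma \ref{nonexpCat0} together with the nonexpansiveness of $\log_x$.

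For the second term, Lemma \ref{tangentCat0} at $q_\mu$ gives
\[
g_{q_\mu}(w_\mu,\log_{q_\mu}x)\geq\tfrac1{2\mu}\bigl(d^2(q_\mu,y)+d^2(q_\mu,x)-d^2(x,y)\bigr).
\]
Applying Lemma \ref{tangentCat0} again, now at $y$, yields $d^2(q_\mu,x)-d^2(x,y)\geq d^2(y,q_\mu)-2g_y(\log_y q_\mu,\log_y x)$. Combining the two, the second term is bounded below by
\[
\tfrac1\mu d^2(q_\mu,y)-\tfrac1\mu g_y(\log_y q_\mu,\log_y x).
\]
The proof would therefore be complete once we show
\[
\liminf_{\mu\to0}\Bigl(-\tfrac1\mu g_y(\log_y q_\mu,\log_y x)\Bigr)\geq g_y(v,\log_y x),
\]
that is, that $-\tfrac1\mu\log_y q_\mu$ recovers $v$ at least ``weakly'' against the direction $\log_y x$.

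I expect this last step to be the main obstacle. In a Hilbert space it is immediate, since there $\tfrac1\mu(y-q_\mu)$ equals the Yosida approximation, which converges to the minimal-norm element of $B(y)$. The first thing I would try is to exploit the fact that, by (i), the vectors $-\tfrac1\mu\log_y q_\mu$ are bounded by $\norm{v}_y$. I would then combine the monotonicity inequality in (i) with a second application of monotonicity of $B$ between $(q_\mu,w_\mu)$ and resolvent pairs based at points on the geodesic from $y$ to $x$. This should produce an inequality for $g_y(\cdot,\log_y x)$ that is linear enough to pass to the limit.

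If this limit cannot be controlled directly, the fallback is to avoid arbitrary $(y,v)$ altogether. Here I would use the resolvent identity of Lemma \ref{ResProp} to recover $v$ from resolvent data of $B$, and hence of $A$, together with the continuity in $\lambda$ from Lemma \ref{contRes}.
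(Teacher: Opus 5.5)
Steps (i), (ii) and your two uses of Lemma \ref{tangentCat0} are correct. However, the inequality you reduce to,
\[
\liminf_{\mu\to 0}\Bigl(-\tfrac{1}{\mu}\, g_y(\log_y q_\mu,\log_y x)\Bigr)\geq g_y(v,\log_y x),
\]
is false in general, already on $\mathbb{R}$, so this step is a dead end rather than an obstacle. The pairs $(q_\mu,w_\mu)$ with $q_\mu=J^B_\mu(y)$ do not depend on $v$ at all. In a Hilbert space they converge to $(y,B^0(y))$, so the left-hand side is $\langle B^0(y),x-y\rangle$, and the inequality would have to hold for every $v\in B(y)$ simultaneously. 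It is therefore not ``immediate'' there but wrong. Take $A=B=\partial|\cdot|$ on $\mathbb{R}$, $y=0$, $v=1$ and $(x,u)=(1,1)$. Then $q_\mu=0$ and $w_\mu=0$ for all $\mu>0$, so the left-hand side is $0$ while the right-hand side is $1$. Monotone relatedness of $(x,u)$ to resolvent pairs based at $z=y$ carries no information about $v$, and no further monotonicity estimate along geodesics can restore it.

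The missing idea is to use your key observation without any limit. The paper applies maximality not to $B$ but to the set $C$ of all resolvent pairs $\bigl(J^A_\lambda(z),\lambda^{-1}\log_{J^A_\lambda(z)}z\bigr)$, $z\in\mathcal{H}$, $\lambda>0$, which by hypothesis are also the resolvent pairs of $B$. Then $C\subseteq\mathrm{gra}A\cap\mathrm{gra}B$ is monotone, and its resolvents are $J^A_\lambda$ by uniqueness. So $C$ satisfies the surjectivity condition and is maximal by Proposition 3.5 of \cite{ChaipunyaKohsakaKumam2021}. Since $A$ and $B$ are monotone extensions of $C$, $A=C=B$.

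An even more elementary version of your fallback is to realise $(y,v)$ itself as a resolvent pair. Suppose $v=t\gamma$ with $t>0$ and $\gamma\in\Sigma'_y\mathcal{H}$ represented by a geodesic from $y$ through some $z\neq y$, and set $\mu:=d(y,z)/t$. Then $\mu^{-1}\log_yz=v\in B(y)$, so $y=J^B_\mu(z)=J^A_\mu(z)$, i.e.\ $v\in A(y)$ (for $v=0_y$ take $z=y$). This uses neither maximality nor any limit, and it covers all of $T_y\mathcal{H}$ in Hilbert spaces and Hadamard manifolds.

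It does not reach directions in $\Sigma_y\mathcal{H}\setminus\Sigma'_y\mathcal{H}$, and there the difficulty is genuine for either argument. On $K=\{(a,b)\mid b\geq a^2\}\subseteq\mathbb{R}^2$ with $f(a,b)=-a$, one has $\partial f(0)=\{(-1,0)\}$, a direction in $\Sigma_0K\setminus\Sigma'_0K$, while $0$ is never a value of $\mathrm{prox}_{\lambda f}$. So the resolvent pairs form a field $C\subsetneq\partial f$ in $\mathcal{M}(K)$ with the same resolvents. Thus, with completed tangent cones, surjectivity does not imply maximality, and the statement needs an extra hypothesis such as $\Sigma'_y\mathcal{H}=\Sigma_y\mathcal{H}$ for all $y$. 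Under that hypothesis the elementary argument above already suffices.
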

\begin{proof}
We first show
\[
A=\{(J^{A}_\lambda(x),\lambda^{-1}\log_{J^{A}_\lambda(x)}x)\mid x\in \mathcal{H}, \lambda>0\}.
\]
Concretely, call the right-hand side $C$. Then $C\subseteq A$ and hence $C$ is a monotone vector field. Now, by definition we have $\lambda^{-1}\log_{J^{A}_\lambda(x)}x\in C(J^{A}_\lambda(x))$, and so by uniqueness of the resolvent, we have $J^{C}_\lambda(x)=J^{A}_\lambda(x)$ for any $x\in \mathcal{H}$ and $\lambda>0$. In particular, $C$ satisfies the surjectivity condition and hence is maximal. Therefore, we have $A=C$. Now, we similarly get the above representation for $B$, and so we immediately get
\begin{align*}
A&=\{(J^{A}_\lambda(x),\lambda^{-1}\log_{J^{A}_\lambda(x)}x)\mid x\in \mathcal{H}, \lambda>0\}\\
&=\{(J^{B}_\lambda(x),\lambda^{-1}\log_{J^{B}_\lambda(x)}x)\mid x\in \mathcal{H}, \lambda>0\}=B
\end{align*}
if $J^A_\lambda(x)=J^B_\lambda(x)$ for all $\lambda>0$ and $x\in \mathcal{H}$, as claimed.
\end{proof}

We can now characterize the $R$-topology further, using a similar approach as Attouch \cite{Attouch1979}.

\begin{lemma}\label{almostPolish}
The $R$-topology on $\mathcal{M}(\mathcal{H})$ is metrizable with metric 
\[
d_R(A,B):=\sum_{n=1}^\infty \frac{1}{2^{n}}\min\{1,d(J^A_{r_n}(x_n),J^B_{r_n}(x_n))\},
\]
where $\{(x_n,r_n)\mid n\in\mathbb{N}\}$ is dense in $\mathcal{H}\times (0,\infty)$. Further, the space $(\mathcal{M}(\mathcal{H}),d_R)$ is separable.
\end{lemma}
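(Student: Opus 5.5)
The plan is to follow Attouch's argument for Proposition 1.1 in \cite{Attouch1979}, using only two earlier facts. The first is that each $J^A_\lambda$ is nonexpansive in $x$ (Lemma \ref{ResProp}). The second is that $J^A_\lambda(x)$ is continuous in $\lambda$, with the explicit modulus of Lemma \ref{contRes}. Lemma \ref{resUnique} will give that $d_R$ separates points.

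First I check that $d_R$ is a metric. Symmetry and the triangle inequality hold termwise. If $d_R(A,B)=0$, then $J^A_{r_n}(x_n)=J^B_{r_n}(x_n)$ for all $n$. To pass from the dense set to all $(x,\lambda)$, I use the bound
\[
d(J^A_\lambda(x),J^A_{r}(y))\leq d(x,y)+d(J^A_\lambda(x),J^A_r(x)),
\]
where the last term tends to $0$ as $r\to\lambda$ by Lemma \ref{contRes}. Hence $(x,\lambda)\mapsto J^A_\lambda(x)$ is jointly continuous, the resolvents of $A$ and $B$ agree everywhere, and $A=B$ by Lemma \ref{resUnique}.

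Next I show that $d_R$ induces $\tau_R$.

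Each $p_{x_n,r_n}$ is continuous for $d_R$, but I need every $p_{x,\lambda}$ to be continuous. So suppose $d_R(A_k,A)\to 0$, fix $(x,\lambda)$ and $\varepsilon>0$, and pick $(x_n,r_n)$ with $d(x,x_n)<\varepsilon$ and $r_n$ close to $\lambda$. Then
\[
d(J^{A_k}_\lambda x,J^A_\lambda x)\leq 2d(x,x_n)+d(J^{A_k}_{r_n}x_n,J^A_{r_n}x_n)+d(J^{A_k}_\lambda x,J^{A_k}_{r_n}x)+d(J^A_\lambda x,J^A_{r_n}x).
\]
The critical term is $d(J^{A_k}_\lambda x,J^{A_k}_{r_n}x)$, which must be controlled uniformly in $k$. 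Taking $r_n\leq\lambda$, Lemma \ref{contRes} bounds it by $(1-r_n/\lambda)\,d(J^{A_k}_\lambda x,x)$. For the distance factor I estimate
\[
d(J^{A_k}_\lambda x,x)\leq 2d(J^{A_k}_{r_m}x_m,x_m)+2d(x,x_m)
\]
for some fixed $r_m\in[\lambda,2\lambda]$, using the second inequality in the proof of Lemma \ref{contRes}. Since $J^{A_k}_{r_m}x_m\to J^A_{r_m}x_m$, this quantity is bounded in $k$.

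Metric topologies are determined by sequences, so this gives $\tau_R\subseteq\tau_{d_R}$. The reverse inclusion holds because $d_R$ is built from continuous functions $p_{x_n,r_n}$ via a uniformly convergent series, which makes $d_R(\cdot,B)$ $\tau_R$-continuous. I expect this uniform-in-$k$ continuity in $\lambda$ to be the main obstacle.

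For separability, I map $\mathcal{M}(\mathcal{H})$ into $\prod_n\mathcal{H}$ via $A\mapsto (J^A_{r_n}(x_n))_n$. The product carries the metric $\sum 2^{-n}\min\{1,d\}$, under which this map is an isometric embedding. A countable product of separable metric spaces is separable, and subspaces of separable metric spaces are separable, so $(\mathcal{M}(\mathcal{H}),d_R)$ is separable. This step is routine.
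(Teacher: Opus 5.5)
Your proposal is correct and follows essentially the same route as the paper. You show continuity of each $p_{x,\lambda}$ under $d_R$ through a nearby dense pair, nonexpansivity in $x$, and the modulus from Lemma \ref{contRes}, made uniform in $k$ via convergence at a dense pair with a larger parameter; separability then comes from the embedding $A\mapsto (J^A_{r_n}(x_n))_n$ into $\mathcal{H}^{\mathbb{N}}$, which is injective by Lemma \ref{resUnique}. The only differences are cosmetic: you approximate $\lambda$ from below (bounding $d(J^{A_k}_\lambda x,x)$ through some $r_m\in[\lambda,2\lambda]$) where the paper approximates from above, and you check the metric axioms explicitly rather than leaving them implicit in the injectivity of that embedding.
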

\begin{proof}
First, note that $\tau_R$ is the topology induced by $d_R$. For that, note that the topology induced by $d_R$ is the coarsest topology that makes all the functions $J^A_{r_n}(x_n)$ continuous. It hence only remains to be shown that  $J^A_\lambda(x)$ is continuous in $A$ under $d_R$, given $\lambda>0$ and $x\in \mathcal{H}$. For this, suppose $d_R(A_n,A)\to 0$. Then by definition $d(J^{A_n}_{r_k}(x_k),J^A_{r_k}(x_k))\to 0$ for any $k\in\mathbb{N}$. Take a $k\in\mathbb{N}$ such that $d(x_k,x)<\frac{\varepsilon}{3}$. Then, for large enough $n$, we get $d(J^{A}_{r_k}(x_k),J^{A_n}_{r_k}(x_k))<\frac{\varepsilon}{3}$ for this $k$, and hence obtain
\[
d(J^{A}_{r_k}(x),J^{A_n}_{r_k}(x))\leq d(J^{A}_{r_k}(x),J^{A}_{r_k}(x_k))+d(J^{A}_{r_k}(x_k),J^{A_n}_{r_k}(x_k))+d(J^{A_n}_{r_k}(x_k),J^{A_n}_{r_k}(x))<\varepsilon
\]
for all such suitably large $n$. Now, using Lemma \ref{contRes}, take a $k$ such that $r_j\geq r_k\geq\lambda$ is close enough to $\lambda$ such that $d(J^{A}_{\lambda}(x),J^{A}_{r_k}(x))<\frac{\varepsilon}{3}$ and 
\[
d(J^{A_n}_{r_k}(x),J^{A_n}_\lambda(x))\leq 2\left( 1-\frac{\lambda}{r_k}\right) d(J^{A_n}_{r_j}(x),x)<\frac{\varepsilon}{3},
\]
using that $d(J^{A_n}_{r_j}(x),x)$ is bounded for varying $n$. Then, for large enough $n$, we get that $d(J^{A}_{r_k}(x),J^{A_n}_{r_k}(x))<\frac{\varepsilon}{3}$ for this $k$, and hence obtain
\[
d(J^{A}_{\lambda}(x),J^{A_n}_{\lambda}(x))\leq d(J^{A}_{\lambda}(x),J^{A}_{r_k}(x))+d(J^{A}_{r_k}(x),J^{A_n}_{r_k}(x))+d(J^{A_n}_{r_k}(x),J^{A_n}_{\lambda}(x))<\varepsilon.
\]
To see that $(\mathcal{M}(\mathcal{H}),d_R)$ is separable, define the map $i:\mathcal{M}(\mathcal{H})\to \mathcal{H}^\mathbb{N}$ via
\[
i(A):=(J^A_{r_n}(x_n))_{n\in\mathbb{N}}.
\]
The map $i$ is now injective: Suppose that $A,B\in\mathcal{M}(\mathcal{H})$ are such that $J^A_{r_n}(x_n)=J^B_{r_n}(x_n)$ for all $n\in\mathbb{N}$. Then by Lemmas \ref{ResProp} and \ref{contRes}, we get that $J^A_{\lambda}(x)=J^B_{\lambda}(x)$ for all $\lambda>0$ and $x\in \mathcal{H}$. Lemma \ref{resUnique} now yields that $A=B$. In particular, $i$ is a bijection between $\mathcal{M}(\mathcal{H})$ and $i(\mathcal{M}(\mathcal{H}))$. Now, note that by definition of the $R$-topology, the map $i$ is a homeomorphism between $\mathcal{M}(\mathcal{H})$ and $i(\mathcal{M}(\mathcal{H}))$. As $\mathcal{H}$ is separable, so is $\mathcal{H}^\mathbb{N}$ and hence $\mathcal{M}(\mathcal{H})$ is also separable.
\end{proof}

\begin{remark}
Proposition 1.1 in \cite{Attouch1979} also establishes that $(\mathcal{M}(\mathcal{H}),d_R)$ is complete for a separable Hilbert space $\mathcal{H}$. Lifting this completeness to the present nonlinear setting is not completely trivial: While the approach from \cite{Attouch1979} carries over conceptually, it relies on the ability to associate a monotone vector field to a given nonexpansive map which is not fully available in this nonlinear context (see Proposition 4.7 in \cite{ChaipunyaKohsakaKumam2021} for some partial results in the context of the geodesic extension property). We however do not need the completeness of $(\mathcal{M}(\mathcal{H}),d_R)$ here, and so simply omit results related to this.
\end{remark}

The following result now partially extends Lemma 2.1 from \cite{Attouch1979}:

\begin{lemma}\label{randMonChar}
Let $A:T\to \mathcal{M}(\mathcal{H})$ be given. Then $A$ is a random monotone vector field if, and only if, $A$ is $\mathsf{T}$\emph{/}$\mathsf{B}(\mathcal{M}(\mathcal{H}))$-measurable.
\end{lemma}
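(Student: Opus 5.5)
The proof splits into the two implications. The measurability direction is the short one. The converse rests on the metrizability and separability from Lemma \ref{almostPolish}.

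\emph{Measurability implies random monotone vector field.} Suppose $A$ is $\mathsf{T}$/$\mathsf{B}(\mathcal{M}(\mathcal{H}))$-measurable and fix $x\in\mathcal{H}$ and $\lambda>0$. By Definition \ref{RtopRem} the map $p_{x,\lambda}(B)=J^B_\lambda(x)$ is continuous from $(\mathcal{M}(\mathcal{H}),\tau_R)$ to $\mathcal{H}$, hence Borel measurable. So $t\mapsto J^{A(t)}_\lambda(x)=p_{x,\lambda}(A(t))$ is a composition of a measurable map with a Borel map and is therefore measurable.

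\emph{Random monotone vector field implies measurability.} Suppose now that all maps $t\mapsto J^{A(t)}_\lambda(x)$ are measurable. By Lemma \ref{almostPolish}, $\tau_R$ is induced by the metric $d_R$, and $(\mathcal{M}(\mathcal{H}),d_R)$ is separable. In a separable metric space the Borel $\sigma$-algebra is generated by the open balls, since every open set is a countable union of balls. It therefore suffices to show that $t\mapsto d_R(A(t),B)$ is $\mathsf{T}$-measurable for each fixed $B\in\mathcal{M}(\mathcal{H})$. Writing this out,
\[
d_R(A(t),B)=\sum_{n=1}^\infty \frac{1}{2^n}\min\{1,d(J^{A(t)}_{r_n}(x_n),J^B_{r_n}(x_n))\}.
\]
Each summand is measurable in $t$: the point $J^B_{r_n}(x_n)$ is fixed, $z\mapsto d(z,J^B_{r_n}(x_n))$ is continuous, and $t\mapsto J^{A(t)}_{r_n}(x_n)$ is measurable by hypothesis. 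A pointwise convergent series of measurable real functions is measurable, so $t\mapsto d_R(A(t),B)$ is measurable. Hence $A^{-1}(\{C: d_R(C,B)<r\})\in\mathsf{T}$ for every ball, and then $A^{-1}(U)\in\mathsf{T}$ for every open $U$, because $U$ is a countable union of balls with centres in a countable dense subset.

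The only step needing care is the reduction from open sets to balls. It requires separability of $(\mathcal{M}(\mathcal{H}),d_R)$, which Lemma \ref{almostPolish} supplies, and it needs neither completeness nor a Polish structure. Concretely: take a countable dense set $\{B_k\}$, and write any open $U$ as the union of the balls $B(B_k,q)$, $q\in\mathbb{Q}_{>0}$, that are contained in $U$. This is the standard argument, so no real obstacle remains beyond invoking Lemma \ref{almostPolish}.
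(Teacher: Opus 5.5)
Your proposal is correct and matches the paper's proof: the forward direction uses continuity of $p_{x,\lambda}$ under $\tau_R$, and the converse shows $t\mapsto d_R(A(t),B)$ is measurable from the series representation and then uses the separability from Lemma \ref{almostPolish}. The only difference is that you prove the reduction from open sets to balls centred at a countable dense set directly, whereas the paper cites Lemma III.16 of Castaing--Valadier for this step.
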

\begin{proof}
Suppose that $A$ is $\mathsf{T}$/$\mathsf{B}(\mathcal{M}(\mathcal{H}))$-measurable. Then $J^{A(t)}_\lambda(x)=p_{x,\lambda}(A(t))$ is $\mathsf{T}$/$\mathsf{B}(\mathcal{H})$-measurable as $p_{x,\lambda}$ is continuous in $\mathcal{M}(\mathcal{H})$. Hence, $A$ is a random monotone vector field.

Conversely, suppose that each $J^{A(t)}_\lambda(x)$ is $\mathsf{T}$/$\mathsf{B}(\mathcal{H})$-measurable. Then, for an arbitrary but fixed $B\in \mathcal{M}(\mathcal{H})$, we have
\[
d_R(A(t),B):=\sum_{n=1}^\infty \frac{1}{2^{n}}\min\{1,d(J^{A(t)}_{r_n}(x_n),J^B_{r_n}(x_n))\}.
\]
As each summand is measurable, we get that also the series and hence $d_R(A(t),B)$ is $\mathsf{T}$-measurable for any $B\in\mathcal{M}(\mathcal{H})$. As $(\mathcal{M}(\mathcal{H}),d_R)$ is separable by Lemma \ref{almostPolish}, we get that $A$ is $\mathsf{T}$/$\mathsf{B}(\mathcal{M}(\mathcal{H}))$-measurable (see e.g.\ Lemma III.16 in \cite{CastaingValadier1977}).
\end{proof}

Further, we can show that for a fixed $x\in \mathcal{H}$, the map $t\mapsto A(t,x)$ is measurable as a set-valued map $T\to 2^{T_x\mathcal{H}}$, which will be useful later. For that, we first recall the following two key notions from measurable selection theory (we generally refer to \cite{AubinFrankowska2009,CastaingValadier1977} for further background): Over a complete separable metric space $(\mathcal{H},d)$ and a measurable space $(T,\mathsf{T})$, a set-valued map $\varphi:T\to 2^\mathcal{H}$ is called graph measurable if
\[
\mathrm{gra}(\varphi):=\{(t,x)\in T\times \mathcal{H}\mid x\in\varphi(t)\}\in\mathsf{T}\otimes\mathsf{B}(\mathcal{H}).
\]
Over complete $\sigma$-finite measure spaces, and if $\varphi$ has nonempty closed images, this is equivalent (see e.g.\ Theorem 8.1.4 in \cite{AubinFrankowska2009}) to the so-called weak measurability of $\varphi:T\to 2^\mathcal{H}$, that is that 
\[
\varphi^{-1}(C):=\{t\in T\mid \varphi(t)\cap C\neq\emptyset\}\in\mathsf{T}
\]
for all open sets $C\subseteq \mathcal{H}$, as well as to the so-called measurability of $\varphi$, that is that $\varphi^{-1}(C)\in\mathsf{T}$ for all closed sets $C\subseteq \mathcal{H}$. 

\begin{lemma}\label{Ameas}
Let $(T,\mathsf{T},\tau)$ be a complete probability space. Let $A:T\to\mathcal{M}(\mathcal{H})$ be given and fix $x\in \mathcal{H}$. Then $t\mapsto A(t,x)$ is measurable as a set-valued map $T\to 2^{T_x\mathcal{H}}$.
\end{lemma}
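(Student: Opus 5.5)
The plan is to prove graph measurability of $t\mapsto A(t,x)$ in $T\times T_x\mathcal{H}$. Since $T_x\mathcal{H}$ is complete and separable (Remark \ref{rem:tangentSepAndMeas}) and $(T,\mathsf{T},\tau)$ is complete, Theorem 8.1.4 in \cite{AubinFrankowska2009} then gives measurability. The fibres $A(t,x)$ are closed: by maximality of $A(t)$, $A(t,x)$ is the set of all $u\in T_x\mathcal{H}$ satisfying the monotonicity inequalities against every point of the graph, and each such inequality is closed in $u$ by Lemma \ref{nonexpCat0}. Where $A(t,x)=\emptyset$, the set $\{t\mid A(t,x)\neq\emptyset\}$ is the projection of the graph. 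By the projection theorem on complete measure spaces this set is measurable, so the empty-image case does not obstruct the argument.

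The key step is to describe the graph by countably many measurable conditions, using the representation from the proof of Lemma \ref{resUnique}. That proof shows
\[
A(t)=\{(J^{A(t)}_\lambda(y),\lambda^{-1}\log_{J^{A(t)}_\lambda(y)}y)\mid y\in\mathcal{H},\lambda>0\}.
\]
Combined with maximality, this gives that $u\in A(t,x)$ if and only if
\[
g_x(u,\log_x z)+g_z(v,\log_z x)\leq 0\quad\text{for all }(z,v)\in A(t).
\]
It therefore suffices to test against pairs $(z,v)=(J^{A(t)}_{r_n}(x_n),r_n^{-1}\log_{z}x_n)$ with $\{(x_n,r_n)\}$ dense in $\mathcal{H}\times(0,\infty)$. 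To justify reducing to this countable family, fix $(y,\lambda)$ and approximate it by $(x_n,r_n)\to(y,\lambda)$. Lemma \ref{ResProp} and Lemma \ref{contRes} then give $z_n:=J^{A(t)}_{r_n}(x_n)\to z:=J^{A(t)}_\lambda(y)$.

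Next rewrite the test term so it needs no tangent cone at $z$. We have
\[
g_z(\lambda^{-1}\log_z y,\log_z x)=\lambda^{-1}\,d(z,y)\,d(z,x)\cos\angle_z(y,x).
\]
This is not obviously continuous in $z$, and that is the main obstacle: angles are only upper semicontinuous. To avoid it, I would replace the monotonicity characterization by the resolvent characterization, which involves only metric quantities. The candidate is that $u\in A(t,x)$ iff $J^{A(t)}_\lambda(\exp\text{-type point})=x$. Since an exponential map is unavailable, I would instead use the following necessary and sufficient condition, obtained via Lemma \ref{tangentCat0} and maximality: $u\in A(t,x)$ iff for all $n$,
\[
g_x(u,\log_x z_n)\leq -\,\tfrac{1}{2r_n}\bigl(d^2(z_n,x_n)+d^2(z_n,x)-d^2(x_n,x)\bigr).
\]
Here the right-hand side is a lower bound for $-g_{z_n}(v_n,\log_{z_n}x)$ obtained by applying Lemma \ref{tangentCat0}. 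Sufficiency would have to be rechecked, and if it fails I would revert to the angle formula together with the semicontinuity argument.

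With this condition in hand, measurability follows. Each map $t\mapsto z_n(t)$ is measurable by the definition of a random monotone vector field. The maps $(t,u)\mapsto g_x(u,\log_x z_n(t))$ are Carath\'eodory by Lemma \ref{nonexpCat0} and the nonexpansiveness of $\log_x$, and hence jointly measurable. The right-hand sides are measurable in $t$. Each condition therefore defines a set in $\mathsf{T}\otimes\mathsf{B}(T_x\mathcal{H})$, and the graph is their countable intersection, which proves graph measurability.
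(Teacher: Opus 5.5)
Your framework is sound: closed fibres via maximality, Carath\'eodory measurability of each constraint, and handling empty values by projection all work. The gap is in the step that carries the whole proof. You never show that the graph of $t\mapsto A(t,x)$ is the countable intersection of the sets cut out by your metric inequalities.

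\textbf{The inequality direction is inverted.} Lemma \ref{tangentCat0} gives $g_{z_n}(v_n,\log_{z_n}x)\geq\frac{1}{2r_n}\bigl(d^2(z_n,x_n)+d^2(z_n,x)-d^2(x_n,x)\bigr)$. So your right-hand side is an \emph{upper} bound for $-g_{z_n}(v_n,\log_{z_n}x)$, not a lower bound. This makes the condition necessary for $u\in A(t,x)$. But whenever Lemma \ref{tangentCat0} is strict (the generic situation outside flat spaces), the condition is strictly weaker than monotonicity against $(z_n,v_n)$. Sufficiency is therefore the entire content of the claim, and you leave it ``to be rechecked''.

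\textbf{The earlier reduction is also unjustified.} Your reduction of the original monotonicity test to the countable family used only $z_n\to z$, which does not control the angle term. The fallback is only named. As written, graph measurability, and hence the lemma, is not proved.

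\textbf{Either route can be completed.}
\begin{enumerate}
\item[(a)] For sufficiency of the metric condition: joint continuity of $(y,\lambda)\mapsto J^{A(t)}_\lambda(y)$ and Lemma \ref{nonexpCat0} extend the inequality from the dense family to all $(y,\lambda)$. Given $(z,v)=(J^{A(t)}_\lambda(y),\lambda^{-1}\log_zy)\in\mathrm{gra}A(t)$ and $\gamma\in(0,\lambda]$, Lemma \ref{ResProp} gives $z=J^{A(t)}_\gamma(y_\gamma)$ for $y_\gamma:=(1-\frac{\gamma}{\lambda})z\oplus\frac{\gamma}{\lambda}y$, with $\gamma^{-1}\log_zy_\gamma=v$. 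For $z\notin\{x,y\}$ (the other cases are trivial), the Euclidean law of cosines shows that the right-hand side at $(y_\gamma,\gamma)$ equals $-\norm{v}_z\,d(z,x)\cos\bar{\angle}_z(y_\gamma,x)$. By the first variation formula this tends to $-g_z(v,\log_zx)$ as $\gamma\to 0$. Maximality then gives $u\in A(t,x)$.
\item[(b)] Alternatively, upper semicontinuity of $(p,a,b)\mapsto\angle_p(a,b)$ goes exactly the right way; it is not an obstruction. It yields $\liminf_n g_{z_n}(v_n,\log_{z_n}x)\geq g_z(v,\log_zx)$, so the countably many monotonicity inequalities pass to the limit. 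It also makes $t\mapsto g_{z_n(t)}(v_n(t),\log_{z_n(t)}x)$ measurable.
\end{enumerate}

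\textbf{The paper sidesteps all of this.} From $A(t,x)=\{\lambda^{-1}\log_xy\mid J^{A(t)}_\lambda(y)=x\}$, it writes $\{t\mid A(t,x)\cap U\neq\emptyset\}$ as the projection onto $T$ of $\{(t,y,\lambda)\mid J^{A(t)}_\lambda(y)=x,\ \lambda^{-1}\log_xy\in U\}$. This set is product-measurable because the resolvent is Carath\'eodory in $(t,(y,\lambda))$, and the measurable projection theorem concludes. This is how your ``$\exp$-type point'' idea works without an exponential map: the existential quantifier over $(y,\lambda)$ is absorbed by projection rather than solved for.
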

\begin{proof}
Similar as in the proof of Lemma \ref{resUnique}, we have 
\[
A(t)=\{(J^{A(t)}_\lambda(y),\lambda^{-1}\log_{J^{A(t)}_\lambda(y)}y)\mid y\in \mathcal{H}, \lambda>0\}.
\]
Hence, we get
\[
A(t,x)=\{\lambda^{-1}\log_{J^{A(t)}_\lambda(y)}y\mid y\in \mathcal{H}, \lambda>0\text{ such that }J^{A(t)}_\lambda(y)=x\}.
\]
Hence, for $U\subseteq T_x\mathcal{H}$ closed, we have
\[
\{t\in T\mid A(t,x)\cap U\neq\emptyset\}=\pi_T(E_U)
\]
where $\pi_T$ is the projection onto $T$ and 
\[
E_U=\{(t,y,\lambda)\in T\times \mathcal{H}\times (0,\infty)\mid J^{A(t)}_\lambda(y)=x\text{ and }\lambda^{-1}\log_xy\in U\}.
\]
Note that $J^{A(t)}_\lambda(y)$ is jointly continuous in $(y,\lambda)\in \mathcal{H}\times (0,\infty)$ since we have
\begin{align*}
d(J^{A(t)}_{\lambda_n}(y_n),J^{A(t)}_{\lambda}(y))&\leq d(J^{A(t)}_{\lambda_n}(y_n),J^{A(t)}_{\lambda_n}(y))+d(J^{A(t)}_{\lambda_n}(y),J^{A(t)}_{\lambda}(y))\\
&\leq d(y_n,y)+d(J^{A(t)}_{\lambda_n}(y),J^{A(t)}_{\lambda}(y))\to 0
\end{align*}
as $(y_n,\lambda_n)\to (y,\lambda)$, using the nonexpansivity of the resolvent and Lemma \ref{contRes}. As $J^{A(t)}_\lambda(y)$ is also measurable in $t$, we get that $J^{A(t)}_\lambda(y)$ is a Carath\'eodory function and hence that it is jointly measurable in $(t,y,\lambda)\in T\times \mathcal{H}\times (0,\infty)$. Hence, by the inverse image theorem (see Theorem 8.2.9 in \cite{AubinFrankowska2009}), we have that
\[
t\mapsto
\{(y,\lambda)\in \mathcal{H}\times (0,\infty)\mid J^{A(t)}_\lambda(y)=x\}
\]
is measurable and hence also graph-measurable. Similarly, we have that 
\[
t\mapsto \{(y,\lambda)\in \mathcal{H}\times (0,\infty)\mid \lambda^{-1}\log_xy\in U\}
\]
is measurable and hence graph-measurable. Combined, we get that $E_U\in \mathsf{T}\otimes \mathsf{B}(\mathcal{H})\otimes \mathsf{B}((0,\infty))$. Hence $\pi_T(E_U)\in \mathsf{T}$ by the measurable projection theorem (see Theorem III.23 in \cite{CastaingValadier1977}). This implies that $A(\cdot,x)$ is measurable.
\end{proof}

For a random monotone vector field $A:T\to\mathcal{M}(\mathcal{H})$, we define its integral operator via
\[
\int A(t)\,d\tau:\mathcal{H}\to 2^{T\mathcal{H}},\quad x\mapsto \int A(t,x)\,d\tau(t),
\]
where the integral refers to the Aumann-Sturm integral as defined before, now on $T_x\mathcal{H}$. As already remarked in \cite{Pischke2025}, if $\mathcal{H}$ is a separable Hilbert-Hadamard space and $A$ is a random monotone vector field, then $\int A(t)\,d\tau(t)$ is immediately monotone, using that $g_x$ is affine (recall Lemma \ref{affine}).

In this paper, we now need a concept of a random monotone vector field being integrable. In \cite{Salim2023}, Salim called a random monotone operator on a Hilbert space integrable if $\int A(t)\,d\tau(t)$ is maximal (see also already \cite{BianchiHachem2016,BianchiHachemSalim2019} for previous uses of this notion). This however is used therein exclusively to guarantee that the associated resolvents are total functions via Minty's theorem \cite{Minty1962}. This equivalence is not available in this nonlinear context as discussed before,  and we hence focus on the resolvents right away and arrive at the following condition.

\begin{definition}
A random monotone vector field $A:T\to\mathcal{M}(\mathcal{H})$ is called integrable if $\int A(t)\,d\tau(t)$ satisfies the surjectivity condition.
\end{definition}

Examples of random monotone operators on Hilbert spaces which are integrable are discussed in \cite{Salim2023} (see also \cite{BianchiHachem2016}). Modelled on these discussions, we in the following discuss a few cases where similar results can be guaranteed in more general metric contexts.

\begin{example}\label{exampleInteg}
Let $(T,\mathsf{T},\tau)$ be a complete probability space and $A:T\to\mathcal{M}(\mathcal{H})$ be a random monotone vector field.
\begin{enumerate}
\item Suppose that $A$ is $L^1$-dominated, that is there exists a $g\in L^1(T,(0,\infty),\tau)$ such that $A(t,x)\neq\emptyset$ and $\sup_{y\in A(t,x)}\norm{y}_{x}\leq g(t)$ almost everywhere. As shown in Example 2 of \cite{Bianchi2016}, over $\mathbb{R}^d$, an $L^1$-dominated random monotone operator is integrable. This extends to finite-dimensional Hadamard manifolds as follows: By Remark 4.4 of \cite{LiLopezMartinMarquez2009}, it suffices to show that $\int A(t)\,d\tau(t)$ has full domain and is maximally monotone, and by Theorem 3.7 of \cite{LiLopezMartinMarquez2009}, this reduces to $\int A(t)\,d\tau(t)$ being monotone, upper semicontinuous and having full domain as well as having closed and convex values. Monotonicity is immediate as a Hadamard manifold is a Hilbert-Hadamard space. Further, fixing $x\in \mathcal{H}$, first recall from Lemma \ref{Ameas} that $A(\cdot,x)$ is measurable. Also, its values are closed (since $A$ is maximal).  We hence get that $\mathcal{S}^1(A(\cdot,x))\neq\emptyset$ by the Kuratowski–Ryll-Nardzewski measurable selection theorem (see Theorem 8.1.3 in \cite{AubinFrankowska2009}), using that $A$ is dominated to guarantee integrability. Hence, $\int A(t)\,d\tau(t)$ has full domain. Further, the values are convex and closed as can be immediately seen (see also \cite{Podczeck2008}). As the tangent cones are just Euclidean spaces and as the values of $A(t,\cdot)$ are closed and bounded, they are compact. Hence, the results of \cite{Yannelis1990} imply that $\int A(t)\,d\tau(t)$ is upper semicontinuous as each $A(\cdot,x)$ has measurable graph.
\item Suppose $A(t,x)=\partial f(t,x)$ is the subdifferential of a proper normal convex integrand $f:T\times \mathcal{H}\to (-\infty,+\infty]$, which is a random monotone vector field as discussed in Example \ref{ex:RMVF}. Assume that $\int f(t,\cdot)\,d\tau(t)$ is proper and lsc (it is immediately convex). If we can interchange the integral and the subdifferential, that is if 
\[
\int \partial f(t,x)\,d\tau(t)=\partial\left( \int f(t,\cdot)\,d\tau(t)\right)(x)
\]
for all $x\in \mathcal{H}$, then $\int \partial f(t,\cdot)\,d\tau(t)$ satisfies the surjectivity condition as $\partial\left( \int f(t,\cdot)\,d\tau(t)\right)$ does (recall Example \ref{subDiffEx}). Suitable conditions which guarantee this interchange of the integral and the subdifferential on Hilbert spaces can be found in \cite{RockafellarWets1982}. We will later extend this result to separable Hilbert-Hadamard spaces where every tangent cone is isometric to a Hilbert space, covering in particular Hadamard manifolds, provided that $(T,\mathsf{T},\tau)$ is additionally separable and $f$ is $L^2$-Lipschitz and suitably integrable.
\end{enumerate}
If $(T,\mathsf{T},\tau)$ is a finite discrete measure space with $T=\{t_1,\dots,t_n\}$ as well as $\tau(\{t_i\})=w_i$, and $\mathcal{H}$ is such that each $T_x\mathcal{H}$ is isometric to a Hilbert space, then $\int A(t,x)\,d\tau(t)$ reduces to an arithmetic mean of the image sets $\sum_{i=1}^n w_i A(t_i,x)$, where the sum refers to the Minkowski sum. In particular, if $\mathcal{H}$ is a finite dimensional Hadamard manifold and all $A(t_i,\cdot)$ have full domain, $\int A(t)\,d\tau(t)$ satisfies the surjectivity condition as can be seen similarly to item (1) above: Since the sum also has full domain, by Remark 4.4 of \cite{LiLopezMartinMarquez2009} and Theorem 3.7 of \cite{LiLopezMartinMarquez2009}, $\int A(t)\,d\tau(t)$ satisfying the surjectivity condition reduces to $\int A(t)\,d\tau(t)$ being monotone, upper semicontinuous and having closed and convex values which we can establish in this case. First, in the present context, $\int A(t)\,d\tau(t)$ is clearly monotone and its values are convex as the Minkowski sum preserves convexity. By Lemma 3.6 in \cite{LiLopezMartinMarquez2009}, the $A_i$ are locally bounded and so, as they are also closed, we get that $\underline{A}$ has closed values since the tangent cones of $\mathcal{H}$ are finite dimensional Hilbert spaces. Also, by the same argument as given in item (1) above, we get that $\underline{A}$ is upper semicontinuous.
\end{example}

\section{A strong law of large numbers for random monotone vector fields}\label{sec:SLLNVec}

We now turn to the second main result of this paper, an extension of the previous $L^1$ strong law of large numbers for random variables on Hadamard spaces to integrable random monotone vector fields. For this, unless said otherwise, let $(\mathcal{H},d)$ now be a separable Hilbert-Hadamard space (and recall Remark \ref{rem:tangentSepAndMeas}).

Again, we require a notion of finite mean, and for this we extend the previously discussed inductive mean of Sturm from points to sets.

\begin{definition}[Inductive mean of sets]
The inductive mean of $A_1,\dots,A_n\subseteq \mathcal{H}$ is defined as follows:
\[
\frac{1}{n}\underset{i=1,\dots,n}{\dsum} A_i:=\left\{\frac{1}{n}\underset{i=1,\dots,n}{\dsum} x_i\mid x_i\in A_i\text{ for }i=1,\dots,n\right\}.
\]
\end{definition}

If $A_i:\mathcal{H}\to 2^{T\mathcal{H}}$ are monotone vector fields, we define inductive mean vector field via
\[
\frac{1}{n}\underset{i=1,\dots,n}{\dsum} A_i:\mathcal{H}\to 2^{T\mathcal{H}},\quad x\mapsto \frac{1}{n}\underset{i=1,\dots,n}{\dsum} A_i(x).
\]
In particular, this vector field is monotone in a separable Hilbert-Hadamard space $\mathcal{H}$, using that $g_x$ is affine (recall Lemma \ref{affine}) similar as with $\int A(t)\,d\tau$.

Before we can derive this second strong law of large numbers, we require an intermediate step. In a Hadamard manifold $\mathcal{H}$, using that $\nabla_x\frac{1}{2}d^2(x,y)=-\log_xy$ (as follows from the first variation formula, see \cite{Sakai1996}) and that $\frac{1}{2}d^2(x,y)$ is strongly convex, it can be shown that $-\log_x$ is strongly monotone, that is
\[
g_z(\log_zx,\log_zy)\geq -g_y(\log_yx,\log_yz)+d^2(z,y)
\]
for all $x,y,z\in \mathcal{H}$. Our proof of the strong law of large numbers for integrable random monotone vector fields relies on this property and while the conclusion of strong monotonicity as formulated above does make sense in the context of Hadamard spaces, there are two immediate issues with the above approach: First, a Hadamard space does not carry differential structure as is, and second, the expression $-\log_x$ is not well-defined as the space $T_x\mathcal{H}$ is not linear but just a cone.

We hence rely on a different approach here. Concretely, we follow the argument used in Proposition 2.9 from \cite{Pischke2025} to show that $\partial f$ is strongly monotone whenever $f$ is strongly convex, but utilize $-\frac{1}{2}d^2(x,y)$ in place of $f$.

\begin{lemma}\label{negLogMon}
Let $(\mathcal{H},d)$ be a Hadamard space. For any $x,y,z\in \mathcal{H}$:
\[
g_z(\log_zx,\log_zy)\geq -g_y(\log_yx,\log_yz)+d^2(z,y)
\]
\end{lemma}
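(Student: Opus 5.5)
The inequality should follow directly from the comparison estimate of Lemma \ref{tangentCat0}, applied once at the base point $z$ and once at the base point $y$. The $x$-dependent distance terms should then cancel. This avoids any differential structure and never treats $-\log_x$ as a linear object, which sidesteps both issues raised before the statement.

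First, apply Lemma \ref{tangentCat0} with base point $z$, points $a=x$, $b=y$ and $t=s=1$. This gives
\[
g_z(\log_zx,\log_zy)\geq \frac{1}{2}\left(d^2(z,x)+d^2(z,y)-d^2(x,y)\right).
\]
Second, apply the same lemma with base point $y$, points $a=x$, $b=z$ and $t=s=1$. This gives
\[
g_y(\log_yx,\log_yz)\geq \frac{1}{2}\left(d^2(y,x)+d^2(y,z)-d^2(x,z)\right).
\]

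Adding the two inequalities, the terms $d^2(z,x)$ and $d^2(x,z)$ cancel, and so do $d^2(x,y)$ and $d^2(y,x)$. What remains on the right-hand side is $\frac{1}{2}(d^2(z,y)+d^2(y,z))=d^2(z,y)$. Moving $g_y(\log_yx,\log_yz)$ to the right-hand side then yields exactly the claimed inequality.

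I do not expect a genuine obstacle here, and no flatness assumption is needed. The only points to check are degenerate configurations, such as $x=z$ or $y=z$, where some $\log$ equals $0$. In these cases $g$ vanishes and the bound in Lemma \ref{tangentCat0} still holds trivially, since it is stated for arbitrary points. This is consistent with the lemma being stated for a general Hadamard space.
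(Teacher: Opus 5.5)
Your proof is correct, and it is more direct than the paper's. Adding the two instances of Lemma \ref{tangentCat0} (at the base points $z$ and $y$) cancels every term involving $x$ and leaves exactly $d^2(z,y)$. The degenerate configurations are harmless, since that lemma holds for arbitrary points.

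The paper instead adapts the argument that strongly convex functions have strongly monotone subdifferentials (Proposition 2.9 in \cite{Pischke2025}), applied to $f:=-\frac{1}{2}d^2(\cdot,x)$. It proceeds as follows:
\begin{enumerate}
\item It uses Lemma \ref{tangentCat0} only in weakened form, discarding the $d^2(a,y)$ term, to get $f(y)\leq f(a)+g_a(\log_ax,\log_ay)$.
\item It recovers a quadratic term from the $\mathrm{CN}$-inequality at the points $(1-t)a\oplus tb$.
\item It uses the homogeneity $g_a(\log_ax,\log_a((1-t)a\oplus tb))=t\,g_a(\log_ax,\log_ab)$, symmetrizes in $a$ and $b$, and lets $t\to 0$.
\end{enumerate}
Your computation shows that the discarded term is exactly what is needed. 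With the full comparison inequality, the geodesic interpolation, the homogeneity identity and the limit all become unnecessary.

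The paper's route buys conformity with the general strong-convexity-to-strong-monotonicity template and with the Riemannian heuristic $\nabla_z\tfrac{1}{2}d^2(z,x)=-\log_zx$ that motivates it. Yours relies on an exact cancellation specific to squared distances. For this lemma, your proof is the shorter and cleaner one.
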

\begin{proof}
For $f(a):=-\frac{1}{2}d^2(a,x)$, we get for any $t\in [0,1]$ and any $a\in \mathcal{H}$ that
\[
(1-t)f(a)+tf(b)+t(1-t)\frac{1}{2}d^2(a,b)\leq f((1-t)a\oplus t b).
\]
Lemma \ref{tangentCat0} yields
\begin{align*}
2g_a(\log_a x,\log_a y)&\geq d^2(a,x)+d^2(a,y)-d^2(x,y)\\
&\geq d^2(a,x)-d^2(x,y)
\end{align*}
and so
\[
f(y)\leq f(a)+g_a(\log_a x,\log_a y)
\]
for all $y\in \mathcal{H}$. In particular, we have
\[
f((1-t)a\oplus t b)\leq f(a)+g_a(\log_a x,\log_a ((1-t)a\oplus t b)),
\]
so that combined, we have
\[
(1-t)f(a)+tf(b)+t(1-t)\frac{1}{2}d^2(a,b)\leq f(a)+g_a(\log_a x,\log_a ((1-t)a\oplus t b)).
\]
Hence
\[
f(b)-f(a)\leq \frac{g_a(\log_a x,\log_a ((1-t)a\oplus t b))}{t}-(1-t)\frac{1}{2}d^2(a,b)
\]
for all $t\in (0,1]$. Similarly, we get
\[
f(a)-f(b)\leq \frac{g_b(\log_b x,\log_b ((1-t)b\oplus t a))}{t}-(1-t)\frac{1}{2}d^2(b,a).
\]
Using $g_a(\log_a x,\log_a ((1-t)a\oplus t b))/t=g_a(\log_ax,\log_ab)$ (and similarly for $b$), as established in the proof of Proposition 2.9 in \cite{Pischke2025}, we have
\[
g_a(\log_ax,\log_ab)-(1-t)d^2(a,b)\geq -g_b(\log_bx,\log_ba).
\]
Taking $t\to 0$, we get
\[
g_a(\log_ax,\log_ab)\geq -g_b(\log_bx,\log_ba)+d^2(a,b)
\]
as claimed.
\end{proof}

We can now establish our strong law of large numbers for integrable random monotone vector fields over Hilbert-Hadamard spaces. Similar to \cite{Salim2023}, the mode of convergence for the random monotone vector fields will be that of $R$-convergence, as already mentioned before. Concretely, the $R$-topology discussed previously in Definition \ref{RtopRem} gives rise to the following notion of $R$-convergence: Given a sequence $(A_n)\subseteq\mathcal{M}(\mathcal{H})$ together with $A\in \mathcal{M}(\mathcal{H})$, we say that $(A_n)$ $R$-converges to $A$, written $A_n\to^R A$, if
\[
J^{A_n}_\lambda(x)\to J^{A}_\lambda(x)
\]
for all $x\in \mathcal{H}$ and $\lambda>0$. In particular, if $A_n,A:\Omega\to \mathcal{M}(\mathcal{H})$ are random monotone vector fields, then $A_n\to^R A$ holds $\PP$-a.s.\ if, and only if, there exists a set $\Omega'\subseteq \Omega$ of measure one such that $J^{A_n(\omega)}_\lambda(x)\to J^{A(\omega)}_\lambda(x)$ for all $\omega\in \Omega'$, $x\in \mathcal{H}$ and $\lambda>0$.

We now turn to the second main result of this paper:

\begin{theorem}\label{L1LawVector}
Let $(\Omega,\mathsf{F},\PP)$ and $(E,\mathsf{E},\mu)$ be probability spaces and let $(\mathcal{H},d)$ be a separable Hilbert-Hadamard space. Further, fix an i.i.d.\ sequence $(\xi_i)$ of random variables $\xi_i:\Omega\to E$ with distribution $\mu$. Let $A:E\to \mathcal{M}(\mathcal{H})$ be an integrable random monotone vector field such that $\frac{1}{n}\underset{i=1,\dots,n}{\dsum} A(\xi_i)$ satisfies the surjectivity condition $\PP$-a.s. Then
\[
\frac{1}{n}\underset{i=1,\dots,n}{\dsum} A(\xi_i)\to^R \int A(e)\, d\mu(e)\quad\PP\text{-a.s.}
\]
as $n\to\infty$.
\end{theorem}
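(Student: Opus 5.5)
The plan is to prove $R$-convergence pointwise at a fixed pair $(x,\lambda)$ with an explicit deterministic bound, then apply Corollary \ref{L1LawHad} in a tangent cone, and finally pass from countably many pairs to all of $\mathcal{H}\times(0,\infty)$.

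\textbf{Step 1: a deterministic bound at a fixed pair.} Write $\underline{A}:=\int A(e)\,d\mu(e)$ and $A_n:=\frac{1}{n}\dsum_{i=1,\dots,n}A(\xi_i)$. Fix $x\in\mathcal{H}$ and $\lambda>0$, and set $z:=J^{\underline{A}}_\lambda(x)$. This exists since $A$ is integrable. By definition of the resolvent and of the Aumann-Sturm integral, there is a selection $\varphi\in\mathcal{S}^1(A(\cdot,z))$ with $\int\varphi\,d\mu=\Phi:=\frac{1}{\lambda}\log_z x$ in $T_z\mathcal{H}$.

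On the full-measure set where $A_n(\omega)$ satisfies the surjectivity condition, set $z_n:=J^{A_n}_\lambda(x)$. Then $\frac{1}{\lambda}\log_{z_n}x\in A_n(z_n)$. So there are $v_i\in A(\xi_i,z_n)$ whose inductive mean in $T_{z_n}\mathcal{H}$ equals $\frac1\lambda\log_{z_n}x$.

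Monotonicity of each $A(\xi_i)$, applied to the pairs $(z,\varphi(\xi_i))$ and $(z_n,v_i)$, gives
\[
g_z(\varphi(\xi_i),\log_z z_n)\leq -g_{z_n}(v_i,\log_{z_n}z).
\]
Since the cones are flat, $g_z$ and $g_{z_n}$ are affine in the first argument by Lemma \ref{affine}. Hence $g$ of an inductive mean equals the arithmetic average of the $g$-values; this follows by induction on the defining recursion. Averaging the inequality over $i=1,\dots,n$ therefore yields
\[
g_z(\Phi_n,\log_z z_n)\leq -\tfrac1\lambda g_{z_n}(\log_{z_n}x,\log_{z_n}z),\qquad \Phi_n:=\tfrac1n\dsum_{i=1,\dots,n}\varphi(\xi_i)\in T_z\mathcal{H}.
\]
Next apply Lemma \ref{negLogMon} with the roles $(z,y)\mapsto(z_n,z)$:
\[
g_{z_n}(\log_{z_n}x,\log_{z_n}z)\geq -g_z(\log_zx,\log_zz_n)+d^2(z_n,z).
\]
Combining the two displays, and using symmetry of $g_z$ together with Lemma \ref{nonexpCat0}, gives
\[
\tfrac1\lambda d^2(z_n,z)\leq g_z(\Phi,\log_z z_n)-g_z(\Phi_n,\log_z z_n)\leq d_z(\Phi_n,\Phi)\,d(z_n,z).
\]
Hence $d(J^{A_n}_\lambda(x),J^{\underline A}_\lambda(x))\leq\lambda\, d_z(\Phi_n,\Phi)$.

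\textbf{Step 2: the strong law in the tangent cone.} The variables $\varphi(\xi_i)$ are i.i.d. in $L^1(\Omega,T_z\mathcal{H},\PP)$ with expectation $\Phi$. Here $T_z\mathcal{H}$ is a separable Hadamard space by Remark \ref{rem:tangentSepAndMeas} and Nikolaev's result, and $\varphi$ is measurable because the measurable selection is composed with the $\xi_i$. Corollary \ref{L1LawHad} then gives $\Phi_n\to\Phi$ almost surely. By Step 1, $J^{A_n}_\lambda(x)\to J^{\underline A}_\lambda(x)$ on a full-measure set $\Omega_{x,\lambda}$.

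\textbf{Step 3: from countably many pairs to all pairs.} Intersect the sets $\Omega_{x_k,r_k}$ over a countable dense set $\{(x_k,r_k)\}\subseteq\mathcal{H}\times(0,\infty)$, together with the surjectivity event. Extending convergence to all $(x,\lambda)$ follows exactly the argument in the proof of Lemma \ref{almostPolish}.
\begin{itemize}
\item In $x$, one uses nonexpansiveness of the resolvents (Lemma \ref{ResProp}).
\item In $\lambda$, one uses Lemma \ref{contRes}, approximating $\lambda$ from above by some $r_k\geq\lambda$.
\end{itemize}
The needed boundedness of $d(J^{A_n}_{r_j}(x),x)$ in $n$ follows from convergence at a nearby dense point $x_k$ together with nonexpansiveness.

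\textbf{Main obstacle.} I expect the delicate point to be the averaging step in Step 1: the monotonicity inequalities live in two different cones, $T_z\mathcal{H}$ and $T_{z_n}\mathcal{H}$. One must check that affinity of $g$ transfers correctly to the $n$-fold inductive mean on both sides. It is also necessary that $\Phi_n$ is a genuine inductive mean of i.i.d. cone-valued variables, so that the scalar bound reduces cleanly to Corollary \ref{L1LawHad}.
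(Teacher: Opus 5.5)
Your proposal is correct and follows essentially the same route as the paper. You derive the deterministic bound $d(J^{A_n}_\lambda(x),J^{\underline A}_\lambda(x))\leq\lambda\, d_z(\Phi_n,\Phi)$ from monotonicity of the averaged field (unpacked via Lemma \ref{affine}), Lemma \ref{negLogMon} and Lemma \ref{nonexpCat0}, then apply Corollary \ref{L1LawHad} in $T_z\mathcal{H}$ and a countable-density argument using nonexpansiveness and Lemma \ref{contRes}. The only differences are cosmetic: you average the individual monotonicity inequalities yourself rather than citing monotonicity of the inductive mean field, and you get the uniform-in-$n$ bound on $d(J^{A_n}_{r}(x),x)$ from a nearby dense pair rather than the paper's auxiliary parameter $\lceil 2\lambda\rceil$.
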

\begin{proof}
For brevity, write 
\[
\underline{A}_n:=\frac{1}{n}\underset{i=1,\dots,n}{\dsum} A(\xi_i)\text{ and }\underline{A}:=\int A(e)\, d\mu(e).
\]
Given any $x\in \mathcal{H}$ and $\lambda>0$, write $y:=J^{\underline{A}}_\lambda(x)$ and $z:=\frac{1}{\lambda}\log_y x\in \underline{A}(y)$. As $z\in \underline{A}(y)$, there is an $\alpha\in\mathcal{S}^1(A(\cdot,y))$ such that $z=\int\alpha\,d\mu$. Define
\[
z_n:=\frac{1}{n}\underset{i=1,\dots,n}{\dsum} \alpha(\xi_i)
\]
and note that $z_n(\omega)\in\underline{A}_n(\omega,y)$. Then by Corollary \ref{L1LawHad} on $T_y\mathcal{H}$, we have 
\[
z_n\to z\quad \PP\text{-a.s.}
\]
as $n\to\infty$. Now, define $y_n(\omega):=J^{\underline{A}_n(\omega)}_\lambda(x)$ and $u_n(\omega):=\frac{1}{\lambda}\log_{y_n(\omega)}x\in \underline{A}_n(\omega,y_n(\omega))$. As $ \underline{A}_n(\omega,\cdot)$ is monotone for any $\omega$, we get
\[
g_{y_n}(u_n,\log_{y_n}y)\leq -g_y(z_n,\log_yy_n)
\]
almost surely. Lemma \ref{negLogMon} yields that
\[
g_{y_n}(\log_{y_n}x,\log_{y_n}y)\geq -g_y(\log_yx,\log_y{y_n})+d^2({y_n},y),
\]
and so
\[
g_{y_n}(u_n,\log_{y_n}y)\geq -g_y(z,\log_y{y_n})+\frac{1}{\lambda}d^2({y_n},y).
\]
Combined, we have 
\begin{align*}
d^2(y_n,y)&\leq \lambda\left(g_{y_n}(u_n,\log_{y_n}y) +g_y(z,\log_y{y_n})\right)\\
&\leq \lambda\left(g_y(z,\log_y{y_n})-g_y(z_n,\log_yy_n)\right)\\
&\leq \lambda\norm{\log_yy_n} d_y(z,z_n)\\
&= \lambda d(y,y_n) d_y(z,z_n),
\end{align*}
where the third inequality follows from Lemma \ref{nonexpCat0}. Hence, we have
\[
d(J^{\underline{A}_n(\omega)}_\lambda(x),J^{\underline{A}}_\lambda(x))=d(y_n(\omega),y)\leq \lambda d_y(z,z_n(\omega))\to 0
\]
for almost all $\omega$ as $n\to\infty$. This shows that for any $x\in \mathcal{H}$ and $\lambda>0$, there exists a set $\Omega_{x,\lambda}$ of measure one such that 
\[
J^{\underline{A}_n(\omega)}_\lambda(x)\to J^{\underline{A}}_\lambda(x)
\]
for all $\omega\in\Omega_{x,\lambda}$. Using that $\mathcal{H}$ is separable, let $Z\subseteq \mathcal{H}$ be a countable dense subset. Define 
\[
\Omega':=\bigcap_{z\in Z,r\in\mathbb{Q}_{>0}}\Omega_{z,r},
\]
which has measure one as well. Now, let $\omega\in\Omega'$ as well as $x\in \mathcal{H}$ and $\lambda>0$ be arbitrary. Let $\varepsilon\in (0,1]$ be arbitrary and let $z\in Z$ be such that $d(x,z)<\frac{\varepsilon}{6}$ and let $r\in\mathbb{Q}_{>0}$ be such that $\lambda\leq r\leq \min\{\lambda/(1-\varepsilon/24b),\lceil 2\lambda\rceil\}$ for $b\geq 1$ such that $b\geq d(J^A_{\lceil{2\lambda\rceil}}(z),z)$. Then, using the nonexpansivity of the resolvents $J^{\underline{A}}_\lambda$ and $J^{\underline{A}_n}_\lambda$, we get
\begin{align*}
d(J^{\underline{A}_n(\omega)}_\lambda(x),J^{\underline{A}}_\lambda(x))&\leq d(J^{\underline{A}_n(\omega)}_\lambda(x),J^{\underline{A}_n(\omega)}_\lambda(z))+d(J^{\underline{A}_n(\omega)}_\lambda(z),J^{\underline{A}}_\lambda(z))+d(J^{\underline{A}}_\lambda(x),J^{\underline{A}}_\lambda(z))\\
&\leq 2d(x,z)+d(J^{\underline{A}_n(\omega)}_\lambda(z),J^{\underline{A}}_\lambda(z))\\
&\leq 2d(x,z)+d(J^{\underline{A}_n(\omega)}_\lambda(z),J^{\underline{A}_n(\omega)}_r(z))+d(J^{\underline{A}_n(\omega)}_r(z),J^{\underline{A}}_r(z))+d(J^{\underline{A}}_r(z),J^{\underline{A}}_\lambda(z)).
\end{align*}
Now, note that 
\[
d(J^{\underline{A}}_r(z),J^{\underline{A}}_\lambda(z))\leq 2\left( 1-\frac{\lambda}{r}\right) d(J^A_{\lceil{2\lambda\rceil}}(z),z)\leq 2\left( 1-\frac{\lambda}{r}\right) b
\]
using (the proof of) Lemma \ref{contRes}. Similarly, we have
\begin{align*}
d(J^{\underline{A}_n(\omega)}_\lambda(z),J^{\underline{A}_n(\omega)}_r(z))&\leq 2\left( 1-\frac{\lambda}{r}\right) d(J^{\underline{A}_n(\omega)}_{\lceil 2\lambda\rceil}(z),z)\\
&\leq 2\left( 1-\frac{\lambda}{r}\right) d(J^{\underline{A}_n(\omega)}_{\lceil 2\lambda\rceil}(z),J^{\underline{A}}_{\lceil 2\lambda\rceil}(z))+2\left( 1-\frac{\lambda}{r}\right)d(J^{\underline{A}}_{\lceil 2\lambda\rceil}(z),z)\\
&\leq 2d(J^{\underline{A}_n(\omega)}_{\lceil 2\lambda\rceil}(z),J^{\underline{A}}_{\lceil 2\lambda\rceil}(z))+2\left( 1-\frac{\lambda}{r}\right) b.
\end{align*}
Combined, we have
\begin{align*}
d(J^{\underline{A}_n(\omega)}_\lambda(x),J^{\underline{A}}_\lambda(x))&\leq 2d(x,z)+2d(J^{\underline{A}_n(\omega)}_{\lceil 2\lambda\rceil}(z),J^{\underline{A}}_{\lceil 2\lambda\rceil}(z))+4\left( 1-\frac{\lambda}{r}\right) b+d(J^{\underline{A}_n(\omega)}_r(z),J^{\underline{A}}_r(z))\\
&\leq \frac{3\varepsilon}{6}+2d(J^{\underline{A}_n(\omega)}_{\lceil 2\lambda\rceil}(z),J^{\underline{A}}_{\lceil 2\lambda\rceil}(z))+d(J^{\underline{A}_n(\omega)}_r(z),J^{\underline{A}}_r(z)).
\end{align*}
Using $\omega\in\Omega'\subseteq \Omega_{z,r}\cap \Omega_{z,\lceil 2\lambda\rceil}$, there is an $m$ such that
\[
d(J^{\underline{A}_n(\omega)}_r(z),J^{\underline{A}}_r(z)),d(J^{\underline{A}_n(\omega)}_{\lceil 2\lambda\rceil}(z),J^{\underline{A}}_{\lceil 2\lambda\rceil}(z))\leq \frac{\varepsilon}{6}
\]
for all $n\geq m$. Hence
\[
d(J^{\underline{A}_n(\omega)}_\lambda(x),J^{\underline{A}}_\lambda(x))\leq \varepsilon\text{ for all }n\geq m.
\]
As $\varepsilon\in (0,1]$ was arbitrary, we get that $J^{\underline{A}_n(\omega)}_\lambda(x)$ converges to $J^{\underline{A}}_\lambda(x)$. This yields the result.
\end{proof}

\begin{remark}\label{vectorLawQuant}
Also the strong law for random monotone vector fields in the above Theorem \ref{L1LawVector} can be made quantitative. At first, note that by following the proof of Theorem \ref{L1LawVector}, we get
\[
d(J^{\underline{A}_n(\omega)}_\lambda(x),J^{\underline{A}}_\lambda(x))\leq \lambda d_y(z,z_n(\omega))
\]
for almost all $\omega$. Theorem \ref{L1LawHadQuant} can then be employed to provide a quantitative rendering of $z_n\to z$ $\PP$-a.s.: For any $\varepsilon>0$ and any $m\in\mathbb{N}$:
\[
\PP\left(\sup_{n\geq m} d\left(z_n,z\right)>\varepsilon\right)\leq \frac{6}{\varepsilon}\left(\frac{\sqrt{\EE[\norm{\alpha(\xi_1)}_y]}}{m^{1/2}}+U_\PP(m^{1/8})\right)+\frac{72}{\min\{\varepsilon^2,1\}m^{1/4}}
\]
where $U_\PP(a):=\EE[\norm{\alpha(\xi_1)}_y\mathbf{1}_{\norm{\alpha(\xi_1)}_y\geq a}]$ where $\alpha\in\mathcal{S}^1(A(\cdot,y))$ such that $z=\int\alpha\,d\mu$ and $y=J^{\underline{A}}_\lambda(x)$. Hence, we in particular have 
\[
\PP\left(\sup_{n\geq m} d(J^{\underline{A}_n}_\lambda(x),J^{\underline{A}}_\lambda(x))>\varepsilon\right)\leq \frac{6}{\varepsilon/\lambda}\left(\frac{\sqrt{\EE[\norm{\alpha(\xi_1)}_y]}}{m^{1/2}}+U_\PP(m^{1/8})\right)+\frac{72}{\min\{\varepsilon^2/\lambda^2,1\}m^{1/4}}
\]
for all $\varepsilon>0$ and $m\in\mathbb{N}$. If $A$ is $L^1$-uniformly majorized (in the spirit of \cite{Pischke2024}), that is $\sup_{y\in A(e,x)}\norm{y}_{x}\leq g(e,b)$ almost everywhere for all $b>0$ and all $x\in \mathcal{H}$ with $d(o,x)\leq b$, where $g(\cdot,b)\in L^1(E,(0,\infty),\mu)$ for all $b>0$, then we can estimate $\sqrt{\EE[\norm{\alpha(\xi_1)}_y]}\leq \sqrt{\EE[g(\cdot,c)]}$ as well as
\[
U_\PP(a)=\EE[\norm{\alpha(\xi_1)}_y\mathbf{1}_{\norm{\alpha(\xi_1)}_y\geq a}]\leq \EE[g(\cdot,c)\mathbf{1}_{g(\cdot,c)\geq a}],
\]
where $c\geq d(o,y)$. In particular, if $d\geq d(o,J^{\underline{A}}_\lambda(o))$ and $b\geq d(o,x)$, then
\[
d(o,y)\leq d(o,J^{\underline{A}}_\lambda(o))+d(J^{\underline{A}}_\lambda(o),J^{\underline{A}}_\lambda(x))\leq d+b
\]
so that we can take $c=d+b$ and so that the above bound holds uniformly for all $x\in \mathcal{H}$ in terms of such $d$ and $b$ as well as $\lambda>0$. In particular, if $A$ is $L^1$-dominated, that is $\sup_{y\in A(e,x)}\norm{y}_{x}\leq g(e)$ almost everywhere and $g(\cdot)\in L^1(E,(0,\infty),\mu)$ for all $x\in \mathcal{H}$, then the above holds uniformly for all $x\in \mathcal{H}$. We do not spell this out here further.
\end{remark}

\section{On the interchange of integration and subdifferentiation}\label{interchange}

Let $(T,\mathsf{T},\tau)$ be a complete probability space and $(\mathcal{H},d)$ be a separable Hadamard space, and let $f:T\times \mathcal{H}\to (-\infty,+\infty]$ be a proper normal convex integrand (recall \cite{RockafellarWets1998} and Example \ref{ex:RMVF}). The goal of this section is to investigate in what way the interchange between subdifferentiation and integration in the form of
\[
\int \partial f(t,x)\,d\tau(t)=\partial\left(\int f(t,\cdot)\,d\tau(t)\right)(x)\text{ for all }x\in \mathcal{H}
\]
holds true in separable Hilbert-Hadamard spaces. Results of this type have a longer history. One of the strongest instances was obtained by Rockafellar and Wets \cite{RockafellarWets1982}, who actually proved a stronger relation on the interchange of the conditional expectation and the subdifferential of $F(x):=\int f(t,x(t))\,d\tau(t)$ defined for $x\in L^2(T,\mathcal{H},\tau)$, over $\mathcal{H}=\mathbb{R}^d$ (see also \cite{Bismut1973} for similar results). A result such as the above for suitable functions $f$ over $\mathbb{R}^d$ was first derived by Ioffe and Tikhomirov \cite{IoffeTikhomirov1969}, and later extended and modified by Levin \cite{Levin1970}, Hiriart-Urruty \cite{HiriartUrruty1976} and Thibault \cite{Thibault1981}, among others.

Throughout, we now write $\underline{f}(x):=\int f(t,x)\,d\tau(t)$ as well as $\underline{\partial f}(x):=\int \partial f(t,x)\,d\tau(t)$. As a first observation, we note that over separable Hilbert-Hadamard spaces, one direction always holds true (as already observed in Example 3.2 of \cite{Pischke2025}):

\begin{lemma}\label{firstInterchange}
Let $(\mathcal{H},d)$ be a separable Hilbert-Hadamard space and let $f:T\times \mathcal{H}\to (-\infty,+\infty]$ be a proper normal convex integrand. For any $x\in \mathcal{H}$: $\underline{\partial f}(x)\subseteq\partial \underline{f}(x)$.
\end{lemma}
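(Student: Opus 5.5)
Fix $x\in\mathcal{H}$ and take $u\in\underline{\partial f}(x)$. By the definition of the Aumann-Sturm integral on $T_x\mathcal{H}$, there is a selection $\alpha\in\mathcal{S}^1(\partial f(\cdot,x))$ with $u=\int\alpha\,d\tau$. Here $\alpha:T\to T_x\mathcal{H}$ is measurable and integrable, and $\alpha(t)\in\partial f(t,x)$ for all $t$. We must show that $\underline{f}(y)\geq\underline{f}(x)+g_x(u,\log_x y)$ for all $y\in\mathcal{H}$. The pointwise subgradient inequality gives
\[
f(t,y)\geq f(t,x)+g_x(\alpha(t),\log_x y)\quad\text{for all }t\in T,
\]
so the plan is to integrate this in $t$ and then move the integral inside $g_x$.

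For the interchange we use Lemma~\ref{intEx}. The random variable $v\equiv\log_x y$ is constant, hence independent of $\alpha$, and it is trivially integrable with $\int v\,d\tau=\log_x y$. Since $T_x\mathcal{H}$ is flat (as $\mathcal{H}$ is Hilbert-Hadamard), Lemma~\ref{intEx} gives
\[
\int g_x(\alpha(t),\log_x y)\,d\tau(t)=g_x\Bigl(\int\alpha\,d\tau,\log_x y\Bigr)=g_x(u,\log_x y).
\]
Alternatively, one can apply the nonlinear Jensen inequality (Lemma~\ref{nonlinJensen}) to the function $w\mapsto g_x(w,\log_x y)$. This function is affine by Lemma~\ref{affine} and Lipschitz by Lemma~\ref{nonexpCat0}. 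Applying Jensen to it and to its negative gives the same equality. The integrand is measurable by Remark~\ref{rem:tangentSepAndMeas}, and it is integrable because $|g_x(\alpha(t),\log_x y)|\leq\norm{\alpha(t)}_x d(x,y)$.

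It remains to integrate the pointwise inequality, and the main care is needed with extended values. If $\underline{f}(x)=+\infty$, then $\partial f(t,x)$ would have to be empty on a set of positive measure, which contradicts the existence of the selection. More precisely, if $f(t,x)=+\infty$ for some $t$, then $\partial f(t,x)=\emptyset$ by the convention for proper functions, so in fact $f(t,x)<\infty$ for all $t$. The integral $\underline{f}(x)$ should then be read with the usual convention, where the negative part is integrable or the value is $+\infty$. We get the lower bound $f(t,x)\leq f(t,y)-g_x(\alpha(t),\log_x y)$, and the affine term is integrable. So if $\underline{f}(y)=+\infty$ the inequality is trivial. Otherwise $f(\cdot,y)^+$ is integrable, and then $f(\cdot,x)^+$ is integrable as well. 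Because the affine term is integrable, integrating both sides is legitimate under the paper's convention, and we obtain
\[
\underline{f}(y)\geq\underline{f}(x)+g_x(u,\log_x y).
\]
This gives $u\in\partial\underline{f}(x)$, provided $\underline{f}(x)$ is finite or the subdifferential is understood in the sense consistent with $\underline{f}$ being proper. Handling these integrability conventions is the only delicate point. The core of the argument is the affine interchange of the integral with $g_x$.
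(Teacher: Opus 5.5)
Your argument is correct and is the same as the paper's: take an integrable selection $\alpha$ of $\partial f(\cdot,x)$, integrate the pointwise subgradient inequality, and pull the integral inside $g_x$ via Lemma~\ref{intEx}, using the constant (hence independent) variable $\log_x y$. Your treatment of extended-valued integrals goes beyond what the paper writes out; the one slip is your opening claim that $\underline{f}(x)=+\infty$ would force $\partial f(t,x)=\emptyset$ on a set of positive measure, which is false (e.g.\ $f(t,y)=y^2+1/t$ on $(0,1]$) but harmless, since your case split on $\underline{f}(y)$ never uses it.
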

\begin{proof}
Let $u\in L^1(T,T_x\mathcal{H},\tau)$ be a selection of $\partial f$, that is for all $t\in T$ and all $y\in \mathcal{H}$:
\[
f(t,y)\geq f(t,x)+g_x(u(t),\log_xy).
\]
Integrating and using Lemma \ref{intEx} yields
\[
\underline{f}(y)\geq \underline{f}(x)+g_x\left(\int u\,d\tau,\log_xy\right)
\]
for all $y\in \mathcal{H}$, so that $\int u\,d\tau\in\partial \underline{f}(x)$.
\end{proof}

Unfortunately, the converse does not hold true over general Hilbert-Hadamard spaces, as the following construction shows:

\begin{example}\label{ExNoHH}
Consider $\mathcal{H}:=[0,\infty)$. This is a non-empty, closed and convex subset of a Hilbert-Hadamard space, and so itself a Hilbert-Hadamard space. In particular, we have $T_0 \mathcal{H}=[0,\infty)$ and $\log_0 x=x$ for all $x\in \mathcal{H}:=[0,\infty)$. Consider the complete probability space on $T:=\{1,2\}$ with measure $\tau$ induced by $\tau(\{1\}):=\tau(\{2\}):=\frac{1}{2}$, and define the function $f:T\times \mathcal{H}\to\mathbb{R}$ by
\[
f(1,x):=-x\text{ and }f(2,x):=2x\text{ for all }x\in \mathcal{H}:=[0,\infty).
\]
Clearly $f(1,\cdot)$ and $f(2,\cdot)$ are proper, convex and continuous. Further, $f$ is clearly measurable. We now have
\[
\underline{f}(x)=\frac{1}{2}f(1,x)+\frac{1}{2}f(2,x)=\frac{x}{2}.
\]
It can be easily seen that $\partial\underline{f}(0)=[0,\frac{1}{2}]$. Now, it holds that $u\in\partial f(1,0)$ if, and only if, $u\in T_0\mathcal{H}$ and $u\leq -1$, but since $u\in T_0\mathcal{H}$ implies $u\geq 0$, we get $\partial f(1,0)=\emptyset$. Hence, we have
\[
\underline{\partial f}(0)=\frac{1}{2}\partial f(1,0)+\frac{1}{2}\partial f(2,0)=\emptyset
\]
which shows that $\partial\underline{f}(0)\not\subseteq \underline{\partial f}(0)$.
\end{example}

A key problem in the above example was that $T_0\mathcal{H}$, while flat, is not a linear space, so that $u\in T_0\mathcal{H}$ contradicted $u\leq -1$.

Indeed, once the linearity of the tangent cones is imposed, we are actually able to establish the converse to the above result (if $f$ is additionally Lipschitz). We first give these spaces a name:

\begin{definition}
We call a Hadamard space $\mathcal{H}$ a full Hilbert-Hadamard space if each tangent cone $T_x\mathcal{H}$ is isometric to a Hilbert space.
\end{definition}

Hilbert spaces and Hadamard manifolds as well as their infinite-dimensional generalizations are not only Hilbert-Hadamard spaces but also full Hilbert-Hadamard spaces. Further, Example \ref{HilHadEx} shows that also full Hilbert-Hadamard spaces are not restricted to the settings of manifolds. Before moving on, we note the following result relating Hilbert-Hadamard spaces to full Hilbert-Hadamard spaces. Say that $\mathcal{H}$ has the geodesic extension property if every geodesic in $\mathcal{H}$ can be extended to an isometry $\mathbb{R}\to \mathcal{H}$.
 
\begin{lemma}\label{HHtofullHH}
If $\mathcal{H}$ is a Hilbert-Hadamard space and it satisfies the geodesic extension property, then $\mathcal{H}$ is a full Hilbert-Hadamard space.
\end{lemma}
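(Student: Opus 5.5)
Fix $x\in\mathcal{H}$. Since $\mathcal{H}$ is Hilbert-Hadamard, $T_x\mathcal{H}$ is flat, so by Remark \ref{flatRem} it is isometric to a closed convex subset $C$ of a Hilbert space $V$. We may take $0_x\mapsto 0$, which makes the isometry compatible with the cone structure: $\norm{u}_x=\|u\|_V$ and $g_x(u,v)=\langle u,v\rangle_V$ by polarization. The plan is to show $C=-C$. A closed convex cone that is symmetric is a closed linear subspace of $V$, hence itself a Hilbert space, and then $T_x\mathcal{H}$ is isometric to a Hilbert space.

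\textbf{Step 1: directions coming from geodesics.} Let $\gamma$ be a nonconstant geodesic issuing from $x$. By the geodesic extension property, $\gamma$ extends to an isometry $c:\mathbb{R}\to\mathcal{H}$ with $c(0)=x$. The two rays $s\mapsto c(s)$ and $s\mapsto c(-s)$ make Alexandrov angle $\pi$ at $x$, since the comparison angles $\bar\angle_x(c(s),c(-t))$ equal $\pi$ for all $s,t>0$ ($c$ being an isometry). Let $\eta$ be the direction of the reversed ray. Then for $u=1\cdot\gamma$ and $v=1\cdot\eta$ we get $d_x(u,v)=2$ and $\norm{u}_x=\norm{v}_x=1$. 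In $V$ this means $\|u-v\|=2$ with $\|u\|=\|v\|=1$, which forces $v=-u$ (equality case of the triangle inequality in a Hilbert space). So for every direction in $\Sigma'_x\mathcal{H}$, and every positive multiple of it, the negative also lies in $C$.

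\textbf{Step 2: extend to all of $T_x\mathcal{H}$.} Let $D\subseteq C$ be the image of the cone over $\Sigma'_x\mathcal{H}$. Since $\Sigma_x\mathcal{H}$ is the completion of $\Sigma'_x\mathcal{H}$ and the cone metric is continuous in the angle, $D$ is dense in $C$. Take $w\in C$ and $w_k\in D$ with $w_k\to w$. By Step 1, $-w_k\in C$. Because $C$ is closed, $-w=\lim(-w_k)\in C$. Hence $C=-C$.

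\textbf{Step 3: conclude.} $C$ is convex, contains $0$ and is closed under nonnegative scaling, since $\lambda u$ is defined in $T_x\mathcal{H}$ and under the isometry $\lambda u$ corresponds to the Hilbert scaling. It is therefore a convex cone; being symmetric, it is closed under addition and all real scalings. So $C$ is a closed linear subspace, which proves the lemma.

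\textbf{Main obstacle.} The delicate point is the identification in the first paragraph: that the isometry to a convex subset of $V$ can be chosen to send $0_x$ to $0$ and to respect the cone scaling, so that $g_x$ becomes the inner product. I would derive this from the fact that a flat Hadamard space is affinely embedded. Geodesics in $T_x\mathcal{H}$ map to segments, and the rays $t\mapsto tu$ are geodesics through $0_x$, so the scaling is affine along them. After translating $0_x$ to the origin, the scaling becomes linear, and the formula for $g_x$ then transfers to the inner product by polarization.
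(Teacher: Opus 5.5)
Your proposal is correct and follows essentially the paper's own argument. The paper also embeds $T_x\mathcal{H}$ with $0_x\mapsto 0$ and uses the geodesic extension property to get opposite directions at angle $\pi$, so that equality in the Hilbert-space triangle inequality forces $\iota(-\gamma)=-\iota(\gamma)$. It then extends this by density from $\Sigma'_x\mathcal{H}$ to all of $T_x\mathcal{H}$, using the homogeneity $\iota(tv)=t\iota(v)$ (which it derives from the same equality-case argument your "rays map to segments" remark gives), and concludes that the image is a closed linear subspace.
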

\begin{proof}
Fix $x\in \mathcal{H}$. By assumption (recall Remark \ref{flatRem}), $T_x\mathcal{H}$ isometrically embeds into a Hilbert space $H$, say via a mapping $\iota:T_x\mathcal{H}\to H$. Using a shift, we may assume that $\iota(0_x)=0$. We now show that $\iota(T_x\mathcal{H})$ is closed under additive inverses. For this, let $\gamma\in \Sigma'_x\mathcal{H}$ and let $-\gamma\in \Sigma'_x\mathcal{H}$ be an opposite geodesic. Note that $\norm{\gamma}_x=\norm{-\gamma}_x$ as well as $\angle_x(\gamma,-\gamma)=\pi$ so that
\[
d_x(\gamma,-\gamma)=\sqrt{2-2\cos\angle_x(\gamma,-\gamma)}=\sqrt{4}=2\norm{\gamma}_x.
\]
As $\iota$ is an isometry, we get $\norm{\iota(\gamma)}_H=\norm{\iota(-\gamma)}_H$ and $\norm{\iota(\gamma)-\iota(-\gamma)}_H=2\norm{\iota(\gamma)}_H$, where $\norm{\cdot}_H$ is the norm on $H$. As we hence have an equality in the triangle inequality in $H$, we get that $\iota(-\gamma)=-\iota(\gamma)$. As $\Sigma'_x\mathcal{H}$ is dense in $\Sigma_x\mathcal{H}$, the result extends to $\Sigma_x\mathcal{H}$ by continuity and then it further extends to $T_x\mathcal{H}$, the cone over $\Sigma_x\mathcal{H}$, where we in particular use that $\iota(tv)=t\iota(v)$, which holds since $\norm{\iota(v)-\iota(tv)}_H=d_x(v,tv)=\norm{v}_x\sqrt{(t-1)^2}=\norm{v}_x\vert t-1\vert=\norm{\iota(v)}_H\vert t-1\vert$. Combined, we get that $\iota(T_x\mathcal{H})$ is a nonempty closed and convex subset of $H$ containing $0$ and which is closed under non-negative scalar multiplication and additive inverses. Hence, $\iota(T_x\mathcal{H})$ is linear.
\end{proof}

To now establish the converse of the above Lemma \ref{firstInterchange} over full Hilbert-Hadamard spaces (and suitable Lipschitz functions), we broadly follow a similar approach as used in \cite{HiriartUrruty1976} (which in turn relies on the exactness result from Valadier \cite{Valadier1970}), together with suitably adapted variants of a variety of results due to Castaing and Valadier \cite{CastaingValadier1977} highlighted throughout. 

In particular, we rely on the conjugate of $F$, which we introduce following the work \cite{BergmannHerzogSilvaLouzeiroTenbrinckVidalNunez2021,SilvaLouzeiroBergmannHerzog2022} (see also \cite{BentoCruzNetoMelo2023,Hirai2024} for related works using Busemann functions). Generally, for a Hadamard space $\mathcal{H}$ and $h:\mathcal{H}\to (-\infty,+\infty]$ and $p\in \mathcal{H}$, define $h^*_p:T_p\mathcal{H}\to (-\infty,+\infty]$ via
\[
h^*_p(u):=\sup_{q\in \mathcal{H}}\{g_p(u,\log_pq)-h(q)\}.
\]
We begin with some immediate lemmas.

\begin{lemma}\label{conjugateProp}
Let $\mathcal{H}$ be a Hadamard space and let $h:\mathcal{H}\to (-\infty,+\infty]$ be given. For any $p\in \mathcal{H}$ and $u\in T_p\mathcal{H}$: 
\begin{enumerate}
\item $h^*_p(u)+h(p)\geq 0$,
\item $u\in\partial h(p)$ if, and only if, $h^*_p(u)+h(p)= 0$.
\end{enumerate}
\end{lemma}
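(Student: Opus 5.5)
Both items will follow directly from the definition of the conjugate $h^*_p$ by testing the supremum at the single point $q=p$, together with the definition of the subdifferential from Example \ref{subDiffEx}. One caveat concerns the expression $h^*_p(u)+h(p)$: it could take the form $-\infty+\infty$ if $h(p)=+\infty$. Since $h^*_p(u)\geq g_p(u,0_p)-h(p)$, it is never $-\infty$, so the sum is well defined in $(-\infty,+\infty]$. I will read the statement with the convention that $p\in\mathrm{dom}\,h$ wherever it matters; when $h(p)=+\infty$ the sum is $+\infty\geq 0$.

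For (1), take $q:=p$ in the supremum. Since $\log_pp=0_p$, we have $g_p(u,0_p)=\norm{u}_p\norm{0_p}_p\cos(\cdot)=0$. This can be checked from the formula $g_p(t\gamma,s\eta)=ts\cos\angle_p(\gamma,\eta)$ with $s=0$, or directly from $g_p(u,0_p)=\frac12(\norm{u}_p^2+0-d_p^2(u,0_p))=0$. Hence $h^*_p(u)\geq -h(p)$. If $h(p)<\infty$ this rearranges to $h^*_p(u)+h(p)\geq 0$. If $h(p)=+\infty$, then $h^*_p(u)>-\infty$ by the remark above, so the sum is $+\infty$ and the claim is trivial.

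For (2), suppose first that $u\in\partial h(p)$. The subdifferential is taken at points where $h(p)$ is finite; this is implicit in the inequality $h(q)\geq h(p)+g_p(u,\log_pq)$ for a proper function, and if $h(p)=+\infty$ the inequality would force $h\equiv+\infty$. For every $q$ we then get $g_p(u,\log_pq)-h(q)\leq -h(p)$, so $h^*_p(u)\leq -h(p)$. Combined with (1) this gives equality. Conversely, if $h^*_p(u)+h(p)=0$, then $h(p)$ must be finite, since otherwise the sum would be $+\infty$. So $h^*_p(u)=-h(p)$, meaning that for all $q$ we have $g_p(u,\log_pq)-h(q)\leq -h(p)$, that is $h(q)\geq h(p)+g_p(u,\log_pq)$. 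This is exactly $u\in\partial h(p)$.

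There is no real obstacle. The only points needing care are the identity $g_p(u,0_p)=0$ and the handling of the value $+\infty$, so that expressions like $\infty-\infty$ never arise; I will state the finiteness convention explicitly. Note that no convexity or flatness assumptions are used, consistent with the lemma being stated for general Hadamard spaces and arbitrary $h$.
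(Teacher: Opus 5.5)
Your argument is correct and is exactly the paper's proof. For (1) you test the supremum at $q=p$, where $g_p(u,0_p)=0$; for (2) you rearrange the subgradient inequality and take the supremum over $q$, combining with (1), and conversely rearrange $h^*_p(u)=-h(p)$.

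There is one small slip in your edge-case remark. The bound $h^*_p(u)\geq g_p(u,0_p)-h(p)$ says nothing when $h(p)=+\infty$, so $h^*_p(u)>-\infty$ must instead come from properness of $h$, by testing at some $q_0$ with $h(q_0)<+\infty$. For $h\equiv+\infty$ the sum is $-\infty+\infty$; the paper tacitly excludes this case, and in its application $F$ is proper.
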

\begin{proof}
\begin{enumerate}
\item By definition, we have that
\[
h^*_p(u)+h(p)\geq g_p(u,\log_pp)-h(p) + h(p)=0.
\]
\item Suppose $u\in\partial h(p)$, that is
\[
h(q)\geq h(p)+g_p(u,\log_pq)\text{ for all }q\in \mathcal{H}.
\]
Re-arranged, we get
\[
-h(p)\geq g_p(u,\log_pq)-h(q)\text{ for all }q\in \mathcal{H}.
\]
Taking the supremum over $q$, we get $-h(p)\geq h^*_p(u)$, that is $h^*_p(u)+h(p)\leq 0$. By (1), we get $h^*_p(u)+h(p)= 0$.

Conversely, if $h^*_p(u)+h(p)= 0$, then $h^*_p(u)+h(p)\leq 0$ and so
\[
-h(p)\geq h^*_p(u)\geq g_p(u,\log_pq)-h(q)\text{ for all }q\in \mathcal{H}.
\]
Re-arranging yields that $u\in \partial h(p)$.
\end{enumerate}
\end{proof}

As discussed in Section \ref{sec:prelim}, $L^2(T,\mathcal{H},\tau)$ is a separable Hilbert-Hadamard space whenever $\mathcal{H}$ is so and $T$ is separable, and the tangent cone $T_pL^2(T,\mathcal{H},\tau)$ of $L^2(T,\mathcal{H},\tau)$ at a point $p\in \mathcal{H}$ (identified with its constant map) is (isometric to) $L^2(T,T_p\mathcal{H},\tau)$. In particular, the pseudo-Riemannian metric takes the form of
\[
g_p(u,v)=\int g_p(u(t),v(t))\,d\tau(t).
\]
Hence, we have
\[
F^*_p(v):=\sup_{q\in L^2(T,\mathcal{H},\tau)}\{g_p(v,\log_pq)-F(q)\}
\]
for $p\in \mathcal{H}$ and $v\in L^2(T,T_p\mathcal{H},\tau)$.

We now show that $F^*_p$ can be represented as the integral over the conjugate $f^*_p$. For that, we follow the approach taken in \cite{CastaingValadier1977} for a similar result. At first, we show that the latter is a normal convex integrand over Hilbert-Hadamard spaces, which is an adaptation of Corollary VII-2 in \cite{CastaingValadier1977}.

\begin{lemma}
Let $\mathcal{H}$ be a separable Hilbert-Hadamard space and let $f:T\times \mathcal{H}\to (-\infty,+\infty]$ be a proper normal integrand. For any $p\in \mathcal{H}$, the conjugate $f^*_p:T\times T_p\mathcal{H}\to (-\infty,+\infty]$ defined by
\[
f^*_p(t,u):=\sup_{q\in \mathcal{H}}\{g_p(u,\log_pq)-f(t,q)\}
\]
is a normal convex integrand.
\end{lemma}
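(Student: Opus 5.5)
The plan is to show three things. First, for each $t$, the function $f^*_p(t,\cdot)$ is convex and lower-semicontinuous on $T_p\mathcal{H}$. Second, it is proper, or at least never $-\infty$. Third, $f^*_p$ is jointly $\mathsf{T}\otimes\mathsf{B}(T_p\mathcal{H})$-measurable, which gives normality because $(T,\mathsf{T},\tau)$ is complete. This mirrors Corollary VII-2 in \cite{CastaingValadier1977}: there the conjugate is a supremum of affine functions of $u$ that are measurable in $t$, and one reduces that supremum to a countable one.

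\emph{Convexity and lsc in $u$.} Fix $t$ and $q$. By Lemma \ref{affine}, $u\mapsto g_p(u,\log_pq)$ is affine, because $T_p\mathcal{H}$ is flat. By Lemma \ref{nonexpCat0} it is Lipschitz with constant $d(p,q)$. Hence $u\mapsto g_p(u,\log_pq)-f(t,q)$ is convex and continuous wherever $f(t,q)<\infty$; for $f(t,q)=+\infty$ the term is $-\infty$ and can be dropped. A pointwise supremum of such functions is convex and lsc. Since $f(t,\cdot)$ is proper, some $q_0$ has $f(t,q_0)<\infty$, and then $f^*_p(t,u)\geq g_p(u,\log_pq_0)-f(t,q_0)>-\infty$.

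\emph{Measurability.} This is the main obstacle, because the supremum runs over the uncountable set $\mathcal{H}$, and $f(t,\cdot)$ is only lsc, so we cannot simply restrict to a dense subset of $\mathcal{H}$. I would first try the approach of \cite{CastaingValadier1977}: use the Castaing representation of the measurable closed-valued epigraph map $t\mapsto\mathrm{epi}f(t,\cdot)$. Measurable-selection theory (Theorem 8.1.4 in \cite{AubinFrankowska2009}, noting that $\mathcal{H}\times\mathbb{R}$ is complete and separable) provides measurable $(q_k(t),\alpha_k(t))$, $k\in\mathbb{N}$, whose values are dense in $\mathrm{epi}f(t,\cdot)$ for every $t$. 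I then claim
\[
f^*_p(t,u)=\sup_{k}\{g_p(u,\log_pq_k(t))-\alpha_k(t)\}.
\]
The inequality ``$\geq$'' holds because $\alpha_k(t)\geq f(t,q_k(t))$. For ``$\leq$'', take $q$ with $f(t,q)<\infty$ and choose $(q_k,\alpha_k)\to(q,f(t,q))$ in the epigraph. The map $q\mapsto g_p(u,\log_pq)$ is continuous, since $\log_p$ is nonexpansive and Lemma \ref{nonexpCat0} applies, so the $k$-th term converges to $g_p(u,\log_pq)-f(t,q)$.

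Each term $(t,u)\mapsto g_p(u,\log_pq_k(t))-\alpha_k(t)$ is Carath\'eodory: it is measurable in $t$ by Remark \ref{rem:tangentSepAndMeas} together with the measurability of $q_k$ and $\alpha_k$, and it is continuous in $u$. Each term is therefore jointly measurable by Lemma 8.2.6 in \cite{AubinFrankowska2009}. A countable supremum of jointly measurable functions is jointly measurable. This gives $\mathsf{T}\otimes\mathsf{B}(T_p\mathcal{H})$-measurability, and combined with lsc and completeness of $\tau$ it shows that $f^*_p$ is a normal convex integrand.

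If completeness of $(T,\mathsf{T},\tau)$ is not available in the intended setting, the fallback is to verify directly that $t\mapsto\mathrm{epi}f^*_p(t,\cdot)$ is measurable. One writes this epigraph as the countable intersection over $k$ of the closed half-space-type sets $\{(u,\beta)\mid g_p(u,\log_pq_k(t))-\alpha_k(t)\leq\beta\}$, each of which is a measurable set-valued map, and uses that countable intersections of closed-valued measurable maps are measurable.
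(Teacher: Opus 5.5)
Your proposal is correct and follows the paper's proof essentially step for step. You get convexity and lower semicontinuity from Lemma \ref{affine}, reduce normality to joint $\mathsf{T}\otimes\mathsf{B}(T_p\mathcal{H})$-measurability via completeness, replace the supremum over $\mathcal{H}$ by a countable one using a Castaing representation of $t\mapsto\mathrm{epi}f(t,\cdot)$, and use that each term is Carath\'eodory. The fallback paragraph is unnecessary, since completeness of $(T,\mathsf{T},\tau)$ is a standing assumption in that section. It would also need more care as written, because the standard results guaranteeing measurability of countable intersections of closed-valued measurable maps assume completeness or compact values.
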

\begin{proof}
First, note that by Lemma \ref{affine}, $f^*_p(t,\cdot)$ is a supremum of convex and lsc functions and hence is convex and lsc. As $T$ is complete, it suffices to show that $f^*_p$ is $\mathsf{T}\otimes\mathsf{B}(T_p\mathcal{H})$-measurable. To see this, note that
\[
f^*_p(t,u)=\sup_{(q,r)\in \mathrm{epi}f(t,\cdot)}\{g_p(u,\log_pq)-r\}.
\]
As $\mathrm{epi}f(t,\cdot)$ is measurable, there is a sequence $(q_n,r_n)$ of measurable selections such that, for any $t\in T$, the sequence $(q_n(t),r_n(t))$ is dense in $\mathrm{epi}f(t,\cdot)$. Hence, we have
\[
f^*_p(t,u)=\sup_{n\in\mathbb{N}}\{g_p(u,\log_pq_n(t))-r_n(t)\},
\]
using that $g_p$ and $\log_p$ are continuous. Now, $g_p(u,\log_pq_n(t))$ is a Carath\'eodory function and hence is $\mathsf{T}\otimes\mathsf{B}(T_p\mathcal{H})$-measurable. Therefore, $f^*_p$ is a supremum of $\mathsf{T}\otimes\mathsf{B}(T_p\mathcal{H})$-measurable functions and hence is $\mathsf{T}\otimes\mathsf{B}(T_p\mathcal{H})$-measurable.
\end{proof}

Further, we need the following lemma, which is an adaptation of Proposition VII-4 in \cite{CastaingValadier1977}.

\begin{lemma}\label{Radon}
Let $\mathcal{H}$ be a complete separable metric space and $s:T\to \mathcal{H}$ be measurable. Then there exists an increasing sequence of measurable sets $T_n$ such that $\tau(T\setminus \bigcup_{n\in\mathbb{N}} T_n)=0$ and $\overline{s(T_n)}$ is compact for all $n\in\mathbb{N}$.
\end{lemma}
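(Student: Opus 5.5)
The key point is that the push-forward measure $\tau_s:=\tau\circ s^{-1}$ is a finite Borel measure on the complete separable metric space $\mathcal{H}$, and is therefore tight (Ulam's theorem: every finite Borel measure on a Polish space is Radon). The plan is to obtain compact sets in $\mathcal{H}$ carrying almost all of the mass of $\tau_s$, and then pull them back through $s$.

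For each $n\in\mathbb{N}$, tightness of $\tau_s$ gives a compact set $K_n\subseteq \mathcal{H}$ with $\tau_s(\mathcal{H}\setminus K_n)\leq 1/n$. We then replace $K_n$ by $K'_n:=K_1\cup\dots\cup K_n$. These sets are still compact, being finite unions of compact sets, and they are increasing in $n$.

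Next define $T_n:=s^{-1}(K'_n)$. Each $T_n$ is measurable, since $K'_n$ is closed and hence Borel and $s$ is $\mathsf{T}$/$\mathsf{B}(\mathcal{H})$-measurable. The sequence $(T_n)$ is increasing because $(K'_n)$ is. We have $s(T_n)\subseteq K'_n$, so the closure $\overline{s(T_n)}$ is a closed subset of a compact set and is therefore compact. Finally,
\[
\tau\Bigl(T\setminus \bigcup_{n\in\mathbb{N}} T_n\Bigr)\leq \tau(T\setminus T_n)=\tau_s(\mathcal{H}\setminus K'_n)\leq \tau_s(\mathcal{H}\setminus K_n)\leq \frac{1}{n}
\]
for every $n$, so this set has measure zero.

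The only nontrivial ingredient is the tightness of $\tau_s$. If I were to reprove it rather than cite it, I would take a countable dense set $\{d_k\}\subseteq\mathcal{H}$. For each $j$, I would choose $N_j$ such that the union of the closed balls $\overline{B}(d_k,1/j)$ for $k\leq N_j$ has $\tau_s$-measure at least $1-\varepsilon 2^{-j}$; this is possible by continuity from below. The intersection over $j$ of these finite unions is closed and totally bounded, hence compact by completeness of $\mathcal{H}$, and its complement has $\tau_s$-measure at most $\varepsilon$. I expect no real obstacle beyond this step.
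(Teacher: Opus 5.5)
Your proof is correct and follows the same route as the paper: tightness (Radon property) of the push-forward $\tau_s$ on the Polish space $\mathcal{H}$ gives increasing compact sets of full measure, and you pull them back as $T_n:=s^{-1}(K_n)$. Your explicit monotonization $K_1\cup\dots\cup K_n$ and your self-contained sketch of tightness fill in details that the paper covers only by citation.
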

\begin{proof}
Consider the push-forward measure $\tau_s$ on $\mathsf{B}(\mathcal{H})$. As $\mathcal{H}$ is a Polish space, $\tau_s$ is a Radon measure (as follows from Prohorov's theorem, see Theorem 13.29 in \cite{Klenke2020}). Therefore, there exists an increasing sequence of compact sets $K_n\subseteq \mathcal{H}$ such that $\tau_s(\bigcup_{n\in\mathbb{N}} K_n)=\tau_s(\mathcal{H})$. Define $T_n:=s^{-1}(K_n)$. This gives the claim.
\end{proof}

Lastly, we require another result from measurable selection theory, stating the graph-measurability of set-valued maps induced by inequalities (extending Proposition 2I in \cite{Rockafellar1976}):

\begin{lemma}\label{epiLem}
Let $\mathcal{H}$ be a complete separable metric space and let $f:T\times \mathcal{H}\to (-\infty,+\infty]$ be a normal integrand and $\alpha$ be measurable. Then 
\[
\Gamma(t):=\{x\in \mathcal{H}\mid f(t,x)\leq \alpha(t)\}
\]
is closed-valued and graph measurable.
\end{lemma}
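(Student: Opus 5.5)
The plan is to write the set $\mathrm{gra}(\Gamma)$ as a preimage of a measurable set under a jointly measurable map and then to read off both closedness and graph measurability. Closed-valuedness is immediate. For fixed $t$, $\Gamma(t)$ is a sublevel set $\{x\mid f(t,x)\leq\alpha(t)\}$ of the lower-semicontinuous function $f(t,\cdot)$, since $f$ is a normal integrand. Hence it is closed, and it may be empty, which is harmless for closedness.

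For graph measurability, I would first establish that $f$ is $\mathsf{T}\otimes\mathsf{B}(\mathcal{H})$-measurable. This is the one point where care is needed, because the definition of a normal integrand only asks for measurability of the epigraph map $t\mapsto\mathrm{epi}f(t,\cdot)$. My route is to use that this epigraph map is closed-valued, since $f(t,\cdot)$ is lsc, and measurable into the Polish space $\mathcal{H}\times\mathbb{R}$. By the Castaing representation (Theorem 8.1.4 in \cite{AubinFrankowska2009}), on the set $T_0:=\{t\mid \mathrm{epi}f(t,\cdot)\neq\emptyset\}$, which lies in $\mathsf{T}$, there is a sequence of measurable selections $(q_n,r_n)$ that is dense in $\mathrm{epi}f(t,\cdot)$ for each $t$. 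Then $\mathrm{gra}(\mathrm{epi}f)$ is the set of $(t,x,r)$ with $t\in T_0$ and $\inf_n d((x,r),(q_n(t),r_n(t)))=0$. Each of these distance functions is Carathéodory, so this set lies in $\mathsf{T}\otimes\mathsf{B}(\mathcal{H})\otimes\mathsf{B}(\mathbb{R})$. It then follows that $\{(t,x)\mid f(t,x)\leq\beta\}$ is a section at $r=\beta$ of a product-measurable set, and is therefore measurable for every $\beta\in\mathbb{R}$. This gives joint measurability of $f$ without any completeness assumption. If $(T,\mathsf{T},\tau)$ is complete, as in the standing assumption of the section, one can instead simply invoke the remark in Example \ref{ex:RMVF}.

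With this in hand, consider the map $(t,x)\mapsto(f(t,x),\alpha(t))\in(-\infty,+\infty]\times\mathbb{R}$. It is $\mathsf{T}\otimes\mathsf{B}(\mathcal{H})$-measurable, because the first component was just shown to be measurable and the second is the measurable function $\alpha$ composed with the projection onto $T$. The set $D:=\{(a,b)\mid a\leq b\}$ is closed, hence Borel. Therefore
\[
\mathrm{gra}(\Gamma)=\{(t,x)\mid (f(t,x),\alpha(t))\in D\}\in\mathsf{T}\otimes\mathsf{B}(\mathcal{H}),
\]
which is exactly the claimed graph measurability. I expect the only real obstacle to be the passage from epigraph measurability to joint measurability; everything after that step is a routine preimage argument.
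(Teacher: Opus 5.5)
Your proof is correct and is essentially the paper's argument. Both rest on $\mathrm{gra}(\mathrm{epi}f)\in\mathsf{T}\otimes\mathsf{B}(\mathcal{H})\otimes\mathsf{B}(\mathbb{R})$ and realize $\mathrm{gra}\,\Gamma$ as a preimage; the paper pulls back directly along $\varphi(t,x)=(t,x,\alpha(t))$, which makes your detour through sections at $r=\beta$ (the constant-$\alpha$ case of that same pullback) and joint measurability of $f$ unnecessary, while your Castaing-representation argument usefully supplies the justification of that product-measurability, which the paper simply asserts.
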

\begin{proof}
That $\Gamma$ is closed-valued follows immediately as $f$ is lsc in its right argument. We have 
\begin{align*}
\mathrm{gra}\Gamma&=\{(t,x)\in T\times \mathcal{H}\mid f(t,x)\leq \alpha(t)\}\\
&=\{(t,x)\in T\times \mathcal{H}\mid (x,\alpha(t))\in \mathrm{epi}f(t,\cdot)\}\\
&=\varphi^{-1}(\mathrm{gra}(\mathrm{epi}f))
\end{align*}
for $\varphi(t,x)=(t,x,\alpha(t))$, which is clearly measurable. Hence $\mathrm{gra}\Gamma\in\mathsf{T}\otimes \mathsf{B}(\mathcal{H})$ since $\mathrm{gra}(\mathrm{epi}f)\in\mathsf{T}\otimes\mathsf{B}(\mathcal{H})\otimes\mathsf{B}(\mathbb{R})$.
\end{proof}

We now can give the respective result on the conjugate $F^*_p$, where we follow the approach taken in the proof of Theorem VII-7 in \cite{CastaingValadier1977}.

\begin{lemma}\label{conjugateCharac}
Let $T$ be separable and let $\mathcal{H}$ be a separable Hilbert-Hadamard space, let $f:T\times \mathcal{H}\to (-\infty,+\infty]$ be a proper normal convex integrand and assume that $F$ is proper and lsc. For any $p\in \mathcal{H}$ and any $v\in L^2(T,T_p\mathcal{H},\tau)$:
\[
F^*_p(v)=\int f^*_p(t,v(t))\,d\tau(t).
\]
\end{lemma}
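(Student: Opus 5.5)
We prove the two inequalities separately, following the proof of Theorem VII-7 in \cite{CastaingValadier1977}. Here $F(q)=\int f(t,q(t))\,d\tau(t)$ for $q\in L^2(T,\mathcal{H},\tau)$. Since $p$ is identified with a constant map, the tangent cone at $p$ satisfies $\log_pq=(\log_pq(t))_t$, so that
\[
g_p(v,\log_pq)=\int g_p(v(t),\log_pq(t))\,d\tau(t).
\]

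\textbf{Inequality $F^*_p(v)\leq \int f^*_p(t,v(t))\,d\tau(t)$.} We may assume the right-hand side is $<+\infty$. For every $q\in L^2(T,\mathcal{H},\tau)$ with $F(q)<+\infty$, the definition of the conjugate gives pointwise
\[
g_p(v(t),\log_pq(t))-f(t,q(t))\leq f^*_p(t,v(t)).
\]
Integrating this is legitimate: $g_p(v,\log_pq)$ is integrable because $\norm{v(t)}_p\,d(p,q(t))$ is integrable by Cauchy--Schwarz, and measurability follows from Remark~\ref{rem:tangentSepAndMeas} together with the normality of $f$ and $f^*_p$ (the previous lemma). Taking the supremum over $q$ then yields the inequality.

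\textbf{Inequality $F^*_p(v)\geq \int f^*_p(t,v(t))\,d\tau(t)$.} This is the main step: we need a measurable near-maximizer $q(t)$ of $g_p(v(t),\log_p\cdot)-f(t,\cdot)$ that also lies in $L^2$ and has $F(q)$ finite. Fix a real number $\beta<\int f^*_p(t,v(t))\,d\tau$. Choose a measurable $\alpha(t)<f^*_p(t,v(t))$ such that $\int\alpha>\beta$; at points where $f^*_p=+\infty$, take $\alpha$ large. The set-valued map
\[
\Gamma(t):=\{q\in \mathcal{H}\mid g_p(v(t),\log_pq)-f(t,q)>\alpha(t)\}
\]
has nonempty values. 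Its graph is measurable because the defining function is jointly measurable, by the same Carath\'eodory and normal-integrand argument as in Lemma~\ref{epiLem}. Since $T$ is complete, the von Neumann--Aumann selection theorem gives a measurable selection $q_0$. This $q_0$ need not be in $L^2$, and $f(\cdot,q_0(\cdot))$ need not be integrable. We fix this by gluing: since $F$ is proper, take $\bar q\in L^2$ with $F(\bar q)<+\infty$. By Lemma~\ref{Radon}, together with the sets $\{d(p,q_0)\leq n\}$, there are increasing measurable $T_n$ exhausting $T$ up to a null set on which $q_0$ is bounded and $f(t,q_0(t))$ is bounded above. Set $q_n:=q_0$ on $T_n$ and $q_n:=\bar q$ off $T_n$. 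Then $q_n\in L^2$ and $F(q_n)<\infty$. Consequently
\[
F^*_p(v)\geq \int_{T_n}\alpha\,d\tau+\int_{T\setminus T_n}\bigl(g_p(v,\log_p\bar q)-f(t,\bar q)\bigr)\,d\tau.
\]
The second integral tends to $0$ because its integrand is integrable. The first tends to $\int\alpha>\beta$ by monotone convergence, applied separately to the positive and negative parts of $\alpha$; the negative part is controlled via Lemma~\ref{conjugateProp}(1)-type lower bounds, which give $f^*_p(t,v(t))\geq g_p(v(t),\log_p\bar q(t))-f(t,\bar q(t))$, an integrable function.

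\textbf{Main obstacle.} The delicate part is this last step: choosing $\alpha$ measurably with the right integrability, in particular when $f^*_p(t,v(t))=+\infty$ on a set of positive measure, where the integral should be $+\infty$, and justifying the joint measurability needed for the selection. Everything else is routine.
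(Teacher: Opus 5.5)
Your proposal is correct and follows essentially the same route as the paper, which is the argument of Theorem VII-7 in Castaing--Valadier. You integrate the pointwise conjugate inequality to get $F^*_p(v)\leq\int f^*_p(t,v(t))\,d\tau$; for the converse you take a measurable selection from the superlevel set below $f^*_p(t,v(t))$, glue it with a point of $\mathrm{dom}F$ along an exhausting sequence $T_n$, and pass to the limit. The differences are cosmetic: the paper uses the closed set defined with $\geq$ and Kuratowski--Ryll-Nardzewski, gets boundedness of the selection from the compactness in Lemma~\ref{Radon}, and settles your \emph{main obstacle} by taking the threshold $\min\{N,f^*_p(t,v(t))\}-\varepsilon$ with $N$ chosen by monotone convergence. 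One small correction: the upper bound on $f(t,q_0(t))$ over $T_n$ does not come from Lemma~\ref{Radon} or from the sets $\{d(p,q_0)\leq n\}$; intersect additionally with $\{f(\cdot,q_0(\cdot))\leq n\}$, or use the defining inequality of $\Gamma$.
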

\begin{proof}
Let $q\in L^2(T,\mathcal{H},\tau)$. Then
\[
g_p(v(t),\log_pq(t))-f(t,q(t))\leq f^*_p(t,v(t))
\]
for any $t\in T$, and hence
\[
g_p(v,\log_pq)-F(q)\leq \int f^*_p(t,v(t))\,d\tau(t).
\]
Now, taking the supremum over all such $q$, we get
\[
\int f^*_p(t,v(t))\,d\tau(t)\geq \sup_{q\in L^2(T,\mathcal{H},\tau)}\left\{g_p(v,\log_pq)-F(q)\right\}.
\]
For the converse, let $\beta< \int f^*_p(t,v(t))\,d\tau(t)$. Now, we define a real-valued integrable function $\gamma$ such that $\beta<\int \gamma(t)\,d\tau(t)-\varepsilon$ for some $\varepsilon>0$ and $\gamma(t)<f^*_p(t,v(t))$ for all $t\in T$. If $\int f^*_p(t,v(t))\,d\tau(t)<+\infty$, then we set $\gamma(t):=f^*_p(t,v(t))-\varepsilon$ for $\varepsilon>0$ such that 
\[
\beta< \int f^*_p(t,v(t))\,d\tau(t)-\varepsilon.
\]
If $\int f^*_p(t,v(t))\,d\tau(t)=+\infty$, then let $N$ and $\varepsilon>0$ be such that 
\[
\beta< \int \min\{N,f^*_p(t,v(t))\}\,d\tau(t)-\varepsilon
\]
and set $\gamma(t):=\min\{N,f^*_p(t,v(t))\}-\varepsilon$. Define 
\[
\Gamma(t):=\{x\in \mathcal{H}\mid g_p(v(t),\log_px)-f(t,x)\geq\gamma(t)\}.
\]
Then $\Gamma$ is graph-measurable by Lemma \ref{epiLem} and so, by completeness, measurable. Further, the values of $\Gamma$ are closed and non-empty, the latter since $\gamma(t)<f^*_p(t,v(t))$. Hence, by the Kuratowski–Ryll-Nardzewski measurable selection theorem (see Theorem 8.1.3 in \cite{AubinFrankowska2009}), there exists a measurable $s:T\to \mathcal{H}$ such that
\[
g_p(v(t),\log_ps(t))-f(t,s(t))\geq\gamma(t)\text{ for all }t\in T.
\]
Now, take $u_0\in\mathrm{dom}F$, which exists since $F$ is proper. Then, there exists an integrable $\alpha$ such that
\[
g_p(v(t),\log_pu_0(t))-f(t,u_0(t))\geq\alpha(t)\text{ for all }t\in T.
\]
Using Lemma \ref{Radon}, take now an increasing sequence $T_n$ such that $\tau(T\setminus \bigcup_nT_n)=0$ and such that $\overline{s(T_n)}$ is compact for all $n\in\mathbb{N}$. Define
\[
u_n(t):=\begin{cases}s(t)&\text{if }t\in T_n,\\u_0(t)&\text{if }t\in T\setminus T_n.
\end{cases}
\]
Clearly we have $u_n\in L^2(T,\mathcal{H},\tau)$. In particular, we have
\[
\int g_p(v(t),\log_pu_n(t))\,d\tau(t)-\int f(t,u_n(t))\,d\tau(t)\geq\int_{T_n}\gamma(t)\,d\tau(t)+\int_{T\setminus T_n}\alpha(t)\,d\tau(t).
\]
As $\int_{T\setminus T_n}\alpha(t)\,d\tau(t)\to 0$ as well as $\int_{T_n}\gamma(t)\,d\tau(t)\to \int_{T}\gamma(t)\,d\tau(t)$ when $n\to\infty$, there is an $n$ such that 
\[
\int_{T\setminus T_n}\alpha(t)\,d\tau(t)\geq -\varepsilon/2\text{ and } \int_{T_n}\gamma(t)\,d\tau(t)\geq \int\gamma(t)\,d\tau(t)-\varepsilon/2.
\]
Hence, for such an $n$, we get
\[
g_p(v,\log_pu_n)-F(u_n)=
\int g_p(v(t),\log_pu_n(t))\,d\tau(t)-\int f(t,u_n(t))\,d\tau(t)\geq\int\gamma(t)\,d\tau(t)-\varepsilon>\beta.
\]
This completes the proof.
\end{proof}

The above can now be used rather immediately to show the following characterization of the subdifferential of $F$:

\begin{lemma}\label{partialFcharac}
Let $T$ be separable and let $\mathcal{H}$ be a separable Hilbert-Hadamard space, let $f:T\times \mathcal{H}\to (-\infty,+\infty]$ be a proper normal convex integrand and suppose that $F$ is proper and lsc. Then, for any $x\in \mathcal{H}$:
\[
\partial F(x)=\{v\in L^2(T,T_x\mathcal{H},\tau)\mid v(t)\in\partial f(t,x)\text{ almost surely}\}.
\]
\end{lemma}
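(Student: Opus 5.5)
The plan is to read $F(q)=\int f(t,q(t))\,d\tau(t)$ as a function on the Hadamard space $L^2(T,\mathcal{H},\tau)$, with $x\in\mathcal{H}$ identified with the constant map. We characterize $\partial F(x)$ through the Fenchel-type equality of Lemma \ref{conjugateProp}(2), applied in the space $L^2(T,\mathcal{H},\tau)$. The conjugate is then split pointwise using Lemma \ref{conjugateCharac}.

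\textbf{Reformulation.} As recalled in Section \ref{sec:prelim}, $T_xL^2(T,\mathcal{H},\tau)\cong L^2(T,T_x\mathcal{H},\tau)$. By Lemma \ref{conjugateProp}(2), applied in $L^2(T,\mathcal{H},\tau)$ with $h=F$ and $p=x$, we have $v\in\partial F(x)$ if and only if $v\in L^2(T,T_x\mathcal{H},\tau)$ and $F^*_x(v)+F(x)=0$. By Lemma \ref{conjugateCharac}, $F^*_x(v)=\int f^*_x(t,v(t))\,d\tau(t)$. Moreover $F(x)=\int f(t,x)\,d\tau(t)$ for the constant map $x$. The condition therefore becomes
\[
\int \bigl(f^*_x(t,v(t))+f(t,x)\bigr)\,d\tau(t)=0 .
\]

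\textbf{Pointwise conclusion.} By Lemma \ref{conjugateProp}(1), applied to $h=f(t,\cdot)$ and $p=x$, the integrand $\phi(t):=f^*_x(t,v(t))+f(t,x)$ is nonnegative for every $t$. Measurability of $\phi$ follows from the normality of $f^*_x$, proved in the lemma just before Lemma \ref{Radon}. A nonnegative measurable function has integral zero if and only if it vanishes almost surely. By Lemma \ref{conjugateProp}(2) again, $\phi(t)=0$ is equivalent to $v(t)\in\partial f(t,x)$. This gives both inclusions at once.

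\textbf{Main obstacle: additivity of the integral.} The only delicate point is that $\int(f^*_x(\cdot,v)+f(\cdot,x))\,d\tau$ must equal $F^*_x(v)+F(x)$, which needs care with infinite values. If $v\in\partial F(x)$, then $F(x)<\infty$ and $F^*_x(v)=-F(x)$ is finite. Hence both $\int f^*_x(t,v(t))\,d\tau$ and $\int f(t,x)\,d\tau$ are well defined and finite, and additivity holds. Conversely, if $v(t)\in\partial f(t,x)$ almost surely, then $f(t,x)<\infty$ and $f^*_x(t,v(t))=-f(t,x)$ almost surely. Then $\int f^*_x(t,v(t))\,d\tau=-\int f(t,x)\,d\tau$ whenever either side makes sense, so we need $F(x)$ to be finite. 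For $x\in\mathrm{dom}F$ this is given. If $F(x)=+\infty$, then $\partial F(x)=\emptyset$, and we must show the right-hand side is empty too. Suppose a selection $v$ exists. By Lemma \ref{conjugateProp}(1), $f(t,x)=-f^*_x(t,v(t))\le -g_x(v(t),\log_x u_0(t))+f(t,u_0(t))$ for any $u_0\in\mathrm{dom}F$. The right-hand side has finite-integrable positive part: the $g_x$ term is bounded by $\|v(t)\|_x\,d(x,u_0(t))$, which is integrable since both factors are in $L^2$, and $F(u_0)<\infty$ controls the positive part of $f(t,u_0(t))$. Hence $F(x)<\infty$, a contradiction. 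This edge case is the step I would check most carefully.
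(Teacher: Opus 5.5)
Your proposal is correct and is essentially the paper's proof: the Fenchel equality for $F$ via Lemma \ref{conjugateProp}(2) in $L^2(T,\mathcal{H},\tau)$, the splitting $F^*_x(v)=\int f^*_x(t,v(t))\,d\tau(t)$ from Lemma \ref{conjugateCharac}, and nonnegativity of $f^*_x(t,v(t))+f(t,x)$ to pass from a vanishing integral to an almost sure pointwise identity. Your extra bookkeeping (finiteness of $F(x)$, additivity of the integrals, and ruling out $F(x)=+\infty$ when the right-hand side is nonempty) usefully fills in a point the paper passes over silently; the only slip is a mis-citation, since $-f^*_x(t,v(t))\le f(t,u_0(t))-g_x(v(t),\log_x u_0(t))$ follows from the definition of the conjugate as a supremum (or directly from the subgradient inequality), not from Lemma \ref{conjugateProp}(1).
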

\begin{proof}
By Lemma \ref{conjugateProp}, we have $v\in\partial F(x)$ if, and only if, $F^*_x(v)+F(x)= 0$. By Lemma \ref{conjugateCharac}, the latter is equivalent to
\begin{align*}
0&=\int f^*_x(t,v(t))\,d\tau(t)+\int f(t,x)\,d\tau(t)\\
&=\int (f^*_x(t,v(t)) + f(t,x))\,d\tau(t).
\end{align*}
As $f^*_x(t,v(t)) + f(t,x)\geq 0$ by Lemma \ref{conjugateProp}, the above is hence is equivalent to $f^*_x(t,v(t)) + f(t,x)=0$ almost surely, which by Lemma \ref{conjugateProp} again is equivalent to $v(t)\in\partial f(t,x)$ almost surely.
\end{proof}

The following key lemma now requires the linearity of the tangent cones as well as the Lipschitz continuity of the integrand. 

\begin{lemma}\label{chainRule}
Let $T$ be separable and let $\mathcal{H}$ be a separable full Hilbert-Hadamard space. Let $f:T\times \mathcal{H}\to (-\infty,+\infty]$ be a proper normal convex integrand and suppose that $f$ is $L^2$-Lipschitz, that is
\[
\vert f(t,x)-f(t,y)\vert\leq L(t)d(x,y)\text{ for all }x,y\in \mathcal{H}\text{ and }t\in T,
\]
where $L\in L^2(T,(0,\infty),\tau)$. Assume that there exists a $u_0\in L^2(T,\mathcal{H},\tau)$ such that $f(\cdot,u_0(\cdot))\in L^1(T,\mathbb{R},\tau)$. Then, for any $x\in \mathcal{H}$:
\[
\partial \underline{f}(x)=\left\{\int v\,d\tau\mid v\in \partial F(x)\right\}.
\]
\end{lemma}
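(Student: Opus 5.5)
The plan is to prove both inclusions separately. We view $\underline{f}$ as the restriction of $F(u)=\int f(t,u(t))\,d\tau(t)$ on $L^2(T,\mathcal{H},\tau)$ to the constant maps, so $\underline{f}(y)=F(y)$ for $y\in\mathcal{H}$. First, $F$ is finite everywhere, convex and $\norm{L}_2$-Lipschitz on $L^2(T,\mathcal{H},\tau)$. Indeed, $\vert f(t,q(t))-f(t,u_0(t))\vert\leq L(t)d(q(t),u_0(t))$ is integrable by Cauchy--Schwarz, and $\vert F(q)-F(q')\vert\leq \norm{L}_2 d_2(q,q')$. In particular $F$ is proper and lsc, so Lemma \ref{partialFcharac} and the surrounding machinery apply. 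Recall also that $T_xL^2(T,\mathcal{H},\tau)\cong L^2(T,T_x\mathcal{H},\tau)$, which is a Hilbert space because $T_x\mathcal{H}$ is one. In it, the constant maps form a closed linear subspace $C\cong T_x\mathcal{H}$.

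\emph{Inclusion ``$\supseteq$''.} Let $v\in\partial F(x)$ and $y\in\mathcal{H}$, and apply the subgradient inequality for $F$ at the constant map $y$. Then
\[
\underline{f}(y)\geq \underline{f}(x)+\int g_x(v(t),\log_xy)\,d\tau(t).
\]
Since $T_x\mathcal{H}$ is a Hilbert space, $g_x(\cdot,\log_xy)$ is linear, so the integral equals $g_x(\int v\,d\tau,\log_xy)$; this can also be read off Lemma \ref{intEx}, with the second argument constant. Hence $\int v\,d\tau\in\partial\underline{f}(x)$.

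\emph{Inclusion ``$\subseteq$''.} We use Hahn--Banach on the Hilbert space $L^2(T,T_x\mathcal{H},\tau)$.
\begin{enumerate}
\item For directions $w=\log_xq$ with $q\in L^2(T,\mathcal{H},\tau)$, define the directional derivative $F'(x;w):=\lim_{s\to0^+}(F(\gamma_{x,q}(s))-F(x))/s$. This limit exists and is monotone in $s$ by convexity, and it satisfies $F'(x;\log_xq)\leq F(q)-F(x)$.
\item Using the Lipschitz bound on $F$ together with $d(\gamma(s),\eta(s))/s\to d_x(\dot\gamma,\dot\eta)$, $F'(x;\cdot)$ is $\norm{L}_2$-Lipschitz with respect to $d_x$ and positively homogeneous. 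It therefore extends uniquely to the closure of the cone over $\Sigma'_x$, which is all of $T_xL^2$.
\item The main obstacle is showing that this extension is sublinear, i.e.\ convex on the Hilbert space $T_xL^2$. I would try the approach of \cite{DiMarinoGigliPasqualettoSoultanis2021,MovahediBehmardiHosseini2015}. For directions $w_1=\log_xq_1$ and $w_2=\log_xq_2$, compare the point $\gamma_{x,m_s}(s)$, where $m_s$ is the geodesic midpoint of $\gamma_{x,q_1}(s)$ and $\gamma_{x,q_2}(s)$, against $\frac12(w_1+w_2)$ in the tangent cone, with the error controlled at scale $s$ via flatness of the cone. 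Then apply convexity of $F$ along the geodesic between $\gamma_{x,q_1}(s)$ and $\gamma_{x,q_2}(s)$, and use Lipschitz continuity to absorb the $o(s)$ errors.
\item Let $u\in\partial\underline{f}(x)$. For $y\in\mathcal{H}$ and $s\in(0,1]$, the subgradient inequality at $\gamma_{x,y}(s)$ gives $s\,g_x(u,\log_xy)\leq F(\gamma_{x,y}(s))-F(x)$. Dividing by $s$ and letting $s\to0^+$ yields $g_x(u,\log_xy)\leq F'(x;\log_xy)$ for constant directions. By homogeneity, density and continuity, $\ell(w):=g_x(u,w)$ is then a linear functional on $C$ dominated by $F'(x;\cdot)$.
\item Hahn--Banach extends $\ell$ to a linear functional on $L^2(T,T_x\mathcal{H},\tau)$ dominated by $F'(x;\cdot)$. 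It is bounded by $\norm{L}_2\norm{\cdot}$, so Riesz gives $v$ with $g_x(v,w)\leq F'(x;w)$ for all $w$.
\item By step 1, $g_x(v,\log_xq)\leq F'(x;\log_xq)\leq F(q)-F(x)$, so $v\in\partial F(x)$.
\item For constant $w\in T_x\mathcal{H}$, we have $g_x(\int v\,d\tau,w)=\int g_x(v(t),w)\,d\tau=g_x(v,w)=\ell(w)=g_x(u,w)$. Since $T_x\mathcal{H}$ is a Hilbert space, this forces $\int v\,d\tau=u$.
\end{enumerate}
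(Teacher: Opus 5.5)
Your proposal is correct and follows essentially the paper's argument. For ``$\subseteq$'' you extend $g_x(u,\cdot)$ from the constants $T_x\mathcal{H}$ to $L^2(T,T_x\mathcal{H},\tau)$ by Hahn--Banach, dominated by the sublinear directional derivative of $F$, apply Riesz, and read off $v\in\partial F(x)$ and $\int v\,d\tau=u$; your steps 1--3 rederive Lemmas \ref{dirDerProp}, \ref{dirDerSubgrad} and Corollary \ref{dirDerSublin} inline, and the convexity in step 3 is exactly what the paper imports from \cite{DiMarinoGigliPasqualettoSoultanis2021}. The only deviation is in ``$\supseteq$'', where you test the subgradient inequality of $F$ directly at constant maps, a slightly more economical route than the paper's detour through Lemma \ref{partialFcharac}.
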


Conceptually, the above result is similar to (a special case of) Theorem I-29 in \cite{CastaingValadier1977} on the representation of the subdifferential $\partial (h\circ A)(x)$ of a composition $h\circ A$ of a convex function $h$ with a linear operator via its conjugate $A^*$ as $A^*(\partial h(A(x)))$.

To prove Lemma \ref{chainRule}, we rely on the directional derivative $d_x h(v)$ at a point $x\in \mathcal{H}$ and $v\in T_x\mathcal{H}$ of a convex and Lipschitz continuous function $h:\mathcal{H}\to\mathbb{R}$. To define such an object, we follow the approach of \cite{DiMarinoGigliPasqualettoSoultanis2021} and first define the function $\sigma_xh:\Sigma'_x\mathcal{H}\to \overline{\mathbb{R}}$ via
\[
\sigma_xh(\gamma):=\lim_{t\to 0}\frac{h(\gamma(t))-h(x)}{t}
\]
for $x\in \mathcal{H}$ and $\gamma\in\Sigma'_x\mathcal{H}$. Note that by convexity of $h$, the above fraction is actually nondecreasing in $t$.  A related object was studied already in \cite{MovahediBehmardiHosseini2015} as well as \cite{Lytchak2004,Petrunin2007,Plaut2002} (the latter in different, partly more general settings). As shown in \cite{DiMarinoGigliPasqualettoSoultanis2021}, using that $h$ is Lipschitz, this function can be extended to $T_x\mathcal{H}$ and this extension retains various properties of $h$.

\begin{lemma}[{Proposition 2.16 in \cite{DiMarinoGigliPasqualettoSoultanis2021}}]\label{dirDerProp}
Let $\mathcal{H}$ be a Hadamard space and let $h:\mathcal{H}\to\mathbb{R}$ be convex and Lipschitz-continuous with constant $L>0$. Then, for each $x\in \mathcal{H}$, there exists a unique continuous map $d_xh:T_x\mathcal{H}\to\mathbb{R}$ such that
\[
d_xh(\gamma)=\sigma_x h(\gamma)\text{ for all }\gamma\in \Sigma'_x\mathcal{H}.
\]
Further, $d_xh$ is convex, Lipschitz-continuous with constant $L$ and positively $1$-homogeneous, that is $d_xh(\lambda v)=\lambda d_xh(v)$ for any $v\in T_x\mathcal{H}$ and $\lambda\geq 0$.
\end{lemma}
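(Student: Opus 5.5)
The plan is to build $d_xh$ on the dense subcone $C':=\{t\gamma\mid \gamma\in\Sigma'_x\mathcal{H},\,t\geq 0\}\subseteq T_x\mathcal{H}$, prove that it is $L$-Lipschitz there, and extend it by density. Convexity will come from a comparison argument in the tangent cone.

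\emph{Step 1: definition on $C'$.} Take representatives $\gamma$ to be unit-speed geodesics. Convexity of $h$ makes the quotient $(h(\gamma(r))-h(x))/r$ nondecreasing in $r$, and Lipschitz continuity bounds its absolute value by $L$. Hence $\sigma_xh(\gamma)$ exists and lies in $[-L,L]$. Set $d_xh(t\gamma):=t\,\sigma_xh(\gamma)$ and $d_xh(0_x):=0$. This is positively $1$-homogeneous on $C'$ by construction.

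\emph{Step 2: Lipschitz estimate on $C'$ (which also gives well-definedness).} Let $\gamma,\eta$ be unit geodesics and $s,t\geq 0$. Then
\[
\vert t\sigma_xh(\gamma)-s\sigma_xh(\eta)\vert=\lim_{r\to0}\frac{\vert h(\gamma(rt))-h(\eta(rs))\vert}{r}\leq L\lim_{r\to 0}\frac{d(\gamma(rt),\eta(rs))}{r}=L\,d_x(t\gamma,s\eta).
\]
The last equality holds because the comparison angle $\bar\angle_x(\gamma(rt),\eta(rs))$ converges to $\angle_x(\gamma,\eta)$ as $r\to0$ (in $\CAT$ spaces this limit exists along any pair of parameters going to $0$). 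Taking $s=t$ and $\angle_x(\gamma,\eta)=0$ shows that $\sigma_xh$ is constant on angle-equivalence classes, so $d_xh$ is well-defined on $C'$.

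$C'$ is dense in $T_x\mathcal{H}$: $\Sigma'_x\mathcal{H}$ is dense in $\Sigma_x\mathcal{H}$, and $d_x(t\gamma,t\eta)\leq t\angle_x(\gamma,\eta)$. Therefore $d_xh$ extends uniquely to an $L$-Lipschitz, hence continuous, function on $T_x\mathcal{H}$. Uniqueness among all continuous extensions also follows from density. Positive homogeneity passes to the limit because $u\mapsto\lambda u$ is continuous on the cone, as $d_x(\lambda u,\lambda v)=\lambda d_x(u,v)$.

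\emph{Step 3: convexity, the main obstacle.} Since $T_x\mathcal{H}$ is Hadamard (Nikolaev), geodesics depend continuously on their endpoints (Busemann property) and $d_xh$ is continuous. It therefore suffices to check midpoint-type convexity along geodesics joining $u=t\gamma$ and $v=s\eta$ in $C'$. Fix $\lambda\in[0,1]$ and let $w:=(1-\lambda)u\oplus\lambda v$ in $T_x\mathcal{H}$. Put $a_r:=\gamma(rt)$, $b_r:=\eta(rs)$ and $m_r:=(1-\lambda)a_r\oplus\lambda b_r$ in $\mathcal{H}$.

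The monotonicity of difference quotients gives $h(y)\geq h(x)+d_xh(\log_xy)$ for all $y$: first for $y$ with geodesic direction in $\Sigma'_x\mathcal{H}$, which is every $y\neq x$. Combining this with homogeneity and convexity of $h$,
\[
d_xh\Big(\tfrac1r\log_xm_r\Big)\leq\frac{h(m_r)-h(x)}{r}\leq (1-\lambda)\frac{h(a_r)-h(x)}{r}+\lambda\frac{h(b_r)-h(x)}{r}\;\longrightarrow\;(1-\lambda)d_xh(u)+\lambda d_xh(v).
\]
By continuity of $d_xh$, it remains to show $\frac1r\log_xm_r\to w$ in $T_x\mathcal{H}$.

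Here $\frac1r\log_xa_r=u$ and $\frac1r\log_xb_r=v$ exactly. Nonexpansiveness of $\log_x$ gives
\[
d_x\Big(\tfrac1r\log_xm_r,u\Big)\leq\lambda\frac{d(a_r,b_r)}{r}\quad\text{and}\quad d_x\Big(\tfrac1r\log_xm_r,v\Big)\leq(1-\lambda)\frac{d(a_r,b_r)}{r},
\]
with $d(a_r,b_r)/r\to d_x(u,v)$ by Step 2. So $p_r:=\frac1r\log_xm_r$ asymptotically satisfies both distance equalities that characterize $w$. The $\mathrm{CN}$-inequality in the Hadamard space $T_x\mathcal{H}$, applied to the geodesic from $u$ to $v$ and the point $p_r$, yields
\[
d_x^2(p_r,w)\leq(1-\lambda)d_x^2(p_r,u)+\lambda d_x^2(p_r,v)-\lambda(1-\lambda)d_x^2(u,v).
\]
The right-hand side tends to $0$, so $p_r\to w$, and convexity of $d_xh$ follows.
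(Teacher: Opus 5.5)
Your proof is correct. The paper gives no argument of its own here: it imports the statement from Proposition 2.16 of Di Marino, Gigli, Pasqualetto and Soultanis, who work in the more general setting of locally $\mathrm{CAT}(\kappa)$ spaces. So there is no in-paper route to compare against, and what you have is a self-contained $\mathrm{CAT}(0)$ proof using only tools the paper already has.

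\begin{itemize}
\item The comparison-angle limit gives the $L$-Lipschitz bound and well-definedness on the cone $C'$ over $\Sigma'_x\mathcal{H}$.
\item Monotonicity of difference quotients gives existence of $\sigma_xh$ and the supporting inequality $h(y)\geq h(x)+d_xh(\log_xy)$ essentially for free in the convex case.
\item For convexity, you combine nonexpansiveness of $\log_x$ with the $\mathrm{CN}$-inequality in the Hadamard space $T_x\mathcal{H}$ (Nikolaev). This shows that $\frac1r\log_x\bigl((1-\lambda)a_r\oplus\lambda b_r\bigr)$ converges to $(1-\lambda)u\oplus\lambda v$, even though that point need not lie in $C'$.
\item The reduction to $u,v\in C'$, via continuity of geodesics in their endpoints and continuity of $d_xh$, is sound.
\end{itemize}

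The citation buys generality (upper curvature bounds other than $0$). Your argument buys transparency: every step is an elementary consequence of the $\mathrm{CAT}(0)$ comparison.

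One point to make explicit concerns uniqueness. Your density argument gives uniqueness among continuous maps agreeing with $t\,\sigma_xh(\gamma)$ on all of $C'$, i.e.\ among positively $1$-homogeneous continuous extensions. The lemma as literally stated prescribes $\sigma_xh$ only on the unit directions $\Sigma'_x\mathcal{H}$, and that does not determine a continuous function on $T_x\mathcal{H}$: adding $c(\norm{v}_x-1)$ to $d_xh$ changes nothing on unit vectors. So the uniqueness clause has to be read in your sense. That is also the only way the paper uses it, through homogeneity, when it identifies $d_xF$ with $d_x\underline{f}$.
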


As every convex and positively $1$-homogeneous function is sub-additive over linear spaces, we in particular get the following result for linear tangent cones:

\begin{corollary}\label{dirDerSublin}
Let $\mathcal{H}$ be a Hadamard space and let $h:\mathcal{H}\to\mathbb{R}$ be convex and Lipschitz-continuous with constant $L>0$. Fix $x\in \mathcal{H}$. If $T_x\mathcal{H}$ is a Hilbert space, then $d_xh$ is sub-additive.
\end{corollary}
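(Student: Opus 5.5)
The statement to prove is Corollary \ref{dirDerSublin}: for $x\in\mathcal{H}$ with $T_x\mathcal{H}$ a Hilbert space, $d_xh$ is sub-additive. The plan is to deduce this directly from Lemma \ref{dirDerProp} by the classical argument that a convex, positively $1$-homogeneous function on a vector space is sublinear.

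First, fix $x$ and an isometry between $T_x\mathcal{H}$ and a Hilbert space $H$, and transport $d_xh$ to a function $\phi:H\to\mathbb{R}$. We must check that the cone structure matches the linear structure:
\begin{itemize}
\item the isometry may be normalised so that $0_x\mapsto 0$;
\item the cone scaling $\lambda v$ for $\lambda\ge0$ then corresponds to ordinary scalar multiplication in $H$;
\item geodesics in $T_x\mathcal{H}$ correspond to line segments in $H$.
\end{itemize}
For the scaling, argue as in the proof of Lemma \ref{HHtofullHH}. We have $\norm{\iota(\lambda v)}=\lambda\norm{\iota(v)}$ and $\norm{\iota(v)-\iota(\lambda v)}=|1-\lambda|\norm{\iota(v)}$. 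This is equality in the triangle inequality, which forces $\iota(\lambda v)=\lambda\iota(v)$. For the geodesics, note that isometries of uniquely geodesic spaces carry geodesics to geodesics, and geodesics in $H$ are segments. Convexity of $d_xh$ along geodesics of $T_x\mathcal{H}$, as given by Lemma \ref{dirDerProp}, therefore becomes ordinary convexity of $\phi$ on $H$. Positive $1$-homogeneity also transfers.

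Next comes the standard computation for $u,v\in H$:
\[
\phi(u+v)=2\phi\left(\tfrac12 u+\tfrac12 v\right)\le 2\left(\tfrac12\phi(u)+\tfrac12\phi(v)\right)=\phi(u)+\phi(v),
\]
using homogeneity with $\lambda=2$ and then convexity. Pulling back through the isometry gives sub-additivity of $d_xh$, where addition in $T_x\mathcal{H}$ is understood via the Hilbert structure.

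The only genuine point of care is the identification step: the convexity in Lemma \ref{dirDerProp} is geodesic convexity on the cone, and the homogeneity is with respect to the cone scaling. Both must coincide with the linear notions in $H$. This is ensured by the equality-case argument above.
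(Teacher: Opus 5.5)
Your proposal is correct and follows the paper's argument: the paper simply notes that $d_xh$ is convex and positively $1$-homogeneous by Lemma \ref{dirDerProp}, and that such functions are sub-additive on linear spaces. You also check that cone scaling and geodesic convexity on $T_x\mathcal{H}$ match scalar multiplication and ordinary convexity in the Hilbert space, using the equality case of the triangle inequality as in Lemma \ref{HHtofullHH}; the paper leaves this identification implicit.
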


The last property we need of the directional derivative is that it characterizes the subgradient:

\begin{lemma}[{extending Theorem 2.7 in \cite{MovahediBehmardiHosseini2015}}]\label{dirDerSubgrad}
Let $\mathcal{H}$ be a Hadamard space and let $h:\mathcal{H}\to\mathbb{R}$ be convex and Lipschitz-continuous with constant $L>0$. Then, for any $x\in \mathcal{H}$ and $v\in T_x\mathcal{H}$, $v\in\partial h(x)$ if, and only if, $g_x(v,w)\leq d_xh(w)$ for all $w\in T_x\mathcal{H}$.
\end{lemma}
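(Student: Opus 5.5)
The plan is to prove both implications by comparing the subgradient inequality along geodesics issuing from $x$ with the difference quotients that define $\sigma_xh$. I then pass from $\Sigma'_x\mathcal{H}$ to all of $T_x\mathcal{H}$ using the continuity provided by Lemma \ref{dirDerProp} and Lemma \ref{nonexpCat0}.

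For the direction ``$\Rightarrow$'', let $v\in\partial h(x)$ and take a nonconstant geodesic $\gamma:[0,l]\to\mathcal{H}$ issuing from $x$, parametrized by arc length. For $t\in(0,l]$ we have $\log_x\gamma(t)=t\gamma$ in $T_x\mathcal{H}$, where $\gamma$ also denotes its direction. The subgradient inequality then gives
\[
h(\gamma(t))\geq h(x)+g_x(v,t\gamma)=h(x)+t\,g_x(v,\gamma).
\]
Dividing by $t$ and letting $t\to 0^+$ yields $g_x(v,\gamma)\leq\sigma_xh(\gamma)=d_xh(\gamma)$. By positive homogeneity of both $g_x(v,\cdot)$ and $d_xh$ (Lemma \ref{dirDerProp}), this extends to all $w=s\gamma$ with $s\geq 0$ and $\gamma\in\Sigma'_x\mathcal{H}$. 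Such elements are dense in $T_x\mathcal{H}$, because $\Sigma'_x\mathcal{H}$ is dense in $\Sigma_x\mathcal{H}$ and $d_x(s\gamma,s\eta)\leq s\angle_x(\gamma,\eta)$. Both sides are continuous in $w$: the left side is $\norm{v}_x$-Lipschitz by Lemma \ref{nonexpCat0}, and the right side is $L$-Lipschitz by Lemma \ref{dirDerProp}. Hence the inequality $g_x(v,w)\leq d_xh(w)$ holds for all $w\in T_x\mathcal{H}$.

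For the direction ``$\Leftarrow$'', assume $g_x(v,w)\leq d_xh(w)$ for all $w$ and fix $y\in\mathcal{H}$. If $y=x$, the subgradient inequality $h(y)\geq h(x)+g_x(v,0_x)$ is trivial. Otherwise let $\gamma=\gamma_{x,y}$ be the unit-speed geodesic on $[0,d(x,y)]$, and take $w=\log_xy=d(x,y)\gamma$. By convexity of $h$, the quotient $(h(\gamma(t))-h(x))/t$ is nondecreasing in $t$. Its limit as $t\to0^+$ is therefore bounded by its value at $t=d(x,y)$, i.e.
\[
\sigma_xh(\gamma)\leq \frac{h(y)-h(x)}{d(x,y)}.
\]
Combining this with homogeneity of $d_xh$, we obtain
\[
g_x(v,\log_xy)\leq d_xh(\log_xy)=d(x,y)\,\sigma_xh(\gamma)\leq h(y)-h(x),
\]
which is exactly the subgradient inequality, so $v\in\partial h(x)$.

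The only delicate step is the density argument in ``$\Rightarrow$''. One has to check that the cone over $\Sigma'_x\mathcal{H}$ is dense in $T_x\mathcal{H}$ and that $g_x(v,\cdot)$ is continuous, and both follow directly from the definition of $d_x$ and Lemma \ref{nonexpCat0}. The identification $\log_x\gamma(t)=t\gamma$, which uses the arc-length parametrization, is also needed in both directions.
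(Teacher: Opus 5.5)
Your proposal is correct and follows the paper's proof essentially step for step. The forward direction divides the subgradient inequality along $\gamma\in\Sigma'_x\mathcal{H}$ by $t$ and then extends by homogeneity and continuity, and the converse evaluates the monotone difference quotient at $t=d(x,y)$. The only cosmetic difference is the order of extension: you apply homogeneity before the density argument and explicitly note continuity of $g_x(v,\cdot)$ via Lemma \ref{nonexpCat0}, whereas the paper first extends to $\Sigma_x\mathcal{H}$ by continuity of $d_xh$ and then uses homogeneity.
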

\begin{proof}
Suppose that $v\in\partial h(x)$, that is
\[
h(y)\geq h(x)+g_x(v,\log_xy)\text{ for all }y\in \mathcal{H}.
\]
Hence, for $\gamma\in \Sigma'_x\mathcal{H}$ with length $l$, we have
\[
h(\gamma(t))\geq h(x)+g_x(v,\log_x\gamma(t))\text{ for all }t\in [0,l].
\]
Now, note that
\[
g_x(v,\log_x\gamma(t))= tg_x(v,\gamma).
\]
Dividing by $t$ and taking $t\to 0$ yields $\sigma_xh(\gamma)\geq g_x(v,\gamma)$. Using continuity of $d_xh$, this now extends to $\gamma\in\Sigma_x\mathcal{H}$. For $w=\lambda\gamma\in T_x\mathcal{H}$, we then get
\[
d_xh(\lambda\gamma)=\lambda d_xh(\gamma)\geq \lambda g_x(v,\gamma)=g_x(v,\lambda\gamma)
\]
using positive $1$-homogeneity of $d_xh$ and $g_x$. 

For the converse, suppose $g_x(v,w)\leq d_xh(w)$ for all $w\in T_x\mathcal{H}$, that is in particular 
\[
g_x(v,\log_xy)\leq d_xh(\log_xy)=d(x,y)d_xh(\gamma_{x,y})\leq d(x,y) \frac{h(\gamma_{x,y}(t))-h(x)}{t}
\]
for $y\neq x$ and for all $t\in (0,d(x,y)]$. Taking $t=d(x,y)$, we get
\[
g_x(v,\log_xy)\leq h(y)-h(x)\text{ for all }y\in \mathcal{H}
\]
and so $v\in \partial h(x)$.
\end{proof}

We can now give the proof of Lemma \ref{chainRule}. Note for this that if $f$ is $L^2$-Lipschitz with $L\in L^2(T,(0,\infty),\tau)$, integrating the respective Lipschitz condition 
\[
\vert f(t,x(t))-f(t,y(t))\vert\leq L(t)d(x(t),y(t))\text{ for all }t\in T
\]
for $x,y\in L^2(T,\mathcal{H},\tau)$ and using the Cauchy-Schwarz inequality yields that
\[
\vert F(x)-F(y)\vert\leq \norm{L}_2d_2(x,y),
\]
where $\norm{L}_2$ is the $L^2$-norm of $L$, so that $F$ is $\norm{L}_2$-Lipschitz on $L^2(T,\mathcal{H},\tau)$, and in particular $\mathrm{dom}F=L^2(T,\mathcal{H},\tau)$ if $F$ is proper, i.e.\ if there exists a $u_0\in L^2(T,\mathcal{H},\tau)$ such that $f(\cdot,u_0(\cdot))\in L^1(T,\mathbb{R},\tau)$. In particular, $\underline{f}$ is $\norm{L}_2$-Lipschitz on $\mathcal{H}$. Note also that for $x\in \mathcal{H}$ and $v\in T_x\mathcal{H}$, we have $d_x F(v)=d_x \underline{f}(v)$. To see the last statement, note first that for $\gamma\in\Sigma'_x\mathcal{H}$, we have $\sigma_x\underline{f}(\gamma)=\sigma_x F(\gamma)$ simply because $\underline{f}$ and $F$ coincide on $\mathcal{H}$. Then, for the unique extensions $d_xF$ of $\sigma_xF$ on $T_xL^2(T,\mathcal{H},\tau)$ and $d_x\underline{f}$ of $\sigma_x\underline{f}$ on $T_x\mathcal{H}$ and $v=\lambda\gamma\in T_x\mathcal{H}$, we have
\[
d_xF(v)=\lambda d_xF(\gamma)=\lambda \sigma_xF(\gamma)=\lambda\sigma_x \underline{f}(\gamma)=\lambda d_x\underline{f}(\gamma)=d_x\underline{f}(v),
\]
using Lemma \ref{dirDerProp}.

\begin{proof}[Proof of Lemma \ref{chainRule}]
If $v\in\partial F(x)$, then by Lemma \ref{partialFcharac}, we have $v(t)\in\partial f(t,x)$ almost surely, that is
\[
f(t,y)\geq f(t,x)+g_x(v(t),\log_xy)\text{ for all }y\in \mathcal{H}.
\]
Integrating and using Lemma \ref{intEx} yields $\int v\,d\tau\in \partial \underline{f}(x)$. For the converse, let $u\in \partial \underline{f}(x)$ and define the function $l_0:T_x\mathcal{H}\to\mathbb{R}$ by $l_0(w):=g_x(u,w)$. Then Lemma \ref{dirDerSubgrad} yields
\[
l_0(w)=g_x(u,w)\leq d_x\underline{f}(w)=d_xF(w)
\]
for any $w\in T_x\mathcal{H}$. Since $T_x\mathcal{H}$ is a Hilbert space, so is $T_xL^2(T,\mathcal{H},\tau)=L^2(T,T_x\mathcal{H},\tau)$ of which $T_x\mathcal{H}$ is a subspace of. By Lemma \ref{dirDerProp} and Corollary \ref{dirDerSublin}, we hence have that $l_0$ is a continuous linear functional on $T_x\mathcal{H}$ which is dominated by the continuous and sub-linear functional $d_xF:T_xL^2(T,\mathcal{H},\tau)\to\mathbb{R}$. By the Hahn-Banach theorem, $l_0$ extends to continuous linear function $l:T_xL^2(T,\mathcal{H},\tau)\to\mathbb{R}$ such that $l(w)\leq d_xF(w)$ for all $w\in T_xL^2(T,\mathcal{H},\tau)$. By the Riesz representation theorem, there is a $v\in T_xL^2(T,\mathcal{H},\tau)$ such that
\[
l(w)=g_x(v,w)\text{ for all }w\in T_xL^2(T,\mathcal{H},\tau).
\]
Since we thus have $g_x(v,w)\leq d_xF(w)$ for all $w\in T_xL^2(T,\mathcal{H},\tau)$, we get $v\in\partial F(x)$. Further, for any $w\in T_x\mathcal{H}$, we have
\[
g_x(u,w)=l_0(w)=l(w)=g_x(v,w)=\int g_x(v(t),w)\,d\tau(t)=g_x\left(\int v\,d\tau,w\right)
\]
so that $\int v\,d\tau=u$.
\end{proof}

We now turn to the main result of this section, the converse to the above Lemma \ref{firstInterchange} over full Hilbert-Hadamard spaces.

\begin{theorem}\label{secondInterchange}
Let $\mathcal{H}$ be a separable full Hilbert-Hadamard space and $T$ be a complete and separable probability space. Let $f:T\times \mathcal{H}\to (-\infty,+\infty]$ be a proper normal convex integrand and suppose that $f$ is $L^2$-Lipschitz and that there exists a $u_0\in L^2(T,\mathcal{H},\tau)$ such that $f(\cdot,u_0(\cdot))\in L^1(T,\mathbb{R},\tau)$. Then, for any $x\in \mathcal{H}$: 
\[
\int \partial f(t,x)\,d\tau(t)=\partial\left(\int f(t,\cdot)\,d\tau(t)\right)(x).
\]
\end{theorem}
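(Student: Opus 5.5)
Since a full Hilbert-Hadamard space is in particular a Hilbert-Hadamard space (its tangent cones, being Hilbert spaces, are flat), Lemma \ref{firstInterchange} already gives $\underline{\partial f}(x)\subseteq\partial\underline{f}(x)$ for every $x\in\mathcal{H}$. So I only need to prove the reverse inclusion $\partial\underline{f}(x)\subseteq\underline{\partial f}(x)$. The plan is to combine Lemma \ref{chainRule} with Lemma \ref{partialFcharac}.

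First I check the hypotheses of Lemma \ref{partialFcharac} for $F(q)=\int f(t,q(t))\,d\tau(t)$ on $L^2(T,\mathcal{H},\tau)$. By the discussion preceding the proof of Lemma \ref{chainRule}, the $L^2$-Lipschitz assumption together with the existence of $u_0$ shows that $F$ is finite everywhere and $\norm{L}_2$-Lipschitz. Hence $F$ is proper and continuous, in particular lsc. The remaining hypotheses are that $T$ is separable and $\mathcal{H}$ is a separable Hilbert-Hadamard space, and both are assumed. One should also note that $\underline{f}$ is the restriction of $F$ to constant maps, so it is finite and Lipschitz.

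Now fix $x\in\mathcal{H}$ and $u\in\partial\underline{f}(x)$. Lemma \ref{chainRule} gives some $v\in\partial F(x)$ with $u=\int v\,d\tau$. By Lemma \ref{partialFcharac}, $v\in L^2(T,T_x\mathcal{H},\tau)$ and $v(t)\in\partial f(t,x)$ for $\tau$-almost every $t$. Since $\tau$ is a probability measure, $v$ also lies in $L^1(T,T_x\mathcal{H},\tau)$.

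The one remaining point is a technicality: $v$ is a selection only almost everywhere, while $\mathcal{S}^1$ requires $v(t)\in\partial f(t,x)$ for all $t$. I would handle this by modifying $v$ on the exceptional null set $N$. Concretely, I would pick a measurable selection $w$ of $\partial f(\cdot,x)$ on $N$ and set $\tilde v:=v$ off $N$ and $\tilde v:=w$ on $N$.
\begin{itemize}
\item To get $w$, I would use that $t\mapsto\partial f(t,x)$ is measurable by Lemma \ref{Ameas} (which needs completeness of $T$) and has closed values.
\item Nonemptiness of the values needs a little care. The simplest route is to note that the set $N'=\{t\mid\partial f(t,x)=\emptyset\}$ is measurable and contained in $N$, so it is null. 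Then set $\tilde v:=0_x$ on $N'$ and use the Kuratowski--Ryll-Nardzewski theorem on $N\setminus N'$.
\item The values on $N'$ are irrelevant for the integral, and measurability is preserved by completeness.
\end{itemize}
Alternatively, one can simply interpret $\mathcal{S}^1$ up to null sets, as is standard for Aumann integrals over complete spaces.

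With this, $\tilde v\in\mathcal{S}^1(\partial f(\cdot,x))$ after the null-set adjustment, and $\int\tilde v\,d\tau=\int v\,d\tau=u$ by Lemma \ref{ExpNonexp}, because $\tilde v=v$ almost everywhere. Therefore $u\in\underline{\partial f}(x)$, which completes the reverse inclusion. I expect this null-set bookkeeping to be the only step needing any care. All the substantive work sits in Lemma \ref{chainRule}, which the paper has already established.
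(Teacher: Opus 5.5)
Your proposal is correct and follows the paper's proof. Lemma \ref{firstInterchange} gives one inclusion. For the other, Lemma \ref{chainRule} writes $u=\int v\,d\tau$ with $v\in\partial F(x)$, and Lemma \ref{partialFcharac} then gives $v(t)\in\partial f(t,x)$ almost surely; the paper silently reads $\mathcal{S}^1$ up to null sets here.

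One small point in your extra null-set patch: setting $\tilde v:=0_x$ on $N'$ would not give a genuine selection. That case never arises, though. Each $f(t,\cdot)$ is finite, convex and $L(t)$-Lipschitz, and $T_x\mathcal{H}$ is a Hilbert space. So extending the zero functional below the sublinear map $d_xf(t,\cdot)$ by Hahn--Banach and Riesz (as in the proof of Lemma \ref{chainRule}), together with Lemma \ref{dirDerSubgrad}, shows $\partial f(t,x)\neq\emptyset$ for every $t$, i.e.\ $N'=\emptyset$.
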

\begin{proof}
By Lemma \ref{firstInterchange}, we only have to show the inclusion $\partial\underline{f}(x)\subseteq\underline{\partial f}(x)$. Let $u\in \partial\underline{f}(x)$. Hence, by Lemma \ref{chainRule}, we have that there exists a $v\in \partial F(x)$ such that $u=\int v\,d\tau$. As $v\in \partial F(x)$, Lemma \ref{partialFcharac} yields that $v(t)\in\partial f(t,x)$ almost surely. Hence, we have $u\in \underline{\partial f}(x)$.
\end{proof}

For the above Theorem \ref{secondInterchange}, we in particular highlight the following natural question:

\begin{question}
Are the assumptions of Theorem \ref{secondInterchange} that $f$ is $L^2$-Lipschitz or that $T$ is separable necessary?
\end{question}

\section{A probabilistic Lie-Trotter-Kato formula}\label{sec:LTK}

Combining Theorem \ref{secondInterchange} with Theorem \ref{L1LawVector}, we in particular obtain the following result on full Hilbert-Hadamard spaces, in analogy to Theorem 1 in \cite{Salim2023}. 

\begin{theorem}\label{HadCorr}
Let $(\Omega,\mathsf{F},\PP)$ and $(E,\mathsf{E},\mu)$ be probability spaces and let $\mathcal{H}$ be a separable full Hilbert-Hadamard space. Further, fix an i.i.d.\ sequence $(\xi_i)$ of random variables $\xi_i:\Omega\to E$ with distribution $\mu$. Let $A:E\to \mathcal{M}(\mathcal{H})$ be an integrable random monotone vector field such that $\frac{1}{n}\sum_{i=1}^n A(\xi_i)$ satisfies the surjectivity condition $\PP$-a.s. Then
\[
\frac{1}{n}\sum_{i=1}^n A(\xi_i)\to^R \int A(e)\, d\mu(e)\quad\PP\text{-a.s.}
\]
as $n\to\infty$.

In particular, suppose that $(E,\mathsf{E},\mu)$ is complete and separable and $A=\partial f$ for a proper normal convex integrand $f:E\times \mathcal{H}\to (-\infty,+\infty]$ which is $L^2$-Lipschitz and where there exists a $u_0\in L^2(E,\mathcal{H},\mu)$ such that $f(\cdot,u_0(\cdot))\in L^1(E,\mathbb{R},\mu)$. Then
\[
\partial \left(\frac{1}{n}\sum_{i=1}^n f(\xi_i,\cdot)\right) \to^R \partial \left(\int f(e,\cdot)\, d\mu(e)\right)\quad\PP\text{-a.s.}
\]
as $n\to\infty$.
\end{theorem}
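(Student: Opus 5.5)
The first claim should follow by reducing the Minkowski-type average $\frac{1}{n}\sum_{i=1}^n A(\xi_i)$ to the inductive mean vector field of Theorem \ref{L1LawVector}. A full Hilbert-Hadamard space is in particular a Hilbert-Hadamard space, since Hilbert spaces are flat. In a full Hilbert-Hadamard space each $T_x\mathcal{H}$ is isometric to a Hilbert space, and the geodesic $(1-t)u\oplus tv$ in $T_x\mathcal{H}$ is the linear combination $(1-t)u+tv$. Consequently the inductive mean $\frac{1}{n}\dsum_{i=1,\dots,n}u_i$ of points $u_i\in T_x\mathcal{H}$ equals the arithmetic mean $\frac{1}{n}\sum_{i=1}^n u_i$; this is a short induction on $n$ using $s_n=\frac{n-1}{n}s_{n-1}+\frac{1}{n}u_n$. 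Taking this over all choices $u_i\in A(\xi_i,x)$ gives $\frac{1}{n}\dsum_{i=1,\dots,n}A(\xi_i)(x)=\frac{1}{n}\sum_{i=1}^n A(\xi_i,x)$ as sets, where the right-hand side is a scaled Minkowski sum. So the two vector fields coincide, the surjectivity hypothesis transfers, and Theorem \ref{L1LawVector} yields the first claim directly.

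For the second claim, I would verify the hypotheses of the first part with $A=\partial f$. By Example \ref{ex:RMVF}, $\partial f$ is a random monotone vector field. For integrability, $L^2$-Lipschitzness together with the existence of $u_0$ makes $F$, and hence $\underline{f}$, finite and Lipschitz, as noted before the proof of Lemma \ref{chainRule}. In particular $\underline{f}$ is proper, convex and lsc. Theorem \ref{secondInterchange} then gives $\int\partial f(e,\cdot)\,d\mu=\partial\underline{f}$, and $\partial\underline{f}$ satisfies the surjectivity condition by Example \ref{subDiffEx}. This is exactly item (2) of Example \ref{exampleInteg}.

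For the surjectivity of the finite averages, I would fix $\omega$ and apply Theorem \ref{secondInterchange} to the finite probability space $T=\{1,\dots,n\}$ with uniform weights (or on the image points, with repetitions merged) and the integrand $(i,x)\mapsto f(\xi_i(\omega),x)$. This space is complete (full $\sigma$-algebra) and separable. The integrand is trivially $L^2$-Lipschitz with constants $L(\xi_i(\omega))$, provided these are finite, and properness of each $f(\xi_i(\omega),\cdot)$ together with Lipschitzness makes each function real-valued, so the $L^1$ condition holds. The Aumann integral over this finite space is the averaged Minkowski sum, as noted at the end of Example \ref{exampleInteg}. Hence $\frac{1}{n}\sum_{i=1}^n\partial f(\xi_i,\cdot)=\partial\big(\frac{1}{n}\sum_{i=1}^n f(\xi_i,\cdot)\big)$, which satisfies the surjectivity condition for every $\omega$ (outside a null set where some $L(\xi_i)=\infty$, if $L$ is only a.e.\ finite). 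Combining this with the first part and rewriting both sides as subdifferentials gives the displayed convergence.

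The main obstacle is the identification of inductive and Minkowski means at the level of sets. It needs the tangent-cone geodesics to be affine segments under the isometry to a Hilbert space; this holds because an isometry between Hilbert spaces fixing $0_x$ is linear, by Mazur–Ulam after the normalization $\iota(0_x)=0$. The measure-theoretic bookkeeping in applying Theorem \ref{secondInterchange} pointwise in $\omega$ is routine by comparison.
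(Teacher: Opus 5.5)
Your proposal is correct. It is exactly the combination of Theorem \ref{L1LawVector} and Theorem \ref{secondInterchange} that the paper invokes without further proof: identifying inductive means with Minkowski averages in the Hilbert tangent cones reduces the first part to Theorem \ref{L1LawVector}, and Theorem \ref{secondInterchange} supplies integrability as in Example \ref{exampleInteg}(2), while your application of it on the finite uniform space $\{1,\dots,n\}$, giving surjectivity of the finite averages for every $\omega$, correctly fills in the one step the paper leaves implicit.
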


This latter conclusion can now be used to derive a probabilistic representation of the gradient flow associated with $\int f(e,\cdot)\, d\mu(e)$ which resembles the well-known Lie-Trotter-Kato formula. 

Concretely, let $\mathcal{H}$ be a Hadamard space and let $f:\mathcal{H}\to (-\infty,+\infty]$ be a convex lsc function. Then, given $t>0$, one can define the mapping $S_t:\overline{\mathrm{dom}f}\to \mathcal{H}$ by
\[
S_t(x):=\lim_{k\to\infty}\left(\mathrm{prox}_{\frac{t}{k}f}\right)^{(k)}(x),
\]
the so-called exponential formula. The limit exists and is uniform in $t$ on bounded subintervals of $(0,\infty)$ (see Theorem 1.13 in \cite{Mayer1998}), and the above map can be continuously extended to $t\in [0,\infty)$ by $S_0(x):=x$. Moreover, the collection of maps $(S_t)$ is a nonexpansive semigroup, that is every $S_t$ is nonexpansive and $S_{t+s}=S_t\circ S_s$. As such, the above definition and existence result provides a nonlinear extension of a similar result of Crandall and Liggett \cite{CrandallLiggett1971} set over Hilbert spaces, where this semigroup can also be characterized as the solution semigroup of the parabolic inclusion problem $-\dot{x}\in \partial f(x)$ which generalizes the gradient flow equation $\dot{x}=\nabla f(x)$ to non-smooth convex functions $f$ (in fact, these results extend to general maximally monotone operators in place of $\partial f$). In particular, the nonlinear variant inherits many aspects of the associated theory of this inclusion, such as regularity of the flow or characterizations via an evolution variational inequality, and further extends to various other classes of spaces, making it a seminal tool in geometric analysis. We refer to  \cite{AmbrosioGigliSavare2008,Bacak2014a} for further background.

The first key result we need is that resolvent convergence actually implies the convergence of the gradient flow semigroup, as established by Ba\v{c}\'ak \cite{Bacak2015}:

\begin{lemma}[Theorem 4.4 in \cite{Bacak2015}]\label{resToSemi}
Let $\mathcal{H}$ be a Hadamard space and let $h_n:\mathcal{H}\to\mathbb{R}$ as well as $h:\mathcal{H}\to \mathbb{R}$ be convex and lsc. Let $S_t(x)$ be the gradient flow semigroup associated with $h$ and $S_{n,t}(x)$ be the gradient flow semigroup associated with $h_n$. If $\mathrm{prox}_{\lambda h_n}(x)\to\mathrm{prox}_{\lambda h}(x)$ as $n\to\infty$ for all $\lambda>0$ and $x\in \mathcal{H}$, then $S_{n,t}(x)\to S_{t}(x)$ as $n\to\infty$ for all $t\geq 0$ and $x\in \mathcal{H}$.
\end{lemma}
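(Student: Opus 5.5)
The plan is to reduce the semigroup convergence to the resolvent convergence through the exponential (Crandall--Liggett/Mayer) formula. The link is a quantitative Crandall--Liggett-type estimate in Hadamard spaces that is uniform in the function. For a convex lsc $h$ with gradient flow $S^h$, the Mayer-type estimate (see \cite{Mayer1998} and \cite[Theorem 4.3.? / Section 4]{Bacak2014a}) gives a bound of the form
\[
d\bigl(S^h_t(x),(\mathrm{prox}_{\frac{t}{k}h})^{(k)}(x)\bigr)\le C\,\frac{t}{\sqrt{k}}\,\bigl|\partial h\bigr|(x),
\]
or, when $x$ is not in the domain of the slope, a variant controlled by $d(x,\mathrm{prox}_{\lambda h}(x))/\lambda$ at a fixed $\lambda$. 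The constant in this bound depends on $h$ only through such slope-type quantities at the point $x$.

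First I fix $t>0$ and $x\in\mathcal{H}$; the case $t=0$ is trivial. By the triangle inequality I split
\[
d(S_{n,t}x,S_tx)\le d\bigl(S_{n,t}x,(\mathrm{prox}_{\frac{t}{k}h_n})^{(k)}x\bigr)+d\bigl((\mathrm{prox}_{\frac{t}{k}h_n})^{(k)}x,(\mathrm{prox}_{\frac{t}{k}h})^{(k)}x\bigr)+d\bigl((\mathrm{prox}_{\frac{t}{k}h})^{(k)}x,S_tx\bigr).
\]
The third term is small for large $k$ by the exponential formula for $h$.

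The middle term, for fixed $k$, tends to $0$ as $n\to\infty$. This follows by induction on the number of compositions, using the pointwise convergence $\mathrm{prox}_{\lambda h_n}\to\mathrm{prox}_{\lambda h}$ together with nonexpansivity:
\[
d(P_n^{j}x,P^{j}x)\le d(P_nP_n^{j-1}x,P_nP^{j-1}x)+d(P_nP^{j-1}x,PP^{j-1}x)\le d(P_n^{j-1}x,P^{j-1}x)+d(P_nP^{j-1}x,PP^{j-1}x),
\]
where each last summand tends to $0$.

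The main obstacle is the first term. We need the rate in the exponential formula for $h_n$ to be uniform in $n$. I would use the estimate in the form where the error is bounded by $C t k^{-1/2}\, d(x,\mathrm{prox}_{\lambda h_n}x)/\lambda$ for a fixed $\lambda\ge t/k$, obtained via the resolvent identity of Lemma \ref{ResProp} and Lemma \ref{contRes}. Since $d(x,\mathrm{prox}_{\lambda h_n}x)\to d(x,\mathrm{prox}_{\lambda h}x)$, these quantities are bounded in $n$, which makes the first term uniformly small for large $k$. If only the slope version is available, I would instead first replace $x$ by $x'=\mathrm{prox}_{\lambda h}x$ for small $\lambda$. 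At such a point the slopes of $h_n$ can be controlled by approximating with $\mathrm{prox}_{\lambda h_n}x$, and the error from moving $x$ is absorbed by nonexpansivity of $S_{n,t}$ and $S_t$ plus $d(x,\mathrm{prox}_{\lambda h}x)\to0$ as $\lambda\to0$ (true since $h$ is finite-valued).

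To finish, I choose $k$ first and then $n$ large, which concludes the argument.
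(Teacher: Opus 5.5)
Your splitting into three terms is fine, and so is your handling of the second and third terms. The gap is the estimate you rely on for the first term: it is false, and this term is where the content of the statement lies. Lemma \ref{contRes} and the triangle inequality show that $\mu\mapsto d(x,\mathrm{prox}_{\mu h_n}x)/\mu$ is nonincreasing. So for the step size $t/k\le\lambda$ they only give $d(x,\mathrm{prox}_{\frac tk h_n}x)/(t/k)\ge d(x,\mathrm{prox}_{\lambda h_n}x)/\lambda$, which is the wrong direction. The rate in the exponential formula is governed by small step sizes, i.e.\ by the slope $\vert\partial h_n\vert(x)$, and a single fixed $\lambda$ carries no information about it.

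Here is a concrete counterexample. Take $h_\alpha(y):=\alpha(\max\{y_1,0\}+\max\{y_1+y_2,0\})$ on $\mathbb{R}^2$ and $x=(1,1)$, with $\alpha\ge\max\{k/t,1/\lambda\}$. Then $(\mathrm{prox}_{\frac tk h_\alpha})^{(k)}x=\mathrm{prox}_{\lambda h_\alpha}x=(0,0)$, whereas the flow of $h_\alpha$ sends $x$ to $(-\frac14,\frac14)$ at time $t$. The error is therefore $\sqrt2/4$ for every $k$, while $d(x,\mathrm{prox}_{\lambda h_\alpha}x)/\lambda=\sqrt2/\lambda$ stays fixed. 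Hence no bound $C(t,\lambda)k^{-1/2}d(x,\mathrm{prox}_{\lambda h}x)/\lambda$ with $C$ independent of the function can hold.

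Your fallback is close to the right idea. As worded, however, it asks for control of the slopes of $h_n$ at the fixed point $\mathrm{prox}_{\lambda h}x$, and resolvent convergence does not provide that. For example, in $\ell^2$ with $h=0$ (so $\mathrm{prox}_{\lambda h}x=x$), the functions $h_n(y)=n\vert\langle y-x,e_n\rangle-\frac1n\vert$ satisfy $\mathrm{prox}_{\lambda h_n}z\to z$ for all $z,\lambda$, yet $\vert\partial h_n\vert(x)=n$.

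The paper gives no proof of this statement; it imports it from \cite{Bacak2015}. What works is the standard argument behind the nonlinear Trotter--Kato theorem of Brezis and Pazy: move the base point itself to the $n$-dependent point $x_n:=\mathrm{prox}_{\lambda h_n}x$.
\begin{enumerate}
\item Minimality of $x_n$ gives $h_n(x_n)-h_n(y)\le\frac{1}{2\lambda}d(y,x_n)\left(d(x,y)+d(x,x_n)\right)$, hence $\vert\partial h_n\vert(x_n)\le d(x,x_n)/\lambda$.
\item Apply the slope estimate $d(S_{n,t}y,(\mathrm{prox}_{\frac tk h_n})^{(k)}y)\le\frac{t}{\sqrt2 k}\vert\partial h_n\vert(y)$, used in the proof of Lemma \ref{quantLTK}, at $y=x_n$.
\item Use nonexpansivity of $S_{n,t}$ and of $(\mathrm{prox}_{\frac tk h_n})^{(k)}$ to pass from $x_n$ back to $x$.
\end{enumerate}
Together these yield
\[
d\left(S_{n,t}x,(\mathrm{prox}_{\frac tk h_n})^{(k)}x\right)\le 2d(x,x_n)+\frac{t}{\sqrt2\,k\lambda}\,d(x,x_n).
\]
The first summand does not decay in $k$, which is why $\lambda$ must be sent to $0$ instead of being fixed. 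The choices are made in this order:
\begin{enumerate}
\item Choose $\lambda$ with $d(x,\mathrm{prox}_{\lambda h}x)<\varepsilon$; this is possible since $h$ is finite, so $\mathrm{prox}_{\lambda h}x\to x$ as $\lambda\to0$.
\item Choose $n_0$ with $d(x,x_n)<2\varepsilon$ for all $n\ge n_0$.
\item Choose $k$ so that the second summand and your third term are below $\varepsilon$, uniformly in $n\ge n_0$.
\item Finally take $n$ large for your middle term.
\end{enumerate}
With this replacement your outline becomes a complete proof.
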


Fundamentally, there is the following theorem of Stojkovi\'c \cite{Stojkovic2012}, of which an easier proof is given by Ba\v{c}\'ak in \cite{Bacak2014c}.

\begin{lemma}[Theorem 4.4 in \cite{Stojkovic2012}]\label{LTK}
Let $\mathcal{H}$ be a Hadamard space and let $h:\mathcal{H}\to \mathbb{R}$ be convex and lsc such that $h=\sum_{k=1}^Nh_k$. Let $S_t(x)$ be the gradient flow semigroup associated with $h$. Then, for every $t\geq 0$ and all $x\in \mathcal{H}$:
\[
S_t(x)=\lim_{n\to\infty} \left(\mathrm{prox}_{\frac{t}{n} h_N} \circ\dots\circ \mathrm{prox}_{\frac{t}{n} h_1}\right)^{(n)}(x).
\]
\end{lemma}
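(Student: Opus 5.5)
The plan follows the strategy of Ba\v{c}\'ak's simplified proof \cite{Bacak2014c}. First I would show that one full sweep $F_\lambda:=\mathrm{prox}_{\lambda h_N}\circ\dots\circ\mathrm{prox}_{\lambda h_1}$ satisfies a discrete evolution variational inequality for $h$, up to an $O(\lambda^2)$ defect. Then I would feed this into a Crandall--Liggett/Mayer type comparison with the exponential formula defining $S_t$.

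\emph{Step 1 (one prox step).} For a single convex lsc $g$ and $z:=\mathrm{prox}_{\lambda g}(x)$, the $\mathrm{CN}$-inequality and minimality of $z$ give the standard estimate
\[
d^2(z,y)\leq d^2(x,y)-2\lambda\bigl(g(z)-g(y)\bigr)-d^2(x,z)\quad\text{for all }y\in\mathcal{H}.
\]
I would apply this successively to $x_0:=x$ and $x_k:=\mathrm{prox}_{\lambda h_k}(x_{k-1})$, and sum the $N$ inequalities. This yields
\[
d^2(F_\lambda x,y)\leq d^2(x,y)-2\lambda\sum_{k=1}^N\bigl(h_k(x_k)-h_k(y)\bigr).
\]

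\emph{Step 2 (localization and the $O(\lambda^2)$ defect).} Next I would replace each $h_k(x_k)$ by $h_k(x_N)$, so that the sum becomes $h(F_\lambda x)-h(y)$. This is where some Lipschitz control is needed. Since each $h_k$ is finite, convex and lsc on the complete space $\mathcal{H}$, a Baire category argument shows it is bounded above on some ball. Convexity along geodesics then makes it locally Lipschitz on bounded sets; here I would use that $h$ is real-valued everywhere.

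On a bounded region containing all iterates (for $t$ in a bounded interval and $x$ fixed), let $L$ be a common Lipschitz constant. The prox step moves a point by at most $\lambda L$, so $d(x_k,x_N)\leq N\lambda L$. This gives
\[
d^2(F_\lambda x,y)\leq d^2(x,y)-2\lambda\bigl(h(F_\lambda x)-h(y)\bigr)+C\lambda^2 .
\]
Boundedness of the iterates I would get by taking $y$ to be a fixed point in this inequality and iterating.

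\emph{Step 3 (comparison with the flow).} The flow $S_t$ satisfies the continuous EVI. The resolvent iterates $(\mathrm{prox}_{\frac tk h})^{(k)}$ satisfy the exact version of the inequality from Step 2. Following Mayer's double-index argument \cite[Theorem 1.13]{Mayer1998}, I would compare $F_{t/n}^{(n)}x$ with $(\mathrm{prox}_{\frac tm h})^{(m)}x$ through a recursive inequality on $a_{j,l}:=d^2(F^{(j)}_{t/n}x,\mathrm{prox}^{(l)}_{\frac tm h}x)$. The $C\lambda^2$ defects accumulate over $n$ steps to $O(t^2/n)$. This gives an estimate of the form $d(F^{(n)}_{t/n}x,S_tx)\lesssim \sqrt{t/n}+o(1)$, hence the claimed limit for each $t\ge 0$ and $x\in\mathcal{H}$.

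\emph{Main obstacle.} I expect the difficulty to lie in Steps 2 and 3 together. Making the double-index recursion work with an additive defect requires some care. Making the local Lipschitz localization uniform along all iterates is the point where I am least sure the argument is complete as described. If that proves delicate, my fallback is to first prove the formula for globally Lipschitz $h_k$, and then pass to the general case by restricting to a large closed ball. On such a ball the functions are Lipschitz, and the iterates never leave it for $t$ in bounded intervals.
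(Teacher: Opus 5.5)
\textbf{Gap in Step 2 (and in your fallback).} The lemma only assumes that the $h_k$ are real-valued, convex and lsc. On a Hadamard space such functions need not be Lipschitz on bounded sets, and need not even be continuous.

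Baire's theorem does give a ball on which $h_k$ is bounded above. But turning an upper bound into a Lipschitz bound needs linear structure, namely extending segments beyond a point. Even in a Hilbert space this only yields \emph{local} Lipschitz continuity. On $\ell^2$, the function $h(x)=\sum_n n\,(|x_n|-\tfrac12)_+$ is finite, convex and continuous, and vanishes on the ball of radius $\tfrac12$. Yet $h(e_n)=n/2$, so it is not Lipschitz on the unit ball.

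In a general Hadamard space continuity fails as well. Take the closed disk $D=\{(a,b)\in\mathbb{R}^2\mid (a-1)^2+b^2\le 1\}$, which is a Hadamard space. Set $h(a,b)=b^2/(2a)$ for $a>0$ and $h(0,0)=0$. This $h$ is real-valued, convex, lsc and bounded above by $1$ on $D$. Nevertheless it tends to $1$ along the boundary circle towards the origin, so it is discontinuous there.

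Consequently none of the following is available in general:
\begin{itemize}
\item the displacement bound $d(x_k,x_N)\le N\lambda L$;
\item the $O(\lambda^2)$ defect in your discrete EVI;
\item the restriction to a large ball.
\end{itemize}
What you have is a proof for Lipschitz summands, i.e.\ the setting of Lemma~\ref{quantLTK}. The paper itself does not prove the present lemma; it cites Stojkovi\'c and Ba\v{c}\'ak.

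\textbf{The Lipschitz case.} There your route is sound and genuinely different from the paper's proof of Lemma~\ref{quantLTK}. The paper goes through Stojkovi\'c's resolvents $J_{\lambda,r}$ of the sweep and the semigroups they generate. In your argument the defect is $2L^2\lambda^2$. The recursion $(\lambda+\mu)a_{j,l}\le\mu a_{j-1,l}+\lambda a_{j,l-1}+2L^2\lambda^2\mu$ is dominated by $L^2\bigl((j\lambda-l\mu)^2+j\lambda^2+l\mu^2\bigr)+(j+l)\tfrac{2L^2\lambda^2\mu}{\lambda+\mu}$. Letting $m\to\infty$ gives $d(F^{(n)}_{t/n}x,S_tx)\le\sqrt{3}\,Lt\,n^{-1/2}$, which is better than the paper's $8Ltn^{-1/4}$.

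\textbf{What the general statement needs.} A per-step defect is the wrong mechanism here. Combining the two discrete EVIs always leaves the term $2\lambda\mu\sum_k\bigl(h_k(x_N)-h_k(x_k)\bigr)$. This must be bounded above at every step, which requires upper regularity of the $h_k$.

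The missing idea is an argument in which the summands enter only through a $\liminf$, as in Stojkovi\'c's resolvent-based proof. There one only needs $J_{\lambda,r}x\to\mathrm{prox}_{\lambda h}x$ as $r\to 0$. For that, lower semicontinuity of the $h_k$, together with weak compactness of bounded sets, can be applied to the summed inequality; this means keeping $\sum_k h_k(x_k)$ rather than replacing it by $h(x_N)$. A Chernoff-type argument then transfers this resolvent convergence to $F^{(n)}_{t/n}x$.
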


If the functions $h_1,\dots,h_N$ are suitably Lipschitz, we can improve the above to the following uniform quantitative version, essentially by following (and at points quantitatively improving) the argument provided in \cite{Stojkovic2012}:

\begin{lemma}\label{quantLTK}
Let $\mathcal{H}$ be a Hadamard space and let $h:\mathcal{H}\to \mathbb{R}$ be convex such that $h=\sum_{k=1}^Nh_k$ where $h_i$ is $L_i$-Lipschitz. Let $S_t(x)$ be the gradient flow semigroup associated with $h$. Then, for every $t\geq 0$ and all $x\in \mathcal{H}$ as well as any $n\in\mathbb{N}$:
\[
d\left(S_t(x),\left(\mathrm{prox}_{\frac{t}{n} h_N} \circ\dots\circ \mathrm{prox}_{\frac{t}{n} h_1}\right)^{(n)}(x)\right)\leq 8Ltn^{-1/4},
\]
where $L:=\sum_{k=1}^NL_k$.
\end{lemma}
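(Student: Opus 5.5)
The plan is to compare both sides with Crandall--Liggett-type approximations and to control everything through the Lipschitz constants. Fix $t\ge 0$, $x\in\mathcal{H}$, $n\in\mathbb{N}$, and write $\lambda:=t/n$, $P_\lambda:=\mathrm{prox}_{\lambda h_N}\circ\dots\circ\mathrm{prox}_{\lambda h_1}$. I would split, for an auxiliary integer $k$ to be optimized (eventually $k\approx n^{1/2}$),
\[
d\big(S_t(x),P_\lambda^{(n)}(x)\big)\le d\big(S_t(x),(\mathrm{prox}_{\frac{t}{m}h})^{(m)}(x)\big)+d\big((\mathrm{prox}_{\frac{t}{m}h})^{(m)}(x),P_\lambda^{(n)}(x)\big)
\]
with $m$ related to $n$, or alternatively compare both directly through the evolution variational inequality for $S_t$.

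\textbf{Step 1: one-step displacement bounds.} For an $L_i$-Lipschitz convex $h_i$, the definition of $\mathrm{prox}_{\lambda h_i}$ gives $\frac{1}{2\lambda}d^2(y,\mathrm{prox}_{\lambda h_i}y)\le h_i(y)-h_i(\mathrm{prox}_{\lambda h_i}y)\le L_i d(y,\mathrm{prox}_{\lambda h_i}y)$. Hence $d(y,\mathrm{prox}_{\lambda h_i}y)\le 2\lambda L_i$, so each composite step moves points by at most $2\lambda L$. Similarly $S_t$ is $L$-Lipschitz in $t$ (metric slope bounded by $L$).

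\textbf{Step 2: discrete variational inequality for one composite step.} Following Stojkovi\'c, I would use the standard prox inequality in Hadamard spaces: for all $z$,
\[
\tfrac{1}{2\lambda}d^2(\mathrm{prox}_{\lambda h_i}y,z)-\tfrac{1}{2\lambda}d^2(y,z)\le h_i(z)-h_i(\mathrm{prox}_{\lambda h_i}y).
\]
This follows from convexity of $h_i$ together with the CN-inequality. Summing over $i=1,\dots,N$ along the intermediate points $y_0=y$, $y_i=\mathrm{prox}_{\lambda h_i}y_{i-1}$, and replacing $h_i(y_i)$ by $h_i(y_N)$ at cost $L_i\,d(y_i,y_N)\le 2\lambda L_iL$, gives
\[
\tfrac{1}{2\lambda}\big(d^2(P_\lambda y,z)-d^2(y,z)\big)\le h(z)-h(P_\lambda y)+2\lambda L^2.
\]
So $P_\lambda$ behaves like an approximate proximal step of $h$ with an $O(\lambda L^2)$ defect.

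\textbf{Step 3: compare with the flow.} I would pair this discrete EVI with the continuous EVI for $S_t$,
\[
\tfrac12\tfrac{d}{ds}d^2(S_s x,z)\le h(z)-h(S_s x),
\]
via a doubling-of-variables argument in the style of Crandall--Liggett/Kobayashi, as in Stojkovi\'c and the proof of Theorem 1.13 in Mayer. Take $z=S_s x$ in the discrete inequality and $z=P_\lambda^{(j)}x$ in the continuous one. The terms $h(z)-h(\cdot)$ cancel up to errors bounded by $L$ times the displacements from Step 1, that is, $O(L^2\lambda)$ per unit step and $O(L^2\delta)$ from time-discretization mismatch. Summing yields an estimate of the form
\[
d^2\big(S_t x,P_\lambda^{(n)}x\big)\lesssim L^2\big(t\lambda+t\sqrt{t\lambda}\big).
\]
The square-root term comes from comparing the grid $j\lambda$ against continuous time. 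With $\lambda=t/n$ this gives roughly $d\lesssim Lt\,n^{-1/4}$. Tracking the constants should give $8$.

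\textbf{Main obstacle.} The hardest part is the doubling-of-variables bookkeeping in Step 3: getting a clean bound that is uniform in $x$ and has explicit constants. I would first try the simpler route. Bound $d(S_t x,(\mathrm{prox}_{\frac tn h})^{(n)}x)$ by the known quantitative Crandall--Liggett rate $\sim Lt/\sqrt n$ for Lipschitz $h$. Then separately bound $d\big((\mathrm{prox}_{\lambda h})^{(n)}x,P_\lambda^{(n)}x\big)$ by combining Step 2 with the corresponding exact inequality for $\mathrm{prox}_{\lambda h}$. Accumulating the per-step $O(\lambda^2L^2)$ squared-distance defect over $n$ steps gives $d^2\lesssim tL^2\lambda$, hence a rate of $n^{-1/2}$. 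Summing the two parts yields a bound of order $Lt\,n^{-1/4}$ at worst. If the accumulation fails because the prox-to-prox comparison is not contractive enough, I would fall back to the full Kobayashi-type double-index estimate.
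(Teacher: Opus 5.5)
Your route is genuinely different from the paper's, and it works. However, Step 3 as written is a sketch rather than a proof: the summation is never carried out, and the $t\sqrt{t\lambda}$ term you predict does not actually arise. The computation closes as follows. Set $a_j:=P_\lambda^{(j)}x$ and $u(s):=S_sx$, and split
\[
d^2(u((j+1)\lambda),a_{j+1})-d^2(u(j\lambda),a_j)
\]
through the intermediate quantity $d^2(u(j\lambda),a_{j+1})$.

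\begin{itemize}
\item Bound the first difference by the integrated EVI on $[j\lambda,(j+1)\lambda]$ with test point $z=a_{j+1}$.
\item Bound the second difference by your Step 2 with $y=a_j$ and $z=u(j\lambda)$.
\end{itemize}

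The terms $h(a_{j+1})$ cancel exactly, and what remains is
\[
2\int_{j\lambda}^{(j+1)\lambda}\big(h(u(j\lambda))-h(u(s))\big)\,ds+4\lambda^2L^2\le L^2\lambda^2+4\lambda^2L^2 .
\]
Here we used $h(u(j\lambda))-h(u(s))\le L\,d(u(j\lambda),u(s))\le L^2(s-j\lambda)$. Summing over $j<n$ gives $d^2(S_tx,P_\lambda^{(n)}x)\le 5L^2t^2/n$, i.e.\ $d\le\sqrt5\,Lt\,n^{-1/2}$. This is already below $8Ltn^{-1/4}$ for every $n\ge 1$, so no constant-tracking is needed.

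Your fallback in the last paragraph also closes, provided the test points are staggered in the same way. Take $b_j:=(\mathrm{prox}_{\lambda h})^{(j)}x$ and split through $d^2(b_j,a_{j+1})$. Test the prox inequality of $h$ at $z=a_{j+1}$, and test Step 2 at $z=b_j$. Again $h(a_{j+1})$ cancels, and you get
\[
d^2(a_{j+1},b_{j+1})\le d^2(a_j,b_j)+2\lambda\big(h(b_j)-h(b_{j+1})\big)+4\lambda^2L^2\le d^2(a_j,b_j)+8\lambda^2L^2 .
\]
Adding the Crandall--Liggett rate for $b_n$ then gives $O(Ltn^{-1/2})$. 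If instead you test Step 2 at $z=b_{j+1}$ (the symmetric choice), you are left with cross terms of order $\lambda L\,d(a_j,b_j)$. Those do not accumulate well, which is presumably the failure you were worried about.

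The paper does not argue via an EVI. It imports Stojkovi\'c's machinery:
\begin{itemize}
\item the semigroup $S^r_t$ generated by the nonexpansive map $Q_r$, with the Chernoff-type bound $2\sqrt m\,d(x,Q_rx)$ (his Lemma 3.11);
\item the resolvent $J_{\lambda,r}$ of $Q_r$, which by his (4.13) lies within $L\sqrt{2\lambda r}$ of $\mathrm{prox}_{\lambda h}$;
\item his Theorem 3.10;
\item the Crandall--Liggett rate from Ambrosio--Gigli--Savar\'e.
\end{itemize}
It then balances an auxiliary $k=\lceil\sqrt n\rceil$, and that balancing is exactly where the exponent $1/4$ comes from.

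Your argument needs only the prox variational inequality and the EVI for $S_t$ (Ambrosio--Gigli--Savar\'e's EVI theorem for convex functionals on NPC spaces). The paper already draws on that same theory for its Crandall--Liggett rate. Your argument is shorter and self-contained, and it yields the sharper rate $O(Ltn^{-1/2})$. The paper's route buys reuse of already published estimates, at the cost of the rate.
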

\begin{proof}
Given $r>0$, write $Q_r:=\mathrm{prox}_{r h_N} \circ\dots\circ \mathrm{prox}_{r h_1}$. Note that $Q_r$ is nonexpansive. Consider the $(\lambda,r)$ resolvent $J_{\lambda,r}$ associated with $Q_r$, defined as in Definition 3.2 in \cite{Stojkovic2012}. For each $\lambda,r>0$, the map $J_{\lambda,r}$ is also nonexpansive (see Lemma 3.3 in \cite{Stojkovic2012}). Let $S^{r}_t(x)$ be the nonexpansive semigroup generated by $J^r_{\lambda,r}$ as in Theorem 3.10 in \cite{Stojkovic2012}, and note that one has
\[
d(S^{r}_t(x),Q_r^{(m)}(x))\leq 2\sqrt{\left(m-\frac{t}{r}\right)^2+\frac{t}{r}}d(x,Q_r(x))
\]
for any $t,r>0$, $x\in \mathcal{H}$ and $m\in\mathbb{N}$ (see Lemma 3.11 in \cite{Stojkovic2012}). Setting $r:=t/m$, we get
\[
d(S^{t/m}_t(x),Q_{t/m}^{(m)}(x))\leq 2\sqrt{m}d(x,Q_{t/m}(x))
\]
As in e.g.\ eq.\ (3.32) in \cite{Stojkovic2012} (see also \cite{AmbrosioGigliSavare2008}), we further have
\[
d\left((\mathrm{prox}_{\frac{t}{m} h})^{(m)}(x),S_t(x)\right)\leq\frac{t}{\sqrt{2}m}\vert\partial h\vert(x),
\]
where, again, $\vert\partial h\vert(x)$ is the metric slope of $h$ at $x$. As $h$ is $L$-Lipschitz, we have $\vert\partial h\vert(x)\leq L$. Using Theorem 3.1.6 in \cite{AmbrosioGigliSavare2008}, we get
\[
d(x,\mathrm{prox}_{r h_i}x)\leq r\vert\partial h_i\vert(x)\leq r L_i
\]
and hence
\[
d(x,Q_rx)\leq \sum_{k=1}^N d(\mathrm{prox}_{r h_k} \circ\dots\circ \mathrm{prox}_{r h_1}(x),\mathrm{prox}_{r h_{k-1}} \circ\dots\circ \mathrm{prox}_{r h_1}(x))\leq \sum_{k=1}^N r L_k=rL.
\]
Combined, we have
\[
d(S^{t/m}_tx,Q^{(m)}_{t/m}x)\leq 2tL/\sqrt{m}.
\]
Eq.\ (4.13) in \cite{Stojkovic2012} gives
\[
\frac{1}{2\lambda}d^2(v,J_{\lambda,t}x)+\frac{1}{2\lambda}d^2(x,J_{\lambda,t}x)+\sum_{i=1}^Nh_i(x_i)\leq h(v)+\frac{1}{2\lambda}d^2(x,v)
\]
for $x_i:=\mathrm{prox}_{t h_i}(x_{i-1})$ with $x_0:=J_{\lambda,t}x$. We get $d(x_i,x_{i-1})\leq t L_i$ again using Theorem 3.1.6 in \cite{AmbrosioGigliSavare2008}, so that $d(x_i,x_0)\leq t\sum_{k=1}^iL_k$ and hence
\[
h(x_0)-\sum_{i=1}^Nh_i(x_i)\leq \sum_{i=1}^N L_id(x_i,x_0)\leq t\sum_{i=1}^N L_i\sum_{k=1}^iL_k\leq tL^2,
\]
using that $h_i$ is $L_i$-Lipschitz. Hence
\[
\frac{1}{2\lambda}d^2(v,J_{\lambda,t}x)-tL^2\leq \left(h(v)+\frac{1}{2\lambda}d^2(x,v)\right)-\left(h(J_{\lambda,t}x)+\frac{1}{2\lambda}d^2(x,J_{\lambda,t}x)\right).
\]
For $v=\mathrm{prox}_{\lambda h}x$, the right-hand side is nonpositive, so that
\[
\frac{1}{2\lambda}d^2(\mathrm{prox}_{\lambda h}x,J_{\lambda,t}x)\leq tL^2
\]
and hence $d(J_{\lambda,t}x,\mathrm{prox}_{\lambda h}x)\leq L\sqrt{2\lambda t}$. Using that both maps are nonexpansive, we get
\[
d(J_{\lambda,t}^{(n)}x,(\mathrm{prox}_{\lambda h})^{(n)}x)\leq n L\sqrt{2\lambda t}.
\]
Lastly, note that by Theorem 3.10 in \cite{Stojkovic2012}, we have
\[
d(S^r_t(x),J_{t/n,r}^{(n)}x)\leq 2t\frac{1}{\sqrt{n}}\frac{d(x,Q_rx)}{r}\leq \frac{2tL}{\sqrt{n}}.
\]
Combined, we get
\begin{align*}
d(S^r_t(x),S_t(x))&\leq d(S^r_t(x),J_{t/k,r}^{(k)}x)+d(J_{t/k,r}^{(k)}x,\mathrm{prox}_{t/k h}^{(k)}x)+d(\mathrm{prox}_{t/k h}^{(k)}x,S_t(x))\\
&\leq \frac{2tL}{\sqrt{k}}+L\sqrt{2trk}+\frac{t}{\sqrt{2}k}L.
\end{align*}
For $r=t/n$ and $k=\lceil \sqrt{n}\rceil$, we get
\[
d(S^{t/n}_t(x),S_t(x))\leq Lt\left(\frac{2}{n^{1/4}}+\frac{2\sqrt{2}}{\sqrt{n}}+\frac{1}{\sqrt{2}\sqrt{n}}\right)\leq 6Ltn^{-1/4}.
\]
All together, we finally obtain
\begin{align*}
d(Q^{(n)}_{t/n}x,S_t(x))&\leq d(Q^{(n)}_{t/n}x,S^{t/n}_tx)+d(S^{t/n}_tx,S_t(x))\\
&\leq 2tL n^{-1/2}+6Ltn^{-1/4}\leq 8Ltn^{-1/4},
\end{align*}
as claimed.
\end{proof}

Note that by the above Lemma \ref{quantLTK}, the convergence in the Lie-Trotter-Kato formula is in particular uniform in $t$ on each bounded time interval in this case (compare Problem 4.10 in \cite{Bacak2023}). 

Combined, we can now show the following:

\begin{theorem}\label{probLTK}
Let $(\Omega,\mathsf{F},\PP)$ and $(E,\mathsf{E},\mu)$ be probability spaces, with the latter complete and separable, and let $\mathcal{H}$ be a separable full Hilbert-Hadamard space. Further, fix an i.i.d.\ sequence $(\xi_i)$ of random variables $\xi_i:\Omega\to E$ with distribution $\mu$. Let $f:E\times \mathcal{H}\to (-\infty,+\infty]$ be a proper normal convex integrand which is $L^2$-Lipschitz and where there exists a $u_0\in L^2(E,\mathcal{H},\mu)$ such that $f(\cdot,u_0(\cdot))\in L^1(E,\mathbb{R},\mu)$. Write $S_t(x)$ for the gradient flow associated with $\int f(e,\cdot)\, d\mu(e)$. Then, there exists a set $\Omega'\subseteq\Omega$ with $\PP(\Omega')=1$ such that for any $\omega\in\Omega'$ as well as for any $t\geq 0$ and $x\in \mathcal{H}$:
\[
S_t(x)=\lim_{n\to\infty} \left(\mathrm{prox}_{\frac{t}{n^2} f(\xi_n(\omega),\cdot)} \circ\dots\circ \mathrm{prox}_{\frac{t}{n^2} f(\xi_1(\omega),\cdot)}\right)^{(n)}(x).
\]
In particular, the above limit holds almost surely.
\end{theorem}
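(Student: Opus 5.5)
The plan is to split the error into a deterministic Lie--Trotter--Kato part and a stochastic resolvent-convergence part. Write $h_n:=\frac{1}{n}\sum_{i=1}^n f(\xi_i(\omega),\cdot)$ and $h:=\int f(e,\cdot)\,d\mu(e)$. Denote by $S^n_t$ the gradient flow semigroup of $h_n$. The key observation is that
\[
\mathrm{prox}_{\frac{t}{n^2} f(\xi_i,\cdot)}=\mathrm{prox}_{\frac{t}{n}\cdot\frac{1}{n} f(\xi_i,\cdot)}.
\]
Hence the product in the statement is exactly $\big(\mathrm{prox}_{\frac{t}{n} h_{n,n}}\circ\dots\circ\mathrm{prox}_{\frac{t}{n} h_{n,1}}\big)^{(n)}(x)$ with $h_{n,i}:=\frac1n f(\xi_i(\omega),\cdot)$, and $\sum_{i=1}^n h_{n,i}=h_n$. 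We then bound
\[
d\big(S_t(x),Q^{(n)}_{t/n}x\big)\leq d\big(S_t(x),S^n_t(x)\big)+d\big(S^n_t(x),Q^{(n)}_{t/n}x\big),
\]
where $Q_r$ is the product of proxes of the $h_{n,i}$.

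For the second term, apply Lemma \ref{quantLTK} with $N=n$ and the functions $h_{n,i}$. Since $f$ is $L^2$-Lipschitz, $h_{n,i}$ is $\frac1n L(\xi_i(\omega))$-Lipschitz, so the total constant is $L_n(\omega)=\frac1n\sum_{i=1}^n L(\xi_i(\omega))$. The bound is then $8L_n(\omega)\,t\,n^{-1/4}$. By the real strong law (e.g.\ Corollary \ref{L1LawHad} on $\mathbb{R}$, using $L\in L^2\subseteq L^1$), $L_n(\omega)\to\EE[L]$ on a full-measure set $\Omega_1$. Hence $L_n(\omega)$ is bounded on $\Omega_1$ and the second term tends to $0$ for every $t,x$. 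One should check that $f(\xi_i(\omega),\cdot)$ is real-valued, so that Lemma \ref{quantLTK} applies. This holds because it is Lipschitz and proper, hence finite everywhere; modifying $f$ on a $\mu$-null set of $e$ for which $L(e)=\infty$ or properness fails is harmless, since $\xi_i$ avoid null sets almost surely.

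For the first term, use Lemma \ref{resToSemi}. It suffices that $\mathrm{prox}_{\lambda h_n}(x)\to\mathrm{prox}_{\lambda h}(x)$ for all $\lambda>0$ and $x$, on a full-measure set $\Omega_2$ independent of $(\lambda,x)$. This is the second part of Theorem \ref{HadCorr}, which yields $\partial h_n\to^R\partial h$ almost surely. Its hypotheses are the standing assumptions here, together with the surjectivity of the averaged field. That surjectivity follows because $h_n$ is a finite real-valued convex Lipschitz function, so by Theorem \ref{secondInterchange} applied to the discrete measure $\frac1n\sum\delta_{\xi_i(\omega)}$ (complete and separable), the averaged subdifferential field coincides with $\partial h_n$, whose resolvents are the total maps $\mathrm{prox}_{\lambda h_n}$ by Example \ref{subDiffEx}. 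By the same theorem and Example \ref{subDiffEx}, the resolvents of $\partial h$ are $\mathrm{prox}_{\lambda h}$, which are total because $h$ is proper, convex and Lipschitz, hence continuous. This also gives integrability of $\partial f$ via Example \ref{exampleInteg}(2). Here the almost-sure set is uniform in $(\lambda,x)$, as built into the definition of $R$-convergence.

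Taking $\Omega'=\Omega_1\cap\Omega_2$ gives the claim for all $t\geq0$ and $x$ simultaneously; the case $t=0$ is trivial. The main obstacle is the verification in the first term. One has to identify the inductive-mean and Minkowski-averaged field of $\partial f(\xi_i,\cdot)$ with $\partial h_n$ pathwise, and confirm that Theorem \ref{HadCorr} delivers a single null set. If the discrete interchange is problematic, the fallback is to prove it directly from Lemma \ref{chainRule}, since the tangent cones are Hilbert spaces and finite sums of Lipschitz convex functions admit the sum rule via Hahn--Banach exactly as in that proof.
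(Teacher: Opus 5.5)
Your proposal is correct and follows essentially the same route as the paper. You rewrite $\mathrm{prox}_{\frac{t}{n^2}f(\xi_i,\cdot)}$ as $\mathrm{prox}_{\frac{t}{n}\cdot\frac{1}{n}f(\xi_i,\cdot)}$ and apply Lemma \ref{quantLTK} to $h_n=\sum_i \frac{1}{n}f(\xi_i,\cdot)$ with the $\omega$-dependent constant $\frac{1}{n}\sum_i L(\xi_i(\omega))$, which the real strong law keeps bounded almost surely; the term $d(S_t(x),S^n_t(x))$ is then handled by Theorem \ref{HadCorr} and Lemma \ref{resToSemi} on a single full-measure set, exactly as in the paper. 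Your extra check of the hypotheses of Theorem \ref{HadCorr} (surjectivity of the averaged field via Theorem \ref{secondInterchange}, and integrability via Example \ref{exampleInteg}(2)) is sound and spells out what the paper leaves implicit; apply it on the index space $T=\{1,\dots,n\}$ with uniform measure and power set rather than on $(E,\mathsf{E},\frac{1}{n}\sum\delta_{\xi_i(\omega)})$, since the latter need not be complete.
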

\begin{proof}
Using Theorem \ref{HadCorr}, we get that there exists a single set $\Omega_0$ of measure one such that
\[
\mathrm{prox}_{\lambda \frac{1}{n}\sum_{i=1}^n f(\xi_i(\omega),\cdot)}(x)\to\mathrm{prox}_{\lambda \int f(e,\cdot)\, d\mu(e)}(x)
\]
as $n\to\infty$, for all $\lambda>0$ and $x\in \mathcal{H}$ as well as $\omega\in\Omega_0$. Let $S^\omega_{n,t}(x)$ be the gradient flow semigroup associated with $\frac{1}{n}\sum_{i=1}^n f(\xi_i(\omega),\cdot)$. Using Lemma \ref{resToSemi}, we hence get that
\[
S^\omega_{n,t}(x)\to S_t(x)
\]
as $n\to\infty$, for all $t\geq 0$ and $x\in \mathcal{H}$ as well as $\omega\in \Omega_0$. Now, using Lemma \ref{quantLTK}, we get
\[
d\left(S^\omega_{n,t}(x),\left(\mathrm{prox}_{\frac{t}{jn} f(\xi_n(\omega),\cdot)} \circ\dots\circ \mathrm{prox}_{\frac{t}{jn} f(\xi_1(\omega),\cdot)}\right)^{(j)}(x)\right)\leq 8tj^{-1/4}\left(\frac{1}{n}\sum_{i=1}^nL(\xi_i(\omega))\right)
\]
for any $j,n\in\mathbb{N}$, $t\geq 0$ and $x\in \mathcal{H}$ as well as $\omega\in\Omega_0$. As $L$ is square-integrable and $(\xi_i)$ is i.i.d., we get $\frac{1}{n}\sum_{i=1}^nL(\xi_i)\to \int L(e)\,d\mu(e)$ $\PP$-a.s.\ as $n\to \infty$ by the strong law of large numbers, say on a set $\Omega_1$ of measure one. In particular, $L^s(\omega):=\sup_{n\in\mathbb{N}}\left(\frac{1}{n}\sum_{i=1}^nL(\xi_i(\omega))\right)<+\infty$ for all $\omega\in\Omega_1$. In particular, we have
\[
d\left(S_t(x),\left(\mathrm{prox}_{\frac{t}{n^2} f(\xi_n(\omega),\cdot)} \circ\dots\circ \mathrm{prox}_{\frac{t}{n^2} f(\xi_1(\omega),\cdot)}\right)^{(n)}(x)\right)\leq d(S_t(x),S^\omega_{n,t}(x))+8tn^{-1/4}L^s(\omega)
\]
for all $\omega\in\Omega_0\cap\Omega_1$ as well as $t\geq 0$, $x\in\mathcal{H}$ and $n\in\mathbb{N}$, and so we get the result by letting $n\to\infty$.
\end{proof}

As mentioned in the introduction, the use of Theorem \ref{HadCorr} can be avoided in certain cases which moreover allow for the simplification of the Lipschitz assumption in the above Theorem \ref{probLTK}.

At first, in the special case where $\mathcal{H}$ is a separable Hilbert space, we can employ a (consequence of a) strong law of large numbers for convex functions established by King and Wets \cite{KingWets1991}. 

\begin{lemma}[consequence of Theorem 2.4 in \cite{KingWets1991}]\label{KingWets}
Let $(\Omega,\mathsf{F},\PP)$ and $(E,\mathsf{E},\mu)$ be probability spaces, with the latter complete, and let $\mathcal{H}$ be a separable Hilbert space. Further, fix an i.i.d.\ sequence $(\xi_i)$ of random variables $\xi_i:\Omega\to E$ with distribution $\mu$. Let $f:E\times \mathcal{H}\to (-\infty,+\infty]$ be a normal convex integrand which is $L^1$-Lipschitz and where there exists a $u_0\in \mathcal{H}$ such that $f(\cdot,u_0)\in L^1(E,\mathbb{R},\mu)$. Then
\[
\frac{1}{n}\sum_{i=1}^n f(\xi_i,\cdot) \to^M \int f(e,\cdot)\, d\mu(e)\quad\PP\text{-a.s.}
\]
as $n\to\infty$, with $\to^M$ meaning convergence in the sense of Mosco (see e.g.\ \cite{Bacak2014a}).
\end{lemma}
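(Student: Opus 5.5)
This follows directly from the epi-convergence strong law of King and Wets (Theorem 2.4 in \cite{KingWets1991}), once we record that Mosco convergence and epi-convergence coincide under the Lipschitz assumption. Set $f_n^\omega:=\frac{1}{n}\sum_{i=1}^n f(\xi_i(\omega),\cdot)$ and $\underline{f}:=\int f(e,\cdot)\,d\mu(e)$. The plan has three steps.

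\emph{Step 1: the limit is finite and uniformly Lipschitz.} Since $f$ is $L^1$-Lipschitz with $L\in L^1(E,(0,\infty),\mu)$, we have
\[
\vert f(e,x)-f(e,u_0)\vert\leq L(e)\Vert x-u_0\Vert .
\]
Together with $f(\cdot,u_0)\in L^1$, this makes $f(\cdot,x)$ integrable for every $x$. Hence $\underline{f}$ is finite everywhere, convex, and $\Vert L\Vert_1$-Lipschitz. In particular it is proper and lsc, so the hypotheses of the King--Wets theorem are met. That theorem then gives a full-measure set $\Omega_0$ on which $f_n^\omega$ epi-converges to $\underline{f}$ with respect to the strong topology.

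\emph{Step 2: controlling the weak topology.} Mosco convergence additionally requires the liminf inequality along weakly convergent sequences. By the classical real strong law, there is a full-measure set $\Omega_1$ on which $\frac{1}{n}\sum_{i=1}^n L(\xi_i(\omega))\to\int L\,d\mu$. For $\omega\in\Omega_1$ the random functions $f_n^\omega$ are therefore eventually equi-Lipschitz with a common constant $C(\omega)$. Now take $x_n\rightharpoonup x$. Each $f_n^\omega$ is convex and continuous, so it admits an affine minorant that is exact at $x$: choose $u_n\in\partial f_n^\omega(x)$, which satisfies $\Vert u_n\Vert\leq C(\omega)$. This gives
\[
f_n^\omega(x_n)\geq f_n^\omega(x)+\langle u_n,x_n-x\rangle .
\]
Strong epi-convergence combined with equi-Lipschitz continuity yields pointwise convergence $f_n^\omega(x)\to\underline{f}(x)$. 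The key is then to show $\liminf_n\langle u_n,x_n-x\rangle\geq 0$.

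\emph{Main obstacle.} This last inequality is where I expect the difficulty, because $u_n$ depends on $n$ and need not converge strongly. I would pass to a subsequence along which $u_n\rightharpoonup u$, and show that $u\in\partial\underline{f}(x)$ using the graphical convergence of subdifferentials under epi-convergence (Attouch's theorem). Even so, the product $\langle u_n,x_n-x\rangle$ of two weakly convergent sequences is not automatically controlled. I would therefore try one of two routes. The first is to cite the result of King and Wets directly: their Theorem 2.4, applied in a separable reflexive space, yields Mosco convergence for integrands satisfying the Lipschitz and integrability conditions, and I would check that the hypotheses stated there match $L^1$-Lipschitz and $f(\cdot,u_0)\in L^1$. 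The second is to use the known fact that, for functions which are equi-Lipschitz on bounded sets, epi-convergence in the strong topology is equivalent to Mosco convergence once pointwise convergence holds on a dense set together with weak lower semicontinuity of the limit; this holds here because $\underline{f}$ is convex and continuous.

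\emph{Conclusion.} Intersecting $\Omega_0$ and $\Omega_1$ gives the required almost-sure Mosco convergence of $f_n^\omega$ to $\underline{f}$.
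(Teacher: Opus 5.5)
The paper gives no argument beyond the attribution: it takes Theorem 2.4 of King and Wets as already delivering almost sure Mosco convergence under these hypotheses. Your route (1) is therefore exactly the paper's proof. If you take it, your Steps 1--2 are superfluous, and Step 1, which credits King--Wets only with strong-topology epi-convergence, is at odds with it.

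Read as a self-contained argument, however, the proposal has a genuine gap at its only nontrivial point: the liminf inequality along weakly convergent sequences.
\begin{itemize}
\item The inequality you single out, $\liminf_n\langle u_n,x_n-x\rangle\geq 0$, is not merely hard but false for admissible choices of $u_n$. Take $f(e,x)=\Vert x\Vert$, an orthonormal sequence $(h_n)$, $x=0$, $u_n=-h_n\in\partial f_n^\omega(0)$ and $x_n=h_n\rightharpoonup 0$. Then the pairing equals $-1$ for every $n$.
\item Appealing to Attouch's theorem is circular. Graphical convergence of subdifferentials is, up to a normalization, equivalent to Mosco convergence, which is what you are trying to prove.
\item The ``known fact'' in route (2) is false. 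The functions $g_n(x):=-\langle h_n,x\rangle$ are convex and $1$-Lipschitz, and they converge pointwise everywhere (hence strongly epi-converge) to the continuous convex function $0$. Yet $g_n(h_n)=-1$ while $h_n\rightharpoonup 0$, so Mosco convergence fails.
\end{itemize}
Equi-Lipschitz bounds and pointwise information cannot control weak limits; you have to use the empirical-mean structure.

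The missing idea is to use affine minorants at nearby points, with slopes that do not depend on $n$ and whose empirical averages converge in norm. Fix a countable dense $D\subseteq\mathcal{H}$. For each $y\in D$ choose a measurable selection $w_y(e)\in\partial f(e,y)$; this exists because $e\mapsto\partial f(e,y)$ is measurable with closed values, nonempty for a.e.\ $e$ (compare Lemma \ref{Ameas}). Then $\Vert w_y\Vert\leq L$, so $w_y$ is Bochner integrable. Set $v_y:=\int w_y\,d\mu$, so that $\Vert v_y\Vert\leq\Vert L\Vert_1$. Off a single null set, the following hold simultaneously for all $y\in D$: the real SLLN gives $\frac{1}{n}\sum_{i=1}^n f(\xi_i,y)\to\underline{f}(y)$, and the Hilbert-space SLLN (e.g.\ Corollary \ref{L1LawHad}) gives $\frac{1}{n}\sum_{i=1}^n w_y(\xi_i)\to v_y$ in norm. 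Now let $x_n\rightharpoonup x$. Averaging the subgradient inequalities gives
\[
f_n^\omega(x_n)\geq\frac{1}{n}\sum_{i=1}^n f(\xi_i(\omega),y)+\Big\langle\frac{1}{n}\sum_{i=1}^n w_y(\xi_i(\omega)),\,x_n-y\Big\rangle .
\]
A norm-convergent sequence paired with a bounded weakly convergent one converges. Hence the right-hand side tends to $\underline{f}(y)+\langle v_y,x-y\rangle$, which is at least $\underline{f}(y)-\Vert L\Vert_1\Vert x-y\Vert$. Letting $y\to x$ in $D$ and using continuity of $\underline{f}$ gives $\liminf_n f_n^\omega(x_n)\geq\underline{f}(x)$.

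The limsup half holds with the constant recovery sequence $x_n:=x$. Indeed, the SLLN on $D$ together with your equi-Lipschitz bound on $\Omega_1$ gives $f_n^\omega\to\underline{f}$ pointwise everywhere. This proves the lemma directly, with no appeal to King--Wets and no use of the hard direction of any subdifferential interchange.
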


Over Hilbert spaces, by a result of Attouch \cite{Attouch1984}, Mosco convergence of convex functions is equivalent to the pointwise convergence of proximal maps. While it is not known whether this equivalence fully transfers to Hadamard spaces (see Problem 3.14 in \cite{Bacak2023}), one direction remains valid which will be the crucial one for the present paper.

\begin{lemma}[Theorem 4.1 in \cite{Bacak2015}]\label{MoscoToRes}
Let $(\mathcal{H},d)$ be a Hadamard space and let $h_n:\mathcal{H}\to (-\infty,+\infty]$ be convex and lsc for each $n\in\mathbb{N}$. If $h_n\to^M h$ as $n\to\infty$, then $\mathrm{prox}_{\lambda h_n}(x)\to\mathrm{prox}_{\lambda h}(x)$ as $n\to\infty$ for all $\lambda>0$ and $x\in\mathcal{H}$.
\end{lemma}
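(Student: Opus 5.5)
The plan is to fix $x\in\mathcal{H}$ and $\lambda>0$ and work with the Moreau objectives
\[
\Phi_n(z):=h_n(z)+\tfrac{1}{2\lambda}d^2(x,z),\qquad \Phi(z):=h(z)+\tfrac{1}{2\lambda}d^2(x,z).
\]
These have unique minimizers $p_n:=\mathrm{prox}_{\lambda h_n}(x)$ and $p:=\mathrm{prox}_{\lambda h}(x)$. By the $\mathrm{CN}$-inequality, each $\Phi_n$ is $\tfrac{1}{\lambda}$-strongly convex along geodesics, which gives
\[
\Phi_n(z)\geq \Phi_n(p_n)+\tfrac{1}{2\lambda}d^2(z,p_n)\quad\text{for all } z.
\]
Two parts of Mosco convergence will be used.
\begin{enumerate}
\item The $\liminf$ inequality: $h(q)\leq\liminf h_n(q_n)$ whenever $q_n$ $\Delta$-converges (weakly) to $q$.
\item The recovery sequence: there are $y_n\to p$ strongly with $h_n(y_n)\to h(p)$.
\end{enumerate}
Throughout, $\Phi_n(y_n)\to\Phi(p)$. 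I also use two standard Hadamard-space facts: bounded sequences have $\Delta$-convergent subsequences, and $d^2(x,\cdot)$ is weakly lsc.

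\textbf{Step 1 (boundedness; the main obstacle).} I would show that $r_n:=d(y_n,p_n)$ is bounded. Suppose instead $r_n\to\infty$ along a subsequence. Let $q_n$ be the point at distance $1$ from $y_n$ on the geodesic $[y_n,p_n]$, so $t_n=1/r_n$. Strong convexity along this geodesic together with $\Phi_n(p_n)\leq\Phi_n(y_n)$ gives
\[
\Phi_n(q_n)\leq (1-t_n)\Phi_n(y_n)+t_n\Phi_n(p_n)-t_n(1-t_n)\tfrac{r_n^2}{2\lambda}\leq \Phi_n(y_n)-(1-t_n)\tfrac{r_n}{2\lambda},
\]
and the right-hand side tends to $-\infty$. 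However, $(q_n)$ is bounded, since $d(q_n,y_n)=1$ and $y_n\to p$. Passing to a $\Delta$-convergent subsequence $q_n\rightharpoonup q$, the $\liminf$ inequality and weak lsc of $d^2(x,\cdot)$ give $\liminf\Phi_n(q_n)\geq\Phi(q)>-\infty$, because $h$ does not take the value $-\infty$. This is a contradiction, so $(p_n)$ is bounded.

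\textbf{Step 2 (identify weak limits and the $\liminf$).} Take any subsequence of $(p_n)$. It has a further subsequence $\Delta$-converging to some $p^*$. Along it,
\[
\Phi(p^*)\leq\liminf\Phi_n(p_n)\leq\lim\Phi_n(y_n)=\Phi(p),
\]
so $p^*=p$ by uniqueness of the minimizer. Since this holds for every subsequence, we get $\liminf_n\Phi_n(p_n)\geq\Phi(p)$ along the whole sequence.

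\textbf{Step 3 (strong convergence).} Applying the strong convexity estimate at $z=y_n$ gives
\[
\tfrac{1}{2\lambda}d^2(y_n,p_n)\leq\Phi_n(y_n)-\Phi_n(p_n).
\]
Its $\limsup$ is at most $\Phi(p)-\liminf\Phi_n(p_n)\leq 0$ by Step 2. Hence $d(y_n,p_n)\to0$, and since $y_n\to p$, we conclude $p_n\to p$.

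The delicate point is Step 1: Mosco convergence gives no uniform lower bound on the $h_n$. The fix is to test at the bounded auxiliary points $q_n$ and let the quadratic strong-convexity term dominate there.
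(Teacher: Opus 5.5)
The paper does not prove this lemma. It is imported as Theorem~4.1 of Ba\v{c}\'ak's 2015 paper and used as a black box in Theorems~\ref{probLTKHilbert} and~\ref{probLTKProper}, so there is no in-paper argument to compare against. Your proof is correct and self-contained.

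\begin{itemize}
\item The $\frac{1}{\lambda}$-strong convexity of $\Phi_n$ follows from the $\mathrm{CN}$-inequality as you say.
\item In Step~1, the identity $t_n(1-t_n)r_n^2/(2\lambda)=(1-t_n)r_n/(2\lambda)$ forces $h_n(q_n)\to-\infty$ on a bounded sequence. This contradicts the Mosco $\liminf$ condition together with $h>-\infty$.
\item Steps~2 and~3 are the usual squeeze argument.
\end{itemize}

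Three small points are worth making explicit.

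\begin{itemize}
\item The Mosco $\liminf$ condition is stated for full sequences, but you apply it along subsequences in Steps~1 and~2. This is legitimate: fill the missing indices with a recovery sequence for the weak limit. The interleaved sequence still $\Delta$-converges to the same point, and the resulting $\liminf$ bound transfers to the subsequence.
\item In Step~2 the identification $p^*=p$ is not needed. Only $\Phi(p^*)\geq\Phi(p)$ is used to get $\liminf_n\Phi_n(p_n)\geq\Phi(p)$.
\item $p_n$ is well defined because $h_n$ is proper for large $n$, which follows from $h_n(y_n)\to h(p)<\infty$. Properness of $h$ and $h>-\infty$ are implicit in the statement.
\end{itemize}

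Compared with the citation, your route shows that only a few ingredients are needed: the two Mosco conditions, weak sequential compactness of bounded sets, weak lower semicontinuity of $d^2(x,\cdot)$, and strong convexity of the Moreau objective. In particular, Step~1 obtains boundedness of the resolvents by a purely metric argument. It needs no uniform lower bound on the $h_n$ of the kind one would get from conjugate duality in a Hilbert space.
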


This now allows us to establish the following variant of Theorem \ref{probLTK}:

\begin{theorem}\label{probLTKHilbert}
Let $(\Omega,\mathsf{F},\PP)$ and $(E,\mathsf{E},\mu)$ be probability spaces, with the latter complete, and let $\mathcal{H}$ be a separable Hilbert space. Further, fix an i.i.d.\ sequence $(\xi_i)$ of random variables $\xi_i:\Omega\to E$ with distribution $\mu$. Let $f:E\times \mathcal{H}\to (-\infty,+\infty]$ be a proper normal convex integrand which is $L^1$-Lipschitz and where there exists a $u_0\in \mathcal{H}$ such that $f(\cdot,u_0)\in L^1(E,\mathbb{R},\mu)$. Write $S_t(x)$ for the gradient flow associated with $\int f(e,\cdot)\, d\mu(e)$. Then, there exists a set $\Omega'\subseteq\Omega$ with $\PP(\Omega')=1$ such that for any $\omega\in\Omega'$ as well as for any $t\geq 0$ and $x\in \mathcal{H}$:
\[
S_t(x)=\lim_{n\to\infty} \left(\mathrm{prox}_{\frac{t}{n^2} f(\xi_n(\omega),\cdot)} \circ\dots\circ \mathrm{prox}_{\frac{t}{n^2} f(\xi_1(\omega),\cdot)}\right)^{(n)}(x).
\]
In particular, the above limit holds almost surely.
\end{theorem}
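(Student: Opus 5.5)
The plan mirrors the proof of Theorem \ref{probLTK}, with Theorem \ref{HadCorr} replaced by the King--Wets strong law. The $L^2$-Lipschitz requirement is only used to make a Lipschitz sample average available, and the $L^1$-Lipschitz assumption already gives that.

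\textbf{Step 1: almost sure convergence of proximal maps.} Lemma \ref{KingWets} applies directly under the present hypotheses and gives a set $\Omega_0$ with $\PP(\Omega_0)=1$ on which
\[
h^\omega_n:=\frac{1}{n}\sum_{i=1}^n f(\xi_i(\omega),\cdot)\to^M \underline{f}:=\int f(e,\cdot)\,d\mu(e).
\]
For this we need that $\underline{f}$ and each $h^\omega_n$ are convex, lsc and real-valued. Since $f(\cdot,u_0)\in L^1$ and $\vert f(e,x)-f(e,u_0)\vert\leq L(e)\norm{x-u_0}$ with $L\in L^1$, the integral $\underline{f}$ is finite everywhere and $\norm{L}_1$-Lipschitz. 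Each $h^\omega_n$ is finite and $\frac1n\sum_{i=1}^n L(\xi_i(\omega))$-Lipschitz. Lemma \ref{MoscoToRes} (or Attouch's theorem, since $\mathcal{H}$ is Hilbert) then gives
\[
\mathrm{prox}_{\lambda h^\omega_n}(x)\to \mathrm{prox}_{\lambda\underline{f}}(x)\quad\text{for all }\lambda>0,\ x\in\mathcal{H},\ \omega\in\Omega_0.
\]
A single null set suffices here because Mosco convergence is a statement about the whole function. This is the step that replaces the interchange result and the vector-field strong law.

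\textbf{Step 2: convergence of the sampled gradient flows.} Lemma \ref{resToSemi} converts Step 1 into $S^\omega_{n,t}(x)\to S_t(x)$ for all $t\geq0$, $x\in\mathcal{H}$ and $\omega\in\Omega_0$, where $S^\omega_{n,t}$ denotes the gradient flow of $h^\omega_n$.

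\textbf{Step 3: quantitative splitting with random Lipschitz constants.} Apply Lemma \ref{quantLTK} to $h^\omega_n=\sum_{i=1}^n \frac1n f(\xi_i(\omega),\cdot)$ with $j=n$ steps. Since $\mathrm{prox}_{r\frac1n f}=\mathrm{prox}_{\frac rn f}$, the product formula with step $t/n$ becomes exactly the iterate with proximal parameters $t/n^2$ in the statement. The summands have Lipschitz constants $L(\xi_i(\omega))/n$, whose total is $\bar L_n(\omega):=\frac1n\sum_{i=1}^n L(\xi_i(\omega))$. This gives
\[
d\left(S^\omega_{n,t}(x),\left(\mathrm{prox}_{\frac{t}{n^2} f(\xi_n(\omega),\cdot)} \circ\dots\circ \mathrm{prox}_{\frac{t}{n^2} f(\xi_1(\omega),\cdot)}\right)^{(n)}(x)\right)\leq 8t\,n^{-1/4}\,\bar L_n(\omega).
\]
Since $L\in L^1$, Kolmogorov's strong law gives a full-measure set $\Omega_1$ on which $\bar L_n(\omega)$ converges and is therefore bounded by some $L^s(\omega)<\infty$.

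\textbf{Step 4: conclusion.} Set $\Omega'=\Omega_0\cap\Omega_1$. The triangle inequality, combined with Steps 2 and 3, gives convergence for every $t\geq0$ and $x\in\mathcal{H}$.

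The main point to check is that $\Omega'$ does not depend on $t$ or $x$. Steps 1 and 3 deliver this, since they rest on Mosco convergence on one set and on a bound that is uniform in $t$ and $x$. Real-valuedness of $f$ is also needed to invoke Lemmas \ref{resToSemi} and \ref{quantLTK}. It follows from properness together with the Lipschitz bound: finiteness at one point and Lipschitz continuity force finiteness everywhere.
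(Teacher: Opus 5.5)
Your proposal is correct and follows the paper's proof essentially verbatim. Lemma \ref{KingWets} together with Lemma \ref{MoscoToRes} replaces Theorem \ref{HadCorr} to give a single full-measure set on which the proximal maps converge, and Lemma \ref{resToSemi}, Lemma \ref{quantLTK} with $j=n$ and random Lipschitz constant $\frac1n\sum_{i=1}^n L(\xi_i(\omega))$, and the real strong law for the integrable $L$ then finish the argument, exactly as the paper indicates. Your extra checks, namely real-valuedness of $f(e,\cdot)$ from properness plus the Lipschitz bound and the identity $\mathrm{prox}_{r\frac1n f}=\mathrm{prox}_{\frac rn f}$, are details the paper leaves implicit.
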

\begin{proof}
Proceed as in the proof of Theorem \ref{probLTK}, using Lemmas \ref{KingWets} and \ref{MoscoToRes} in place of Theorem \ref{HadCorr} to derive that there exists a single set $\Omega_0$ of measure one such that
\[
\mathrm{prox}_{\lambda \frac{1}{n}\sum_{i=1}^n f(\xi_i(\omega),\cdot)}(x)\to\mathrm{prox}_{\lambda \int f(e,\cdot)\, d\mu(e)}(x)
\]
as $n\to\infty$, for all $\lambda>0$ and $x\in \mathcal{H}$ as well as $\omega\in\Omega_0$. From that point onwards, the proof proceeds similar, noting that it suffices that $L$ is integrable to apply the strong law of large numbers.
\end{proof}

The second case we consider is that $\mathcal{H}$ is a locally compact Hadamard space. In that case, we cannot only weaken the $L^2$-Lipschitz assumption to $L^1$-Lipschitz continuity but also do not have to require any additional assumptions on the tangent cones of $\mathcal{H}$. The result that makes this possible is the following (consequence of a) strong law of large numbers for convex functions due to Artstein and Wets \cite{ArtsteinWets1995}:

\begin{lemma}[consequence of Theorem 2.3 in \cite{ArtsteinWets1995}]\label{ArtsteinWets}
Let $(\Omega,\mathsf{F},\PP)$ and $(E,\mathsf{E},\mu)$ be probability spaces, with the latter complete, and let $\mathcal{H}$ be a separable Hadamard space. Further, fix an i.i.d.\ sequence $(\xi_i)$ of random variables $\xi_i:\Omega\to E$ with distribution $\mu$. Let $f:E\times \mathcal{H}\to (-\infty,+\infty]$ be a normal convex integrand which is $L^1$-Lipschitz and where there exists a $u_0\in \mathcal{H}$ such that $f(\cdot,u_0)\in L^1(E,\mathbb{R},\mu)$. Then
\[
\frac{1}{n}\sum_{i=1}^n f(\xi_i,\cdot) \to^\Gamma \int f(e,\cdot)\, d\mu(e)\quad\PP\text{-a.s.}
\]
as $n\to\infty$, with $\to^\Gamma$ meaning $\Gamma$-convergence in the sense of De Giorgi (see e.g.\ \cite{Bacak2014a}).
\end{lemma}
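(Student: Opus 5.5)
This statement is labelled a consequence of Theorem 2.3 in \cite{ArtsteinWets1995}, so the plan is to verify that theorem's hypotheses in the present setting and then read off the conclusion. The Artstein–Wets strong law for random lsc functions is stated for Polish spaces. It asserts that if $f$ is a random lsc function (a normal integrand) on a Polish space $\mathcal{H}$ with an integrable lower bound, then the sample averages $\frac{1}{n}\sum_{i=1}^n f(\xi_i,\cdot)$ epi-converge ($\Gamma$-converge) almost surely to $\int f(e,\cdot)\,d\mu(e)$. A separable Hadamard space is complete and separable, hence Polish. Completeness of $(E,\mathsf{E},\mu)$ ensures that the normal integrand $f$ is $\mathsf{E}\otimes\mathsf{B}(\mathcal{H})$-measurable, and hence that $e\mapsto f(e,\cdot)$ is a random lsc function in the required sense. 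Convexity plays no role beyond guaranteeing lower semicontinuity and normality.

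The main step is producing the local integrable lower bound that Artstein–Wets require, which must be uniform on some neighbourhood of each point (or at least a bound that is locally uniform). I would derive it from the $L^1$-Lipschitz property together with $f(\cdot,u_0)\in L^1$. For any $x$ and any $y$ with $d(x,y)\le r$,
\[
\vert f(e,y)\vert\le \vert f(e,u_0)\vert+L(e)\,(d(u_0,x)+r),
\]
which is integrable in $e$ and uniform over the ball $B(x,r)$. This simultaneously shows that $f$ is finite everywhere, so each $f(e,\cdot)$ is continuous and $\int f(e,\cdot)\,d\mu$ is a well-defined, finite, $\int L\,d\mu$-Lipschitz function. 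With both an upper and a lower integrable majorant, the hypotheses of their Theorem 2.3 are met, and it yields a single null set outside of which epi-convergence holds. Epi-convergence in a metric space is exactly $\Gamma$-convergence in De Giorgi's sense, giving the claim.

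The main obstacle I expect is the precise form of the Artstein–Wets hypotheses. If their result needs more than a pointwise integrable lower bound (for instance local compactness or a specific measurability of the epigraphs), I would handle it as follows. Take a countable dense set $D\subseteq\mathcal{H}$ and apply the scalar strong law of large numbers to the i.i.d.\ integrable variables $f(\xi_i,d)$ for $d\in D$, and also to $L(\xi_i)$. On the intersection $\Omega'$ of these countably many full-measure sets, the functions $h_n:=\frac{1}{n}\sum_{i=1}^n f(\xi_i(\omega),\cdot)$ converge to $h:=\int f(e,\cdot)\,d\mu(e)$ on $D$. They are also Lipschitz with constants $\frac{1}{n}\sum_{i=1}^n L(\xi_i(\omega))$, and these constants are bounded in $n$. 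An equi-Lipschitz $\varepsilon/3$ argument then upgrades the convergence to locally uniform convergence on all of $\mathcal{H}$. Locally uniform (even pointwise) convergence of equi-Lipschitz functions implies $\Gamma$-convergence: the liminf inequality follows from $h_n(x_n)\ge h_n(x)-C\,d(x_n,x)$, and the constant sequence $x_n=x$ serves as a recovery sequence. This self-contained route avoids any delicacy in citing \cite{ArtsteinWets1995}. It also shows that local compactness is needed only later, when passing from $\Gamma$-convergence to proximal convergence.
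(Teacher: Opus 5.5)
Your primary route is what the paper does. The lemma is stated as a consequence of Theorem 2.3 in \cite{ArtsteinWets1995} and no further argument is given. Your hypothesis check (Polishness of $\mathcal{H}$, joint measurability from completeness of $E$, and the integrable bound $\vert f(e,y)\vert\le\vert f(e,u_0)\vert+L(e)(d(u_0,x)+r)$ uniformly on $B(x,r)$) is already more than the paper writes down.

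Your fallback is a genuinely different and fully self-contained proof. Kolmogorov's strong law, applied to $f(\xi_i,d)$ for $d$ in a countable dense set and to $L(\xi_i)$, gives a single full-measure set. On it the averages converge on the dense set and are uniformly Lipschitz, which yields pointwise convergence everywhere. The estimate $h_n(x_n)\ge h_n(x)-C\,d(x_n,x)$ and constant recovery sequences then give $\Gamma$-convergence.

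Compared with the citation, this argument:
\begin{itemize}
\item uses neither convexity nor nonpositive curvature, so any separable metric space will do;
\item yields continuous convergence, which is stronger than $\Gamma$-convergence;
\item shows that under the $L^1$-Lipschitz hypothesis the $\Gamma$-part is elementary, so local compactness in Theorem~\ref{probLTKProper} is needed only for the passage to proximal maps, as you note.
\end{itemize}
What the citation buys instead is generality: a strong law for extended-valued random lsc integrands with no Lipschitz assumption. The price is having to match the precise framework of \cite{ArtsteinWets1995}.

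One claim in the fallback is wrong, though it is not used. The $\varepsilon/3$ argument does not give locally uniform convergence when $\mathcal{H}$ is not locally compact. It gives pointwise convergence everywhere, uniform convergence on compact sets, and $h_n(x_n)\to h(x)$ whenever $x_n\to x$, but not uniform convergence on a neighbourhood. For example, on $\ell^2$ the $1$-Lipschitz functions $x\mapsto\max\{0,1-\norm{x-e_n}\}$ tend to $0$ pointwise. Yet for $0<\delta\le 1$ they take the value $\delta/2$ at $\frac{\delta}{2}e_n\in B(0,\delta)$. Your $\Gamma$-convergence argument only uses pointwise convergence plus the uniform Lipschitz constant, so simply replace ``locally uniform'' by ``pointwise''.
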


The reason why we are restricted to locally compact Hadamard spaces when applying the above Lemma \ref{ArtsteinWets} is that it only establishes the weaker $\Gamma$-convergence, which is generally not enough to guarantee the convergence of the proximal maps (this already fails in infinite-dimensional Hilbert spaces, see e.g.\ Example 3.2 in \cite{LevyPoliquinThibault1995}). However, over locally compact Hadamard spaces, weak and strong convergence coincide (see e.g.\ Example 3.2 in \cite{LytchakPetrunin2023}), and so $\Gamma$- and Mosco-convergence also coincide. It is hence exactly outside of Hilbert spaces and locally compact Hadamard spaces where our Theorem \ref{HadCorr} has its full use.

We now end this paper with the corresponding variant of Theorem \ref{probLTK} in the case of locally compact Hadamard spaces:

\begin{theorem}\label{probLTKProper}
Let $(\Omega,\mathsf{F},\PP)$ and $(E,\mathsf{E},\mu)$ be probability spaces, with the latter complete, and let $\mathcal{H}$ be a locally compact Hadamard space. Further, fix an i.i.d.\ sequence $(\xi_i)$ of random variables $\xi_i:\Omega\to E$ with distribution $\mu$. Let $f:E\times \mathcal{H}\to (-\infty,+\infty]$ be a proper normal convex integrand which is $L^1$-Lipschitz and where there exists a $u_0\in \mathcal{H}$ such that $f(\cdot,u_0)\in L^1(E,\mathbb{R},\mu)$. Write $S_t(x)$ for the gradient flow associated with $\int f(e,\cdot)\, d\mu(e)$. Then, there exists a set $\Omega'\subseteq\Omega$ with $\PP(\Omega')=1$ such that for any $\omega\in\Omega'$ as well as for any $t\geq 0$ and $x\in \mathcal{H}$:
\[
S_t(x)=\lim_{n\to\infty} \left(\mathrm{prox}_{\frac{t}{n^2} f(\xi_n(\omega),\cdot)} \circ\dots\circ \mathrm{prox}_{\frac{t}{n^2} f(\xi_1(\omega),\cdot)}\right)^{(n)}(x).
\]
In particular, the above limit holds almost surely.
\end{theorem}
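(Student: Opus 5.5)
The argument will follow the proof of Theorem \ref{probLTK}, with the first step replaced by Lemma \ref{ArtsteinWets} combined with local compactness. This is the same way Theorem \ref{probLTKHilbert} replaced it by Lemmas \ref{KingWets} and \ref{MoscoToRes}.

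\textbf{Step 1: resolvent convergence almost surely.} First note that a locally compact Hadamard space is proper (Hopf–Rinow for complete locally compact length spaces), hence separable, so Lemma \ref{ArtsteinWets} applies. It gives a set $\Omega_0$ of measure one on which the empirical averages $h^\omega_n:=\frac{1}{n}\sum_{i=1}^n f(\xi_i(\omega),\cdot)$ $\Gamma$-converge to $\underline{f}:=\int f(e,\cdot)\,d\mu(e)$.

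Next, upgrade $\Gamma$-convergence to Mosco convergence. The recovery-sequence half is common to both notions. For the liminf half, a weakly convergent sequence in a Hadamard space is bounded. By properness it has a strongly convergent subsequence, and its strong limit must agree with the weak limit. Applying this to every subsequence shows that weak convergence implies strong convergence. The $\Gamma$-liminf inequality therefore covers weakly convergent sequences too, which is exactly Mosco convergence; this is the remark citing \cite{LytchakPetrunin2023}.

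Lemma \ref{MoscoToRes} then gives $\mathrm{prox}_{\lambda h_n^\omega}(x)\to\mathrm{prox}_{\lambda\underline{f}}(x)$ for all $\lambda>0$, $x\in\mathcal{H}$ and $\omega\in\Omega_0$. For this I need $h^\omega_n$ and $\underline{f}$ to be convex, lsc and real-valued, which I check as follows.
\begin{itemize}
\item The $L^1$-Lipschitz bound $|f(e,x)-f(e,u_0)|\le L(e)d(x,u_0)$ shows that each $f(e,\cdot)$ is finite everywhere.
\item It also gives integrability of $f(\cdot,x)$ for every $x$, so $\underline{f}$ is real-valued.
\item Integrating the same bound shows $\underline{f}$ is $\|L\|_1$-Lipschitz, and each $h^\omega_n$ is Lipschitz with constant $\frac1n\sum_{i=1}^n L(\xi_i(\omega))$.
\end{itemize}
In particular Lemmas \ref{resToSemi} and \ref{quantLTK} will be applicable.

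\textbf{Step 2: semigroup convergence.} Lemma \ref{resToSemi} yields $S^\omega_{n,t}(x)\to S_t(x)$ for all $t\ge 0$, $x$ and $\omega\in\Omega_0$, where $S^\omega_{n,t}$ is the flow of $h^\omega_n$.

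\textbf{Step 3: product formula estimate.} Write $\frac{t}{n}h^\omega_n=\sum_{i=1}^n \frac{t}{n^2}f(\xi_i(\omega),\cdot)$, so the flow of $h^\omega_n$ at time $t$ is the flow of $g:=\sum_{i=1}^n\frac1n f(\xi_i(\omega),\cdot)$. Apply Lemma \ref{quantLTK} to $g$ with $N=n$ summands $h_i=\frac1n f(\xi_i(\omega),\cdot)$ and with $j=n$ in the product formula. The step size is then $\frac{t}{n}$ acting on $h_i$, which gives the prox parameter $\frac{t}{n^2}$ on $f(\xi_i(\omega),\cdot)$. The resulting bound is
\[
8tn^{-1/4}\,\frac1n\sum_{i=1}^nL(\xi_i(\omega)).
\]
Since $L\in L^1$, Kolmogorov's strong law gives a set $\Omega_1$ of measure one on which these averages converge and hence are bounded by some $L^s(\omega)<\infty$.

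Finally, on $\Omega':=\Omega_0\cap\Omega_1$ the triangle inequality gives
\[
d(S_t(x),\text{product}_n)\le d(S_t(x),S^\omega_{n,t}(x))+8tn^{-1/4}L^s(\omega)\to0 .
\]

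The main obstacle is Step 1. Lemma \ref{ArtsteinWets} must hold on a single null set, uniformly over all $x$, which it provides since $\Gamma$-convergence is a statement about the whole functions. The $\Gamma$-to-Mosco upgrade is the only genuinely new ingredient, and it rests on the properness argument sketched in Step 1.
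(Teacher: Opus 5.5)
Your proposal is correct and is essentially the paper's proof. Lemma \ref{ArtsteinWets} gives almost sure $\Gamma$-convergence, local compactness upgrades this to Mosco convergence, and then Lemmas \ref{MoscoToRes}, \ref{resToSemi} and \ref{quantLTK}, together with the real-valued strong law for the integrable $L$, finish exactly as in Theorem \ref{probLTK}. Your additional remarks (separability via Hopf--Rinow so that Lemma \ref{ArtsteinWets} applies, and finiteness and Lipschitz continuity of the integrands and their averages) only make explicit details the paper leaves implicit.
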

\begin{proof}
Proceed as in the proof of Theorem \ref{probLTK} (as well as Theorem \ref{probLTKHilbert}), now using Lemmas \ref{ArtsteinWets} and \ref{MoscoToRes} in place of Theorem \ref{HadCorr}.
\end{proof}

\noindent {\bf Acknowledgments:} I want to thank Morenikeji Neri and Thomas Powell for helpful comments on a previous draft of this paper. The author interacted with OpenAI's GPT-5.6 while working on this paper, as detailed in the following:
\begin{enumerate}
\item Examples \ref{exPPA} and \ref{ExNoHH} were directly provided by GPT-5.6.
\item Example \ref{HilHadEx} as well as Lemmas \ref{contRes}, \ref{Ameas}, \ref{HHtofullHH}, \ref{epiLem} and \ref{dirDerSubgrad} were first obtained by the author alone, partially in weaker form, and then have been extended and streamlined (together with their proofs) after interacting with GPT-5.6. GPT-5.6 made a similar contribution to Lemma \ref{almostPolish}, where it in particular suggested a much easier proof of Lemma \ref{resUnique} which does not require the surjectivity of $\log$, as was used in a first version of the author. However, the author then found another proof alone, the present one, which is quite different to this proof suggested by GPT-5.6.
\item The approach to Corollary \ref{L1LawHad} and Theorem \ref{L1LawVector} was first developed by the author alone over Hadamard manifolds without a focus on quantitative aspects, and then later substantially simplified after interactions with GPT-5.6, not yet being aware of the work by Yokota \cite{Yokota2018}. The lift to Hadamard spaces was then carried out by the author alone, as was the quantitative analysis of this first purely qualitative proof which resulted in the non-asymptotic concentration inequalities provided in Theorem \ref{L1LawHadQuant} and Remark \ref{vectorLawQuant}. After learning of \cite{Yokota2018}, the resulting exposition was restructured.
\item The greatest mathematical contribution of GPT-5.6 is the formulation and proof of Lemma \ref{chainRule}. This result was first formulated (in a slightly different way) and proven over Hadamard manifolds by GPT-5.6 after various interactions on the content of Section \ref{interchange}. In particular, GPT-5.6 suggested the approach to use Hahn-Banach to extend a linear functional on the tangent cone. The author then developed the lift of this result to full Hilbert-Hadamard spaces, together with GPT-5.6.
\item The outline of the approach to Lemma \ref{quantLTK} was sketched first by GPT-5.6 after the author asked an adjacent question of removing a double limit in a first version of Theorem \ref{probLTK}, obtained by the author alone. It was later refined by the author alone.
\end{enumerate}
GPT-5.6 has also been used to find relevant related literature and typos. The author takes full accountability for the work. No parts of this paper were written or worded by an LLM.

\bibliographystyle{plain}
\bibliography{ref}

\end{document}